\newtheorem{proposition}{Proposition}
\newtheorem{theorem}[proposition]{Theorem}
\newtheorem{lemma}[proposition]{Lemma}
\newtheorem{corollary}[proposition]{Corollary}
\newtheorem{remark}[proposition]{Remark}
\newtheorem{example}[proposition]{Example}
\theoremstyle{definition}
\newtheorem{definition}[proposition]{Definition}
\newtheorem*{example*}{Example}
\numberwithin{equation}{section}
\numberwithin{proposition}{section}
\numberwithin{figure}{section}
\numberwithin{table}{section}
\newcommand{\E}{\mathbb{E} }
\renewcommand{\P}{\mathbb{P}}
\newcommand{\R}{\mathbb{R}}
\newcommand{\N}{\mathbb{N}}
\renewcommand{\d}{\mathrm{d}}
\newcommand{\la}{\left\langle}\newcommand{\ra}{\right\rangle}
\newcommand{\eps}{\epsilon}
\newcommand{\cX}{\mathcal{X}}
\renewcommand{\H}{\mathsf{H}}
\newcommand{\cH}{\mathcal{H}}
\renewcommand{\S}{\mathbf{S}}
\newcommand{\cK}{\mathcal{K}}
\newcommand{\cL}{\mathcal{L}}
\newcommand{\cD}{\mathcal{D}}
\newcommand{\tr}{\mathsf{tr}}
\newcommand{\nn}{\mathbf{n}}
\newcommand{\cP}{\mathcal{P}}
\newcommand{\cM}{\mathcal{C}}
\newcommand{\cE}{\mathcal{E}}
\newcommand{\metric}{\mathbf{d}}
\renewcommand{\j}{{(j)}}
\renewcommand{\bar}{\overline}
\renewcommand{\tilde}{\widetilde}
\newcommand{\pj}{\mathrm{p}}
\newcommand{\lf}{\mathrm{l}}
\newcommand{\dom}{\mathsf{dom}\,}
\newcommand{\sF}{\mathsf{F}}
\DeclareMathOperator*{\esssup}{ess\,sup}
\newcommand{\bxi}{\boldsymbol{\xi}}
\newcommand{\itr}{\mathsf{int}\,}
\newcommand{\cl}{\mathsf{cl}\,}
\newcommand{\bd}{\mathsf{bd}\,}
\newcommand{\rank}{\mathsf{rank}}
\newcommand{\diag}{\mathsf{diag}}
\newcommand{\C}{\mathcal C}
\newcommand{\F}{\mathcal F}
\newcommand{\wbd}{\partial_{\mathsf{w}}}
\newcommand{\HJ}{\mathrm{HJ}}
\newcommand{\J}{\mathfrak{J}}
\newcommand{\Jgen}{\J_\mathsf{gen}}
\newcommand{\Junif}{\J_\mathsf{unif}}
\newcommand{\Jgood}{\J_\mathsf{good}}
\newcommand{\LHS}{\mathsf{LHS}}
\newcommand{\RHS}{\mathsf{RHS}}
\renewcommand{\hat}{\widehat}
\newcommand{\Law}{\mathsf{Law}}
\newcommand{\rv}[1]{\textcolor{black}{#1}}
\newcommand{\rrv}[1]{\textcolor{black}{#1}}
\author[Hong-Bin Chen]{Hong-Bin Chen}
\address[Hong-Bin Chen]{Courant Institute of Mathematical Sciences, New York University, New York, New York, USA}
\email{hbchen@cims.nyu.edu}
\author[Jiaming Xia]{Jiaming Xia}
\address[Jiaming Xia]{Department of Mathematics, University of Pennsylvania, Philadelphia, Pennsylvania, USA}
\email{xiajiam@sas.upenn.edu}
\begin{document}

\title{Hamilton--Jacobi equations from mean-field spin glasses}

\begin{abstract}
We give a meaning to the Hamilton--Jacobi equation arising from mean-field spin glass models in the viscosity sense, and establish the corresponding well-posedness. Originally defined on the set of monotone probability measures, these equations can be interpreted, via an isometry, to be defined on an infinite-dimensional closed convex cone with an empty interior in a Hilbert space. We prove the comparison principle, and the convergence of finite-dimensional approximations furnishing the existence of solutions. Under additional convexity conditions, we show that the solution can be represented by a version of the Hopf--Lax formula, or the Hopf formula on cones. Previously, two notions of solutions were considered, one defined directly as the Hopf--Lax formula, and another as limits of finite-dimensional approximations. They have been proven to describe the limit free energy in a wide class of mean-field spin glass models. This work shows that these two kinds of solutions are viscosity solutions.
\end{abstract}

\maketitle

\section{Introduction}

\rrv{The interplay between statistical mechanics and PDEs has been extensively studied (see, e.g., \cite{bogolyubov1984some, brankov1974model, choquard2004mean}). In this work, we focus on Hamilton--Jacobi equations arising in spin glass models. This connection was first explored by Guerra~\cite{guerra2001sum} in the replica-symmetric regime and was later extended to various settings~\cite{barra1,barra2,abarra,barramulti,barra2014quantum}. Our motivation stems from the recent works of Mourrat~\cite{mourrat2019parisi,mourrat2020extending,mourrat2020nonconvex,mourrat2023free}.}
By interpreting the inverse temperature as the temporal variable and enriching the model with a random magnetic field, whose parameter serves as the spatial variable, one can compare the resulting free energy with solutions to a certain Cauchy problem for a Hamilton--Jacobi equation.

Let us give an overview of these equations.
The spatial variable, denoted by $\varrho$, lives in $\cP^\uparrow$ the set of monotone probability measures on $\S^D_+$, the cone of $D\times D$ positive semi-definite matrices (see Section~\ref{s.monotone_measure} for definitions and properties of monotone probability measures) for some fixed $D\in\N$. Formally, the equation is of the following form:
\begin{align}\label{e.HJ_spin_glass}
    \partial_t f - \int \xi(\partial_\varrho f)\d \varrho =0,\quad\text{on }\R_+\times\cP^\uparrow,
\end{align}
where $\xi$ is a real-valued function on $\R^{D\times D}$ and
we set $\R_+ = [0,\infty)
$ throughout.

\rrv{We briefly explain the connection to a spin glass model. In the above, $D$ denotes the dimension of a single spin, $\xi$ represents the covariance function of the spin-glass Hamiltonian, $t$ is related to the inverse temperature $\beta$ via $\beta =\sqrt{2t}$, and $\varrho$ is the parameter for the Ruelle Probability Cascade~\cite{ruelle1987mathematical}, introduced as an external field. See~\eqref{e.F_N} for the expression of the finite-size free energy.  
When $\varrho=\delta_0$ is a Dirac measure at $0$, the corresponding spin glass model has no external field. For instance, the classical Sherrington--Kirkpatrick model~\cite{sherrington1975solvable} corresponds to the case where $D=1$ and $\xi(r)=r^2$ for $r\in\R$. With an appropriately chosen initial condition $f(0,\cdot)$, we obtain that $-f(t,\delta_0) + t$ (setting $\varrho=\delta_0$) represents the limit free energy of this model without an external field at inverse temperature $\beta = \sqrt{2t}$. See Example~\ref{e.SK} for further details.
}

Two notions of solutions have been considered. In \cite{mourrat2019parisi,mourrat2020extending} where $\xi$ is convex, the solution is defined by a version of the Hopf--Lax formula, which has been proven there to be equivalent to the celebrated Parisi formula first proposed in \cite{parisi1980sequence} and rigorously verified in \cite{gue03,talagrand2006parisi}. In \cite{mourrat2020nonconvex,mourrat2023free}, the solution, defined as limits of finite-dimensional approximations, was shown to be an upper bound for the limiting free energy in a wide class of models. 

The \textit{ad hoc} and \textit{extrinsic} nature of these two notions motivate us to seek an \textit{intrinsic} definition of solutions. We want to define solutions in the viscosity sense and establish the well-posedness of the equation, by which we mean the validity of a comparison principle and the existence of solutions. Moreover, we verify that the solution is the limit of finite-dimensional approximations, and, under certain convexity conditions, the solution admits a representation by a variational formula. In particular, we want to ensure that solutions understood in the aforementioned two notions are in fact viscosity solutions. Therefore, the framework of viscosity solutions is compatible with the existing theory.

The key difficulty is to find a natural definition of solutions in the viscosity sense so that all goals announced above are achievable. The surprising observation is that it is sufficient to simply require the solution to satisfy the equation in the viscosity sense everywhere, including the boundary \rv{of the natural domain} without prescribing any additional condition (e.g.\ Neumann or Dirichlet) on the boundary. Let us expand the discussion below.

We start with some basics.
To make sense of the differential $\partial_\varrho f$, we restrict $\cP^\uparrow$ to $\cP^\uparrow_2$, the set of monotone measures with finite second moments, and equip $\cP^\uparrow_2$ with the $2$-Wasserstein metric. Heuristically, the derivative $\partial_\varrho f(t,\varrho)$ describes the asymptotic behavior of $ f(t,\vartheta)-f(t,\varrho)$ as $\vartheta$ tends to $\varrho$ in the transport sense, namely, in the Wasserstein metric. 
Fortunately, $\cP^\uparrow_2$ can be isometrically \rv{(see~\eqref{e.isometry})} embedded onto a closed convex cone in an $L^2$-space. 
\rv{The isometry, defined in \eqref{e.diff_wass}, is simply mapping a probability measure to its quantile function (generalized from one dimension to $D$-dimension).}
This cone has an empty interior but generates the $L^2$-space. So, we cannot restrict to a subspace to ensure that the cone has a nonempty interior. Via this isometry, $\partial_\varrho f$ can be understood in the sense of the Fr\'echet derivative. 

Therefore, we can interpret \eqref{e.HJ_spin_glass} as the Hamilton--Jacobi equation
\begin{align}\label{e.HJ}
    \partial_t f - \H(\nabla f) =0,\quad  \text{on $\R_+\times \C$},
\end{align}
where $\C$ is a closed convex cone in a separable Hilbert space $\cH$, and $\H$ is a general nonlinearity.
\rv{Under the spin glass setting, $\C$ (see~\eqref{e.cone_M}) is the set of increasing functions from $[0,1)$ to $\S^D$ representing (generalized) quantile functions of probability measures and $\cH$ (see~\eqref{e.H_inft_d}) is the obvious $L^2$-space spanned by $\C$.}
\rv{As aforementioned, in the spin glass setting, $\C$ has an empty interior and thus the usual sense of boundary of $\C$ is not useful.}
Aside from the lack of local compactness in infinite dimensions,
\rv{to make sense of~\eqref{e.HJ},} 
one important issue is to figure out a suitable boundary condition \rv{on some suitable notion of boundary of $\C$}. The spin glass setting does not provide a direct hint, except for invalidating the Neumann boundary condition. Moreover, as aforementioned, the solution to \eqref{e.HJ_spin_glass} is expected to satisfy the Hopf--Lax formula under some convexity condition and to be the limit of finite-dimensional approximations. These can be hard to verify if the boundary condition is not easy to work with. The fact that the cone in the spin glass setting has an empty interior adds more difficulty. 

To bypass these obstacles, we exploit the assumption that $\H$ is increasing along the direction given by the dual cone of $\C$ \rv{(see~\eqref{e.C^*-increasing} for this notion of monotonicity)}, which holds in the spin glass setting. Under this assumption, as aforementioned, we do not need to impose any additional condition on \rv{some notion of the boundary of $\C$} and only need the equation to be satisfied in the viscosity sense (see Definition~\ref{d.vs}). This greatly simplifies our analysis and allows us to pass to the limit. Surprisingly, well-posedness holds under this simple definition because usually some boundary condition is needed.

In Section~\ref{s.hj_finite_d}, we study \eqref{e.HJ} on general finite-dimensional cones. Under the monotonicity assumption on $\H$, we recall from \cite{chen2022hamilton} the comparison principle (implying the uniqueness of solutions), the existence of solutions, and, under extra convexity conditions, the representation of the solution as either the Hopf--Lax formula, or the Hopf formula. We also prove a quantified version of the comparison principle, which is needed for passing to the limit.

In Section~\ref{s.HJ_inft_d}, we consider \eqref{e.HJ} on the infinite-dimensional cone relevant to the spin glass models. After establishing the comparison principle, we show that the limit of solutions to finite-dimensional approximations of \eqref{e.HJ} is a viscosity solution of \eqref{e.HJ}. Here, the construction of finite-dimensional approximations has the flavor of projective limits. We also verify that the Hopf--Lax formula and the Hopf formula are stable when passed to the limit.

In Section~\ref{s.spin-glass}, we start with a brief description of mean-field spin glass models. We present more definitions, basic results, and constructions, leading to an interpretation of viscosity solutions of \eqref{e.HJ_spin_glass} in Definition~\ref{d.vis_sol_spin_glass}. Then, we derive the basic properties of the nonlinear term in the equation, which allow us to combine results from other sections to prove the main result, Theorem~\ref{t.main}. Below is a formal restatement of our main result.
\begin{theorem}\label{t.informal}
Under certain assumptions on $\xi$ and on the initial condition $\psi$, which are admissible in mean-field spin glass models, there is a unique viscosity solution $f$ of the Cauchy problem of \eqref{e.HJ_spin_glass}. Moreover, 
\begin{enumerate}
    \item $f$ is the limit of viscosity solutions of finite-dimensional approximations of \eqref{e.HJ_spin_glass};
    \item $f$ is given by the Hopf--Lax formula \eqref{e.Hopf-Lax_spin_glass} if $\xi$ is convex on $\S^D_+$;
    \item $f$ is given by the Hopf formula \eqref{e.Hopf_spin_glass} if $\psi$ is convex.
\end{enumerate}
\end{theorem}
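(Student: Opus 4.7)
The plan is to assemble the main theorem by combining the abstract Hamilton--Jacobi machinery developed in Sections~\ref{s.hj_finite_d} and~\ref{s.HJ_inft_d} with the spin-glass-specific identifications made at the start of Section~\ref{s.spin-glass}. First, one uses the isometry between $\cP^\uparrow_2$ and a closed convex cone $\C\subset\cH$ in a separable Hilbert space to rewrite \eqref{e.HJ_spin_glass} as the abstract equation \eqref{e.HJ}. Under this isometry, the spatial derivative $\partial_\varrho f$ becomes the Fr\'echet gradient $\nabla f$, and the nonlocal integral $\int \xi(\partial_\varrho f)\,\d\varrho$ becomes a nonlinearity $\H:\cH\to\R$. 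The first concrete task is to extract from the hypotheses on $\xi$ the key structural property needed by the abstract theory, namely that $\H$ is monotone along directions in the dual cone of $\C$. In the spin-glass setting this monotonicity reflects the positivity properties of the derivatives of $\xi$ on $\S^D_+$, and the plan is to verify it directly from the formula defining $\H$, together with the requisite Lipschitz regularity of $\H$ on bounded subsets of $\C$.

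With this monotonicity in hand, one invokes the comparison principle for \eqref{e.HJ} on the infinite-dimensional cone (Section~\ref{s.HJ_inft_d}) to deduce uniqueness of any viscosity solution. Existence, together with claim~(1), is obtained by selecting a sequence of finite-dimensional cones $\C_n$ approximating $\C$ in a projective-limit sense, lifting the Cauchy problem to each of them, applying the finite-dimensional existence result recalled from \cite{chen2022hamilton} in Section~\ref{s.hj_finite_d} to obtain a viscosity solution $f_n$ on $\R_+\times\C_n$, and then using the convergence theorem from Section~\ref{s.HJ_inft_d} to pass to the limit. The quantified version of the comparison principle proved in Section~\ref{s.hj_finite_d} provides the uniform modulus of continuity needed to control the $f_n$ along a compatible family of approximating initial data built from $\psi$.

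For item~(2), assuming $\xi$ convex on $\S^D_+$, one checks that the induced finite-dimensional nonlinearities $\H_n$ are convex on the relevant cones, so the finite-dimensional Hopf--Lax representation recalled in Section~\ref{s.hj_finite_d} applies to each $f_n$. The stability of this formula under projective limits, verified in Section~\ref{s.HJ_inft_d}, then upgrades it to the limit $f$ and yields \eqref{e.Hopf-Lax_spin_glass}. Item~(3) is handled analogously: when $\psi$ is convex, its finite-dimensional projections remain convex, so the finite-dimensional Hopf formula on cones from Section~\ref{s.hj_finite_d} applies to $f_n$, and the stability of this representation established in Section~\ref{s.HJ_inft_d} passes it to $f$, giving \eqref{e.Hopf_spin_glass}.

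The main obstacle I anticipate is the interface between the abstract infinite-dimensional framework and the spin-glass-specific setting: one must arrange the finite-dimensional approximations compatibly with the isometry so that (a) the approximating Hamiltonians $\H_n$ inherit monotonicity along the dual cone and admit the right regularity uniformly in $n$, (b) the initial datum $\psi$ is approximated in a sense strong enough for the convergence theorem of Section~\ref{s.HJ_inft_d}, and (c) the convexity used in items (2) and (3) is preserved under the projection. The empty interior of $\C$, emphasized in the introduction, is the structural reason why this coordination is delicate and why dual-cone monotonicity, rather than a classical boundary condition, must be asked to carry the analytic weight of the argument.
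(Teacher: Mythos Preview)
Your high-level plan is correct and matches the paper's architecture closely: uniqueness via the infinite-dimensional comparison principle, existence and item~(1) via convergence of finite-dimensional solutions, and items~(2)--(3) via stability of the Hopf--Lax and Hopf formulae along the projective limit.

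However, there is a genuine technical gap in your treatment of the nonlinearity. You write that the integral $\int \xi(\partial_\varrho f)\,\d\varrho$ ``becomes a nonlinearity $\H:\cH\to\R$'' whose monotonicity and Lipschitz regularity you will verify directly. But $\xi$ is only \emph{locally} Lipschitz on $\R^{D\times D}$, so the naive map $\kappa\mapsto\int_0^1\xi(\kappa(s))\,\d s$ is not defined on all of $\cH=L^2$, only on $L^\infty$. The paper resolves this by introducing a \emph{regularization} $\bar\xi$ of $\xi$ (Definition~\ref{d.regularization}, Lemma~\ref{l.exist_regularization}): a globally Lipschitz, proper function on $\S^D_+$ agreeing with $\xi$ on the unit ball, from which $\H$ is built via the infimum formula~\eqref{e.def_H_spin_glass}. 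This construction is what makes $\H$ globally Lipschitz and $\C^*$-increasing (Lemma~\ref{l.H_lip}). One must then argue, via the $\ell^1/\ell^\infty$ Lipschitz estimates of Proposition~\ref{p.lip_l^p}, that solutions have gradients confined to the unit ball in $\ell^\infty$, so the regularized and original equations coincide (see \eqref{e.obs_1_pf_main}--\eqref{e.obs_2_pf_main}). Without this mechanism, your finite-dimensional $\H_n$ may fail to be Lipschitz, and the passage from the regularized to the original $\xi$ in the variational formulae is unjustified.

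A second, smaller omission: the finite-dimensional Hopf--Lax and Hopf formulae in Theorem~\ref{t.hj_cone}\eqref{i.main_var_rep} require the cones $\C^j$ to have the Fenchel--Moreau property (Definition~\ref{d.fenchel_moreau_prop}). This is not automatic and is established separately in Section~\ref{s.fenchel-moreau}; you should flag this as a needed ingredient before invoking the variational representations.
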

\noindent Accompanying this, a version of the comparison principle holds. In Section~\ref{s.def_vs_equiv}, we verify that solutions considered in \cite{mourrat2019parisi,mourrat2020extending,mourrat2020nonconvex,mourrat2023free} are viscosity solutions.

Lastly, in Section~\ref{s.fenchel-moreau}, we prove that on the cones underlying the finite-dimensional equations that approximate \eqref{e.HJ_spin_glass}, a version of the Fenchel--Moreau biconjugation identity holds, which is needed for the variational representation of a solution. We believe that this is also new.

\rv{Recall that our motivation is to find an intrinsic interpretation of~\eqref{e.HJ_spin_glass}. The approach taken here involves using an isometry to lift $\cP^\uparrow$ to a subset $\C$ within an $L^2$-space $\cH$, allowing us to define derivatives through the linear structure of $\cH$. However, this method disregards certain geometric subtleties of the space of probability measures—specifically, the existence of multiple geodesics between two measures beyond the one induced by the linear $L^2$-geometry.
As a result, while the framework developed here enables us to formulate the equation relevant to spin glasses in the form of~\eqref{e.HJ_spin_glass}, it is not intrinsically geometric in the strictest sense. We hope that future work will further address this limitation.
}

\rv{On the other hand, if we directly interpret the relevant spatial variables as increasing paths (quantile functions) rather than probability measures—namely, if we start with~\eqref{e.HJ} instead of~\eqref{e.HJ_spin_glass}—then our approach indeed provides an intrinsic interpretation. Here, the spatial variables should be of the same type as those over which the Parisi formula (analogous to the Hopf--Lax formula) optimizes. 
When $D=1$, many works in spin glass theory treat such variables as probability measures. However, for $D>1$, quantile functions appear to be more convenient to work with. This observation may provide motivation for favoring~\eqref{e.HJ} over~\eqref{e.HJ_spin_glass}.
}

We close this section with a discussion of related works and a description of the general setting for the equation~\eqref{e.HJ}.

\subsection{Related works}

First, we briefly review existing works on Hamilton--Jacobi equations in Hilbert spaces and Wasserstein spaces.

Equations on Banach spaces satisfying the Radon--Nikodym property (in particular, separable Hilbert spaces) were initially studied in \cite{crandall1985hamiltonI,crandall1986hamiltonII}, where the differential is understood in the Fr\'echet sense and the definition of viscosity solutions is a straightforward extension of definitions in finite dimensions. Comparison principles and existence results were established. Our interpretation of solutions is close in spirit to theirs. We use Stegall's variational principle (restated as Theorem~\ref{t.stegall}) as used in \cite{crandall1985hamiltonI} to compensate for the lack of local compactness, in order to prove the comparison principle (Proposition~\ref{p.comp}). Different from \cite{crandall1986hamiltonII}, we directly use finite-dimensional approximations to furnish the existence result. As demonstrated in \cite[Section~5]{crandall1986hamiltonII}, there are examples where finite-dimensional approximations converge to a solution of a different equation. Hence, the class of equations in this work provides an interesting example where finite-dimensional approximations work properly. Moreover, since the domain for \eqref{e.HJ} is a closed convex cone with an empty interior, simple modifications of methods for existence results in \cite{crandall1986hamiltonII} may not be viable. Works with modified definitions of viscosity solutions for equations in Hilbert spaces also include \cite{crandall1986hamiltonIII,crandall1990viscosityIV,crandall1991viscosityV,crandall1994hamiltonVI,tataru1992viscosity,feng2006large,feng2009comparison}.

Investigations of Hamilton--Jacobi equations on the Wasserstein space of probability measures include \cite{cardaliaguet2008deterministic,cardaliaguet2010notes,cardaliaguet2012differential,gangbo2008hamilton,ambrosio2014class,gangbo2014optimal,gangbo2019differentiability}. There are mainly three notions of differentiability considered in these works. Let $\cP_2(\R^d)$ be the $2$-Wasserstein space of probability measures on $\R^d$ for some $d\in\N$. The first way to make sense of differentiability is through defining the tangent space at each $\varrho\in\cP_2(\R^d)$ by analogy to differential manifolds.
The tangent space at $\varrho$ is the closure of $\{\nabla\phi:\phi\in C^\infty_\mathrm{c}(\R^d)\}$ in $L^2(\R^d,\varrho)$. We refer to \cite{ambrosio2005gradient} \rv{for this notion of differentiability; to~\cite{fenkur} for the first use of mass transport to study such equations; and to~\cite{feng2012hamilton} for the well-posedness of such equations along this consideration}. The second one, more extrinsic, starts by extending any function $g:\cP_2(\R^d)\to\R$ through defining $G: L^2(\Omega,\P) \to\R$ by $G[X]=g(\Law(X))$ for every $\R^d$-valued random variable $X\in L^2(\Omega,\P)$ on some nice probability space $(\Omega,\P)$. Then, one can make sense of the differentiability of $g$ via the Fr\'echet differentiability of $G$. 
One issue is that two different random variables may have the same law, which leads to the situation where $\varrho,\vartheta$ can be ``lifted'' to $X,Y$, respectively, while $X$ and $Y$ are not optimally coupled. Namely, the $L^2$ norm of $X-Y$ is not equal to the metric distance between $\varrho$ and~$\vartheta$.
Another issue is the lack of a canonical choice of $\Omega$.
For details, we refer to \cite{cardaliaguet2010notes,gangbo2019differentiability}. The third notion is based on viewing $\cP_2(\R^d)$ as a geodesic metric space. Denoting by $\metric_2$ the $2$-Wasserstein metric, for any $g:\cP_2(\R^d)\to\R$, one can define the slope of $g$ at $\varrho$ by $|\nabla g|=\limsup_{\vartheta\to\varrho}\frac{|g(\vartheta)-g(\varrho)|}{\metric_2(\vartheta,\varrho)}$. Then, one can study equations involving slopes. This notion was considered in \cite{ambrosio2014class}.

The notion of differentiability adopted in this work is close in spirit to the second one discussed above. But for us, there is an isometry (see \eqref{e.isometry}) between $\cP^\uparrow_2$ and a closed convex cone in an $L^2$-space. As a result of the monotonicity (see \eqref{e.monotone_measure}) of measures in $\cP^\uparrow_2$, the isometry is given by the right-continuous inverse of some analogue of the probability distribution function, which was observed in \cite[Section~2]{mourrat2023free}. Hence, in our case, we can identify
$[0,1)$ equipped with the Borel sigma-algebra and the Lebesgue measure as the canonical probability space $\Omega$, appearing in the discussion of the second notion. It is natural and convenient to use the Hilbert space structure of $L^2([0,1))$ to define differentiability.
We note that in the absence of convexity, the Hopf–Lax and Hopf formulas are not applicable. In this case, \cite{chen2024envelope} provides an envelope representation of the solution to~\eqref{e.HJ}, following the approach of~\cite{evans2014envelopes}.

To the best of our knowledge, there are no prior works on the well-posedness of Hamilton--Jacobi equations \rv{on a nontrivial subset of an infinite-dimensional linear space}, or on \rv{a nontrivial subset in} a Wasserstein space.

Secondly, we discuss works that apply PDE methods to statistical mechanics models.

\rrv{One of the simplest models to which the Hamilton--Jacobi approach is applicable is the Curie--Weiss model~\cite{barra2008mean,genovese2009mechanical}. The relevant equation has a form similar to~\eqref{e.HJ} but is finite-dimensional (one-dimensional for Ising spins). Moreover, if the interaction is quadratic, then at finite size $N$, the finite-size free energy satisfies the viscous version of the limit equation, with viscosity coefficient $\frac{1}{N}$ (see~\cite[(3.7)]{HJbook}). We refer to~\cite[Section~3.1]{HJbook} for a complete treatment, including more general types of interactions. However, in the spin glass setting, at any finite size, the free energy does not appear to satisfy a viscous version of the limit equation. More precisely, it is unclear how to identify the viscous term in the equation at finite size (see~\cite[(3.27)]{mourrat2023free}), which reflects a key difficulty in the spin glass setting.
}

As mentioned in the beginning, considerations of using Hamilton--Jacobi equations to study the free energy of mean-field disordered models first appeared in physics literature \cite{guerra2001sum,barra1,barra2,abarra,barramulti,barra2014quantum}.
\rrv{Also, see~\cite{agliari2022nonlinear,fachechi2021pde} for recent continuations.}
The approach was also explored in \cite{mourrat2018hamilton}, and used subsequently in \cite{mourrat2019hamilton,chen2020hamilton,chen2020hamiltonTensor,chen2021statistical,chen2021limiting} to treat statistical inference models. There, the equations also take the form \eqref{e.HJ} but are defined on finite-dimensional cones. Similar to the equation in spin glass models, the nonlinearity is monotone along the direction of the dual cone (which is the same cone as the cones in these models are self-dual). In these works, some additional Neumann-type condition is imposed on the boundary \rv{of the said finite-dimensional cone}. We remark that these conditions can be dropped and the results in \cite{mourrat2018hamilton,chen2020hamiltonTensor,chen2021statistical}, where solutions were defined in the viscosity sense, still hold with our simplified definition of viscosity solutions (Definition~\ref{d.vs}). Facts about viscosity solutions proven and used there can be replaced by those in Section~\ref{s.hj_finite_d}.
Related works on the well-posedness of these finite-dimensional equations also include~\cite{issa2024weaksol,issa2024weak}.
The Hamilton–Jacobi equation technique is also useful in analyzing spin glass models with additional conventional order parameters, such as self-overlap or mean magnetization~\cite{chen2023self,chen2024self,chen2024conventional}. Moreover, recent studies have demonstrated that the Parisi formula can be un-inverted, transforming the supremum into an infimum~\cite{mourrat2023inverting,issa2024hopf}. Notably, the latter work reveals that the neun-inverted formula closely resembles the Hopf formula.

\rrv{The solution of~\eqref{e.HJ_spin_glass} is believed to describe the limit free energy for all reasonable $\xi$ arising from the covariance of the spin-glass Hamiltonian (see~\eqref{e.Gaussian_covariance}) and typical choices of spins. 
Recall that $\xi$ is a real-valued function on $\R^{D\times D}$. If $\xi$ is convex on $\S^D_+$ and the spins are i.i.d.\ and bounded, then the limit free energy has been proven to satisfy~\eqref{e.HJ_spin_glass} in~\cite{HJ_critical_pts}, covering a broad class of vector spin models from~\cite{pan05,panchenko2014parisi,pan,pan.potts,pan.vec,mourrat2020extending}. 
Due to the absence of Talagrand's positivity principle (see~\cite[Section~3.3]{pan}) for $D>1$, the works~\cite{pan.potts,pan.vec} assume $\xi$ to be convex over $\R^{D\times D}$. Thus, the result in~\cite{HJ_critical_pts} is stronger. Similar results are established for multi-species models with a convex interaction and bounded spins in~\cite{chen2024on}, extending~\cite{pan.multi,barcon}. In these cases, the Hopf--Lax representations hold and are equivalent to the Parisi formulas.
The spherical spin glass (single-species)~\cite{tal.sph,chen2013aizenman} is already treated in~\cite{mourrat2019parisi}. We expect that multi-species spherical models with convex interaction, such as those in~\cite{bates2022crisanti, bates2022free, ko2020free}, can also be analyzed using the same method, albeit with additional technical challenges.
In the above works within the Hamilton--Jacobi equation framework, the boundedness of spins simplifies the analysis in many ways. As long as $\xi$ is convex, extending the results to unbounded spins with sufficiently fast decay appears feasible but requires additional effort.
}

\rrv{The most interesting case arises when the interaction $\xi$ is not convex. A key example is the bipartite spin glass, which is closely related to the Hopfield model and the restricted Boltzmann machine. Non-convex models serve as a primary motivation for the Hamilton--Jacobi equation approach, as existing methods based on the Parisi formula fail to identify the limit free energy. This problem remains unresolved, and we mention partial results obtained through this approach.
Consider a vector spin model with non-convex $\xi$ and bounded spins. In this case, Guerra's RSB bound~\cite{gue03} does not apply, and instead,~\cite{mourrat2020nonconvex,mourrat2023free} provides an upper bound on the free energy. Furthermore,~\cite{HJ_critical_pts} shows that any subsequential limit of the free energy must satisfy the equation on a dense set, which is a weaker condition than being a viscosity solution. Consequently, the uniqueness of viscosity solutions cannot be used to establish the limit.
Additionally,~\cite{HJ_critical_pts} demonstrates that if the limit free energy exists, then at every point $(t,\varrho)$, its value must be determined by a characteristic line associated with the equation. This property is stronger than that of a viscosity solution but still not able to identify the limit. In principle, it is possible that the appropriate notion of solutions in the spin glass setting differs from viscosity solutions. Hence, there is still much to be understood in the non-convex case.}

Lastly, we briefly mention relevant results on finite-dimensional equations.

In \cite{mourrat2018hamilton,chen2020hamiltonTensor,chen2021statistical}, the viscosity solution always admit an expression as the Hopf formula. To prove this, a version of the Fenchel--Moreau biconjugation identity on cones is needed, which has been proven for a large class of cones in \cite{chen2020fenchel}. However, the cones pertinent to spin glass models do not fall in that class. As aforementioned, we prove the identity on these cones in Section~\ref{s.fenchel-moreau}, following similar arguments as in \cite{chen2020fenchel}.

Using the monotonicity of the nonlinearity, \cite{crandall1985viscosity, souganidis1986remark} showed that the viscosity solution to a Hamilton--Jacobi equation on an open set $\Omega$ in finite dimensions can be extended to a viscosity solution on $\Omega\cup\{z\}$ for any regular point $z\in\partial \Omega$. Results in \cite{chen2022hamilton} to be recalled in Section~\ref{s.hj_finite_d} extend these to the cones.

\subsection{General setting and definitions}\label{s.general_setting}

\rv{In this subsection, $\C$ and $\cH$ are considered in a general context and are not specific to the spin glass setting.}
Let $\cH$ be a separable Hilbert space with inner product $\la\, \cdot\,,\,\cdot\,\ra_\cH$ and associated norm $|\cdot|_\cH$. Let $\C\subset\cH$ be a closed convex cone. In addition, we assume that $\C$ generates $\cH$, namely,
\begin{align}\label{e.cl(C-C)=H}
    \cl(\C-\C)=\cH,
\end{align}
where $\mathsf{cl}$ is the closure operator.
The dual cone of $\C$ is defined to be
\begin{align}\label{e.C^*}
    \C^*=\{x\in\cH: \la x,y\ra_\cH\geq 0,\ \forall y\in\C\}.
\end{align}
It is clear that $\C^*$ is a closed and convex cone.

\begin{definition}[Differentiability and smoothness]\label{d.differentiability}
Let $\cD$ be a subset of $\cH$.
\begin{enumerate}
\item A function $\phi:(0,\infty)\times\cD\to\R$ is said to be \textit{differentiable} at $(t,x)\in(0,\infty)\times\cD$, if there is a unique element in $\R\times\cH$, denoted by $(\partial_t \phi(t,x),\nabla \phi(t,x))$ and called the \textit{differential} of $\phi$ at $(t,x)$, such that
\begin{align*}
    \phi(s,y)-\phi(t,x) = \partial_t\phi(t,x)(s-t) + \la \nabla \phi(t,x), y - x\ra_\cH + o\left(|s-t|+|y - x|_\cH\right),
\end{align*}
as $(s,y) \in (0,\infty)\times\cD$ tends to $(t,x)$ in $\R\times \cH$.
\item \label{i.phi_smooth} A function $\phi:(0,\infty)\times\cD\to\R$ is said to be \textit{smooth} if
\begin{enumerate}
    \item $\phi$ is differentiable everywhere with differentials satisfying that, for every $(t,x)\in(0,\infty)\times\cD$,
\begin{align*}
    \phi(s,y)-\phi(t,x) = \partial_t\phi(t,x)(s-t) + \la \nabla \phi(t,x), y - x\ra_\cH + O\left(|s-t|^2+|y - x|^2_\cH\right),
\end{align*}
as $(s,y) \in (0,\infty)\times\cD$ tends to $(t,x)$ in $\R\times \cH$;

    \item the function $(t,x)\mapsto(\partial_t\phi(t,x),\nabla \phi(t,x))$ is continuous from $(0,\infty)\times \cD$ to $\R\times\cH$.
\end{enumerate}

\item \label{i.def_differentiable_C} A function $g:\cD\to\R$ is said to be \textit{differentiable} at $x\in\cD$, if there is a unique element in $\cH$, denoted by $\nabla g(x)$ and called the \textit{differential} of $g$ at $x$, such that
\begin{align*}
    g(y)-g(x) =  \la \nabla g(x), y - x\ra_\cH + o\left(|y - x|_\cH\right),
\end{align*}
as $y \in \cD$ tends to $x$ in $\cH$.

\end{enumerate}

\end{definition}

\begin{remark}\rm
We are mostly interested in the case $\cD=\C$.
Note that the differential is defined at every point of the closed cone $\C$, which is needed to make sense of differentials at boundary points. Also, in infinite dimensions, $\C$ can have an empty interior. Let us show that the differential is unique whenever it exists. Hence, the above is well-defined.

To see this, it suffices to show that, for any fixed $(t,x)\in(0,\infty)\times\C$, if $(r,h)\in\R\times\cH$ satisfies $r(s-t)+\la h, y-x\ra_\cH = o(|s-t|+|y - x|_\cH)$ for all $(s,y)\in(0,\infty)\times\cM$, then we must have $r=0$ and $h =0 $. It is easy to see that $r=0$. Replacing $y$ by $x+\eps z$ for $\eps>0$ and any fixed $z\in\C$, and sending $\eps\to 0$, we can deduce that $\la h,z\ra_\cH = 0$ for all $z\in\C$, which along with~\eqref{e.cl(C-C)=H} implies that~$h =0$.
\end{remark}

For a closed cone $\cK\subset\cH$, a function $g: \cD\to (-\infty,\infty]$ defined on a subset $\cD\subset \cH$ is said to be \textit{$\cK$-increasing} (over $\cD$) if $g$ satisfies that
\begin{align}\label{e.C^*-increasing}
    g(x)\geq g(x'),\qquad \text{for all $x, x'\in \cD$ satisfying $x-x'\in\cK$.}
\end{align}
Let $\H:\cH\to\R$ be a continuous function.
Since we work with equations defined on different sets, in different ambient Hilbert spaces, and with different nonlinearities, for convenience, we denote by $\HJ(\cH,\cD,\H)$ the equation $\partial_t f-\H(\nabla f)=0$ on $\R_+\times \cD$ for some $\cD\subset \cH$. The corresponding Cauchy problem with initial condition $\psi:\cD\to\R$ is denoted by $\HJ(\cH,\cD,\H;\psi)$.

\begin{definition}[Viscosity solutions]\label{d.vs} Let $\cD$ be a subset of $\cH$.
\leavevmode
\begin{enumerate}
    \item A continuous function $f:\R_+\times \cD\to \R$ is a \textit{viscosity subsolution} of~$\HJ(\cH,\cD,\H)$ if for every $(t,x) \in (0,\infty)\times \cD$ and every smooth $\phi:(0,\infty)\times \cD\to\R$ such that $f-\phi$ has a local maximum at $(t,x)$, we have
\begin{align*}
    \left(\partial_t \phi - \H(\nabla\phi)\right)(t,x)\leq 0.
\end{align*}

\item A continuous function $f:\R_+\times \cD\to \R$ is a \textit{viscosity supersolution} of~$\HJ(\cH,\cD,\H)$ if for every $(t,x) \in (0,\infty)\times \cD$ and every smooth $\phi:(0,\infty)\times \cD\to\R$ such that $f-\phi$ has a local minimum at $(t,x)$, we have
\begin{align*}
    \left(\partial_t \phi - \H(\nabla\phi)\right)(t,x)\geq 0. 
\end{align*}

\item A continuous function $f:\R_+\times \cD\to \R$ is a \textit{viscosity solution} of~$\HJ(\cH,\cD,\H)$ if $f$ is both a viscosity subsolution and supersolution.

\end{enumerate}

\end{definition}

Here, a local extremum at $(t,x)$ is understood to be an extremum over a metric ball of some positive radius centered at~$(t,x)$ intersected with $(0,\infty)\times \cD$.

For $\psi:\cD\to\R$, we call $f:\R_+\times\cD\to\R$ a viscosity solution of $\HJ(\cH,\cD,\H;\psi)$ if $f$ is a viscosity solution of $\HJ(\cH,\cD,\H)$ and satisfies $f(0,\cdot) = \psi$.

Throughout, Lipschitzness of any real-valued function on a subset of $\cH$ or $\R\times\cH$ is defined with respect to $|\cdot|_\cH$ or $|\cdot|_{\R\times\cH}$, respectively. A \textit{Lipschitz viscosity solution} is a viscosity solution that is Lipschitz.

For every $a,b\in\R$, we write $a\vee b= \max\{a,b\}$ and $a_+ = a\vee0$.

\subsection*{Acknowledgement}
We warmly thank Jean-Christophe Mourrat for many stimulating discussions and helpful comments.

\section{Equations on finite-dimensional cones}\label{s.hj_finite_d}

In this section, we assume that $\cH$ is finite-dimensional. We consider the setting given in Section~\ref{s.general_setting} and study the equation $\HJ(\cH,\C,\H)$.
Notice that, in finite dimensions, the assumption~\eqref{e.cl(C-C)=H} implies that $\C$ has a nonempty interior.
We denote by $\mathring \C$ the interior of $\C$ in $\cH$ and we also consider the equation $\HJ(\cH,\mathring\C,\H)$.

\subsection{Basic results}

Hamilton--Jacobi equations with monotone nonlinearities on convex cones have been studied in~\cite{chen2022hamilton}.
We consider viscosity solutions in the class of functions $f:\R_+\times \C\to\R$ satisfying
\begin{align}\label{e.class_sol}
    \sup_{t\in\R_+}\|f(t,\cdot)\|_\mathrm{Lip}<\infty;\quad \sup_{t>0,\,x\in\C}\frac{|f(t,x)-f(0,x)|}{t}<\infty;\quad  \text{$f(t,\cdot)$ is $\C^*$-increasing, $\forall t\in\R_+$}.
\end{align}
To find variational representations of the solution, we need an additional condition on $\C$.
For $\cE\supset\C$ and $g:\cE\to(-\infty,\infty]$, we define the \textit{monotone conjugate} (over $\C$) of $g$ by
\begin{align}\label{e.def_u*}
    g^*(y) = \sup_{x\in \C}\{\la x,y\ra_\cH-g(x)\},\quad\forall y \in \cH.
\end{align}
Let $g^{**} = (g^*)^*$ be the \textit{monotone biconjugate} of $g$ with expression
\begin{align*}g^{**}(x) = \sup_{y\in \C}\{\la y,x\ra_{\cH}-g^*(y)\},\quad\forall x\in \cH.
\end{align*}
\begin{definition}\label{d.fenchel_moreau_prop}
A nonempty closed convex cone $\C$ is said to have the \textit{Fenchel--Moreau property} if the following holds: for every $g:\C\to(-\infty,\infty]$ not identically equal to $\infty$, we have that $g^{**}=g$ on $\C$ if and only if $g$ is convex, lower semicontinuous, and $\C^*$-increasing.
\end{definition}

In Section~\ref{s.fenchel-moreau}, we show that the cones relevant to the spin glass models have the Fenchel--Moreau property. 

For $r>0$, we denote by $B(r)$ the centered closed ball in $\cH$ with radius $r$. Below is \cite[Theorem~1.2]{chen2022hamilton} slightly simplified.

\begin{theorem}[\cite{chen2022hamilton}]\label{t.hj_cone}
Let $\H:\cH\to\R$ satisfy that $\H\lfloor_\C$ is locally Lipschitz and $\C^*$-increasing. Then, the following holds:
\begin{enumerate}
  \setlength\itemsep{1em}
    \item (Comparison principle)
    
        \medskip
        
    \noindent If 
    $u,v:\R_+\times\C\to\R$ are respectively a subsolution and a supersolution of $\HJ(\cH,\mathring\C,\H)$ in the class~\eqref{e.class_sol}, then $\sup_{\R_+\times\C}(u-v)  = \sup_{\{0\}\times \C}(u-v)$.
    
    \item (Existence of solutions)
    
        \medskip
        
    \noindent For every Lipschitz and $\C^*$-increasing $\psi:\C\to\R$, there is a viscosity solution $f:\R_+\times \C\to\R$ of $\HJ(\cH,\mathring\C,\H;\psi)$ unique in the class~\eqref{e.class_sol}. In addition, $f$ satisfies the following:
    \bigskip
    \begin{enumerate}
    \setlength\itemsep{1em}
        \item  \label{i.main_lip} 
        (Lipschitzness)
        
        \medskip
        
        The solution $f$ is Lipschitz and satisfies
        \begin{gather*}
\sup_{t\in\R_+}\|f(t,\cdot)\|_\mathrm{Lip}=\|\psi\|_\mathrm{Lip},
            \\
            \sup_{x\in\C}\|f(\cdot,x)\|_\mathrm{Lip}\leq \sup_{\C\cap B(\|\psi\|_\mathrm{Lip})}|\H|.
        \end{gather*}
        
        \item
(Monotonicity in time)
        
        \medskip
        
        If $\H\lfloor_\C \geq 0$, then $f(t,x)\leq f(t',x)$ for all $t'\geq t\geq 0$ and $x\in\C$.
  
        \item \label{i.main_equiv} 
        (Solving modified equations)
        
        \medskip
        
        For every locally Lipschitz and $\C^*$-increasing $\sF:\cH\to\R$ satisfying $\sF\lfloor_{\C\cap B(\|\psi\|_\mathrm{Lip})} = \H\lfloor_{\C\cap B(\|\psi\|_\mathrm{Lip})}$, $f$ is the solution of $\HJ(\cH,\C,\sF;\psi)$ unique in the class~\eqref{e.class_sol}.
        
        \item
        \label{i.main_var_rep}
        (Variational representations)
        
        \medskip
        
        Under an additional assumption that $\C$ has the {\rm Fenchel--Moreau property}, if $\H\lfloor_\C$ is convex and bounded below, then
        \begin{align}\label{e.hopf_lax}
    f(t,x) = \sup_{y\in \C}\inf_{z\in\C}\left\{\psi(x+y)-\la y,z\ra_\cH+t\H(z)\right\},\quad\forall (t,x)\in\R_+\times\C,
\end{align}
        or if $\psi$ is convex, then
        \begin{align}\label{e.hopf}
    f(t,x) = \sup_{z\in\C}\inf_{y\in\C} \left\{\psi(y)+\la x-y,z\ra_\cH+t\H(z)\right\},\quad\forall (t,x)\in\R_+\times\C.
\end{align}
    \end{enumerate}
\end{enumerate}
\end{theorem}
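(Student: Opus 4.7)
The plan is to establish the four parts in sequence---comparison principle, existence, Lipschitz estimates, and variational representations---since later parts lean on earlier ones. Throughout, the $\C^*$-monotonicity of both the solutions (built into the class~\eqref{e.class_sol}) and the nonlinearity $\H$ is the structural mechanism that compensates for the absence of prescribed boundary data and the fact that viscosity inequalities are demanded only at interior points of $\C$.

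For the comparison principle I would run a Crandall--Lions doubling-of-variables argument adapted to the cone. Given a subsolution $u$ and a supersolution $v$ in the class~\eqref{e.class_sol}, fix small parameters $\epsilon,\beta,\delta,\eta>0$, pick an element $e\in\mathring{\C^*}$, and maximize over $\R_+\times\C\times\R_+\times\C$ the functional $\Phi(t,x,s,y)=u(t,x)-v(s,y)-\frac{(t-s)^2+|x-y|_\cH^2}{2\epsilon}-\beta(|x|_\cH^2+|y|_\cH^2)-\delta(t+s)-\eta\la e,y-x\ra_\cH$. The uniform Lipschitz bound in~\eqref{e.class_sol} together with the quadratic $\beta$-penalty guarantees a maximizer $(t^*,x^*,s^*,y^*)$ with standard $\epsilon\to 0$ asymptotics. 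The main obstacle is that $x^*$ or $y^*$ may lie on $\partial\C$, where the viscosity inequalities do not apply directly. The crucial observation is that $\C^*$-monotonicity of $u$ and $v$, combined with the shift term $-\eta\la e,y-x\ra_\cH$, allows one to translate $x^*$ and $y^*$ by a small multiple of $e$ without decreasing $\Phi$ by more than a controlled amount, pushing the relevant points into $\mathring{\C}$. Applying the sub- and supersolution inequalities at the translated points, using local Lipschitzness of $\H$ on bounded sets, and then sending $\eta,\beta,\epsilon\to 0$ in the correct order yields $\sup(u-v)\leq\sup_{\{0\}\times\C}(u-v)+C\delta$; letting $\delta\to 0$ concludes.

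Existence is then furnished by verifying that the Hopf--Lax formula~\eqref{e.hopf_lax} (for convex $\H$) or a mollification/vanishing-viscosity procedure (in general) produces a viscosity solution attaining $\psi$ at $t=0$; $\C^*$-monotonicity of $f(t,\cdot)$ propagates from $\psi$ via the monotonicity of $\H$. The Lipschitz bounds of~\ref{i.main_lip} are transparent from the formula: $\|f(t,\cdot)\|_\mathrm{Lip}\leq\|\psi\|_\mathrm{Lip}$ by inspecting differences in $x$, and the temporal bound follows from $|f(t,x)-\psi(x)|\leq t\sup_{\C\cap B(\|\psi\|_\mathrm{Lip})}|\H|$. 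The monotonicity in time when $\H\lfloor_\C\geq 0$ is immediate from the same representation, and uniqueness within the class~\eqref{e.class_sol} follows from the comparison principle. Part~\ref{i.main_equiv} is then a direct consequence of the gradient bound $|\nabla f|_\cH\leq\|\psi\|_\mathrm{Lip}$: only values of the nonlinearity on $\C\cap B(\|\psi\|_\mathrm{Lip})$ can ever enter the viscosity inequalities, so any $\sF$ agreeing with $\H$ there has $f$ as its unique solution. For the variational representations, I would check directly that the right-hand side of~\eqref{e.hopf_lax} is a viscosity solution with initial datum $\psi$ when $\H\lfloor_\C$ is convex and bounded below, the Fenchel--Moreau property on $\C$ being exactly what ensures the inner infimum recovers $\H$; likewise, when $\psi$ is convex, the Hopf formula~\eqref{e.hopf} is verified as a viscosity solution, with the Fenchel--Moreau property ensuring $\psi$ equals its biconjugate on $\C$. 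Comparison then identifies each formula with $f$. The doubling-of-variables step on the cone is the single hardest piece; the rest is structural once that is in hand.
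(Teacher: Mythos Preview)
This theorem is not proved in the present paper; it is quoted from \cite{chen2022hamilton} (see the attribution immediately preceding the statement). The paper does, however, explain the mechanism for handling the boundary, and it differs from yours: rather than translating maximizers, one replaces a subsolution $f$ by $f_\epsilon(t,x)=f(t,x)-\epsilon/d(x)$ with $d=\mathrm{dist}(\cdot,\partial\C)$. The barrier forces any local maximum of $f_\epsilon-\phi$ into $\mathring\C$, and since the (generalized) gradient of $d$ lies in $\C^*$ while $\H$ is $\C^*$-increasing, $f_\epsilon$ remains a subsolution. This is the content behind Proposition~\ref{p.drop_bdy} and, per the paper's discussion, the device used in \cite{chen2022hamilton}.

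Your comparison-principle sketch has a genuine gap at exactly this step. First, choosing $e\in\mathring{\C^*}$ and shifting by a multiple of $e$ does not in general land in $\mathring\C$: directions in $\C^*$ need not point into the interior of $\C$ unless the cone is self-dual. To displace into $\mathring\C$ you would need $e\in\mathring\C$. Second, and more seriously, even with the correct $e$, ``translating $x^*$ and $y^*$ by a small multiple of $e$'' does not give you a point at which the viscosity inequalities may be invoked: you need a bona fide local extremum of an auxiliary functional at an interior point, and a linear tilt $-\eta\la e,y-x\ra_\cH$ does not create one (the maximizer of the tilted functional can still sit on $\partial\C$). The $-\epsilon/d(x)$ barrier resolves both issues at once: it forces interior extrema and its gradient perturbation is absorbed by the $\C^*$-monotonicity of $\H$. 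The remaining parts of your outline---propagation of Lipschitz bounds, part~\eqref{i.main_equiv} from the gradient constraint $\nabla f\in\C\cap B(\|\psi\|_{\mathrm{Lip}})$, and identification of the Hopf--Lax and Hopf formulae via the Fenchel--Moreau property plus comparison---are structurally sound and match what the paper describes.
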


In the statement, by that $f:\R_+\times \C\to\R$ is a solution of $\HJ(\cH,\mathring\C,\H)$, we mean that $f\lfloor_{\R_+\times \mathring\C}$ is a solution. Also, by that $f$ is a solution of $\HJ(\cH,\mathring\C,\H;\psi)$, we mean that, additionally, $f(0,\cdot)\lfloor_{\mathring\C} = \psi\lfloor_{\mathring\C}$ which actually implies $f(0,\cdot)=\psi$ on $\C$ since both are Lipschitz.

In~\eqref{i.main_var_rep}, \eqref{e.hopf_lax} is the Hopf--Lax formula on convex cones. For the standard version, we refer to \cite{evans2010partial,crandall1992user}. The second one~\eqref{e.hopf} is the Hopf formula on convex cones. Hopf originally proposed the standard version in \cite{hopf1965generalized}, which was later confirmed rigorously in \cite{bardi1984hopf,lions1986hopf}.

As a consequence of~\eqref{i.main_equiv}, the unique solution $f$ of $\HJ(\cH,\mathring\C,\sF)$ is also the unique solution of $\HJ(\cH,\C,\sF)$. Notice that the uniqueness of the solution of $\HJ(\cH,\C,\sF)$ usually requires imposing a boundary condition on $\partial \C$. Here, due to the monotonicity of $\sF$ on $\cH$, such a condition is not needed. More precisely, $\sF$ is increasing along $\C^*$ and outer normal vectors of $\C$ lies in $-\C^*$. This coincidence allows us to use ideas in \cite{crandall1985viscosity,souganidis1986remark} to deduce the irrelevance of the boundary condition. 

Let us briefly explain this.
We argue that if $f$ is a subsolution of $\HJ(\cH,\mathring\C,\sF)$, then $f$ should be a subsolution of $\HJ(\cH,\C,\sF)$. 
Let $d= \mathrm{dist}(\cdot,\partial\C)$ and consider $f_\eps$ given by $f_\eps(t,x) =f(t,x) - \frac{\eps}{d(x)}$. Heuristically, since the ``gradient'' of $d$ lies in $\C^*$ ($d$ may not be differentiable), we have $\nabla f_\eps -\nabla f\in \C^*$. Since $\sF$ is $\C^*$-increasing, we have $\partial_t f_\eps - \sF(\nabla f_\eps)\leq 0$ and thus $f_\eps$ should also be a subsolution of $\HJ(\cH,\mathring\C,\sF)$. 
Now, if $f-\phi$ achieves a local maximum at some $(t,x)$ for $x\in\C$, we can choose $\eps$ sufficiently small so that $f_\eps-\phi$ has a local maximum at some $(t_\eps, x_\eps)$ near $(t,x)$. The advantage now is that $x_\eps$ must be an interior point. Since $f_\eps$ is a subsolution, we have $\partial_t\phi-\sF(\nabla \phi)\leq 0$ at $(t_\eps,x_\eps)$. Sending $\eps\to0$, the relation holds at $(t,x)$. Hence, we can conclude that $f$ is a subsolution of $\HJ(\cH,\C,\sF)$. 
For the supersolution, we use $f^\eps$ given by $f^\eps(t,x) =f(t,x) + \frac{\eps}{d(x)}$.
We restate \cite[Proposition~2.1]{chen2022hamilton} below and refer to its proof for the rigorous argument.

\begin{proposition}[\cite{chen2022hamilton}]\label{p.drop_bdy}
Let $\sF:\cH\to\R$ be $\C^*$-increasing and continuous. If $f:\R_+\times\C\to\R$ is a viscosity solution of $\HJ(\cH,\mathring\C,\sF)$, then $f$ is a viscosity solution of $\HJ(\cH,\C,\sF)$.
\end{proposition}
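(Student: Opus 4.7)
The plan is to argue separately for the subsolution and supersolution properties, with the two cases being completely symmetric; I describe the subsolution case in detail and only indicate the sign change needed for the other.

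Fix $(t,x)\in(0,\infty)\times \C$ and a smooth test function $\phi$ such that $f-\phi$ has a local maximum at $(t,x)$. If $x\in\mathring\C$, then $\partial_t\phi(t,x)-\sF(\nabla\phi(t,x))\le 0$ follows immediately from the hypothesis, so assume $x\in\partial\C$. By replacing $\phi$ with $\phi(s,y)+(s-t)^2+|y-x|_\cH^2$, I may assume the maximum is strict without altering the differential of $\phi$ at $(t,x)$. The key geometric input is the distance to the boundary $d(y):=\mathrm{dist}(y,\partial\C)$: since $\C$ is a closed convex cone, the inclusion $B(y,r)\subset\C$ yields $B(y+tu,r)=B(y,r)+tu\subset\C+\C\subset\C$ for every $u\in\C$ and $t\ge0$, so $d$ is non-decreasing along $\C$-directions, and wherever $d$ is differentiable on $\mathring\C$, $\la\nabla d,u\ra_\cH\ge 0$ for all $u\in\C$, i.e.\ $\nabla d\in\C^*$. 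Since $d$ is only Lipschitz (and concave), I would replace it by a smooth approximation $d_\delta$ obtained by mollifying $d$ on $\mathring\C$; convolution with a symmetric kernel preserves concavity and the $\C$-monotonicity, so $\nabla d_\delta\in\C^*$, and $d_\delta\to d$ uniformly on compact subsets of $\mathring\C$ as $\delta\to 0$.

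For $\eps>0$ and $\delta=\delta(\eps)$ chosen small enough, set $\phi_\eps(s,y):=\phi(s,y)+\eps/d_\delta(y)$ on $(0,\infty)\times\mathring\C$. Since $\phi_\eps\to+\infty$ near $\partial\C$, $f-\phi_\eps$ tends to $-\infty$ there, so its local maximum on a small neighborhood of $(t,x)$ intersected with $\R_+\times\C$ is attained at some interior point $(t_\eps,x_\eps)\in(0,\infty)\times\mathring\C$. Strict maximality of $f-\phi$ at $(t,x)$ combined with $\eps/d_\delta\to 0$ on compact subsets of $\mathring\C$ gives $(t_\eps,x_\eps)\to(t,x)$ and $\eps/d_\delta(x_\eps)\to 0$ as $\eps\to 0$. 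Because $\phi_\eps$ is smooth at $(t_\eps,x_\eps)$ and $f$ is a viscosity subsolution of $\HJ(\cH,\mathring\C,\sF)$,
\begin{align*}
\partial_t\phi_\eps(t_\eps,x_\eps)-\sF\bigl(\nabla\phi_\eps(t_\eps,x_\eps)\bigr)\le 0.
\end{align*}
But $\partial_t\phi_\eps=\partial_t\phi$ and $\nabla\phi-\nabla\phi_\eps=(\eps/d_\delta^2)\nabla d_\delta\in\C^*$; the $\C^*$-monotonicity of $\sF$ yields $\sF(\nabla\phi(t_\eps,x_\eps))\ge\sF(\nabla\phi_\eps(t_\eps,x_\eps))$, hence $\partial_t\phi(t_\eps,x_\eps)-\sF(\nabla\phi(t_\eps,x_\eps))\le 0$. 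Sending $\eps\to 0$ and using continuity of $\partial_t\phi$, $\nabla\phi$, and $\sF$ gives the required subsolution inequality at $(t,x)$.

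For the supersolution property, the argument is identical after replacing $+\eps/d_\delta$ with $-\eps/d_\delta$: the perturbation $\phi-\eps/d_\delta$ makes $f-(\phi-\eps/d_\delta)\to+\infty$ near the boundary, so a strict local minimum of $f-\phi$ at $(t,x)$ shifts to an interior minimum with $x_\eps\in\mathring\C$; the sign of the perturbation reverses so that $\nabla(\phi-\eps/d_\delta)-\nabla\phi\in\C^*$, $\C^*$-monotonicity of $\sF$ gives the reverse inequality, and the limit $\eps\to 0$ concludes. The main technical obstacle is the non-smoothness of $d$, which rules out a naive use of $1/d$ as a test-function perturbation; I expect mollification to handle this cleanly, because both concavity and $\C$-monotonicity are stable under convolution with a symmetric kernel, but an alternative is to bypass smoothing altogether by working with the superdifferential of the concave function $d$ (everywhere nonempty and contained in $\C^*$) in a semiconcave-test-function formulation of viscosity solutions.
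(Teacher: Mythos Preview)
Your overall strategy---penalizing by $\pm\eps/d$ to push the local extremum into $\mathring\C$ and then invoking the $\C^*$-monotonicity of $\sF$ to discard the extra gradient term---matches the paper's heuristic exactly (the paper writes the penalization on $f$ rather than on $\phi$, which is the same thing since $f-\phi_\eps=f_\eps-\phi$) and is the content of the cited result in \cite{chen2022hamilton} to which the paper defers for a rigorous proof.

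The gap is in the mollification step. For any fixed kernel width $\delta>0$, the mollified $d_\delta$ does \emph{not} vanish on $\partial\C$: near a boundary point the averaging ball still meets $\mathring\C$, so $d_\delta$ stays bounded below by a positive constant depending on $\delta$, and your claim ``$\phi_\eps\to+\infty$ near $\partial\C$'' fails---nothing then forces $x_\eps\in\mathring\C$. Mollifying the signed distance instead preserves $\nabla d_\delta\in\C^*$ globally but allows $d_\delta$ to vanish or change sign slightly \emph{inside} $\C$, so $\eps/d_\delta$ becomes ill-defined exactly where you need it. Your own alternative at the end is the correct and standard fix: keep the non-smooth $\eps/d$, use that $d$ is concave on $\C$ (if $B(y_i,r_i)\subset\C$ then $B(\lambda y_1+(1-\lambda)y_2,\lambda r_1+(1-\lambda)r_2)\subset\C$ by convexity) so that $\eps/d$ is convex on $\mathring\C$, and at the interior maximizer $(t_\eps,x_\eps)$ of $f-\phi-\eps/d$ replace $\eps/d$ by a supporting affine function $\ell$ with slope $p\in\partial(\eps/d)(x_\eps)$. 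Then $\phi+\ell$ is a genuine smooth test function touching $f$ from above at $(t_\eps,x_\eps)\in(0,\infty)\times\mathring\C$; your $\C$-monotonicity argument for $d$ gives $-p\in\C^*$, and the remainder of your proof goes through unchanged.
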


\subsection{Lipschitzness in \texorpdfstring{$\ell^p$}{lp}-norms}

We consider the setting where $\cH$ is a product space with $\ell^p$ norms:
\begin{enumerate}[start=1,label={\rm{(P)}}]
    \item \label{i.H_prod} Let $\cH = \times_{i=1}^k\cH_i$ where each $\cH_i$ is a Hilbert space with inner product $\la\cdot,\cdot\ra_{\cH_i}$ and the induced norm $|\cdot|_{\cH_i}$. 
    Let $a_1,a_2,\ldots, a_k>0$ satisfy $\sum_{i=1}^ka_i=1$. We set $\la x,x'\ra_\cH= \sum_{i=1}^k a_i \la x_i,x'_i\ra_{\cH_i}$ for $x,x'\in\cH$.
    For every $x\in \cH$, we define
    \begin{align*}
        \|x\|_p = \left(\sum_{i=1}^ka_i|x_i|^p_{\cH_i}\right)^\frac{1}{p},
    \end{align*}
    for $p\in[1,\infty)$
    and $\|x\|_\infty = \sup_{i=1,2,\ldots,k}|x_i|_{\cH_i}$. As usual, we set $p^* = \frac{p}{p-1}$.
\end{enumerate}
Note that in this setting, the induced norm $|\cdot|_\cH$ by the inner product on $\cH$ is equal to $\|\cdot\|_2$.
For any $g:\C\to\R$, define
\begin{align*}
    \|g\|_{\mathrm{Lip}\|\cdot\|_p} = \sup_{\substack{y,y'\in\C\\ y\neq y'}}\frac{|g(y)-g(y')|}{\|y-y'\|_p}.
\end{align*}

The following is \cite[Corollary~5.4]{chen2022hamilton}, which is used later in Section~\ref{s.spin-glass}.
\begin{proposition}[\cite{chen2022hamilton}]\label{p.lip_l^p}
Under~\ref{i.H_prod}, if $f$ is a viscosity solution of $\HJ(\cH,\C,\sF;\psi)$ given by Theorem~\ref{t.hj_cone}~\eqref{i.main_equiv}, then, for all $p\in[1,\infty]$,
\begin{align*}
    \sup_{t\in\R_+}\|f(t,\cdot)\|_{\mathrm{Lip}\|\cdot\|_p} = \|\psi \|_{\mathrm{Lip}\|\cdot\|_p}\quad\text{and} \quad \sup_{x\in\C}\|f(\cdot,x)\|_\mathrm{Lip}\leq\sup_{v\in\C,\,\|v\|_{p^*}\leq \|\psi\|_{\mathrm{Lip}\|\cdot\|_p}}|F(v)|.
\end{align*}
\end{proposition}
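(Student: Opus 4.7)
The plan is to propagate the Lipschitz constant of $\psi$ in $\|\cdot\|_p$ forward in time via the comparison principle, and then extract the time-Lipschitz bound directly from the PDE.

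For the spatial bound, the inequality $\sup_{t\in\R_+}\|f(t,\cdot)\|_{\mathrm{Lip}\|\cdot\|_p}\geq \|\psi\|_{\mathrm{Lip}\|\cdot\|_p}$ is immediate from $f(0,\cdot)=\psi$. For the reverse, set $L:=\|\psi\|_{\mathrm{Lip}\|\cdot\|_p}$. There is a clean subcase: when $h\in\C$, the cone property $\C+\C\subset\C$ ensures $u(t,x):=f(t,x+h)$ is a viscosity solution of the same equation on $\R_+\times\C$, and at $t=0$ one has $\psi(x+h)\leq\psi(x)+L\|h\|_p$, so Theorem~\ref{t.hj_cone}(1) yields $f(t,x+h)\leq f(t,x)+L\|h\|_p$. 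This handles differences $y-y'\in\pm\C$ at once.

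For general $y,y'\in\C$, I would run a doubling-of-variables argument: analyze the maximum of
\begin{align*}
\Phi(t,x,x')=f(t,x)-f(t,x')-L\Psi_\eps(x-x')-\tfrac{\delta}{2}\bigl(|x|_\cH^2+|x'|_\cH^2\bigr)-\beta t
\end{align*}
on $\R_+\times\C\times\C$, where $\Psi_\eps$ is a smooth approximation of $\|\cdot\|_p$ built componentwise from $(|x_i|_{\cH_i}^2+\eps^2)^{p/2}$ and satisfying $\|\nabla\Psi_\eps\|_{p^*}\leq 1+o(1)$. Finite-dimensional compactness gives a maximizer $(t_*,x_*,x_*')$, and the sub/supersolution inequalities for $f$ at $(t_*,x_*)$ and $(t_*,x_*')$ combine via the symmetry $\nabla_x\Psi_\eps(x-x')=-\nabla_{x'}\Psi_\eps(x-x')$ to cancel the $\sF$-contributions; sending $\beta,\delta,\eps\to 0$ yields $f(t,y)-f(t,y')\leq L\|y-y'\|_p$. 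Boundary maximizers are absorbed by the $\C^*$-monotonicity of $\sF$, in the spirit of Proposition~\ref{p.drop_bdy}.

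For the time-Lipschitz bound, I would use Proposition~\ref{p.drop_bdy} to view $f$ as a viscosity solution of $\HJ(\cH,\C,\sF)$ on the whole cone. Under~\ref{i.H_prod}, $\|\cdot\|_p$ and $\|\cdot\|_{p^*}$ are dual with respect to $\la\cdot,\cdot\ra_\cH$, so the spatial bound constrains any sub- or supergradient $v$ of $f(t,\cdot)$ at a point to satisfy $\|v\|_{p^*}\leq L$; the $\C^*$-monotonicity of $f(t,\cdot)$ built into the solution class~\eqref{e.class_sol} further forces $v\in\C$. Testing with $\phi(s,x)=\la v,x\ra_\cH+cs$ at any $(t_0,x_0)\in(0,\infty)\times\C$ and combining the sub- and supersolution inequalities pins down the admissible values of $c$ to lie between $\inf\sF$ and $\sup\sF$ over $\{v\in\C:\|v\|_{p^*}\leq L\}$, which yields the announced estimate.

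The main obstacle is the doubling-of-variables step: $\|\cdot\|_p$ is not smooth nor induced by an inner product when $p\neq 2$, so the smoothing $\Psi_\eps$ must be handled with uniform $\|\cdot\|_{p^*}$-control on $\nabla\Psi_\eps$, and the two sub/supersolution inequalities must be coupled carefully to survive boundary maximizers. These technicalities closely mirror the proof of the basic comparison principle Theorem~\ref{t.hj_cone}(1), which already contains the corresponding $\|\cdot\|_2$ argument.
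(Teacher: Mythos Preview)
The paper does not prove this proposition; it is quoted from \cite{chen2022hamilton} (Corollary~5.4 there), so there is no in-paper argument to compare against. Your strategy---propagate the $\|\cdot\|_p$-Lipschitz constant via doubling of variables, then extract the time bound from the equation---is the standard one and presumably matches the cited proof.

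There is, however, a concrete gap in your doubling step. With a single time variable in $\Phi(t,x,x')$, you cannot apply the sub- and supersolution inequalities at $(t_*,x_*)$ and $(t_*,x_*')$ as claimed: the test function for the subsolution side at $(t_*,x_*)$ would have to be $\phi(t,x)=f(t,x_*')+L\Psi_\eps(x-x_*')+\tfrac{\delta}{2}|x|_\cH^2+\beta t$, which is not smooth in $t$ because of the term $f(t,x_*')$. You must also double the time variable (add $\tfrac{1}{2\alpha}|t-s|^2$ and send $\alpha\to0$ first), exactly as in the proof of Proposition~\ref{p.comp_fin_d_strong}. Relatedly, the two $\sF$-terms do not literally cancel after subtraction; they differ by $\sF(L\nabla\Psi_\eps+\delta x_*)-\sF(L\nabla\Psi_\eps-\delta x_*')=O(\delta)$ via local Lipschitzness of $\sF$, and it is this smallness (not cancellation) that forces $\beta\leq O(\delta)$. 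Since $f$ already solves $\HJ(\cH,\C,\sF)$ on the closed cone by Theorem~\ref{t.hj_cone}\eqref{i.main_equiv}, boundary maximizers need no separate treatment. For the time bound, ``testing with $\phi(s,x)=\la v,x\ra_\cH+cs$ at any $(t_0,x_0)$'' is imprecise since you cannot freely prescribe a touching test function; a clean fix is Rademacher's theorem: $f$ is Lipschitz by Theorem~\ref{t.hj_cone}\eqref{i.main_lip}, hence differentiable a.e.\ on $(0,\infty)\times\mathring\C$, where the equation holds classically and your constraints $\nabla f\in\C$ (from $\C^*$-monotonicity in the class~\eqref{e.class_sol}) and $\|\nabla f\|_{p^*}\leq L$ give $|\partial_t f|=|\sF(\nabla f)|\leq\sup_{v\in\C,\,\|v\|_{p^*}\leq L}|\sF(v)|$.
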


\subsection{Quantified comparison principle}

Later, to show the convergence of solutions of finite dimensional equations to the solution in infinite dimensions, we need a more quantified version of the comparison principle than that in Theorem~\ref{t.hj_cone}.

\begin{proposition}[Quantified comparison principle]\label{p.comp_fin_d_strong}
Suppose that $\H:\cH\to\R$ is locally Lipschitz.
Let $u$ be a viscosity subsolution of $\HJ(\cH,\C,\H)$ and $v$ be a viscosity supersolution of $\HJ(\cH,\C',\H)$, with either $\C\subset\C'$ or $\C'\subset\C$. Suppose that
\begin{align*}
    L=\sup_{t\in\R_+}\|u(t,\cdot)\|_\mathrm{Lip}\vee\|v(t,\cdot)\|_\mathrm{Lip}
\end{align*}
is finite. Then, for every $R>0$ and every $M>2L$, the function
\begin{align}\label{e.u-v-M}
    \R_+\times (\C\cap\C')\ni(t,x)\longmapsto u(t,x) -v(t,x)-M(|x|_\cH+Vt-R)_+
\end{align}
achieves its global supremum on $\{0\}\times(\C\cap\C')$, where 
\begin{align*}
    V=\sup\left\{\frac{|\H(y)-\H(y')|}{|y-y'|_\cH}:|y|_\cH,\,|y'|_\cH\leq 2L+3M\right\}.
\end{align*}
\end{proposition}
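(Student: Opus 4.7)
The plan is a doubling-of-variables contradiction with a linear time perturbation, tuned so that the Lipschitz modulus $V$ of $\H$ on $B(2L+3M)$ exactly absorbs the gradient of the penalty $g(t,x) := (|x|_\cH + Vt - R)_+$. Assume without loss of generality that $\C \subset \C'$, hence $\C \cap \C' = \C$ (the other case is symmetric). Using $M > 2L$ to force decay of $w := u - v - Mg$ as $|x|_\cH \to \infty$, and the standard time-Lipschitz bound $\sup_{B(L)}|\H|$ for $u, v$ (which follows from the sub/super-solution property with $|\nabla u|_\cH, |\nabla v|_\cH \leq L$) together with the $-MVt$ drift in $g$, one checks that $\sup_{\{0\}\times\C} w$ is finite and attained. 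Suppose for contradiction that $\sup_{\R_+ \times \C} w > \sup_{\{0\} \times \C} w + \gamma$ for some $\gamma > 0$.

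For small $\eps, \beta, \delta > 0$, consider on $\R_+^2 \times \C \times \C'$
\begin{align*}
\Psi(t, s, x, y) := u(t, x) - v(s, y) - \frac{|x - y|_\cH^2}{2\eps} - \frac{(t - s)^2}{2\eps} - M g_\delta(t, x) - \beta(t + s),
\end{align*}
where $g_\delta(t, x) := \rho_\delta\bigl(\sqrt{|x|_\cH^2 + \delta^2} + Vt - R\bigr)$ smoothes $g$ via a nondecreasing smooth approximation $\rho_\delta$ of $(\cdot)_+$. Standard penalty bookkeeping shows $\Psi$ attains its maximum at some $(t_\eps, s_\eps, x_\eps, y_\eps)$ with $|x_\eps - y_\eps|_\cH, |t_\eps - s_\eps| \to 0$ as $\eps \to 0$, and for $\beta, \delta$ small the excess $\gamma$ forces the limit point to have strictly positive time, hence $t_\eps, s_\eps > 0$. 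Because $x_\eps \in \C \subset \C'$ and $\C'$ is convex, the direction $(x_\eps - y_\eps)/|x_\eps - y_\eps|_\cH$ lies in the tangent cone of $\C'$ at $y_\eps$; testing the maximality of $\Psi$ along small perturbations of $y_\eps$ in this direction against the $L$-Lipschitzness of $v$ yields $|(x_\eps - y_\eps)/\eps|_\cH \leq L$.

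Writing the viscosity subsolution property of $u$ at $(t_\eps, x_\eps)$ against the smooth test function $\phi^1(t,x) := v(s_\eps,y_\eps) + \frac{|x-y_\eps|_\cH^2}{2\eps} + \frac{(t-s_\eps)^2}{2\eps} + Mg_\delta(t,x) + \beta(t+s_\eps)$, and the supersolution property of $v$ at $(s_\eps, y_\eps)$ against its counterpart, then subtracting, gives
\begin{align*}
MV\rho'_\delta + 2\beta \leq \H\!\left(\tfrac{x_\eps - y_\eps}{\eps} + M\rho'_\delta \tfrac{x_\eps}{\sqrt{|x_\eps|_\cH^2 + \delta^2}}\right) - \H\!\left(\tfrac{x_\eps - y_\eps}{\eps}\right),
\end{align*}
with $\rho'_\delta$ evaluated at $\sqrt{|x_\eps|_\cH^2 + \delta^2} + Vt_\eps - R$. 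By the bound from the previous paragraph, both arguments of $\H$ lie in $B(L + M) \subset B(2L + 3M)$, so the Lipschitz constant $V$ applies and the right-hand side is at most $VM\rho'_\delta \cdot |x_\eps|_\cH/\sqrt{|x_\eps|_\cH^2 + \delta^2} \leq MV\rho'_\delta$. The inequality thus collapses to $2\beta \leq 0$, the desired contradiction.

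The main delicacies are the attainment of the maximum of $\Psi$ (secured by $M > 2L$ together with the extra perturbations $-\beta(t+s)$ and $-Mg_\delta$) and the calibration of $V$ with the ball radius $2L + 3M$ so that the $MV\rho'_\delta$ contribution coming from $\partial_t g_\delta = V\rho'_\delta$ cancels exactly against the $VM\rho'_\delta$ supplied by the Lipschitz continuity of $\H$ at the extremal spatial direction; any slack in these identities would destroy the contradiction, and it is precisely the extra room created by the time perturbation $-\beta(t+s)$ that produces the $2\beta$ needed to close the argument.
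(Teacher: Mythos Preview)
Your approach is essentially the same as the paper's: a contradiction via doubling of variables with a smoothed penalty $g_\delta$, followed by the key cancellation $\partial_t g_\delta = V|\nabla g_\delta|$ so that the Lipschitz bound $V$ on $\H$ over $B(2L+3M)$ absorbs the spatial gradient of the penalty. The inequality manipulation in your third paragraph and the perturbation bound $|(x_\eps-y_\eps)/\eps|\le L$ (which is in fact slightly sharper than the paper's $2L$, obtained by letting the perturbation parameter go to $0$ rather than setting it equal to $1$) are correct.

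There is, however, a genuine gap. You assert that ``the standard time-Lipschitz bound $\sup_{B(L)}|\H|$ for $u,v$ follows from the sub/super-solution property,'' and you rely on this (together with the linear penalty $-\beta(t+s)$) to conclude that $\Psi$ attains its maximum on $\R_+^2\times\C\times\C'$. But the proposition only assumes spatial Lipschitzness of $u,v$; the time-Lipschitz bound is \emph{not} an immediate consequence of the viscosity inequalities, and in fact proving it already requires a comparison-type argument of the same flavor as the one you are carrying out. Without it, $t\mapsto u(t,0)$ is merely continuous and could grow faster than any linear function, so $-\beta t - MVt$ need not force coercivity in $t$, and $\Psi$ may fail to attain its supremum.

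The paper avoids this issue cleanly: it fixes a large $T$, restricts the domain to $[0,T)\times\C\times[0,T]\times\C'$, and adds the barrier $\frac{\eps}{T-t}$ to the penalty $\chi$. Continuity of $u,v$ on the compact time interval (combined with the spatial decay from $M>2L$) then guarantees that $\Psi_\alpha$ attains its maximum, the barrier keeps $t_\alpha$ bounded away from $T$, and the standard limiting argument forces $t'_\alpha$ into $(0,T)$ as well. No time-Lipschitz bound is ever invoked. To repair your argument you should either replace $-\beta(t+s)$ by a penalty of this type, or supply a separate proof of the one-sided time bounds $u(t,x)\le u(0,x)+t\sup_{B(L)}\H$ and $v(t,x)\ge v(0,x)+t\inf_{B(L)}\H$ (each of which requires its own small variable-doubling argument).
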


The proof below is a modification of the proof of \cite[Proposition~3.2]{mourrat2020nonconvex}.

\begin{proof}[Proof of Proposition~\ref{p.comp_fin_d_strong}]
For $\delta\in(0,1)$ to be chosen, let $\theta:\R\to\R_+$ be a increasing smooth function satisfying
\begin{align*}
    |\theta'|\leq 1, \qquad\text{and}\qquad(r-\delta)_+ \leq \theta(r)\leq r_+,\quad\forall r\in\R,
\end{align*}
where $\theta'$ is the derivative of $\theta$.
We define
\begin{align*}
    \Phi(t,x) = M\theta\left(\left(\delta+|x|^2_\cH\right)^\frac{1}{2}+Vt-R\right),\quad\forall (t,x)\in \R_+\times \C.
\end{align*}
It is immediate that
\begin{gather}
    \sup_{(t,x)\in\R_+\times\C}|\nabla\Phi(t,x)|_\cH\leq M,\label{e.|nabla_Phi|}
    \\
    \partial_t\Phi\geq V|\nabla \Phi|_\cH,\label{e.d_tPhi>}
    \\
    \Phi(t,x)\geq M(|x|_\cH+Vt-R-1)_+, \quad\forall (t,x)\in\R_+\times\C. \label{e.Phi(t,x)>M|x|}
\end{gather}
We argue by contradiction and assume that the function in \eqref{e.u-v-M} does not achieve its supremum on $\{0\}\times(\C\cap\C')$. Then, we can fix $\delta\in(0,1)$ sufficiently small and $T>0$ sufficiently large so that
\begin{align*}
    \sup_{[0,T)\times(\C\cap\C')}(u-v-\Phi)>\sup_{\{0\}\times(\C\cap\C')}(u-v-\Phi).
\end{align*}
For $\eps>0$ to be determined, we define
\begin{align*}
    \chi(t,x)=\Phi(t,x) + \eps t+\frac{\eps}{T-t},\quad \forall (t,x) \in [0,T)\times\C.
\end{align*}
In view of the previous display, we can choose $\eps>0$ small and further enlarge $T$ so that
\begin{align}\label{e.u-v-chi}
    \sup_{[0,T)\times(\C\cap\C')}(u-v-\chi)>\sup_{\{0\}\times(\C\cap\C')}(u-v-\chi).
\end{align}
For each $\alpha>1$, we introduce
\begin{align*}
    \Psi_\alpha(t,x,t',x')= u(t,x)-v(t',x')-\frac{\alpha}{2}(|t-t'|^2+|x-x'|^2_\cH)-\chi(t,x),
    \\
    \quad\forall (t,x,t',x')\in[0,T)\times\C\times[0,T]\times\C'. 
\end{align*}
By the definition of $L$ and \eqref{e.Phi(t,x)>M|x|}, setting $C_1=\sup_{t\in[0,T]}(|u(t,0)|\vee|v(t,0)|)$, we can see that
\begin{align*}
    \Psi_\alpha(t,x,t',x')\leq C_1 +L(2|x|_\cH+|x-x'|_\cH)-\frac{1}{2}|x-x'|^2-M(|x|_\cH-R-1)_+.
\end{align*}
Hence, due to the requirement $M>2L$, $\Psi_\alpha$ is bounded from above uniformly in $\alpha>1$ and decays as $|x|_\cH,|x'|_\cH\to\infty$. Since $\cH$ is finite-dimensional, we can see that $\Psi_\alpha$ achieves its supremum at some $(t_\alpha,x_\alpha,t'_\alpha,x'_\alpha)$. The above display also implies that there is $C$ such that
\begin{align*}
    |x_\alpha|_\cH,\ |x'_\alpha|_\cH\leq C,\quad\forall \alpha>1.
\end{align*}
Setting $C_0=\Psi_\alpha(0,0,0,0)$ which is independent of $\alpha$, we have
\begin{align*}
    C_0\leq \Psi_\alpha(t_\alpha,x_\alpha,t'_\alpha,x_\alpha)\leq C_1+2LC-\frac{\alpha}{2}(|t_\alpha-t'_\alpha|^2+|x_\alpha-x'_\alpha|^2_\cH).
\end{align*}
From this, we can see that $\alpha(|t_\alpha-t'_\alpha|^2+|x_\alpha-x'_\alpha|^2_\cH)$ is bounded as $\alpha\to\infty$.
Hence, passing to a subsequence if necessary, we may assume $t_\alpha,t'_\alpha\to t_0$ and $x_\alpha,x'_\alpha\to x_0$ for some $(t_0,x_0)\in [0,T]\times( \C\cap\C')$.

Then, we show $t_0\in(0,T)$.
Since
\begin{align*}
    C_0\leq \Psi_\alpha(t_\alpha,x_\alpha,t'_\alpha,x_\alpha)\leq C_1+2LC-\frac{\eps}{T-t_\alpha},
\end{align*}
we must have that $t_\alpha$ is bounded away from $T$ uniformly in $\alpha$, which implies $t_0<T$. Since
\begin{align*}
    u(t_\alpha,x_\alpha) - v(t'_\alpha,x'_\alpha)-\chi(t_\alpha,x_\alpha)\geq \Psi_\alpha(t_\alpha,x_\alpha,t'_\alpha,x'_\alpha) \\
    \geq \sup_{[0,T)\times(\C\cap\C')}(u-v-\chi)\geq (u-v-\chi)(t_0,x_0),
\end{align*}
sending $\alpha\to\infty$, we deduce that
\begin{align*}
    (u-v-\chi)(t_0,x_0) = \sup_{[0,T)\times(\C\cap\C')}(u-v-\chi).
\end{align*}
This along with \eqref{e.u-v-chi} implies that $t_0>0$. In conclusion, we have $t_0\in(0,T)$, and thus $t_\alpha,t'_\alpha\in(0,T)$ for sufficiently large $\alpha$. Henceforth, we fix any such $\alpha$.

Before proceeding, we want to bound $|x_\alpha-x'_\alpha|_\cH$. First, we consider the case $\C\subset\C'$. Using $\Psi_\alpha(t_\alpha,x_\alpha,t'_\alpha,x_\alpha)-\Psi_\alpha(t_\alpha,x_\alpha,t'_\alpha,x'_\alpha)\leq 0$, the computation that
\begin{align*}
    \Psi_\alpha(t_\alpha,x_\alpha,t'_\alpha,x_\alpha)-\Psi_\alpha(t_\alpha,x_\alpha,t'_\alpha,x'_\alpha) = v(t'_\alpha,x'_\alpha)-v(t'_\alpha,x_\alpha)+\frac{\alpha}{2}|x_\alpha-x'_\alpha|^2_\cH,
\end{align*}
and the definition of $L$, we can get $\alpha|x_\alpha-x'_\alpha|_\cH \leq 2L$.
If $\C'\subset \C$, we use $\Psi_\alpha(t_\alpha,x'_\alpha,t'_\alpha,x'_\alpha)-\Psi_\alpha(t_\alpha,x_\alpha,t'_\alpha,x'_\alpha)\leq 0$, and
\begin{align*}
    \Psi_\alpha(t_\alpha,x'_\alpha,t'_\alpha,x'_\alpha)-\Psi_\alpha(t_\alpha,x_\alpha,t'_\alpha,x'_\alpha) = u(t_\alpha,x'_\alpha)-u(t_\alpha,x_\alpha)+\frac{\alpha}{2}|x_\alpha-x'_\alpha|^2_\cH
    \\
    -\Phi(t_\alpha,x'_\alpha) +\Phi(t_\alpha,x_\alpha).
\end{align*}
By the definition of $L$ and \eqref{e.|nabla_Phi|}, we can conclude that, in both cases,
\begin{align}\label{e.|x_alpha-x'_alpha|}
    \alpha|x_\alpha-x'_\alpha|_\cH \leq 2(L+M).
\end{align}

With this, we return to the proof. Since the function
\begin{align*}
    (t,x)\mapsto \Psi_\alpha(t,x,t'_\alpha,x'_\alpha)
\end{align*}
achieves its maximum at $(t_\alpha,x_\alpha)\in (0,T)\times \C$, by the assumption that $u$ is subsolution, we have
\begin{align}\label{e.u_sub_Phi_alpha}
    \alpha(t_\alpha-t'_\alpha) + \eps +\eps(T-t_\alpha)^{-2}+\partial_t\Phi(t_\alpha,x_\alpha)-\H\left(\alpha(x_\alpha - x'_\alpha)+\nabla\Phi(t_\alpha,x_\alpha)\right)\leq 0
\end{align}
On the other hand, since the function
\begin{align*}
    (t',x')\mapsto \Psi_\alpha(t_\alpha,x_\alpha,t',x')
\end{align*}
achieves its minimum at $(t'_\alpha,x'_\alpha)\in (0,\infty)\times \C'$, by the assumption that $v$ is subsolution, we have
\begin{align}\label{e.v_super_Phi_alpha}
    \alpha(t_\alpha-t'_\alpha) -\H\left(\alpha(x_\alpha - x'_\alpha)\right)\geq 0.
\end{align}
By \eqref{e.|nabla_Phi|} and \eqref{e.|x_alpha-x'_alpha|}, the arguments inside $\H$ in both \eqref{e.u_sub_Phi_alpha} and \eqref{e.v_super_Phi_alpha} have norms bounded by $2L+3M$. Taking the difference of \eqref{e.u_sub_Phi_alpha} and \eqref{e.v_super_Phi_alpha}, and using the definition of $V$ and \eqref{e.d_tPhi>}, we obtain that
\begin{align*}
    \eps\leq V|\nabla \Phi(t_\alpha,x_\alpha)|-\partial_t\Phi(t_\alpha,x_\alpha)\leq 0,
\end{align*}
contradicting the fact that $\eps>0$. Therefore, the desired result must hold.
\end{proof}

\section{Equations on an infinite-dimensional cone}\label{s.HJ_inft_d}
For a fixed positive integer $D$, let $\S^D$ be the space of $D\times D$-symmetric matrices, and $\S^D_+$ be the cone of $D\times D$-symmetric positive semidefinite matrices. We equip $\S^D$ with the inner product $a\cdot b = \tr(ab)$, for all $a,b\in\S^D$. We can view $\S^D_+$ as a closed convex cone in the Hilbert space $\S^D$. Naturally, $\S^D$ is endowed with the Borel sigma-algebra generated by the norm topology. For $a,b\in\S^D$, we write
\begin{align}\label{e.a>b_SD_+}
    a\geq b,\quad\text{if }a-b\in \S^D_+,
\end{align}
which defines a partial order on $\S^D$.

We work with the infinite-dimensional Hilbert space
\begin{align}\label{e.H_inft_d}
    \cH= L^2([0,1);\S^D)
\end{align}
namely, $\S^D$-valued squared integrable functions on $[0,1)$ endowed with the Borel sigma-algebra $\mathcal{B}_{[0,1)}$ and the Lebesgue measure. In addition to the Hilbert space $\cH$, we also need
\begin{align*}L^p = L^p([0,1);\S^D_+)
\end{align*}
for $p\in[1,\infty]$, whose norm is denoted by $|\cdot|_{L^p}$ with expression
\begin{align*}
    |\kappa|_{L^p} = \left(\int_0^1|\kappa(s)|^p\d s\right)^\frac{1}{p} \ \text{for}\ p\in[1,\infty), \quad\text{and}\quad |\kappa|_{L^\infty} =\esssup_{s\in[0,1]}|\kappa(s)|.
\end{align*}

We consider the following cone
\begin{align}\label{e.cone_M}
    \cM = \left\{\mu:[0,1)\to \S^D_+ \ \big|\  \text{$\mu$ is square-integrable, right-continuous with left limits, and increasing}  \right\}.
\end{align}
Here, $\mu$ is said to be increasing if $\mu(t)-\mu(s)\in\S^D_+$ whenever $t\geq s$. We view $\C\subset \cH$ by identifying every element in $\C$ with its equivalence class in $\cH$. Since $\{\mathds{1}_{[t,1)}\}_{t\in[0,1)}\subset\C$, it is immediate that $\C$ spans $\cH$. More precisely, \eqref{e.cl(C-C)=H} holds.

In this section, we study $\HJ(\cH,\C,\H)$ for $\cH$ and $\C$ given above. We start by introducing more notations and basic results in Section~\ref{s.HJ_inft_d_prelim}. The main results of this section are scattered in subsections afterwards. The comparison principle is given in Proposition~\ref{p.comp}. In Section~\ref{s.HJ_inft_d_cvg}, we show that any limit of finite-dimensional approximations is a viscosity solution (Proposition~\ref{p.lim_is_vis_sol}), and provide sufficient conditions for such a convergence (Proposition~\ref{p.exist_sol_approx}). In Section~\ref{s.HJ_inft_d_var_form}, we show that the Hopf--Lax formula and the Hopf formula are stable, when passed to the limit (Propositions~\ref{p.hopf_lax_cvg} and~\ref{p.hopf_cvg}). Lastly, in Section~\ref{s.HJ_inft_d_weak_boundary}, we briefly discuss a way to make sense of the boundary of $\C$ in a weaker notion.

Throughout, we denote elements in $\C$ by $\mu,\nu,\rho$; generic elements in $\cH$ by $\iota,\kappa$; and elements in finite-dimensional spaces by $x,y,z$.

\subsection{Preliminaries}\label{s.HJ_inft_d_prelim}

We introduce definitions and notations related to partitions of $[0,1)$, by which the finite approximations of $\HJ(\cH,\C,\H)$ are indexed. Projection maps and lifting maps between finite-dimensional approximations and their infinite-dimensional counterparts are used extensively. We record their basic properties in Lemma~\ref{l.basics_(j)}. We also need the projections of $\C$ and their dual cones, the properties of which are collected in Lemmas~\ref{l.dual_cone} and~\ref{l.proj_cones}. Lastly, in Lemma~\ref{l.derivatives}, we clarify the relation between the differentiability in finite-dimensional approximations and the one in infinite dimensions.

\subsubsection{Partitions}\label{s.partition}

We denote the collection of ordered tuples as partitions of $[0,1)$ by
\begin{align*}
    \J = \cup_{n\in \N}\left\{( t_1,t_2,\dots,t_n)\in(0,1]^n: 0< t_1 < t_2 <\cdots<t_{n-1}<t_n=1 \right\}.
\end{align*}
For every such tuple $j\in\J$, we set $t_0=0$, and denote by $|j|$ the cardinality of $j$. 

A natural partial order on $\J$ is given by the set inclusion. Under this partial order, a subcollection $\tilde \J\subset\J$ is said to be \textit{directed} if for every pair $j,j'\in \tilde \J$, there is $j''\in\tilde\J$ such that $j,j'\subset j''$. 

For each $j\in\J$, we associate a sigma-algebra $\F_j$ on $[0,1)$ generated by $\{[t_k,t_{k+1})\}_{t_k\in j}$. A subcollection $\tilde \J\subset\J$ is said to be \textit{generating} if $\tilde\J$ is directed, and the collection of sigma-algebras $\{\F_j\}_{j\in \tilde \J}$ generates the Borel sigma-algebra on $[0,1)$.

Let $\Junif$ be the collection of uniform partitions. A subcollection $\tilde \J\subset\J$ is said to be \textit{good} if $\tilde \J\subset \Junif$ and $\tilde \J$ is generating. Examples of good collections of partitions include $\Junif$ itself, and the collection of dyadic partitions.

In the following, we denote by $\Jgen$ a generic generating collection of partitions, and by $\Jgood$ a generic good collection.

Then, we introduce the notions of nets and the convergence of a net.
For any directed subcollection $\widetilde\J\subset \J$, a collection of elements $(x_j)_{j\in \widetilde\J}$, indexed by $\widetilde\J$, from some set $\cX$ is called a \textit{net} in $\cX$. If $\cX$ is a topological space, a net $(x_j)_{j\in\widetilde\J}$ is said to converge in $\cX$ to $x$ if for every neighborhood $\mathcal N$ of $x$, there is $j_{\mathcal N}\in \J$ such that $x_j\in\mathcal N$ for every $j\in\widetilde\J$ satisfying $j\supset j_{\mathcal N}$. In this case, we write $\lim_{j\in\widetilde \J} x_j = x$ in $\mathcal X$.

For each $j\in\J$ and every $\iota\in L^1$, we define
\begin{align}\label{e.(j)}
    \iota^\j(t) =  \sum_{k=1}^{|j|}\mathds{1}_{[t_{k-1},t_{k})}(t)\frac{1}{t_{k}-t_{k-1}}\int_{t_{k-1}}^{t_{k}} \iota(s) \d s,\quad\forall t\in[0,1).
\end{align}
It is easy to see that $\iota^\j$ is
the conditional expectation of $\iota$ on $\F_j$, namely,
\begin{align}\label{e.iota^j_cond_exp}
    \iota^\j(U) = \E \left[\iota(U)|\F_j\right].
\end{align}
Here, and throughout, $U$ is uniform random variable on $[0,1)$ defined on the probability space $([0,1),\mathcal{B}_{[0,1)},\mathrm{Leb})$. 
By Jensen's inequality, we have $\iota^\j\in L^p$ if $\iota\in L^p$, for any $p\in[1,\infty)$, which also holds obviously for $p=\infty$. In particular, $\iota^\j\in \cH$ if $\iota\in \cH$. It is straightforward to see that $\iota^\j\in \cM$ if $\iota\in\cM$.

\subsubsection{Projections and lifts}
We introduce finite-dimensional Hilbert spaces indexed by $\J$. For each $j\in\J$, we define
\begin{align}\label{e.def_H^j}
    \cH^j = (\S^D)^{|j|}
\end{align}
 equipped with the inner product
\begin{align}\label{e.inner_product_H^j}
    \la x,y\ra_{\cH^j} = \sum_{k=1}^{|j|} (t_{k}-t_{k-1})x_k\cdot y_k,\quad\forall x,y\in \cH^j.
\end{align}

For each $j\in\J$, we define the projection map $\pj_j:\cH \to \cH^j$ by
\begin{align}\label{e.def_pj_iota}
    \pj_j \iota = \left(\frac{1}{t_k - t_{k-1}}\int_{t_{k-1}}^{t_k}\iota(s)\d s\right)_{k\in \{1,\dots, |j|\}},\quad\forall \iota\in\cH.
\end{align}
Correspondingly, we define the associated lift map $\lf_j :\cH^j \to \cH$:
\begin{align}\label{e.lf_jx}
    \lf_j x = \sum_{k=1}^{|j|}x_k\mathds{1}_{[t_{k-1},t_{k})},\quad\forall x\in\cH^j. 
\end{align}

We define projections and lifts acting on functions.
\begin{definition}[Lifts and projections of functions]\label{d.lift_proj}
Let $j\in\J$.
\begin{itemize}
    \item For any $\cE\subset \cH$ and any $g:\cE\to\R$, its $j$-projection $g^j:\pj_j\cE\to\R$ is given by $g^j = g\circ \lf_j$.
    
    \item For any $\mathcal T\times \cE\subset \R_+\times\cH$ and any $f:\mathcal T\times \cE\to\R$, its $j$-projection $f^j:\mathcal T\times\pj_j \cE\to\R$ is given by $f^j(t,\cdot) = f(t,\lf_j(\cdot))$ for each $t\in\mathcal T$.

    \item For any $\cE\subset \cH^j$ and any function $g:\cE\to\R$, its lift $g^\uparrow:\lf_j\cE\to\R$, is given by $g^\uparrow = g\circ \pj_j$.
    \item for any $\mathcal T\times \cE\subset\R_+\times \cH^j$ and any $f:\mathcal T\times \cE\to\R$, its lift $f^\uparrow:\mathcal T\times \lf_j\cE\to\R$, is defined by $f^\uparrow(t,\cdot) = f(t,\pj_j(\cdot))$ for each $t\in\mathcal T$.
\end{itemize}

\end{definition}

\begin{remark}\rm
Let us clarify our use of indices. Objects with superscript $j$, for instance, $\cH^j$, $\C^j$ (introduced later in \eqref{e.C^j}), $f^j$, are always projections of infinite-dimensional objects either mapped directly by $\pj_j$ or induced by $\pj_j$. Superscript $\j$ is reserved for~\eqref{e.(j)}. Other objects directly associated with $j$ or whose existence depends on $j$ are labeled with subscript $j$. \end{remark}

We record the basic properties of projections and lifts in the following lemma.

\begin{lemma}\label{l.basics_(j)}
For every $j\in\J$, the following hold:
\begin{enumerate}
\item \label{i.adjoint}  $\pj_j$ and $\lf_j$ are adjoint to each other: $\la \pj_j\iota, x\ra_{\cH^j}=\la \iota,\lf_j x\ra_{\cH}$ for every $\iota\in\cH$ and $x\in \cH^j$;

\item \label{i.isometric}  $\lf_j$ is isometric: $\la \lf_jx,\lf_jy\ra_{\cH}= \la x,y\ra_{\cH^j}$ for every $x,y\in\cH^j$;

\item \label{i.pj_lf=ID}
 $\pj_j\lf_j$ is the identity map on $\cH^j$:
$\pj_j\lf_j x=x$ for every $x\in\cH^j$;

\item \label{i.lf_pj_(j)} $\lf_j\pj_j \iota = \iota^\j$ and $\pj_j\iota = \pj_j\iota^\j$ for every $\iota\in\cH$;

\item \label{i.|iota^j|<|iota|}
$\pj_j$ is a contraction: $|\pj_j \iota|_{\cH^j}\leq |\iota|_\cH$, or equivalently $|\iota^\j|_\cH\leq |\iota|_\cH$, for every $\iota\in\cH$;

\item \label{i.projective} 
if $j'\in\J$ satisfies $j\subset j'$, then $\pj_j \lf_{j'}\pj_{j'}\iota =\pj_j\iota$ for every $\iota \in \cH$.

\end{enumerate}
In addition, the following results on convergence hold:
\begin{enumerate}
    \setcounter{enumi}{6}
    \item \label{i.cvg} for every $\iota\in\cH$, $\lim_{j\in \Jgen}\iota^\j =\iota$ in $\cH$;

\item \label{i.cvg_iota^(j)_j} for any net $(\iota_j)_{j\in\Jgen}$ in $\cH$, if $\lim_{j\in\Jgen} \iota_j =\iota$ in $\cH$,  then $\lim_{j\in \Jgen}\iota^\j_j=\iota$ in $\cH$.
\end{enumerate}

\end{lemma}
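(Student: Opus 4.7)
The plan is to dispatch items (1)--(6) by direct computation from the definitions \eqref{e.def_pj_iota}, \eqref{e.lf_jx}, and \eqref{e.(j)}, and to reserve the real work for the convergence statements (7) and (8), which will follow from the conditional-expectation interpretation \eqref{e.iota^j_cond_exp} together with standard martingale/density arguments. Throughout, the unifying observation is that $\lf_j x$ is a piecewise-constant $\cH$-element on the blocks $[t_{k-1},t_k)$, so all inner products involving it collapse the integral over $[0,1)$ into the weighted sum \eqref{e.inner_product_H^j}. This single identity drives (1), (2), and (3) in one line each: adjointness is verified by splitting $\int_0^1 \iota(s)\cdot \lf_j x(s)\,\d s$ across blocks and recognizing $\int_{t_{k-1}}^{t_k}\iota(s)\,\d s$ as $(t_k-t_{k-1})(\pj_j\iota)_k$; isometry is the same computation with $\lf_j y$ in place of $\iota$; and $\pj_j\lf_j=\mathrm{Id}$ is just averaging a constant over its own block.

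For (4), plugging \eqref{e.def_pj_iota} into \eqref{e.lf_jx} reproduces exactly the right-hand side of \eqref{e.(j)}, and the second identity $\pj_j\iota=\pj_j\iota^{\j}$ is then the tautology $\pj_j\lf_j\pj_j\iota=\pj_j\iota$ from (3). Item (5) admits two equivalent routes: either use Jensen's inequality on each block via \eqref{e.iota^j_cond_exp}, or deduce it from the fact that, by (1)--(3), $\lf_j\pj_j$ is an orthogonal projection on $\cH$, so $|\lf_j\pj_j\iota|_{\cH}\leq |\iota|_{\cH}$ and then $|\pj_j\iota|_{\cH^j}=|\lf_j\pj_j\iota|_{\cH}$ by (2). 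For (6), the inclusion $j\subset j'$ gives $\F_j\subset \F_{j'}$, and the formula $\lf_{j'}\pj_{j'}\iota=\iota^{(j')}$ from (4) combined with the tower property $\E[\E[\iota(U)\mid \F_{j'}]\mid \F_j]=\E[\iota(U)\mid \F_j]$ yields $\pj_j\lf_{j'}\pj_{j'}\iota=\pj_j\iota^{(j')}=\pj_j\iota$, using (4) once more in the last step. (A purely analytic substitute, bypassing conditional expectations, is to observe that each $[t_{k-1},t_k)$ with $t_k\in j$ is a disjoint union of blocks of $j'$ and to compute the nested average directly.)

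The main step, and the one that actually uses the hypothesis on $\Jgen$, is (7). Here the cleanest argument is to invoke \eqref{e.iota^j_cond_exp} and note that $(\iota^{\j})_{j\in\Jgen}$ is a $\cH$-valued martingale indexed by the directed set $\Jgen$ with respect to the filtration $(\F_j)_{j\in\Jgen}$ that generates $\mathcal{B}_{[0,1)}$. The scalar $L^2$ martingale convergence theorem applied coordinate by coordinate (for a fixed orthonormal basis of $\S^D$) gives $\iota^{\j}\to \E[\iota(U)\mid \mathcal{B}_{[0,1)}]=\iota$ in $\cH$. A self-contained alternative is to first check convergence on the dense subspace spanned by indicators $\mathds{1}_{[a,b)}v$ with $v\in\S^D$ (where for $j$ refining a partition containing $a$ and $b$ the projection becomes exact for $a,b\in \bigcup_{j\in\Jgen} j$, whose density in $[0,1)$ is precisely what ``generating'' encodes), and then to extend to all of $\cH$ by an $\eps/3$-argument using the contraction (5). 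Finally, (8) is a triangle inequality combined with (5) and (7): writing $\iota^{\j}_j-\iota=(\iota_j-\iota)^{\j}+(\iota^{\j}-\iota)$, we bound $|(\iota_j-\iota)^{\j}|_\cH\leq |\iota_j-\iota|_\cH\to 0$ by (5), while $|\iota^{\j}-\iota|_\cH\to 0$ by (7); the slight subtlety that the two convergences occur along the same net $j\in\Jgen$ is handled by taking, for each $\eps>0$, an index $j_\eps$ making the second term small, then enlarging it along $\Jgen$ if needed to control the first term uniformly.
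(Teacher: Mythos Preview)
Your argument is correct and follows essentially the same route as the paper for all parts. The one place worth flagging is part~(7): you invoke the $L^2$ martingale convergence theorem for a net indexed by the directed set $\Jgen$, which is technically a less standard result than the sequential version. The paper handles this by arguing by contradiction and extracting from $\Jgen$ an increasing \emph{sequence} $(j_n)_n$ of partitions (still generating) along which $|\iota^{(j_n)}-\iota|_\cH\ge \eps$, then applying the ordinary sequential martingale convergence theorem to reach a contradiction. Your alternative density argument---checking convergence on $\F_{j_0}$-measurable functions for some $j_0\in\Jgen$ (where the projection is eventually exact) and extending via the contraction~(5)---is a clean way to sidestep this issue entirely and is a valid substitute. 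For part~(6), your use of the tower property is slightly more conceptual than the paper's direct block-averaging computation, but both are immediate.
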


\begin{proof}

Part~\eqref{i.adjoint}. We can compute:
\begin{align*}
    \la \pj_j \iota, x\ra_{\cH^j} &= \sum_{k=1}^{|j|}(t_k-t_{k-1}) \left(\frac{1}{t_k-t_{k-1}}\int_{t_{k-1}}^{t_k} \iota(s)\d s\right) \cdot x_k\\
    &=\sum_{k=1}^{|j|}\int_{t_{k-1}}^{t_k} \iota(s)\cdot x_k\d s =\int_0^1\iota(s)\cdot\left(\sum_{k=1}^{|j|}\mathds{1}_{[t_{k-1},t_k)}(s)x_k\right)\d s = \la \iota,\lf_j x\ra_{\cH}.
\end{align*}

Part~\eqref{i.isometric}. We use~\eqref{e.lf_jx} to compute explicitly to get the desired result.

Part~\eqref{i.pj_lf=ID}. Definitions of $\pj_j$ in \eqref{e.def_pj_iota} and $\lf_j$ in \eqref{e.lf_jx} directly yield \eqref{i.pj_lf=ID}.

Part~\eqref{i.lf_pj_(j)}. Comparing the definitions of $\pj_j$, $\lf_j$ and $\iota^\j$ in \eqref{e.(j)}, we can easily deduce the first identity in~\eqref{i.lf_pj_(j)}. The second identity follows from the first identity and \eqref{i.pj_lf=ID}.

Part~\eqref{i.|iota^j|<|iota|}. We use~\eqref{e.iota^j_cond_exp} and Jensen's equality to see
\begin{align*}
    \left|\iota^\j\right|^2_\cH = \E \left|\iota^\j(U)\right|^2= \E|\E[\iota(U)|\F_j]|^2\leq \E|\iota(U)|^2=|\iota|_\cH^2.
\end{align*}
The equivalent formulation follows from \eqref{i.isometric} and \eqref{i.lf_pj_(j)}.

Part~\eqref{i.projective}. We can directly use the definitions of projections and lifts. Heuristically, $j$ is a coarser partition and $j'$ is a refinement of $j$. The map $\lf_{j'}\pj_{j'}$ has the effect of locally averaging $\iota$ with respect to the finer partition $j'$. On the other hand, $\pj_j$ is defined via local averaging with respect to the coarser $j$. The result follows from the fact that local averaging first with respect to a finer partition and then to a coarser partition is equivalent to local averaging directly with respect to the coarser one.

Part~\eqref{i.cvg}. We argue by contradiction. We assume that there exists $\eps>0$ such that for every $j\in\Jgen$, there is some $j'\supset j$ satisfying $|\iota^{(j')}-\iota|_\cH\geq\eps$. Let us construct a sequence recursively. We start by choosing $j_1\in\Jgen$ to satisfy $|\iota^{(j_1)}-\iota|_\cH\geq\eps$. For $m> 1$, we choose $j_{m+1}\supset(j_m\cup j_{m}')$ such that $|\iota^{(j_{m+1})}-\iota|_\cH\geq\eps$, where we let $j_{m}'\in\Jgen$ be any partition satisfying $\max_{1\leq i\leq |j_{m}'|}\{|t_i-t_{i-1}|\}<\frac{1}{m}$. Denote this sequence by $\Jgen'$, which is directed and generating. 

By~\eqref{e.iota^j_cond_exp}, for $j_m,j_n\in\Jgen'$ such that $n\geq m$, we have
\begin{equation*}
    \iota^{(j_m)}(U)=\E\left[\iota(U)|\F_{j_m}\right]=\E\left[\E\left[\iota(U)|\F_{j_n}\right]|\F_{j_m}\right]=\E\left[\iota^{(j_n)}(U)|\F_{j_m}\right],
\end{equation*}
which implies that $(\iota^{(j_n)})_{n\in\N}$ is a martingale with respect to $(\F_{j_n})_{n\in\N}$. By the martingale convergence theorem, this sequence converges to $\iota$ in $\cH$ as $n\to\infty$, which is a contradiction to our construction of the sequence.

Part~\eqref{i.cvg_iota^(j)_j}. By the triangle inequality and~\eqref{i.|iota^j|<|iota|}, we have
\begin{align*}
    \left|\iota^\j_j - \iota\right|_\cH\leq \left|\iota^\j_j - \iota^\j\right|_\cH+ \left|\iota^\j - \iota\right|_\cH\leq \left|\iota_j - \iota\right|_\cH+ \left|\iota^\j - \iota\right|_\cH.
\end{align*}
Then,~\eqref{i.cvg_iota^(j)_j} follows from~\eqref{i.cvg}.
\end{proof}

\subsubsection{Cones and dual cones}

For each $j\in \J$, we introduce
\begin{align}\label{e.C^j}
    \cM^j = \{x \in \cH^j: 0\leq x_1\leq x_2\leq \cdots \leq x_{|j|} \},
\end{align}
where we used the notation in \eqref{e.a>b_SD_+}. It is clear that $\C^j$ and $\cH^j$ satisfy \eqref{e.cl(C-C)=H}.

Recall the definition of dual cones in~\eqref{e.C^*}.

\begin{lemma}[Characterizations of dual cones]\label{l.dual_cone}

\leavevmode

\begin{enumerate}
    \item \label{i.dual-cone-M^j}
For each $j\in\J$, the dual cone of $\cM^j$ in $\cH^j$ is
\begin{align*}
    (\cM^j)^* = \left\{x\in\cH^j: \sum_{i=k}^{|j|}(t_i-t_{i-1})x_i\in\S^D_+,\quad\forall k\in\{1,2,\dots,|j|\}\right\}.
\end{align*}

    \item \label{i.dual-cone-M}
The dual cone of $\cM$ in $\cH$ is
\begin{align*}
    \cM^* = \left\{\iota\in \cH: \int_t^1\iota(s)\d s\in\S^D_+,\quad \forall t\in [0,1) \right\}.
\end{align*}
\end{enumerate}

\end{lemma}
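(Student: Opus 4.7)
The plan is to prove both parts by the same two-step template. Step one: rewrite the pairing between a cone element and a generic vector as a combination of trace pairings of two PSD matrices, each of which is automatically nonnegative. Step two: run the converse by pairing against ``atomic'' test elements, using the fact that $\S^D_+$ is self-dual in $(\S^D,\cdot)$. The ingredient that makes step one work in both cases is that an increasing $\S^D_+$-valued object has nonnegative increments.

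For part~\eqref{i.dual-cone-M^j}, the move is a matrix Abel summation. Given $x\in\cM^j$, set $x_0=0$ and $\Delta x_i=x_i-x_{i-1}\in\S^D_+$; then $x_k=\sum_{i=1}^k\Delta x_i$, and substituting into~\eqref{e.inner_product_H^j} and swapping the order of summation gives
\begin{align*}
    \la x,y\ra_{\cH^j}=\sum_{i=1}^{|j|}\Delta x_i\cdot S_i(y),\qquad S_i(y):=\sum_{k=i}^{|j|}(t_k-t_{k-1})\,y_k.
\end{align*}
If every $S_i(y)\in\S^D_+$, each summand is nonnegative because $\tr(AB)\geq 0$ whenever $A,B\in\S^D_+$, so $y\in(\cM^j)^*$. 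Conversely, for each $k$ and each $a\in\S^D_+$, the element $x\in\cM^j$ with $x_i=0$ for $i<k$ and $x_i=a$ for $i\geq k$ pairs with $y$ to give $a\cdot S_k(y)\geq 0$; letting $a$ range over $\S^D_+$ forces $S_k(y)\in\S^D_+$ by self-duality.

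Part~\eqref{i.dual-cone-M} is the continuous analogue. Because every $\mu\in\cM$ is right-continuous, increasing, and $\S^D_+$-valued, it determines an $\S^D_+$-valued Borel measure $d\mu$ on $[0,1)$ via $d\mu(\{0\})=\mu(0)$ and $d\mu((a,b])=\mu(b)-\mu(a)$, so that $\mu(s)=d\mu([0,s])$. A Fubini-type exchange yields
\begin{align*}
    \la \mu,\iota\ra_\cH=\int_0^1 d\mu([0,s])\cdot\iota(s)\,\d s=\int_{[0,1)}\Bigl(\int_r^1\iota(s)\,\d s\Bigr)\cdot d\mu(r).
\end{align*}
If $\int_r^1\iota(s)\d s\in\S^D_+$ for every $r$, the right-hand side pairs a PSD-valued integrand against a PSD-valued measure, hence is nonnegative, so $\iota\in\cM^*$. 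For the converse, the test elements $\mu_{t,a}:=a\,\mathds{1}_{[t,1)}$ with $t\in[0,1)$ and $a\in\S^D_+$ all lie in $\cM$ and pair with $\iota$ to yield $a\cdot\int_t^1\iota(s)\d s\geq 0$, which again forces the tail integral into $\S^D_+$ by self-duality.

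The one place where genuine care is required is the Fubini step, since $\mu$ is only assumed square-integrable, so $d\mu$ may have infinite total mass (e.g.\ $\mu(s)=(1-s)^{-1/3}I$). I would handle this by truncating to $[0,1-\eps]$, on which $d\mu$ has finite total variation bounded by $\sqrt{D}\,|\mu(1-\eps)|$ (using $\tr A\leq\sqrt{D}|A|$ for $A\in\S^D_+$), applying the classical scalar Fubini componentwise, and then passing to the limit via Cauchy--Schwarz, $|\la \mu\,\mathds{1}_{[1-\eps,1)},\iota\ra_\cH|\leq|\mu|_\cH\,|\iota\,\mathds{1}_{[1-\eps,1)}|_\cH\to 0$. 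The same truncation legitimizes interpreting $\int A(r)\cdot d\mu(r)$ as the limit of finite sums $\sum_\ell A(r_\ell)\cdot d\mu(B_\ell)$, each of which is manifestly nonnegative when $A$ is $\S^D_+$-valued.
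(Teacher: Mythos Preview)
For Part~\eqref{i.dual-cone-M^j} your argument is the same as the paper's: Abel summation for one inclusion, step-function test elements $x_i=a\,\mathds{1}_{i\geq k}$ for the other. For Part~\eqref{i.dual-cone-M}, the direction $\cM^*\subset\{\text{tail integrals in }\S^D_+\}$ via $\mu=a\,\mathds{1}_{[t,1)}$ also matches.

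The reverse inclusion in Part~\eqref{i.dual-cone-M} is where you diverge. The paper argues by contradiction and reduces to Part~\eqref{i.dual-cone-M^j}: if $\la\iota,\mu\ra_\cH<0$ for some $\mu\in\cM$, then by Lemma~\ref{l.basics_(j)}\eqref{i.cvg} one has $\la\pj_j\iota,\pj_j\mu\ra_{\cH^j}<0$ for some $j$, while a direct computation from the definition of $\pj_j$ shows $\pj_j\iota\in(\cM^j)^*$ and $\pj_j\mu\in\cM^j$, a contradiction. This sidesteps Stieltjes measures entirely and recycles the projection machinery already in place. Your Fubini route is more self-contained, but the truncation step has a wrinkle you did not name: after restricting to $[0,1-\eps]$ and swapping integrals, the inner integral is $\int_r^{1-\eps}\iota$, not $\int_r^1\iota$. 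Writing $\int_r^{1-\eps}\iota=\int_r^1\iota-\int_{1-\eps}^1\iota$, the manifestly nonnegative piece is $\int_{[0,1-\eps]}\bigl(\int_r^1\iota\bigr)\cdot d\mu(r)$, but you pick up a boundary term $\bigl(\int_{1-\eps}^1\iota\bigr)\cdot\mu(1-\eps)$ that your Cauchy--Schwarz remark on $\la\mu\,\mathds{1}_{[1-\eps,1)},\iota\ra_\cH$ does not cover. It does vanish: monotonicity of $\mu$ in the $\S^D_+$ order gives $|\mu(s)|\geq|\mu(1-\eps)|$ for $s\geq1-\eps$, hence $\eps\,|\mu(1-\eps)|^2\leq\int_{1-\eps}^1|\mu|^2$, and combining with $\bigl|\int_{1-\eps}^1\iota\bigr|\leq\eps^{1/2}\bigl(\int_{1-\eps}^1|\iota|^2\bigr)^{1/2}$ bounds the boundary term by $\bigl(\int_{1-\eps}^1|\mu|^2\int_{1-\eps}^1|\iota|^2\bigr)^{1/2}\to0$. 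With this extra estimate your argument is complete.
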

\begin{proof}
Part~\eqref{i.dual-cone-M^j}.
We denote the set on the right-hand side by $\RHS$. We first show that $(\cM^j)^*\subset \RHS$. Let $x\in(\cM^j)^*$. For every $k$ and every $a\in\S^D_+$, we can choose $y\in\cM^j$ such that $0=y_1=\cdots=y_{k-1}$ and $y_k=\cdots=y_{|j|}=a$. Then, we have
\begin{align*}
    \sum_{i=k}^{|j|}(t_i-t_{i-1})x_i\cdot a&=\sum_{i=k}^{|j|}(t_i-t_{i-1})x_i\cdot y_i\geq 0,
\end{align*}
which implies that $x\in \RHS$. In the other direction, we assume $x\in\RHS$. For every $y\in\cM^j$, by setting $y_0=0$, since $y_k-y_{k-1}\in\S^D_+$ for all $k$, we have
\begin{align*}
    \sum_{i=1}^{|j|}(t_i-t_{i-1})x_i\cdot y_i&=\sum_{k=1}^{|j|}\left(\sum_{i=k}^{|j|}(t_i-t_{i-1})x_i\cdot (y_k-y_{k-1})\right)\geq 0,
\end{align*}
which gives that $x\in(\cM^j)^*$. Now we can conclude that $(\cM^j)^*=\RHS$ as desired.

Part~\eqref{i.dual-cone-M}.
We denote the set on the right-hand side by $\RHS$. Let $\iota\in\C^*$. For any $a\in\S^D_+$ and $t\in [0,1)$, we set $\mu = a\mathds{1}_{[t,1)}$. It is clear that $\mu\in\C$. Due to $\la \iota, \mu\ra_\cH\geq 0$ by duality, we deduce that $\iota\in\RHS$. 

Now, let $\iota \in \RHS$. We argue by contradiction and assume $\iota\not\in \C^*$. Then, by definition, there is $\mu\in\C$ such that $\la \iota,\mu\ra_\cH<0$. By Lemma~\ref{l.basics_(j)}~\eqref{i.cvg}, there is a partition $j$ such that $\la\iota^\j,\mu^\j\ra_\cH<0$. Due to Lemma~\ref{l.basics_(j)}~\eqref{i.isometric} and~\eqref{i.lf_pj_(j)}, this can be rewritten as $\la\pj_j\iota,\pj_j\mu\ra_{\cH^j}<0$. 

On the other hand,
by the definition of $\pj_j$ in \eqref{e.def_pj_iota}, we can compute that, for every $k$,
\begin{align*}
    \sum_{i=k}^{|j|}(t_i-t_{i-1})(\pj_j\iota)_i = \int_{t_{k-1}}^1\iota(s)\d s\in\S^D_+
\end{align*}
by the assumption that $\iota \in \RHS$. Hence, by \eqref{i.dual-cone-M^j}, we have $\iota\in(\C^j)^*$. Since $\mu$ is increasing as $\mu\in\C$, it is easy to see that $\pj_j\mu\in\C^j$. The detailed computation can be seen in \eqref{e.pj_mu>0}. Therefore, we must have $\la \pj_j\iota,\pj_j\mu\ra_{\cH^j}\geq 0$, reaching a contradiction.
\end{proof}

\begin{lemma}\label{l.proj_cones}
For every $j\in \J$, the following hold:
\begin{enumerate}
    \item \label{i.lM^j} $\lf_j(\cM^j)\subset \cM$;
    \item \label{i.lM^j*} $\lf_j((\cM^j)^*)\subset \cM^*$;
    \item \label{i.pjM} $\pj_j(\cM)= \cM^j$;
    \item \label{i.pjM*} $\pj_j(\cM^*)= (\cM^j)^*$;
    \item \label{l.mu-mu^j}
    $\mu\in \mu^\j+\C^*$, for every $\mu\in \C$.
\end{enumerate}
\end{lemma}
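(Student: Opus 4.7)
My plan is to handle \eqref{i.lM^j}--\eqref{i.pjM*} as short consequences of the definitions and Lemma~\ref{l.dual_cone}, and to treat \eqref{l.mu-mu^j} as the one substantive computation.

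For \eqref{i.lM^j}, unpacking the definition of $\lf_j$ in \eqref{e.lf_jx} shows that for $x\in\cM^j$ the function $\lf_j x$ is a right-continuous step function whose successive values $0\leq x_1\leq\cdots\leq x_{|j|}$ lie in $\S^D_+$, hence $\lf_j x\in\cM$. For \eqref{i.pjM}, one inclusion is immediate because cell averages of an increasing, $\S^D_+$-valued function are still nondecreasing and nonnegative; the reverse inclusion follows from \eqref{i.lM^j} combined with $\pj_j\lf_j=\mathrm{Id}$ from Lemma~\ref{l.basics_(j)}\eqref{i.pj_lf=ID}. I would handle \eqref{i.pjM*} analogously once \eqref{i.lM^j*} is in hand: the forward inclusion comes from the identity $\sum_{i=k}^{|j|}(t_i-t_{i-1})(\pj_j\iota)_i=\int_{t_{k-1}}^1\iota(s)\,\d s$ combined with the characterization of $(\cM^j)^*$ in Lemma~\ref{l.dual_cone}\eqref{i.dual-cone-M^j}, and the reverse inclusion again uses $\pj_j\lf_j=\mathrm{Id}$.

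For \eqref{i.lM^j*}, the key is to verify $\int_t^1\lf_jx(s)\,\d s\in\S^D_+$ for $x\in(\cM^j)^*$ and every $t\in[0,1)$. Setting $S_k=\sum_{i=k}^{|j|}(t_i-t_{i-1})x_i$ for $1\leq k\leq|j|$ and $S_{|j|+1}=0$, Lemma~\ref{l.dual_cone}\eqref{i.dual-cone-M^j} gives $S_k\in\S^D_+$. For $t\in[t_{k-1},t_k)$ I would compute
\begin{equation*}
\int_t^1\lf_jx(s)\,\d s=(t_k-t)x_k+S_{k+1}=\frac{t_k-t}{t_k-t_{k-1}}\,S_k+\frac{t-t_{k-1}}{t_k-t_{k-1}}\,S_{k+1},
\end{equation*}
using the identity $(t_k-t_{k-1})x_k=S_k-S_{k+1}$. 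Since this is a convex combination of elements of $\S^D_+$, it lies in $\S^D_+$, and Lemma~\ref{l.dual_cone}\eqref{i.dual-cone-M} then gives $\lf_jx\in\cM^*$.

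The main obstacle is \eqref{l.mu-mu^j}. Fix $\mu\in\cM$ and $t\in[0,1)$, write $t\in[t_{k-1},t_k)$, and set $h=t-t_{k-1}$ and $\Delta=t_k-t_{k-1}$. Because $\mu^\j$ is constant equal to the cell average on each $[t_{i-1},t_i)$, one has $\int_{t_{i-1}}^{t_i}(\mu-\mu^\j)=0$ for every $i$, so
\begin{equation*}
\int_t^1(\mu-\mu^\j)(s)\,\d s=\int_{t_{k-1}}^t(\mu^\j-\mu)(s)\,\d s.
\end{equation*}
My plan is then to split $\int_{t_{k-1}}^{t_k}\mu$ inside the definition of $\mu^\j$ at the point $t$ and reorganize to obtain
\begin{equation*}
\int_{t_{k-1}}^t(\mu^\j-\mu)(s)\,\d s=\frac{h(\Delta-h)}{\Delta}\,(B-A),
\end{equation*}
where $A$ is the average of $\mu$ over $[t_{k-1},t]$ and $B$ is its average over $[t,t_k]$. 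Monotonicity of $\mu$ in the sense of \eqref{e.a>b_SD_+} forces $B-A\in\S^D_+$, the scalar prefactor is nonnegative, and Lemma~\ref{l.dual_cone}\eqref{i.dual-cone-M} then yields $\mu-\mu^\j\in\cM^*$. The delicate step is recognizing this decomposition into the two sub-cell averages; once that algebraic rewriting is found, monotonicity of $\mu$ immediately closes the argument.
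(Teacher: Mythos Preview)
Your proof is correct. Parts \eqref{i.lM^j}, \eqref{i.pjM}, and \eqref{l.mu-mu^j} match the paper's argument essentially line for line; your decomposition in \eqref{l.mu-mu^j} as $\frac{h(\Delta-h)}{\Delta}(B-A)$ with $A,B$ the left and right sub-cell averages is a slightly cleaner repackaging of the paper's comparison of $\frac{1}{t_k-t}\int_t^{t_k}\mu$ with the full-cell average, but the underlying idea---reduce to comparing two averages of an increasing function over nested or adjacent intervals---is identical.

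The one genuine methodological difference is in \eqref{i.lM^j*} and the forward inclusion of \eqref{i.pjM*}. You work directly with the explicit characterizations of $\cM^*$ and $(\cM^j)^*$ from Lemma~\ref{l.dual_cone}, computing $\int_t^1\lf_jx$ as a convex combination of tail sums $S_k,S_{k+1}$ and using the integral identity $\sum_{i=k}^{|j|}(t_i-t_{i-1})(\pj_j\iota)_i=\int_{t_{k-1}}^1\iota$. The paper instead argues by duality: for \eqref{i.lM^j*} it writes $\la\lf_jx,\mu\ra_\cH=\la x,\pj_j\mu\ra_{\cH^j}\geq0$ via the adjoint relation (Lemma~\ref{l.basics_(j)}\eqref{i.adjoint}) and the already-established $\pj_j(\cM)\subset\cM^j$, and similarly for \eqref{i.pjM*}. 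Your route is more computational and self-contained; the paper's is shorter and highlights that these facts are formal consequences of adjointness once \eqref{i.lM^j} and \eqref{i.pjM} are known. Either is fine.
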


\begin{proof}
We first show that
\begin{align}\label{e.pj(C)_subset_C}
    \pj_j(\C)\subset \C^j
\end{align}
and then verify each claim. For every $\mu\in\cM$, it follows from the definition that $\pj_j \mu\in\cH^j$. Since $\mu$ is increasing, setting $(\pj_j \mu)_0=0$ by our convention, we get,  
\begin{align}
(\pj_j \mu)_k-(\pj_j \mu)_{k-1}&=\frac{1}{t_k-t_{k-1}}\int _{t_{k-1}}^{t_k}\mu(s)\d s-\frac{1}{t_{k-1}-t_{k-2}}\int _{t_{k-2}}^{t_{k-1}}\mu(s)\d s\notag
\\
&\geq \mu(t_{k-1})-\mu(t_{k-1})=0,\label{e.pj_mu>0}
\end{align}
for $k\in\{2,\cdots,|j|\}$. Clearly when $k=1$, $(\pj_j \mu)_1=\frac{1}{t_1}\int _{0}^{t_1}\mu(s)ds\in\S^D_+$. Hence, we have $\pj_j \mu\in\cM^j$ and thus \eqref{e.pj(C)_subset_C}.

Part~\eqref{i.lM^j}. For any $x\in\cM^j$, recall the definition of $\lf_j x$ in \eqref{e.lf_jx}. Since $x_k\geq x_{k-1}$ for each $k$, it is clear that $\lf_j x$ is increasing and thus belongs to $\C$.

Part~~\eqref{i.lM^j*}. Let $x\in (\cM^j)^*$. For every $\mu\in \cM$, recalling the definition of $\pj_j\mu$ in \eqref{e.def_pj_iota}, we have
\begin{align*}
    \int_0^1\sum_{k=1}^{|j|}\mathds{1}_{[t_{k-1},t_k)}(s)x_i\cdot \mu(s)\d s&=\sum_{k=1}^{|j|}\int_{t_{k-1}}^{t_k}x_k\cdot \mu(s)\d s=\sum_{k=1}^{|j|}(t_k-t_{k-1})x_k\cdot\left(\frac{1}{t_k-t_{k-1}}\int_{t_{k-1}}^{t_k}\mu(s) \d s\right)\\
    &=\sum_{k=1}^{|j|}(t_k-t_{k-1})x_k\cdot (\pj_j \mu)_k\geq 0,
\end{align*}
where the last inequality holds due to $x\in(\cM^j)^*$ and $\pj_j \mu\in\cM^j$ by~\eqref{e.pj(C)_subset_C}. This implies that $\lf_j x\in\cM^*$, and thus $\lf_j ((\cM^j)^*)\subset \cM^*$.

Part~~\eqref{i.pjM}. For every $x\in\C^j$, by \eqref{i.lM^j}, we have $\lf_j x \in \C$. Lemma~\ref{l.basics_(j)}~\eqref{i.pj_lf=ID} implies that $x = \pj_j\lf_jx$. Hence, we get $\C^j\subset \pj_j(\C)$. Then, \eqref{i.pjM} follows from this and \eqref{e.pj(C)_subset_C}.

Part~~\eqref{i.pjM*}. Let $\iota\in\cM^*$. 
For every $x\in \cM^j$, we have by Lemma~\ref{l.basics_(j)}~\eqref{i.adjoint} that $\la \pj_j\iota, x\ra_{\cH^j} = \la \iota,\lf_j x\ra_\cH\geq 0$ due to $\lf_j x \in \C$ ensured by \eqref{i.lM^j}. Hence, we have $\pj_j(\C^*)\subset (\C^j)^*$. For the other direction, let $x\in (\C^j)^*$. Lemma~\ref{l.basics_(j)}~\eqref{i.pj_lf=ID} gives $x=\pj_j\lf_j x$. Invoking \eqref{i.lM^j*}, we can deduce that $(\C^j)^*\subset \pj_j(\C^*)$, completing the proof of \eqref{i.pjM*}.

Part~\eqref{l.mu-mu^j}.
We show that $\mu-\mu^\j\in\C^*$. Let $\tau\in [0,1)$ and $a\in\S^D_+$. We choose $t_{k_0}\in j$ such that $\tau\in[t_{k_0-1},t_{k_0})$. Using the definition of $\mu^\j$ in \eqref{e.(j)}, we can compute that
\begin{align*}
    &\int_\tau^1 \left(\mu-\mu^\j\right)(s)\d s
    \\
    &=\left(\int_\tau^{t_{k_0}}\mu(s)\d s+\int_{t_{k_0}}^1\mu(s)\d s\right)-\left(\frac{t_{{k_0}}-\tau}{t_{k_0}-t_{{k_0}-1}}\int_{t_{{k_0}-1}}^{t_{k_0}}\mu(s)\d s+\int_{t_{k_0}}^1\mu(s)\d s\right),\\
    &= \int_\tau^{t_{k_0}}\mu(s)\d s - \frac{t_{{k_0}}-\tau}{t_{k_0}-t_{{k_0}-1}}\int_{t_{{k_0}-1}}^{t_{k_0}}\mu(s)\d s
    \\
    &=(t_{k_0}-\tau)\left(\frac{1}{t_{k_0}-\tau}\int_\tau^{t_{k_0}}\mu(s)\d s-\frac{1}{t_{k_0}-t_{k_0-1}}\int_{t_{k_0-1}}^{t_{k_0}} \mu(s)\d s\right)
    \geq 0,
\end{align*}
where the last inequality follows from the fact that $\mu$ is increasing. By Lemma~\ref{l.dual_cone}~\eqref{i.dual-cone-M}, we conclude that $\mu-\mu^\j\in\C^*$ as desired.
\end{proof}

\subsubsection{Derivatives}

Recall Definition~\ref{d.differentiability}~\eqref{i.def_differentiable_C} for the differentiability of functions defined on $\C$. We denote by $\nabla_j$ the differential operator on functions defined on $\C^j$.

\begin{lemma}\label{l.derivatives}
For every $j\in \J$, the following hold.

\begin{enumerate}
    \item \label{i.char_nabla_j} If $g:\cM\to\R$ is differentiable at $\lf_j x$ for some $x\in \cM^j$, then $g^j:\cM^j\to\R$ is differentiable at $x$ and its differential is given by $\nabla_j g^j(x) = \pj_j(\nabla g(\lf_j x))$.
    
    \item \label{i.lift_gradient}
If $g:\cM^j\to\R$ is differentiable at $ x$ for some $x\in \cM^j$, then $g^\uparrow:\cM\to\R$ is differentiable at every $\mu\in\cM$ satisfying $\pj_j\mu = x$ and its differential is given by $\nabla g^\uparrow(\mu) = \lf_j(\nabla_j g(x)) $.
\end{enumerate}

\end{lemma}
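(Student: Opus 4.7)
The plan is to prove both parts by direct computation, using the fact that $g^j$ and $g^\uparrow$ are compositions $g\circ\lf_j$ and $g\circ\pj_j$, respectively, and then invoking the algebraic relations collected in Lemma~\ref{l.basics_(j)} (adjointness, isometry of $\lf_j$, contractivity of $\pj_j$) to transfer the infinitesimal error terms between $\cH$ and $\cH^j$. Since $\cM^j$ spans $\cH^j$ and $\cM$ spans $\cH$, uniqueness of the candidate differentials follows from the remark after Definition~\ref{d.differentiability}, so it suffices to exhibit the stated expressions as valid differentials in the sense of Definition~\ref{d.differentiability}~\eqref{i.def_differentiable_C}.

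For Part~\eqref{i.char_nabla_j}, I would start from the differentiability of $g$ at $\lf_j x$: for $y\in\cM^j$ tending to $x$ in $\cH^j$,
\begin{align*}
g^j(y)-g^j(x) = g(\lf_j y)-g(\lf_j x) = \la \nabla g(\lf_j x),\,\lf_j(y-x)\ra_\cH + o\bigl(|\lf_j(y-x)|_\cH\bigr).
\end{align*}
Applying Lemma~\ref{l.basics_(j)}~\eqref{i.adjoint} to the inner product rewrites it as $\la \pj_j(\nabla g(\lf_j x)),\,y-x\ra_{\cH^j}$, and Lemma~\ref{l.basics_(j)}~\eqref{i.isometric} yields $|\lf_j(y-x)|_\cH = |y-x|_{\cH^j}$, which turns the remainder into $o(|y-x|_{\cH^j})$. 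Hence $\pj_j(\nabla g(\lf_j x))\in\cH^j$ is a (and thus the) differential of $g^j$ at $x$.

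For Part~\eqref{i.lift_gradient}, fix $\mu\in\cM$ with $\pj_j\mu = x$. For $\nu\in\cM$ tending to $\mu$ in $\cH$, continuity of the bounded linear map $\pj_j$ gives $\pj_j\nu\to x$ in $\cH^j$, and $\pj_j\nu\in\cM^j$ by Lemma~\ref{l.proj_cones}~\eqref{i.pjM}, so differentiability of $g$ at $x$ applies:
\begin{align*}
g^\uparrow(\nu)-g^\uparrow(\mu) = g(\pj_j\nu)-g(x) = \la \nabla_j g(x),\,\pj_j(\nu-\mu)\ra_{\cH^j} + o\bigl(|\pj_j(\nu-\mu)|_{\cH^j}\bigr).
\end{align*}
Lemma~\ref{l.basics_(j)}~\eqref{i.adjoint} rewrites the inner product as $\la \lf_j(\nabla_j g(x)),\,\nu-\mu\ra_\cH$, while Lemma~\ref{l.basics_(j)}~\eqref{i.|iota^j|<|iota|} gives $|\pj_j(\nu-\mu)|_{\cH^j}\leq |\nu-\mu|_\cH$, so the remainder absorbs into $o(|\nu-\mu|_\cH)$. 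This identifies $\lf_j(\nabla_j g(x))$ as the differential of $g^\uparrow$ at $\mu$.

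The only subtle point, and what I would flag as the main place to be careful, is the direction of the error-transfer inequalities: in Part~\eqref{i.char_nabla_j} one exploits the \emph{equality} $|\lf_j(y-x)|_\cH=|y-x|_{\cH^j}$ from isometry, whereas in Part~\eqref{i.lift_gradient} one only has the \emph{one-sided} contraction bound $|\pj_j(\nu-\mu)|_{\cH^j}\leq|\nu-\mu|_\cH$. Fortunately, that is the correct direction to absorb the $o(\cdot)$ into $o(|\nu-\mu|_\cH)$, so no further argument is needed and the proof concludes without obstruction.
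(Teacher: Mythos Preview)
Your proposal is correct and follows essentially the same argument as the paper's own proof: both parts are handled by writing out the composition, then invoking Lemma~\ref{l.basics_(j)}~\eqref{i.adjoint} to move between $\cH$ and $\cH^j$ in the inner product, together with~\eqref{i.isometric} in Part~\eqref{i.char_nabla_j} and~\eqref{i.|iota^j|<|iota|} in Part~\eqref{i.lift_gradient} to control the remainder. Your extra remarks on uniqueness and on the direction of the contraction inequality are accurate and only make the argument slightly more explicit than the paper's version.
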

\begin{proof}
Part~\eqref{i.char_nabla_j}
Recall that by definition, $g^j=g\circ l_j$. For every $y\in \cM^j$, we can see that
\begin{align*}
    g^j(y)-g^j(x)&= g\circ \lf_j (y)-g\circ\lf_j(x)\\
    &=\la \nabla g(\lf_j x),\lf_j y-\lf_j x\ra_{\cH}+o\left(|\lf_j y-\lf_j x|_\cH\right),\\
    &=\la \pj_j(\nabla g(\lf_j x)), y-x\ra_{\cH^j}+o\left(|y-x|_{\cH^j}\right),
\end{align*}
where the last equality follows from Lemma~\ref{l.basics_(j)}~~\eqref{i.adjoint} and~\eqref{i.isometric}. 

Part~\eqref{i.lift_gradient}.
Recall that by definition, $g^\uparrow = g\circ\pj_j$. Let $\mu\in\cM$ satisfy $\pj_j \mu=x$. Then for any $\nu\in\cM$, we get
\begin{align*}
    g^\uparrow(\nu)-g^\uparrow(\mu)&=g\circ\pj_j(\nu)-g\circ\pj_j(\mu),\\
    &=\la \nabla_j g(x), \pj_j\nu-x\ra_{\cH^j}+o\left(|\pj_j\nu-x|_{\cH^j}\right),\\
    &=\la \lf_j(\nabla_j g(x)), \nu-\mu\ra_{\cH}+o\left(|\nu-\mu|_{\cH}\right),
\end{align*}
where we used Lemma~\ref{l.basics_(j)}~~\eqref{i.adjoint} and~\eqref{i.|iota^j|<|iota|}.
\end{proof}

\subsection{Comparison principle}

To compensate for the lack of compactness in infinite dimensions, we need Stegall's variational principle \cite[Theorem on page 174]{stegall1978optimization} (see also \cite[Theorem~8.8]{cardaliaguet2010notes}). 

\begin{theorem}[Stegall's variational principle] \label{t.stegall}
Let $\cE$ be a convex and weakly compact set in a separable Hilbert space $\cX$ and $g:\cE\to\R$ be an upper semi-continuous function bounded from above. Then, for every $\delta>0$, there is $\iota\in \cX$ satisfying $|\iota|_\cX\leq \delta$ such that $g+\la \iota,\cdot\ra_\cX$ achieves maximum on $\cE$.
\end{theorem}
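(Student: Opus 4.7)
The plan is to reduce the claim to the Fr\'echet differentiability of a convex function on $\cX$. Define $\Phi: \cX \to \R$ by
\[
\Phi(\iota) = \sup_{x\in\cE}\bigl(g(x)+\la \iota, x\ra_\cX\bigr).
\]
Since $\cE$ is norm-bounded (weak compactness implies boundedness) and $g$ is bounded above, $\Phi$ is finite on all of $\cX$. As a supremum of affine functions it is convex; it is also Lipschitz with constant $\sup_{x\in\cE}|x|_\cX$, and in particular continuous.

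Next I would invoke the fact that a separable Hilbert space is an Asplund space, so every continuous convex function on $\cX$ is Fr\'echet differentiable on a dense $G_\delta$ subset. In particular there exists $\iota\in\cX$ with $|\iota|_\cX\leq\delta$ at which $\Phi$ is Fr\'echet differentiable. The task then reduces to showing that $y:=\nabla\Phi(\iota)$ lies in $\cE$ and satisfies $g(y)+\la\iota,y\ra_\cX=\Phi(\iota)$; the point $-\iota$ (or equivalently a sign change in the inner product) will produce the $\iota$ that makes $g+\la\iota,\cdot\ra_\cX$ achieve a \emph{maximum} on $\cE$ as stated.

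To verify the attainment, pick any maximizing sequence $(x_n)\subset\cE$ with $\eps_n:=\Phi(\iota)-g(x_n)-\la\iota,x_n\ra_\cX\to 0^+$. For every $h\in\cX$ the inequality
\[
\Phi(\iota+h)-\Phi(\iota)\;\geq\; \la h,x_n\ra_\cX-\eps_n
\]
combined with the Fr\'echet expansion $\Phi(\iota+h)-\Phi(\iota)=\la y,h\ra_\cX+o(|h|_\cX)$ yields
\[
\la h,x_n-y\ra_\cX\;\leq\; o(|h|_\cX)+\eps_n.
\]
Testing with $h=\rho(x_n-y)/|x_n-y|_\cX$ for sufficiently small fixed $\rho$ and then sending $n\to\infty$ gives $\limsup_n|x_n-y|_\cX\leq\eta$ for any preassigned $\eta>0$, hence $x_n\to y$ in norm. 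Since $\cE$ is weakly compact, it is norm-closed, so $y\in\cE$; upper semicontinuity of $g$ then forces $g(y)+\la\iota,y\ra_\cX\geq\limsup_n\bigl(g(x_n)+\la\iota,x_n\ra_\cX\bigr)=\Phi(\iota)$, and the supremum is attained at $y$.

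The main obstacle is the appeal to the Asplund property of a separable Hilbert space, i.e.\ the density of Fr\'echet differentiability points for continuous convex functions. This is classical but not elementary; I would either cite it directly (it is a consequence of reflexivity, which in turn forces the dual to have the Radon-Nikodym property) or, for a self-contained proof, perturb $\Phi$ by a small strictly convex function such as $\tau|\cdot|_\cX^2$ and combine Ekeland's variational principle on the dual with the Fr\'echet smoothness of the Hilbert norm away from $0$ to exhibit a Fr\'echet differentiability point of $\Phi$ in the closed ball of radius $\delta$.
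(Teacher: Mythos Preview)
The paper does not supply its own proof of this theorem: it is stated as Theorem~\ref{t.stegall} and attributed to \cite[Theorem on page~174]{stegall1978optimization} (with \cite[Theorem~8.8]{cardaliaguet2010notes} as an additional reference), together with the remark that the original result only requires $\cE$ to have the Radon--Nikodym property. So there is nothing in the paper to compare your argument against line by line.

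That said, your sketch is a correct and standard route to the result. The reduction to Fr\'echet differentiability of the convex function $\Phi(\iota)=\sup_{x\in\cE}\bigl(g(x)+\la\iota,x\ra_\cX\bigr)$ is exactly the right idea, and your verification that every maximizing sequence must converge in norm to $\nabla\Phi(\iota)$ is clean (the Fr\'echet uniformity over directions is precisely what you need here; G\^ateaux would not suffice). Two minor comments: the remark about ``the point $-\iota$'' is unnecessary---once $\Phi$ is Fr\'echet differentiable at some $\iota$ with $|\iota|_\cX\le\delta$, that very $\iota$ already makes $g+\la\iota,\cdot\ra_\cX$ attain its maximum, so no sign change is needed; and your identification of the Asplund property as the non-elementary input is accurate. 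For a separable Hilbert space this follows from reflexivity (the dual has the Radon--Nikodym property), and indeed Stegall's original argument proceeds through the Radon--Nikodym property rather than through Asplund theory per se, though the two are closely linked.
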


Originally, $\cE$ is only required to satisfy the Radon-Nikodym property which is weaker than being convex and weakly compact (see discussion on \cite[page~173]{stegall1978optimization}).

The goal of this subsection is to prove the following.

\begin{proposition}[Comparison principle]\label{p.comp}
Suppose that $\H:\cH\to\R$ is locally Lipschitz.
Let $u$ be a Lipschitz viscosity subsolution and $v$ be a Lipschitz viscosity supersolution of $\HJ(\cH,\C,\H)$. If $u(0,\cdot)\leq v(0,\cdot)$, then~${u\leq v}$.
\end{proposition}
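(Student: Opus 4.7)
The plan is to adapt the doubling-of-variables proof of the finite-dimensional comparison principle (Proposition~\ref{p.comp_fin_d_strong}), substituting Stegall's variational principle (Theorem~\ref{t.stegall}) for the compactness of closed bounded sets, which fails in infinite dimensions. Suppose for contradiction that $\sup_{\R_+\times\C}(u-v)>0$ while $u(0,\cdot)\leq v(0,\cdot)$. Setting $L=\sup_{t\in\R_+}(\|u(t,\cdot)\|_{\mathrm{Lip}}\vee\|v(t,\cdot)\|_{\mathrm{Lip}})$, fixing $M>2L$ and $R>0$, and letting $V$ be a Lipschitz constant for $\H$ on a sufficiently large ball, I would recycle the coercive penalty $\Phi(t,\mu)=M\theta\bigl(\sqrt{\delta+|\mu|_\cH^2}+Vt-R\bigr)$ from the finite-dimensional proof, together with $\chi(t,\mu)=\Phi(t,\mu)+\eps t+\eps/(T-t)$, choosing $\delta\in(0,1)$ small, $T$ large and $\eps>0$ small enough that
\begin{align*}
\sup_{[0,T)\times\C}(u-v-\chi)>\sup_{\{0\}\times\C}(u-v-\chi).
\end{align*}

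Next I would introduce the doubled functional
\begin{align*}
\Psi_\alpha(t,\mu,t',\nu)=u(t,\mu)-v(t',\nu)-\tfrac{\alpha}{2}\bigl(|t-t'|^2+|\mu-\nu|_\cH^2\bigr)-\chi(t,\mu),
\end{align*}
for $\alpha>1$. The Lipschitz bounds on $u,v$ together with $M>2L$ and the coercivity of $\Phi$ make $\Psi_\alpha$ bounded above, decaying to $-\infty$ as $|\mu|_\cH+|\nu|_\cH\to\infty$, and degenerating as $t\to T$. Thus the supremum of $\Psi_\alpha$ is concentrated on some convex weakly compact product $\cE\subset([0,T]\times\overline{B}_{R'})^2$. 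In finite dimensions a maximizer exists by compactness; here I would instead apply Stegall's variational principle in the separable Hilbert space $\R^2\times\cH^2$: for each $\delta'>0$ there exist linear perturbations $(\sigma,\sigma',\iota,\iota')$ with $|\sigma|+|\sigma'|+|\iota|_\cH+|\iota'|_\cH\leq\delta'$ such that $\Psi_\alpha+\sigma t+\sigma' t'+\la\iota,\mu\ra_\cH+\la\iota',\nu\ra_\cH$ attains a genuine maximum on $\cE$ at some $(t_\alpha,\mu_\alpha,t'_\alpha,\nu_\alpha)$.

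From this maximizer the proof follows the finite-dimensional template. Comparing $\Psi_\alpha(t_\alpha,\mu_\alpha,t'_\alpha,\mu_\alpha)$ and $\Psi_\alpha(t_\alpha,\mu_\alpha,t'_\alpha,\nu_\alpha)$ with the Lipschitz bound on $v$ (and the symmetric variant for $u$) yields $\alpha|\mu_\alpha-\nu_\alpha|_\cH\leq 2(L+M)+O(\delta')$ and $\alpha|t_\alpha-t'_\alpha|\leq O(1)$; after extraction one gets $(t_\alpha,\mu_\alpha),(t'_\alpha,\nu_\alpha)\to(t_0,\mu_0)\in(0,T)\times\C$, with $t_0>0$ forced by~\eqref{e.u-v-chi} being strict. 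The test functions
\begin{align*}
(t,\mu)&\mapsto \tfrac{\alpha}{2}\bigl(|t-t'_\alpha|^2+|\mu-\nu_\alpha|_\cH^2\bigr)+\chi(t,\mu)-\sigma t-\la\iota,\mu\ra_\cH,\\
(t',\nu)&\mapsto -\tfrac{\alpha}{2}\bigl(|t_\alpha-t'|^2+|\mu_\alpha-\nu|_\cH^2\bigr)-\sigma' t'-\la\iota',\nu\ra_\cH,
\end{align*}
are smooth in the sense of Definition~\ref{d.differentiability} on $(0,\infty)\times\C$, so they are admissible in Definition~\ref{d.vs}. Using the subsolution inequality for $u$ at $(t_\alpha,\mu_\alpha)$ and the supersolution inequality for $v$ at $(t'_\alpha,\nu_\alpha)$, subtracting, and invoking $\partial_t\Phi\geq V|\nabla\Phi|_\cH$ together with the $V$-Lipschitzness of $\H$ on the ball containing the relevant gradient arguments, one arrives at
\begin{align*}
\eps\leq V|\nabla\Phi(t_\alpha,\mu_\alpha)|_\cH-\partial_t\Phi(t_\alpha,\mu_\alpha)+O(\delta')\leq O(\delta'),
\end{align*}
which contradicts $\eps>0$ once $\delta'$ is small enough.

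The main obstacle I expect is the careful bookkeeping around Stegall's perturbation: one must ensure that the $O(\delta')$ error terms entering both the bound $\alpha|\mu_\alpha-\nu_\alpha|_\cH\leq 2(L+M)+O(\delta')$ (which determines the radius of the ball on which $V$ is taken) and the final dichotomy $\eps\leq O(\delta')$ can be made arbitrarily small uniformly in $\alpha$, so that the order of limits $\delta'\to 0$ before $\alpha\to\infty$ produces a genuine contradiction. A minor regularity point is to check that $\Phi$ is smooth in the sense of Definition~\ref{d.differentiability}; this follows from the smoothness of $\theta$ and of $\mu\mapsto\sqrt{\delta+|\mu|_\cH^2}$ for $\delta>0$, so the Fréchet differentials exist on all of $\cH$.
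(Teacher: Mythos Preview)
Your overall architecture is correct and matches the paper: doubling of variables, the coercive penalty $\Phi$, and Stegall's principle to produce a maximizer. The gap is the sentence ``after extraction one gets $(t_\alpha,\mu_\alpha),(t'_\alpha,\nu_\alpha)\to(t_0,\mu_0)\in(0,T)\times\C$.'' This is precisely the step that fails in infinite dimensions, and Stegall does not rescue it: bounded sets in $\cH$ are only weakly compact, while $u,v$ are merely norm-Lipschitz, so you cannot pass to the limit in $u(t_\alpha,\mu_\alpha)-v(t'_\alpha,\nu_\alpha)$ to conclude that $(u-v-\chi)(t_0,\mu_0)$ equals the supremum and hence $t_0>0$. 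Worse, since Stegall hands you a \emph{different} linear perturbation for each $\alpha$, the family $(t_\alpha,\mu_\alpha,t'_\alpha,\nu_\alpha)$ is not a coherent sequence you can extract from. Your order-of-limits worry at the end is a symptom of this, but the real problem is one level up: you cannot manufacture $t_\alpha,t'_\alpha>0$ via any limit argument.

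The paper reorganizes the logic to avoid a limit in the penalization parameter altogether. It fixes the penalization $\eps$ (your $1/\alpha$) once and for all, applies Stegall once to obtain a single maximizer $(\bar t,\bar\mu,\bar t',\bar\mu')$, and then argues by dichotomy: if both $\bar t>0$ and $\bar t'>0$, the subsolution and supersolution inequalities already give $\sigma\leq 2(1+V)\delta$ at this fixed $\eps$, a contradiction for $\delta$ small. Hence $\bar t=0$ or $\bar t'=0$. In either case, using $u(0,\cdot)\leq v(0,\cdot)$, the Lipschitz bounds, and the estimates $|\bar t-\bar t'|,\,|\bar\mu-\bar\mu'|_\cH\leq 2(L+\delta)\eps$, one bounds $(u-v)(t,\mu)$ at an arbitrary point by $\Phi(t,\mu)+\sigma t+\sigma/(T-t)+O(\eps)+O(\delta)$, and only then sends $\delta\to0$, $\eps,\sigma\to0$, $R\to\infty$. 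No convergence of maximizers is ever needed. In short: your viscosity-inequality computation is right, but it should be used to force $\bar t=0$ or $\bar t'=0$, not as the endpoint of a contradiction that presupposes $\bar t,\bar t'>0$ via a compactness argument that is unavailable here.
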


\begin{proof}[Proof of Proposition~\ref{p.comp}]
It suffices to show $u(t,\cdot)-v(t,\cdot)\leq0$ for all $t\in[0,T)$ for any $T>0$. Henceforth, we fix any $T>0$. 
We set $L = \|u\|_\mathrm{Lip}\vee \|v\|_\mathrm{Lip}$, $M=2L+3$ and $V$ to be the Lipschitz coefficient of $\H$ restricted to the centered ball with radius $2L+M+3$.
We proceed in steps.

Step~1.
Let $\theta:\R\to\R_+$ be an increasing smooth function satisfying
\begin{align*}
    |\theta'|\leq 1\qquad\text{and}\qquad (r-1)_+\leq \theta(r)\leq r_+,\quad\forall r\in\R,
\end{align*}
where $\theta'$ is the derivative of $\theta$. For $R>1$ to be determined, we define
\begin{align*}
    \Phi(t,\mu) = M\theta\left(\left(1+|\mu|^2_\cH\right)^\frac{1}{2}+Vt-R\right),\quad\forall (t,\mu)\in \R_+\times \C.
\end{align*}
It is immediate that
\begin{gather}
    \sup_{(t,\mu)\in\R_+\times\C}|\nabla\Phi(t,\mu)|_\cH\leq M,\label{e.|nabla_Phi|_inft_d}
    \\
    \partial_t\Phi\geq V|\nabla \Phi|_\cH,\label{e.d_tPhi>_inft_d}
    \\
    \Phi(t,\mu)\geq M(|\mu|_\cH-R-1)_+, \quad\forall (t,\mu)\in\R_+\times\C. \label{e.Phi(t,x)>M|x|_inft_d}
\end{gather}

For $\eps,\sigma\in(0,1)$ to be determined, we consider
\begin{align*}
    \Psi(t,\mu,t',\mu') = u(t,\mu)-v(t',\mu') - \frac{1}{2\eps}(|t-t'|^2 + |\mu-\mu'|_\cH^2) - \Phi(t,\mu) -\sigma t - \frac{\sigma}{T-t},
    \\
    \quad\forall (t,\mu,t',\mu') \in [0,T)\times\C\times\R_+\times\C.
\end{align*}
Setting $C_0=u(0,0)-v(0,0)$, and using \eqref{e.Phi(t,x)>M|x|_inft_d} and the definition of $L$, we have
\begin{align}
    \Psi(t,\mu,t',\mu')\leq C_0 + L(2|t|+2|\mu|_\cH+|t-t'|+|\mu-\mu'|_\cH) - \frac{1}{2\eps}(|t-t'|^2 + |\mu-\mu'|_\cH^2) \label{e.Psi<}
    \\
    - M(|\mu|_\cH-R-1)_+ - \frac{\sigma}{T-t}.\notag
\end{align}
Hence, by the definition of $M$, $\Psi$ is bounded from above and its supremum is achieved over a bounded set. Invoking Theorem~\ref{t.stegall}, for $\delta\in(0,1)$ to be chosen, there is $(\bar s,\bar \iota,\bar s',\bar\iota')\in\R\times\cH\times\R\times \cH$ satisfying
\begin{align}\label{e.s,iota,s',iota'<delta}
    |\bar s|,\ |\bar\iota|_\cH,\ |\bar s'|,\ |\bar\iota'|_\cH\leq \delta,
\end{align}
such that the function
\begin{align*}
    \bar\Psi(t,\mu,t',\mu') = \Psi(t,\mu,t',\mu')-\bar s t - \la\bar \iota,\mu\ra_\cH -\bar s' t' -\la\bar \iota',\mu'\ra_\cH,\quad\forall (t,\mu,t',\mu')\in[0,T)\times\C\times\R_+\times\C
\end{align*}
achieves its maximum at $(\bar t,\bar\mu,\bar t',\bar\mu')$.

Step~2.
We derive bounds on $|\bar\mu|_\cH$, $|\bar\mu-\bar\mu'|_\cH$ and $|\bar t-\bar t'|$. Using $\bar\Psi(0,0,0,0)\leq \bar\Psi(\bar t,\bar\mu,\bar t',\bar\mu')$, \eqref{e.Psi<} and $\bar t\leq T$, we have
\begin{align*}
     C_0 &\leq  \frac{\eps}{T}+\Psi(\bar t,\bar\mu,\bar t',\bar\mu') +2\delta|\bar \mu|_\cH+2T\delta +\delta|\bar t-\bar t'|+\delta|\bar \mu-\bar \mu'|_\cH 
     \\
    &\leq \frac{\eps}{T}+C_0+2LT+\left( 2L|\bar\mu|_\cH - M(|\bar\mu|_\cH-R-1)_+\right)+\left( L|\bar t-\bar t'|-\frac{1}{2\eps}|\bar t-\bar t'|^2\right)
    \\
    &\qquad\qquad+\left( L|\bar\mu-\bar\mu'|_\cH-\frac{1}{2\eps}|\bar\mu-\bar\mu'|_\cH^2\right)+2\delta|\bar \mu|_\cH+2T\delta +\delta|\bar t-\bar t'|+\delta|\bar \mu-\bar \mu'|_\cH 
    \\
    &\leq \left( 2(L+\delta)|\bar\mu|_\cH - M(|\bar\mu|_\cH-R-1)_+\right) + \left(\frac{\eps}{T}+C_0 +2LT+\eps(L+\delta)^2+2T\delta\right).
\end{align*}
By this and the definition of $M$, there is $C_1>0$ such that, for all $\eps,\delta\in(0,1)$ and all $R>1$, 
\begin{align}\label{e.|bar_mu|<eps^-1/2}
    |\bar\mu|_\cH\leq C_1R.
\end{align}
Since
\begin{align*}
    0\geq \bar\Psi(\bar t,\bar\mu,\bar t',\bar\mu)-\bar\Psi(\bar t,\bar\mu,\bar t',\bar\mu') = v(\bar t',\bar\mu') - v(\bar t',\bar \mu)+\frac{1}{2\eps}|\bar \mu -\bar\mu'|^2_\cH+\la \bar\iota',\bar\mu'-\bar\mu\ra_\cH,
\end{align*}
by the definition of $L$ and \eqref{e.s,iota,s',iota'<delta}, we can get
\begin{align}\label{e.mu-mu'<2alphaL}
    |\bar \mu -\bar \mu'|_\cH\leq 2 (L+\delta)\eps.
\end{align}
Similarly, by
\begin{align*}
    0\geq \bar\Psi(\bar t,\bar\mu,\bar t,\bar\mu')-\bar\Psi(\bar t,\bar\mu,\bar t',\bar\mu') = v(\bar t',\bar\mu') - v(\bar t,\bar \mu')+\frac{1}{2\eps}|\bar t -\bar t'|^2+\bar s'(\bar t'-\bar t),
\end{align*}
we have
\begin{align}\label{e.t-t'<2alphaL}
    |\bar t -\bar t'|\leq 2 (L+\delta)\eps.
\end{align}

Step~3. 
We show that for every $\sigma,\eps\in(0,1)$, every $R>1$, and sufficiently small $\delta$, we have either $\bar t=0$ or $\bar t'=0$
We argue by contradiction and assume that $\bar t>0$ and $\bar t'>0$.
Since the function
\begin{align*}
    (t,\mu)\mapsto \bar\Psi(t,\mu,\bar t',\bar\mu')
\end{align*}
achieves its maximum at $(\bar t, \bar \mu)\in(0,T)\times\C$, by the assumption that $u$ is a subsolution, we have
\begin{align}\label{e.u_sub=>}
    \frac{1}{\eps}(\bar t- \bar t')+\partial_t\Phi(\bar t,\bar \mu) +\sigma + \sigma(T-\bar t)^{-2}+\bar s-\H\left(\frac{1}{\eps}(\bar \mu -\bar \mu') + \nabla \Phi(\bar t,\bar \mu)+\bar \iota\right)\leq 0.
\end{align}
Since the function
\begin{align*}
    (t',\mu')\mapsto \bar\Psi(\bar t,\bar\mu, t',\mu')
\end{align*}
achieves its maximum at $(\bar t', \bar \mu')\in(0,\infty)\times\C$, by the assumption that $v$ is a supersolution, we have
\begin{align}\label{e.v_sup=>}
    \frac{1}{\eps}(\bar t- \bar t') -\bar s'-\H\left(\frac{1}{\eps}(\bar \mu -\bar \mu') -\bar \iota'\right)\geq 0.
\end{align}
By \eqref{e.|nabla_Phi|_inft_d}, \eqref{e.s,iota,s',iota'<delta} and \eqref{e.mu-mu'<2alphaL}, for $\eps,\delta\in(0,1)$, we have
\begin{align*}
    \left|\frac{1}{\eps}(\bar \mu -\bar \mu') + \nabla \Phi(\bar t,\bar \mu)+\bar \iota\right|_\cH, \ \left|\frac{1}{\eps}(\bar \mu -\bar \mu') -\bar \iota'\right|_\cH\leq 2L+M+3.
\end{align*}
Taking the difference of terms in \eqref{e.u_sub=>} and \eqref{e.v_sup=>}, by the definition of $L$, \eqref{e.d_tPhi>_inft_d} and \eqref{e.s,iota,s',iota'<delta}, we obtain
\begin{align*}
    \sigma\leq -\bar s -\bar s'+V|\nabla \Phi(\bar t,\bar \mu)|_\cH + V(|\bar\iota|_\cH+|\bar\iota'|_\cH) - \partial_t \Phi(\bar t,\bar \mu)\leq 2(1+V)\delta.
\end{align*}
By making $\delta$ sufficiently small, we reach a contradiction, and thus we must have either $\bar t =0$ or $\bar t'=0$.

Step~4. 
We conclude our proof. Let us consider the case $\bar t=0$. Fixing any $(t,\mu)\in[0,T)\times\C$, by $\bar\Psi(t,\mu,t,\mu)\leq \bar\Psi(\bar t,\bar\mu,\bar t',\bar\mu')$, we have
\begin{align*}
    \Psi(t,\mu,t,\mu)\leq \Psi(\bar t,\bar\mu,\bar t',\bar\mu') + \delta(4 T+2 |\mu|_\cH + 2C_1R+ 2 (L+\delta)\eps)
\end{align*}
where we used $t,\bar t<T$, \eqref{e.s,iota,s',iota'<delta}, \eqref{e.|bar_mu|<eps^-1/2} and \eqref{e.mu-mu'<2alphaL}. Due to $u(0,\cdot)\leq v(0,\cdot)$ and $\bar t=0$, using \eqref{e.mu-mu'<2alphaL} and \eqref{e.t-t'<2alphaL}, we can see 
\begin{align*}
    \Psi(\bar t,\bar\mu,\bar t',\bar\mu') \leq u(0,\bar\mu)-v(\bar t',\bar \mu')\leq v(0,\bar\mu)-v(\bar t',\bar \mu')\leq L|\bar t-\bar t'|+L|\bar\mu-\bar\mu'|_\cH \leq 4L(L+\delta)\eps.
\end{align*}
Combining the above two displays and recalling the definition of $\Psi$, we get
\begin{align*}
    u(t,\mu)-v(t,\mu)\leq \Phi(t,\mu)+\sigma t+\frac{\sigma}{T-t}+ 4L(L+\delta)\eps  + \delta(4 T+2 |\mu|_\cH + 2C_1R+ 2 (L+\delta)\eps) .
\end{align*}
First sending $\delta\to0$, then $\eps,\sigma\to0$, and finally $R\to\infty$, by the above and the definition of $\Phi$, we obtain $u(t,\mu)-v(t,\mu)\leq 0$ as desired.
The case $\bar t'=0$ is similar.
\end{proof}

\subsection{Convergence of approximations}\label{s.HJ_inft_d_cvg}

Recall our notation, for Hamilton--Jacobi equations on different cones with different nonlinearities, given above Definition~\ref{d.vs}.
Throughout, for a directed subcollection $\tilde\J\subset \J$, and a metric space $\cX$, we say that a net $(g_j)_{j\in \tilde\J}$, consisting of $g_j:\cX\to\R$, converges in the local uniform topology to some $g:\cX\to \R$, if $(g_j)_{j\in \tilde\J}$ converges uniformly to $g$ on any closed metric ball with a finite radius.

\begin{proposition}[Limit of approximations is a solution]\label{p.lim_is_vis_sol}
Suppose that $\H$ is continuous.
For each $j\in\Jgen$, let $f_j$ be a viscosity subsolution (respectively, supersolution) of $\HJ(\cH^j,\cM^j,\H^j)$.
If $f=\lim_{j\in \Jgen}f_j^\uparrow$ in the local uniform topology, then $f$ is a viscosity subsolution (respectively, supersolution) of $\HJ(\cH,\C,\H)$. 
\end{proposition}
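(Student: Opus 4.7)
The plan is to prove the subsolution case; the supersolution case is symmetric. Let $\phi$ be smooth on $(0,\infty)\times\cM$ and suppose $f-\phi$ has a local maximum at $(t,\mu)\in(0,\infty)\times\cM$. Replacing $\phi$ by $\phi(s,\nu)+|s-t|^2+|\nu-\mu|^2_\cH$ preserves its differentials at $(t,\mu)$, so I may assume there is a closed set $B=\{(s,\nu)\in[0,\infty)\times\cM:|s-t|+|\nu-\mu|_\cH\leq r\}\subset(0,\infty)\times\cM$ on which the quadratic separation
\[
(f-\phi)(s,\nu)\leq (f-\phi)(t,\mu)-|s-t|^2-|\nu-\mu|^2_\cH
\]
holds for all $(s,\nu)\in B$.

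For each $j\in\Jgen$, define the projected test function $\phi^j(s,x)=\phi(s,\lf_j x)$ on $\R_+\times\cM^j$ and set $g_j=f_j-\phi^j$. By Lemma~\ref{l.basics_(j)}~\eqref{i.pj_lf=ID} we have $f_j=f_j^\uparrow\circ\lf_j$, hence $g_j(s,x)=(f_j^\uparrow-\phi)(s,\lf_j x)$. The set $K_j:=\{(s,x)\in\R_+\times\cM^j:(s,\lf_j x)\in B\}$ is compact in the finite-dimensional space $\R\times\cH^j$, thanks to the isometry property of $\lf_j$ (Lemma~\ref{l.basics_(j)}~\eqref{i.isometric}), so $g_j$ attains a maximum on $K_j$ at some $(t_j,x_j)$. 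Write $(\tau_j,\nu_j):=(t_j,\lf_j x_j)\in B$; the claim is that $(\tau_j,\nu_j)\to(t,\mu)$ in $\R\times\cH$. To see this, combine: (i) $\mu^\j\to\mu$ by Lemma~\ref{l.basics_(j)}~\eqref{i.cvg}, together with Lemma~\ref{l.proj_cones}~\eqref{i.pjM} showing $(t,\pj_j\mu)\in K_j$ eventually, and $f_j^\uparrow(t,\mu^\j)=f_j^\uparrow(t,\mu)$ via $\pj_j\mu^\j=\pj_j\mu$ (Lemma~\ref{l.basics_(j)}~\eqref{i.lf_pj_(j)}), giving $g_j(t,\pj_j\mu)\to(f-\phi)(t,\mu)$; (ii) local uniform convergence $f_j^\uparrow\to f$ on $B$, yielding $g_j(t_j,x_j)=(f-\phi)(\tau_j,\nu_j)+o(1)$; (iii) the quadratic separation at $(\tau_j,\nu_j)$. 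Chaining these with $g_j(t_j,x_j)\geq g_j(t,\pj_j\mu)$ produces $|\tau_j-t|^2+|\nu_j-\mu|^2_\cH\leq o(1)$. In particular $(\tau_j,\nu_j)\in\itr B$ for large $j$, so $(t_j,x_j)$ is a local maximum of $g_j$ on $(0,\infty)\times\cM^j$ with $t_j>0$.

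Applying the subsolution property of $f_j$ to $\phi^j$ at $(t_j,x_j)$ yields $\partial_t\phi^j(t_j,x_j)-\H^j(\nabla_j\phi^j(t_j,x_j))\leq 0$. By Lemma~\ref{l.derivatives}~\eqref{i.char_nabla_j} and Lemma~\ref{l.basics_(j)}~\eqref{i.lf_pj_(j)}, $\lf_j\nabla_j\phi^j(t_j,x_j)=(\nabla\phi(\tau_j,\nu_j))^\j$, while $\H^j=\H\circ\lf_j$ by Definition~\ref{d.lift_proj}; hence the inequality reads
\[
\partial_t\phi(\tau_j,\nu_j)-\H\bigl((\nabla\phi(\tau_j,\nu_j))^\j\bigr)\leq 0.
\]
Smoothness of $\phi$ gives $\nabla\phi(\tau_j,\nu_j)\to\nabla\phi(t,\mu)$ in $\cH$, Lemma~\ref{l.basics_(j)}~\eqref{i.cvg_iota^(j)_j} upgrades this to $(\nabla\phi(\tau_j,\nu_j))^\j\to\nabla\phi(t,\mu)$ in $\cH$, and continuity of $\H$ then delivers the viscosity inequality at $(t,\mu)$. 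The main obstacle is the convergence $(\tau_j,\nu_j)\to(t,\mu)$: in infinite dimensions bounded sets are not relatively compact, so a convergent subsequence cannot be extracted by compactness as in finite dimensions; the quadratic separation obtained by perturbing $\phi$ is precisely what pins the approximate maximizers to $(t,\mu)$ and keeps the whole argument inside the ball where one controls $f_j^\uparrow-f$.
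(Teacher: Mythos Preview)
Your proof is correct and follows essentially the same approach as the paper: perturb $\phi$ to obtain a strict (quadratic) local maximum, project the test function to $\cH^j$, pick a maximizer $(t_j,x_j)$ of $f_j-\phi^j$ on a compact finite-dimensional set, use the quadratic separation together with local uniform convergence to force $(t_j,\lf_j x_j)\to(t,\mu)$, and then pass to the limit in the viscosity inequality via Lemma~\ref{l.derivatives}~\eqref{i.char_nabla_j} and Lemma~\ref{l.basics_(j)}~\eqref{i.cvg_iota^(j)_j}. The only cosmetic differences are that the paper centers its finite-dimensional ball at $(t,\pj_j\mu)$ in $\cH^j$ rather than pulling back $B$ through $\lf_j$, and frames the convergence of $(t_j,\lf_j x_j)$ as a contradiction argument with explicit~$\delta$ rather than your direct chaining; neither changes the substance.
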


\begin{proof}
Suppose that $\{f_j\}_{j\in\Jgen}$ is a collection of viscosity subsolutions.
Let us assume that $f-\phi$ achieves a local maximum at $(t,\mu)\in(0,\infty)\times\cM$ for some smooth function $\phi$. We define
\begin{align*}\widetilde\phi(s,\nu)= \phi(s,\nu)+|s-t|^2+|\nu-\mu|^2_\cH,\quad\forall (s,\nu)\in\R_+\times\C.
\end{align*}
Then, there is some $R>0$ such that
\begin{align}\label{e.f-tilde_phi}
    f(s,\nu)-\widetilde \phi(s,v)  = f(t,\mu)-\tilde\phi(t,\mu)- |(s,\nu) - (t,\mu)|^2_{\R\times\cH},\quad\forall (s,\nu)\in B
\end{align}
where
\begin{align*}
    B=\{(s,\nu)\in (0,\infty)\times \C: |(s,\nu)-(t,\mu)|_{\R\times\cH}\leq 2R\}.
\end{align*}
Note that $f-\tilde\phi$ achieves a local maximum at $(t,\mu)$ and that the derivatives of $\tilde\phi $ coincide with those of $\phi$ at $(t,\mu)$. For lighter notation, we redefine $\phi $ to be $\tilde\phi$ henceforth.
It is also clear from Definition~\ref{d.differentiability}~\eqref{i.phi_smooth} that $\phi$ is locally Lipschitz. Hence, there is $L>0$ such that
\begin{align}\label{e.phi_loc_L}
    |\phi(s,\nu)-\phi(s',\nu')|\leq L|(s,\nu)-(s',\nu')|_{\R\times \cH},\quad\forall (s,\nu),\,(s',\nu') \in B.
\end{align}

For each $j\in \Jgen$, we set
\begin{align*}
    B_j = \{(s,y)\in(0,\infty)\times \C^j:|(s,y)- (t,\pj_j \mu)|_{\R\times\cH^j}\leq R\}.
\end{align*}
By making $2R<|t|$ sufficiently small, we can ensure that both $B$ and $B_j$ are closed. Let $(t_j,x_j)\in B_j$ be the point at which $f_j-\phi^j$ achieves the maximum over $B_j$. Here, $\phi^j$ is the $j$-projection of $\phi$ given in Definition~\ref{d.lift_proj}.

For any $\delta\in(0,1)$, we choose $j'\in \Jgen$ such that, for all $j\in\Jgen$ satisfying $j\supset j'$,
\begin{gather}
    \sup_{B}\left|f^\uparrow_j - f\right|<\frac{\delta^2}{4}, \label{e.sup|f^uparrow_j-f|}
    \\
    \left|\mu - \mu^\j\right|_\cH< \min\left\{R ,\frac{\delta^2}{4L}\right\}. \label{e.|mu-mu^j|<R...}
\end{gather}
We claim that, for all $j\in\Jgen$ satisfying $j\supset j'$,
\begin{align}\label{e.(t,x)-(t,mu)}
    \left|(t_j, \lf_j x_j) - (t,\mu)\right|_{\R\times\cH} <\delta.
\end{align}
We argue by contradiction and suppose that there is $j\supset j'$ such that
\begin{align}\label{e.contradiction}
    \left|(t_j, \lf_j x_j) - (t,\mu)\right|_{\R\times\cH} \geq \delta.
\end{align}
Before proceeding, we note that
\begin{align}\label{e.(t_j,l_jx_j)-...}
    \left|(t_j, \lf_j x_j) - (t,\mu)\right|_{\R\times\cH}\leq \left|(t_j, \lf_j x_j) - \left(t,\mu^\j\right)\right|_{\R\times\cH} + \left| \mu - \mu^\j\right|_\cH \leq 2R
\end{align}
where in the last inequality we used \eqref{e.|mu-mu^j|<R...}, and the fact that $(t_j,  x_j)\in B_j$ together with Lemma~\ref{l.basics_(j)}~\eqref{i.isometric} and~\eqref{i.lf_pj_(j)}.
Then, we have
\begin{align*}
    f_j(t_j,x_j)-\phi^j(t_j,x_j) & = f^\uparrow_j(t_j,\lf_j x_j) - \phi(t_j, \lf_j x_j)\\
    &\leq f(t_j, \lf_j x_j) - \phi(t_j, \lf_j x_j) + \frac{\delta^2}{4} 
    \\
    & \leq f(t,\mu) - \phi(t,\mu) - \frac{3\delta^2}{4}\\ 
    &\leq f^\uparrow_j(t,\mu) -\phi(t,\mu)-\frac{\delta^2}{2}
    \\
    & \leq f^\uparrow_j\left(t,\mu^\j\right) -\phi\left(t,\mu^\j\right)-\frac{\delta^2}{4}\\
    &= f_j\left(t,\pj_j\mu\right) -\phi^j\left(t,\pj_j\mu\right)-\frac{\delta^2}{4}
\end{align*}
where the first and the last equalities follow from the definitions of lifts and projections of functions in Definition~\ref{d.lift_proj} together with Lemma~\ref{l.basics_(j)}~\eqref{i.pj_lf=ID} and \eqref{i.lf_pj_(j)}; the first and third inequalities follow from \eqref{e.sup|f^uparrow_j-f|} and $(t_j,\lf_jx_j)\in B$ due to \eqref{e.(t_j,l_jx_j)-...}; the second inequality follows from \eqref{e.contradiction} and \eqref{e.f-tilde_phi}; the fourth inequality follows from the observation that $f^\uparrow_j(t,\mu) = f^\uparrow_j(t,\mu^\j)$ due to the definition of lifts of functions and Lemma~\ref{l.basics_(j)}~\eqref{i.lf_pj_(j)}, and \eqref{e.phi_loc_L} along with \eqref{e.|mu-mu^j|<R...}. The relation in the above display contradicts the fact the maximality of $f_j-\phi_j$ over $B_j$ at $(t_j,x_j)$. Hence, we must have \eqref{e.(t,x)-(t,mu)} and thus
\begin{align}\label{e.(t_j,l_jx_j)_cvg}
    \lim_{j\in\Jgen} (t_j,\lf_jx_j) = (t,\mu)\quad \text{in }(0,\infty)\times \C.
\end{align}

Using \eqref{e.(t_j,l_jx_j)_cvg} and Lemma~\ref{l.basics_(j)}~\eqref{i.pj_lf=ID} and~\eqref{i.|iota^j|<|iota|}, we also have that $\lim_{j\in\Jgen} |(t_j,x_j)- (t,\pj_j\mu)|_{\R\times\cH^j}=0$. Hence, we deduce that, for sufficiently fine $j\in\Jgen$, $(t_j,x_j)$ lies in the interior of $B_j$ relative to $(0,\infty)\times \C^j$. Since $f_j$ is a viscosity subsolution, we get that \begin{align}\label{e.phi^j_HJ_new}
    \left(\partial_t \phi^j - \H^j\left(\nabla_j \phi^j\right)\right)(t_j, x_j) \leq 0.
\end{align}
Using the definition of projections of functions, Lemma~\ref{l.derivatives}~\eqref{i.char_nabla_j}, and Lemma~\ref{l.basics_(j)}~\eqref{i.lf_pj_(j)}, we have that
\begin{gather*}
    \partial_t \phi^j(t, x_j) = \partial_t \phi(t, \lf_j x_j),\qquad \nabla_j \phi^j(t_j,x_j) = \pj_j\left(\nabla \phi(t_j, \lf_j x_j)\right),
    \\
    \H^j\left(\nabla_j\phi^j(t_j,x_j)\right) = \H\left(\left(\nabla\phi(t_j,\lf_jx_j)\right)^\j\right).
\end{gather*}
Then, using \eqref{e.(t_j,l_jx_j)_cvg}, the continuity of differentials (see Definition~\ref{d.differentiability}~\eqref{i.phi_smooth}), and Lemma~\ref{l.basics_(j)}~\eqref{i.cvg_iota^(j)_j}, we can pass \eqref{e.phi^j_HJ_new} to the limit to obtain that
\begin{align*}
    \left(\partial_t \phi - \H\left(\nabla \phi\right)\right)(t, \mu) \leq 0.
\end{align*}
Hence, we have verified that $f$ is a viscosity subsolution.
The same argument also works for viscosity supersolutions.
\end{proof}

Recall that $\Junif$ is the collection of uniform partitions of $[0,1)$, which is generating in the sense given in Section~\ref{s.partition}.
\begin{proposition}[Convergence of approximations]\label{p.exist_sol_approx}
Suppose that $\H:\cH\to\R$ is locally Lipschitz and $\C^*$-increasing and that $\psi:\C\to\R$ is $\C^*$-increasing and satisfies, for some $C>0$ and $p\in[1,2)$,
\begin{align}\label{e.psi_Lip_Lp}
    |\psi(\mu)-\psi(\nu)|\leq C|\mu-\nu|_{L^p}.
\end{align}
For every $j\in\Jgood$, let $f_j$ be the viscosity solution of $\HJ(\cH^j,\C^j,\H^j;\psi^j)$ given by Theorem~\ref{t.hj_cone}~\eqref{i.main_equiv}.
Then, $(f^\uparrow_j)_{j\in \Jgood}$ converges in the local uniform topology to a Lipschitz function $f:\R_+\times \C\to\R$ satisfying $f(0,\cdot)=\psi$,
\begin{gather}
    \sup_{t\in\R_+}\|f(t,\cdot)\|_\mathrm{Lip} \leq \|\psi\|_\mathrm{Lip}, \label{e.f(t,)_lip_infty}
    \\
    \sup_{\mu\in \C} \|f(\cdot, \mu)\|_\mathrm{Lip}\leq \sup_{\substack{\iota\in\cH \\ |\iota|\leq \|\psi\|_\mathrm{Lip}}}|\H(\iota)|. \label{e.f(,mu)_lip_infty}
\end{gather}
\end{proposition}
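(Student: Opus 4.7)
The strategy is to show that $(f_j^\uparrow)_{j\in\Jgood}$ is Cauchy in the local uniform topology, so that it converges to a Lipschitz limit $f$. For $j\subset j'$ in $\Jgood$, I reduce the comparison of $f_j^\uparrow$ and $f_{j'}^\uparrow$ to a finite-dimensional comparison on $\C^{j'}$ by lifting $f_j$ appropriately, and then apply the quantified comparison principle of Proposition~\ref{p.comp_fin_d_strong}.

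For $j\subset j'$, set $A:=\pj_j\lf_{j'}\colon\cH^{j'}\to\cH^j$; by Lemma~\ref{l.proj_cones} this maps $\C^{j'}$ onto $\C^j$, and its adjoint $A^*:=\pj_{j'}\lf_j$ satisfies $AA^*=I$ on $\cH^j$. Define $G_j(t,y):=f_j(t,Ay)$ on $\R_+\times\C^{j'}$. Using Lemma~\ref{l.basics_(j)}\eqref{i.projective} one checks $A\pj_{j'}=\pj_j$, so
\[
f_j^\uparrow(t,\mu)-f_{j'}^\uparrow(t,\mu)=G_j(t,\pj_{j'}\mu)-f_{j'}(t,\pj_{j'}\mu),\qquad\forall (t,\mu)\in\R_+\times\C.
\]
The key assertion is that $G_j$ is itself a viscosity solution of $\HJ(\cH^{j'},\C^{j'},\H^{j'})$. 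At any interior local extremum $(t_0,y_0)$ of $G_j-\phi$, the invariance of $G_j$ along $\ker A$ forces $\nabla_{j'}\phi(t_0,y_0)\in\mathrm{Im}(A^*)$; writing this as $A^*q$, the pullback $\tilde\phi(t,x):=\phi(t,A^*x+(I-A^*A)y_0)$ is a smooth test function for $f_j$ at $(t_0,Ay_0)\in(0,\infty)\times\C^j$, and the viscosity inequality for $f_j$ transports to the one for $\phi$ via the identity $\H^{j'}(A^*q)=\H(\lf_{j'}A^*q)=\H(\lf_jq)=\H^j(q)$, which holds because $\lf_jq$ is $\F_{j'}$-measurable whenever $j\subset j'$. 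The viscosity property extends from the interior to all of $\C^{j'}$ by Proposition~\ref{p.drop_bdy}, since $\H^{j'}$ is $(\C^{j'})^*$-increasing and continuous by Lemma~\ref{l.proj_cones}\eqref{i.lM^j*}.

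Both $G_j$ and $f_{j'}$ are then viscosity solutions of the same equation on $\R_+\times\C^{j'}$, with Lipschitz bounds uniform in $j,j'$: Proposition~\ref{p.lip_l^p} combined with $\|\psi^j\|_{\mathrm{Lip}\|\cdot\|_p}\leq C$ (which holds because $|\lf_jx-\lf_jy|_{L^p}=\|x-y\|_p$) yields the $\ell^p$-spatial bound, while the induced $L^2$-Lipschitz bound $\|\psi\|_\mathrm{Lip}\leq C$ (from $|\cdot|_{L^p}\leq|\cdot|_{L^2}$ on a probability space) together with local boundedness of $\H$ bounds the time-Lipschitz constant. Choosing $M$ larger than twice the Lipschitz constant, and an auxiliary radius $\tilde R$ so large that outside $B(\tilde R)$ the linear growth $2C|y|$ of the initial difference is dominated by the penalty $-M(|y|-R)$, Proposition~\ref{p.comp_fin_d_strong} yields
\[
\sup_{|y|_{\cH^{j'}}+Vt\leq R}|G_j(t,y)-f_{j'}(t,y)|\leq C\sup_{\iota\in B(\tilde R)\cap\C}\bigl|\iota^\j-\iota\bigr|_{L^p},
\]
where I used $|G_j(0,y)-f_{j'}(0,y)|=|\psi((\lf_{j'}y)^\j)-\psi(\lf_{j'}y)|\leq C|(\lf_{j'}y)^\j-\lf_{j'}y|_{L^p}$.

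The main obstacle is to show that the right-hand side above vanishes as $j$ refines. This requires compactness of $B(\tilde R)\cap\C$ in $L^p$ for $p\in[1,2)$: from any sequence of monotone $\S^D_+$-valued functions with uniformly bounded $L^2$-norms, Helly's selection theorem extracts a pointwise a.e.\ limit, and the $L^p$-uniform integrability (by de la Vall\'ee Poussin, from the $L^2$-bound) upgrades this to $L^p$-convergence via Vitali's theorem. On this compact set, the pointwise convergence $\iota^\j\to\iota$ in $L^2$ (hence in $L^p$) from Lemma~\ref{l.basics_(j)}\eqref{i.cvg} becomes uniform via a standard finite-$\eps$-net argument using the contraction property of $\iota\mapsto\iota^\j$. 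This establishes the Cauchy property of $(f_j^\uparrow)$ on any bounded subset of $\R_+\times\C$, and the resulting limit $f$ inherits $f(0,\cdot)=\psi$ by $L^p$-continuity, together with the Lipschitz bounds~\eqref{e.f(t,)_lip_infty} and~\eqref{e.f(,mu)_lip_infty} from the uniform Lipschitz bounds on $f_j$ given by Theorem~\ref{t.hj_cone}\eqref{i.main_lip}.
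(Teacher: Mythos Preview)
Your proof is correct and takes a genuinely different route from the paper's in two places.

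\textbf{The lift step.} The paper (Lemma~\ref{l.f_jtoj'}) shows that $f_{j\to j'}$ is a solution on the \emph{larger} cone $(\pj_j\lf_{j'})^{-1}(\C^j)\supset\C^{j'}$, and at an arbitrary point of this cone the gradient of the test function need not lie in $\mathrm{Im}(A^*)$; the paper therefore proves the one-sided inclusion $\lf_{j'}\nabla_{j'}\phi-\lf_j\pj_j\lf_{j'}\nabla_{j'}\phi\in\C^*$ and uses the $\C^*$-monotonicity of $\H$ to close the argument. You instead work only on $\mathring\C^{j'}$, where the kernel-invariance of $G_j$ pins $\nabla_{j'}\phi$ exactly onto $\mathrm{Im}(A^*)$, so that $\H^{j'}(\nabla_{j'}\phi)=\H^j(\nabla_j\tilde\phi)$ holds as an identity rather than an inequality; Proposition~\ref{p.drop_bdy} then handles the boundary. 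Your version is a little cleaner and, since both $G_j$ and $f_{j'}$ live on the same cone $\C^{j'}$, you can invoke Proposition~\ref{p.comp_fin_d_strong} with $\C=\C'$.

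\textbf{The Cauchy estimate.} The paper obtains an explicit rate $|f_{j\to j'}-f_{j'}|\lesssim |j|^{-(2-p)/(2p)}(t+|x|)$ through a direct combinatorial bound on $|(\lf_{j'}x)^\j-\lf_{j'}x|_{L^1}$ (the telescoping computation in~\eqref{e.L^1_est}--\eqref{e.cond_on_|x_k|>B}), and this is the only place where the uniformity of the partitions in $\Jgood$ is actually used. Your soft argument---$B(\tilde R)\cap\C$ is $L^p$-compact for $p<2$ by Helly plus Vitali, so the pointwise convergence $\iota^\j\to\iota$ of Lemma~\ref{l.basics_(j)}\eqref{i.cvg} is automatically uniform there---gives no rate but is conceptually simpler, and in fact works for any generating collection $\Jgen$, not just $\Jgood$. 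The paper's approach buys a quantitative convergence rate; yours buys generality and avoids the combinatorics entirely.
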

To prove this result, we follow the proof of \cite[Proposition~3.7]{mourrat2020nonconvex}. First, we show that the lift of a solution is still a solution.

\begin{lemma}\label{l.f_jtoj'}
Suppose that $\H:\cH\to\R$ is $\C^*$-increasing.
Let $j,j'\in\J$ satisfy $j\subset j'$. If $f_j$ is a viscosity subsolution (respectively, supersolution) of $\HJ(\cH^j,\C^j,\H^j)$, then the function defined by
\begin{align}\label{e.f_jtoj'}
    f_{j\to j'}(t,x) = f_j(t, \pj_j\lf_{j'} x),\quad\forall (t,x)\in\R_+\times (\pj_j\lf_{j'})^{-1}(\C^j)
\end{align}
is a viscosity subsolution (respectively, supersolution) of $\HJ(\cH^{j'},(\pj_j\lf_{j'})^{-1}(\C^j),\H^{j'})$.
\end{lemma}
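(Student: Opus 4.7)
The plan is to transfer the viscosity property from $f_j$ to $f_{j\to j'}$ by constructing an affine section of the averaging map $P:=\pj_j\lf_{j'}:\cH^{j'}\to\cH^j$. I present the subsolution case; the supersolution case will be symmetric. Fix a smooth test function $\phi$ such that $f_{j\to j'}-\phi$ attains a local maximum at some $(t,x)\in(0,\infty)\times(\pj_j\lf_{j'})^{-1}(\C^j)$.

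The first step is to constrain $\nabla_{j'}\phi(t,x)$. Because $f_{j\to j'}(t,\cdot)$ depends only on $P(\cdot)$ and the preimage cone $(\pj_j\lf_{j'})^{-1}(\C^j)$ is stable under translations by $\ker P$, for every $v\in\ker P$ we have $f_{j\to j'}(t,x+sv)=f_{j\to j'}(t,x)$ for all small $s\in\R$. The local-maximum property at $(t,x)$ then forces $\phi(t,x+sv)\geq\phi(t,x)$, and differentiating at $s=0$ gives $\nabla_{j'}\phi(t,x)\perp\ker P$. Since $P^*=\pj_{j'}\lf_j$ by Lemma~\ref{l.basics_(j)}~\eqref{i.adjoint} and $\cH^{j'}$ is finite-dimensional, I conclude
\begin{align*}
z:=\nabla_{j'}\phi(t,x)=\pj_{j'}\lf_j w\quad\text{for some }w\in\cH^j.
\end{align*}

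Next I push $\phi$ down to a test function for $f_j$. Set $v_0:=x-\pj_{j'}\lf_j Px$ (which lies in $\ker P$), $Q(y):=\pj_{j'}\lf_j y+v_0$, and $\tilde\phi(s,y):=\phi(s,Q(y))$ on $(0,\infty)\times\C^j$. Applying Lemma~\ref{l.basics_(j)}~\eqref{i.projective} with $\iota=\lf_j y$ and combining with~\eqref{i.pj_lf=ID} yields $PQ(y)=y$, so $Q(y)\in(\pj_j\lf_{j'})^{-1}(\C^j)$ and $Q(Px)=x$. Thus $\tilde\phi$ is smooth on its domain, and the identity $f_j(s,y)-\tilde\phi(s,y)=f_{j\to j'}(s,Q(y))-\phi(s,Q(y))$ shows that $f_j-\tilde\phi$ has a local maximum at $(t,Px)$. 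The subsolution property of $f_j$, together with $\partial_t\tilde\phi(t,Px)=\partial_t\phi(t,x)$ and $\nabla_j\tilde\phi(t,Px)=Pz$ (obtained from the chain rule and Lemma~\ref{l.basics_(j)}~\eqref{i.adjoint}), yields
\begin{align*}
\partial_t\phi(t,x)-\H^j(Pz)\leq 0.
\end{align*}

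It remains to verify $\H^j(Pz)=\H^{j'}(z)$, i.e.\ $\H(\lf_j Pz)=\H(\lf_{j'}z)$. From $z=\pj_{j'}\lf_j w$, Lemma~\ref{l.basics_(j)}~\eqref{i.projective} and~\eqref{i.pj_lf=ID} give $Pz=w$, and hence $\lf_j Pz=\lf_j w$. On the other hand, Lemma~\ref{l.basics_(j)}~\eqref{i.lf_pj_(j)} rewrites $\lf_{j'}z=(\lf_j w)^{(j')}$, which by~\eqref{e.iota^j_cond_exp} equals $\E[\lf_j w(U)\mid\F_{j'}]$; since $\lf_j w$ is $\F_j$-measurable and $\F_j\subset\F_{j'}$ (as $j\subset j'$), this conditional expectation is just $\lf_j w$. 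Hence $\lf_j Pz=\lf_{j'}z$, $\H^j(Pz)=\H^{j'}(z)$, and the preceding display becomes the subsolution inequality for $f_{j\to j'}$. The supersolution case is identical after swapping maxima for minima; the kernel argument still places $z\in\im(\pj_{j'}\lf_j)$, and the compatibility $\H^j(Pz)=\H^{j'}(z)$ is insensitive to the direction of the inequality. The main conceptual step is recognizing that the constancy of $f_{j\to j'}$ along $\ker P$-fibers pins $\nabla_{j'}\phi(t,x)$ into $\im(\pj_{j'}\lf_j)$; once this is in hand, the identification $\lf_j Pz=\lf_{j'}z$ is a short conditional-expectation argument, and in particular the $\C^*$-monotonicity assumption on $\H$ is not used in this lemma.
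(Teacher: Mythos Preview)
Your proof is correct, and it follows essentially the same test-function construction as the paper: your affine section $Q(y)=\pj_{j'}\lf_j y + (x-\pj_{j'}\lf_j Px)$ is exactly the map the paper uses to define its $\phi_j$, and the transfer of the local extremum to $(t,Px)$ is argued the same way.

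The genuine difference is in how you compare $\H^{j'}(\nabla_{j'}\phi)$ with $\H^j(P\nabla_{j'}\phi)$. The paper only perturbs one-sidedly along the specific kernel directions $z'=\pj_{j'}\iota-\pj_{j'}\lf_j\pj_j\iota$ for $\iota\in\C$, obtaining $\lf_{j'}\nabla_{j'}\phi-\lf_j P\nabla_{j'}\phi\in\C^*$, and then invokes the $\C^*$-monotonicity of $\H$ to close the inequality. You instead observe that the preimage $(\pj_j\lf_{j'})^{-1}(\C^j)$ is invariant under \emph{two-sided} translations by any $v\in\ker P$, so the local extremum forces $\nabla_{j'}\phi(t,x)\perp\ker P$, hence $\nabla_{j'}\phi(t,x)\in\im(\pj_{j'}\lf_j)$; the short conditional-expectation computation then gives the exact equality $\lf_{j'}\nabla_{j'}\phi=\lf_j P\nabla_{j'}\phi$. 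This is strictly sharper than the paper's $\C^*$ containment, and your remark that the $\C^*$-monotonicity hypothesis on $\H$ is unnecessary for this lemma is correct. What the paper's route buys is only robustness: it would still work if the domain were merely stable under one-sided kernel shifts, but on the actual domain $P^{-1}(\C^j)$ that extra generality is not needed.
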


Let us mention and fix a small inaccuracy in \cite{mourrat2020nonconvex}. In the proof \cite[Proposition~3.7]{mourrat2020nonconvex} (see \textit{Step~1} therein), it was claimed that, rephrased in the notation here, the lift of a solution of $\HJ(\cH^j,\C^j,\H^j)$ solves $\HJ(\cH^{j'},\C^{j'},\H^{j'})$ for $j'\supset j$. There, a Dirichlet-type boundary condition was imposed in the definition of viscosity solutions. But, on the boundary of $\C^{j'}$, the lift does not satisfy the condition. This can be fixed using Theorem~\ref{t.hj_cone}~\eqref{i.main_equiv} to see that the boundary is not relevant.

\begin{proof}[Proof of Lemma~\ref{l.f_jtoj'}]
Setting $\tilde \C = (\pj_j\lf_{j'})^{-1}(\C^j)$ for convenience, we suppose that $f_{j\to j'}-\phi$ has a local maximum at $(t,x)\in(0,\infty)\times \tilde\C$ for some smooth function $\phi$. We define
\begin{align*}
    \phi_j(s,y) = \phi(s,x+\pj_{j'}\lf_j y - \pj_{j'}\lf_j \pj_j\lf_{j'} x),\quad\forall (s,y)\in\R_+\times \C^j.
\end{align*}
Using Lemma~\ref{l.basics_(j)}~\eqref{i.projective} and~\eqref{i.pj_lf=ID}, we can show that
\begin{gather}
    \pj_j\lf_{j'}\left(x+\pj_{j'}\lf_j y - \pj_{j'}\lf_j \pj_j\lf_{j'} x\right) =  \pj_j\lf_j y = y\in\C^j, \label{e.pjlfj'(x...)}
\end{gather}
for every $y\in\C^j$, which implies that $x+\pj_{j'}\lf_j y - \pj_{j'}\lf_j \pj_j\lf_{j'} x\in\tilde\C$ for every $y\in\C^j$. Hence, $\phi_j$ is well-defined.

Setting $\bar y = \pj_j\lf_{j'} x$, we want to show that $f_j-\phi_j$ achieves a local maximum at $(t,\bar y)$. Let us fix some $r>0$ sufficiently small such that
\begin{align}\label{e.sup_B_j'}
    \sup_{B_{j'}} (f_{j\to j'}-\phi) =  f_{j\to j'}(t,x)-\phi(t,x)
\end{align}
where
\begin{align*}
    B_{j'}=\left\{(s,z)\in(0,\infty)\times\widetilde\C:|s-t|+|z-x|_{\cH^{j'}}\leq r\right\}.
\end{align*}
Then, we set $B_j = \{(s,y)\in (0,\infty)\times\C^j:|s-t|+|y-\bar y|_{\cH^j}\leq r\}$.
Using Lemma~\ref{l.basics_(j)}~\eqref{i.isometric} and~\eqref{i.|iota^j|<|iota|}, we have that
\begin{align*}
    \left| \pj_{j'}\lf_j y - \pj_{j'}\lf_j \bar y \right|_{\cH^{j'}}\leq |y-\bar y|_{\cH^j}, \quad\forall y \in \C^j,
\end{align*}
which along with \eqref{e.pjlfj'(x...)} implies that
\begin{align*}
    \left(s, x+\pj_{j'}\lf_j y - \pj_{j'}\lf_j \bar y\right)\in B_{j'},\quad\forall (s,y)\in B_j.
\end{align*}
Using \eqref{e.pjlfj'(x...)}, the definition of $f_{j\to j'}$ in \eqref{e.f_jtoj'}, and the definition of $\phi_j$, we also have that
\begin{align*}
    f_j(s,y)-\phi_j(s,y)=f_{j\to j'}(s,x+\pj_{j'}\lf_j y - \pj_{j'}\lf_j \bar y)-\phi(s,x+\pj_{j'}\lf_j y - \pj_{j'}\lf_j \bar y),
    \quad\forall (s,y)\in\R_+\times\C^j.
\end{align*}
Using this, the previous display, and \eqref{e.sup_B_j'}, we obtain that
\begin{align*}
    \sup_{B_j}(f_j-\phi_j)\leq \sup_{B_{j'}} (f_{j\to j'}-\phi)=f_{j\to j'}(t,x)-\phi(t,x)=f_j(t,\bar y)-\phi_j(t,\bar y),
\end{align*}
which implies that $f_j-\phi_j$ achieves a local maximum at $(t,\bar y)$.

Since $f_j$ is a viscosity subsolution, we have
\begin{align*}
    \left(\partial_t\phi_j-\H^j(\nabla_j\phi_j)\right)(t,\bar y)\leq 0.
\end{align*}
Using the definition of $\phi_j$, we can compute that, for any $h\in\cH^j$ sufficiently small,
\begin{align*}
    \la h, \nabla_j\phi_j(s,y)\ra_{\cH^j}+o\left(|h|_{\cH^j}\right)&=\phi_j(s,y+h)-\phi_j(s,y)
    \\
    &= \la \pj_{j'}\lf_j h,\, \nabla_{j'}\phi(\cdots)\ra_{\cH^{j'}}+o\left(|\pj_{j'}\lf_j h|_{\cH^{j'}}\right)
    \\
    &=\la h,\,\pj_j\lf_{j'}\nabla_{j'}\phi(\cdots)\ra_{\cH^j}+o\left(|h|_{\cH^j}\right), \quad\forall (s,y)\in\C^j,
\end{align*}
where in $(\cdots)$ we omitted $(s,x+\pj_{j'}\lf_j y-\pj_{j'}\lf_j\pj_j\lf_{j'}x)$, and, in the last equality, we used Lemma~\ref{l.basics_(j)}~\eqref{i.adjoint} and \eqref{i.projective} to get the term in the bracket and Lemma~\ref{l.basics_(j)}~\eqref{i.isometric} and \eqref{i.|iota^j|<|iota|} for the error term. The above display implies that $\nabla_j\phi_j(t,\bar y) = \pj_j\lf_{j'}\nabla_{j'}\phi(t,x)$. It is easy to see $\partial_t\phi_j(t,\bar y) = \partial_t\phi(t,x)$. These along with the previous display and the definition of $\H^j$ yield
\begin{align*}
    \left(\partial_t\phi-\H(\lf_j\pj_j\lf_{j'}\nabla_{j'}\phi)\right)(t,x)\leq 0.
\end{align*}
We claim that
\begin{align}\label{e.lf_nabla_lpl_nabla}
    \lf_{j'}\nabla_{j'}\phi(t,x)-\lf_j\pj_j\lf_{j'}\nabla_{j'}\phi(t,x) \in \C^*.
\end{align}
Since $\H$ is $\C^*$-increasing, recalling that $\H^{j'}=\H(\lf_{j'}(\cdot))$, we deduce from \eqref{e.lf_nabla_lpl_nabla} and the previous display that
\begin{align*}
    \left(\partial_t\phi-\H^{j'}(\nabla_{j'}\phi)\right)(t,x)\leq 0,
\end{align*}
verifying that $f_{j\to j'}$ is a viscosity subsolution of $\HJ(\cH^{j'},\tilde\C,\H^{j'})$. 

To prove \eqref{e.lf_nabla_lpl_nabla}, by the duality of cones, it suffices to show that
\begin{align*}
    \la \iota,\ \lf_{j'}\nabla_{j'}\phi(t,x)-\lf_j\pj_j\lf_{j'}\nabla_{j'}\phi(t,x)\ra_\cH\geq 0, \quad\forall \iota\in\C.
\end{align*}
By Lemma~\ref{l.basics_(j)}~\eqref{i.adjoint}, the above is equivalent to
\begin{align}\label{e.<,nabla>>0}
    \la \pj_{j'}\iota - \pj_{j'}\lf_j\pj_j\iota,\ \nabla_{j'}\phi(t,x)\ra_{\cH^{j'}}\geq 0,\quad\forall \iota\in\C. 
\end{align}
Fix any $\iota\in\C$.
Lemma~\ref{l.basics_(j)}~\eqref{i.projective} yields
\begin{align}\label{e.p_jl_j'z=0}
    \pj_j\lf_{j'}\left(\pj_{j'}\iota - \pj_{j'}\lf_j\pj_j\iota\right) = \pj_j\iota - \pj_j\lf_j\pj_j\iota = 0.
\end{align}
Hence, setting $z= \pj_{j'}\iota - \pj_{j'}\lf_j\pj_j\iota$, we have $z\in\tilde\C$, and thus $\eps z+x\in\tilde\C$ for any $\eps>0$. Since $f_{j\to j'}-\phi$ has a local maximum at $(t,x)$, we can see that, for $\eps>0$ sufficiently small,
\begin{align*}
    \la \eps z,\nabla_{j'}\phi(t,x)\ra_{j'} + o(\eps)=\phi(t,x+\eps z)-\phi(t,x)\geq f_{j\to j'}(t,x+\eps z) - f_{j\to j'}(t,x)
    \\
    =f_j(t, \pj_j\lf_{j'}x + \eps \pj_j\lf_{j'} z) - f_j(t, \pj_j\lf_{j'}x ) = 0
\end{align*}
where the last equality follows from \eqref{e.p_jl_j'z=0} and the definition of $z$. Sending $\eps\to0$, we can verify~\eqref{e.<,nabla>>0} and complete the proof for subsolutions. The argument for supersolutions is the same with inequalities reversed.
\end{proof}

\begin{proof}[Proof of Proposition~\ref{p.exist_sol_approx}]
It can be readily checked that $\H^j$ is locally Lipschitz and $(\C^j)^*$-increasing on $\cH$ (see Lemma~\ref{l.proj_cones}~\eqref{i.lM^j*}) and that $\psi^j$ is Lipschitz and $(\C^j)^*$-increasing. Hence, Theorem~\ref{t.hj_cone} is applicable, which along with its part~\eqref{i.main_equiv} gives the unique solution $f_j$ of $\HJ(\cH^j,\C^j,\H^j;\psi^j)$ for each $j$.

Let $j,j'\subset \Jgood$ satisfy $j\subset j'$, and $f_j$, $f_{j'}$ be the viscosity solutions. We define $f_{j\to j'}$ by \eqref{e.f_jtoj'}.
By Lemma~\ref{l.f_jtoj'}, $f_{j\to j'}$ is a viscosity solution of $\HJ(\cH^{j'},\,(\pj_j\lf_{j'})^{-1}(\C^j),\,\H^{j'};\,\psi^j(\pj_j\lf_{j'}(\cdot)))$. By Lemma~\ref{l.proj_cones}~\eqref{i.pjM} and~\eqref{i.lM^j}, we have
\begin{align}\label{e.pjljC_subset_C}
    \C^{j'}\subset (\pj_j\lf_{j'})^{-1}(\C^j).
\end{align}
Throughout this proof, we denote by $C$ an absolute constant, which may vary from instance to instance.
We claim that there is $C>0$ such that
\begin{align}\label{e.f_j-h'-f_j'}
    |f_{j\to j'}(t,x)-f_{j'}(t,x)|\leq C|j|^{-\frac{2-p}{2p}}\left(t+|x|_{\cH^{j'}}\right),\quad\forall (t,x)\in \R_+\times \C^{j'}.
\end{align}
Let us use this to derive the desired results. For $\mu\in\C$, we set $x= \pj_{j'}\mu$. Lemma~\ref{l.basics_(j)}~~\eqref{i.projective} implies that $\pj_j \lf_{j'} x = \pj_j \mu$. Hence, by definitions, we have
\begin{gather*}
    f^\uparrow_j(t,\mu) = f_j(t,\pj_j \mu) = f_{j\to j'}(t,x)
\end{gather*}
and $f^\uparrow_{j'}(t,\mu) = f_{j'}(t, x)$. Now using~\eqref{e.f_j-h'-f_j'} and Lemma~\ref{l.basics_(j)}~~\eqref{i.|iota^j|<|iota|}, we have
\begin{align*}
    \left|f^\uparrow_j(t,\mu)-f^\uparrow_{j'}(t,\mu)\right|\leq C|j|^{-\frac{2-p}{2p}}\left(t+|\mu|_{\cH}\right).
\end{align*}
We could now conclude the existence of a limit $f(t,\mu)$ by arguing that the above together with the triangle inequality yields that $(f^\uparrow_j(t,\mu))_{j\in\Jgood}$ is a \textit{Cauchy net} in $\R$ (see \cite[Definition~2.1.41]{megginson2012introduction} and \cite[Proposition~2.1.49]{megginson2012introduction}). Denoting the pointwise limit by $f$, and passing $j'$ to limit in the above display to see that $f^\uparrow_j$ converges in the local uniform topology to some $f:\R_+\times\C\to\R$. By Lemma~\ref{l.basics_(j)}~\eqref{i.cvg}, it is straightforward to see $f(0,\cdot)=\psi$.

Then, we show~\eqref{e.f(t,)_lip_infty} and~\eqref{e.f(,mu)_lip_infty}. By \eqref{e.psi_Lip_Lp} and H\"older's inequality, we have $\|\psi\|_\mathrm{Lip}<C$. Theorem~\ref{t.hj_cone}~\eqref{i.main_lip} implies that, for every $j$,
\begin{gather}\label{e.f_lip_finite_d}
    \sup_{t\in\R_+}\|f_j(t,\cdot)\|_\mathrm{Lip} = \|\psi^j\|_\mathrm{Lip},\qquad
    \sup_{x\in \C^j} \|f_j(\cdot, x)\|_\mathrm{Lip}\leq \sup_{\substack{p\in\cH^j \\ |p|_{\cH^j}\leq \|\psi^j\|_\mathrm{Lip}}}|\H^j(p)|.
\end{gather}
By the definition of $\psi^j$ and Lemma~\ref{l.basics_(j)}~~\eqref{i.isometric}, we can see that, for every $x,y\in \C^j$,
\begin{align*}
    |\psi^j(x)-\psi^j(y)|= |\psi(\lf_j x)-\psi(\lf_j y)|\leq \|\psi\|_\mathrm{Lip}|\lf_j x - \lf_j y|_{\cH} = \|\psi\|_\mathrm{Lip}|x - y|_{\cH^j},
\end{align*}
which implies that
\begin{align}\label{e.psi^j_Lip<psi_Lip}
    \|\psi^j\|_\mathrm{Lip}\leq \|\psi\|_\mathrm{Lip},\quad\forall j\in\J.
\end{align}
Using this, the first result in~\eqref{e.f_lip_finite_d} and Lemma~\ref{l.basics_(j)}~~\eqref{i.|iota^j|<|iota|}, we have, for every $t\in\R_+$ and every $\mu,\nu\in\C$,
\begin{align*}
    |f^\uparrow_j(t,\mu)- f^\uparrow_j(t,\nu)|= |f_j(t,\pj_j\mu)-f_j(t,\pj_j\nu)|\leq \|\psi^j\|_\mathrm{Lip}|\pj_j\mu - \pj_j\nu|_{\cH^j}\leq \|\psi\|_\mathrm{Lip}|\mu - \nu|_{\cH},
\end{align*}
yielding~\eqref{e.f(t,)_lip_infty} after passing $j$ to the limit. To see~\eqref{e.f(,mu)_lip_infty}, for every $p$ satisfying the condition under supremum in the second result in~\eqref{e.f_lip_finite_d}, we have, by Lemma~\ref{l.basics_(j)}~~\eqref{i.isometric}, that
\begin{align*}
    |\lf_j p|_\cH = |p|_{\cH^j}\leq \|\psi^j\|_\mathrm{Lip} \leq \|\psi\|_\mathrm{Lip}.
\end{align*}
Since $\H^j(p) = \H(\lf_j p)$ by definition, the right-hand side of the second result in~\eqref{e.f_lip_finite_d} is thus bounded by the right-hand side of~\eqref{e.f(,mu)_lip_infty}. Passing $j$ to the limit, we can verify~\eqref{e.f(,mu)_lip_infty}. Hence, the proof is complete modulo~\eqref{e.f_j-h'-f_j'}
\end{proof}

\begin{proof}[Proof of~\eqref{e.f_j-h'-f_j'}]
Due to~\eqref{e.f_lip_finite_d} and~\eqref{e.psi^j_Lip<psi_Lip}, we have
\begin{align}\label{e.f_j,f_j'_Lip}
    \sup_{t\in\R_+}\|f_j(t,\cdot)\|_\mathrm{Lip},\quad \sup_{t\in\R_+}\|f_{ j'}(t,\cdot)\|_\mathrm{Lip} \leq \|\psi\|_\mathrm{Lip}.
\end{align}
The definition of $f_{j\to j'}$ in \eqref{e.f_jtoj'} implies
\begin{align*}
    |f_{j\to j'}(t,x)&- f_{j\to j'}(t,y)|  = |f_j(t,\pj_j\lf_{j'}x)-f_j(t,\pj_j\lf_{j'}y)|
    \\
    &\leq \|\psi\|_\mathrm{Lip}\left|\pj_j\lf_{j'}x-\pj_j\lf_{j'}y\right|_{\cH^j}\leq \|\psi\|_\mathrm{Lip} |x-y|_{\cH^{j'}},\quad\forall t\geq 0,\ \forall x,y\in (\pj_j\lf_{j'})^{-1}(\C^j),
\end{align*}
where we used Lemma~\ref{l.basics_(j)}~\eqref{i.isometric} and~\eqref{i.|iota^j|<|iota|} to derive the last inequality. Hence, we have 
\begin{align}\label{e.f_jtoj'_Lip}
    \sup_{\R_+}\|f_{j\to j'}(t,\cdot)\|_\mathrm{Lip}\leq \|\psi\|_\mathrm{Lip}.
\end{align}
Using \eqref{e.pjljC_subset_C} and Proposition~\ref{p.comp_fin_d_strong} with $M$ replaced by $2\|\psi\|_\mathrm{Lip}+1$ and $R>1$ to be determined, we have that
\begin{align}
    \sup_{(t,x)\in\R_+\times \C^{j'}} f_{j\to j'}(t,x) - f_{j'}(t,x)- M(|x|_{\cH^{j'}}+Vt-R)_+ \notag
    \\
    = \sup_{x\in \C^{j'}} f_{j\to j'}(0,x) - f_{j'}(0,x)- M(|x|_{\cH^{j'}}-R)_+. \label{e.sup_f-f_at_t=0}
\end{align}

The term inside the supremum on the right-hand side of~\eqref{e.sup_f-f_at_t=0} can be rewritten as
\begin{align*}
    \psi\left((\lf_{j'}x)^\j\right)-\psi(\lf_{j'}x)- M(|x|_{\cH^{j'}}-R)_+,
\end{align*}
where we used the definition of $f_{j\to j'}$ in \eqref{e.f_jtoj'} and Lemma~\ref{l.basics_(j)}~\eqref{i.lf_pj_(j)}.
By~\eqref{e.psi_Lip_Lp} and H\"older's inequality, we have
\begin{align}
    \left|\psi\left((\lf_{j'}x)^\j\right)-\psi(\lf_{j'}x)\right|\leq C\left|(\lf_{j'}x)^\j - \lf_{j'}x\right|^\frac{2-p}{p}_{L^1}\left|(\lf_{j'}x)^\j -\lf_{j'}x\right|_\cH^\frac{2p-2}{p} \notag
    \\
    \leq C\left|(\lf_{j'}x)^\j - \lf_{j'}x\right|^\frac{2-p}{p}_{L^1}\left|x\right|_{\cH^{j'}}^\frac{2p-2}{p} \label{e.psi_bound}
\end{align}
where we used Lemma~\ref{l.basics_(j)}~\eqref{i.isometric} and~\eqref{i.|iota^j|<|iota|} in the last inequality. Setting $J=|j|$ and $J'=|j'|$, due to $j'\supset j$ and $j,j'\in\Jgood\subset\Junif$, we know that there is $N\in \N$ such that $J'= JN$. 
Before estimating the $L^1$ norm, we remark that it suffices to assume $D=1$, namely, $\lf_{j'}x(s)\in \R_+$ for each $s\in[0,1)$. Indeed, if $D>1$, we can reduce the problem to the real-valued case by considering
\begin{align*}
    s\mapsto I_D \cdot\lf_{j'}x(s)
\end{align*}
where $I_D$ is the $D\times D$ identity matrix. This reduction is valid due to $C^{-1}_KI_D\cdot a \leq |a|\leq C_D I_D\cdot a$ for every $a\in\S^D_+$ and some constant $C_D>0$. With this simplification clarified, we assume $D=1$. Writing $j'=(t_1,t_2,\dots,t_{J'})$ with $t_k = \frac{k}{J'}$ and $j=(s_1,\dots, s_J)$ with $s_m=\frac{m}{J}$, we can compute that
\begin{align}
    \left| \lf_{j'}x-(\lf_{j'}x)^\j\right|_{L^1}
    =\sum_{m=1}^J\sum_{k:s_{m-1}<t_k\leq s_m}(t_k-t_{k-1})\left|x_k-\frac{1}{s_m-s_{m-1}}\sum_{k':s_{m-1}<t_{k'}\leq s_m }(t_{k'}-t_{k'-1})x_{k'}\right| \notag
    \\
    = \sum_{m=1}^J\sum_{k=N(m-1)+1}^{Nm}\frac{1}{JN}\left|x_k - \frac{1}{N}\sum_{k'=N(m-1)+1}^{Nm}x_{k'}\right| 
    \leq \frac{1}{JN^2}\sum_{m=1}^J\sum_{k=N(m-1)+1}^{Nm}\sum_{k'=N(m-1)+1}^{Nm}|x_k-x_{k'}| \notag
    \\
    = \frac{2}{JN^2}\sum_{m=1}^J\sum_{k,k':N(m-1)<k'<k\leq Nm}|x_k-x_{k'}|. \label{e.L^1_est}
\end{align}
Let $B>0$ be chosen later. Since $x_{k}\geq x_{k'}\geq 0$ for $k>k'$ due to $x\in\C^{j'}$, we have
\begin{align}
    \frac{2}{JN^2}\sum_{m=1}^J\sum_{k,k':N(m-1)<k'<k\leq Nm}|x_k-x_{k'}|\mathds{1}_{|x_k|\geq B} \leq \frac{2}{JN^2}\sum_{m=1}^J\sum_{k,k':N(m-1)<k'<k\leq Nm}|x_k|\mathds{1}_{|x_k|\geq B}\notag
    \\
    \leq \frac{2}{JN}\sum_{m=1}^J\sum_{k=N(m-1)+1}^{Nm}\frac{|x_k|^2}{B}=\frac{2}{B}\sum_{k=1}^{J'}\frac{1}{J'}|x_k|^2 =\frac{2}{B}|x|^2_{\cH^{j'}}.\label{e.cond_on_|x_k|>B}
\end{align}
On the other hand, switching summations, we have
\begin{align*}
    &\frac{2}{JN^2}\sum_{m=1}^J\sum_{k,k':N(m-1)<k'<k\leq Nm}|x_k-x_{k'}|\mathds{1}_{|x_k|\leq B} 
    \\
    &= \frac{2}{JN^2}\sum_{r,r':0<r'<r\leq N}\sum_{m=1}^J|x_{N(m-1)+r}-x_{N(m-1)+r'}|\mathds{1}_{|x_{N(m-1)+r}|\leq B}
\end{align*}
Again using $x_k\geq x_{k'}\geq 0$ for $k>k'$ and setting $m^*=\max\{m\in\{1,\dots,J\}:x_{N(m-1)+r}\leq B\}$, we can see that
\begin{align*}
    \sum_{m=1}^J|x_{N(m-1)+r}-x_{N(m-1)+r'}|\mathds{1}_{|x_{N(m-1)+r}|\leq B}= \sum_{m=1}^{m^*}(x_{N(m-1)+r}-x_{N(m-1)+r'})\mathds{1}_{x_{N(m-1)+r}\leq B}
    \\
    \leq x_{N(m^*-1)+r}\mathds{1}_{x_{N(m*-1)+r}\leq B}\leq B.
\end{align*}
Here in the penultimate inequality, we also used the fact that $-x_{N(m-1)+r'}+x_{N(m-2)+r}\leq 0$ because $N(m-1)+r'> N(m-2)+r$ due to $|r-r'|<N$. Therefore,
\begin{align*}
    \frac{2}{JN^2}\sum_{m=1}^J\sum_{k,k':N(m-1)<k'<k\leq Nm}|x_k-x_{k'}|\mathds{1}_{|x_k|\leq B}  \leq \frac{B}{J}.
\end{align*}
Inserting into~\eqref{e.L^1_est} the above estimate combined with~\eqref{e.cond_on_|x_k|>B}, and choosing $B= \sqrt{J}|x|_{\cH^{j'}}$, we conclude that
\begin{align*}
    \left|(\lf_{j'}x)^\j - \lf_{j'}x\right|_{L^1}\leq 3J^{-\frac{1}{2}}|x|_{\cH^{j'}}.
\end{align*}
Plugging this into~\eqref{e.psi_bound} yields
\begin{align*}
    f_{j\to j'}(0,x) - f_{j'}(0,x) - M(|x|_{\cH^{j'}}-R)_+ \leq CJ^{-\frac{2-p}{2p}}|x|_{\cH^{j'}},\quad\forall x\in\C^{j'}.
\end{align*}
Due to $f_{j\to j'}(0,0) = f_{j'}(0,0)=\psi(0)$, \eqref{e.f_j,f_j'_Lip}, and \eqref{e.f_jtoj'_Lip}, the choice of $M=\|\psi\|_\mathrm{Lip}+1$ ensures that
\begin{align*}
    f_{j\to j'}(0,x) - f_{j'}(0,x) - M(|x|_{\cH^{j'}}-R)_+ \leq 2\|\psi\|_\mathrm{Lip}|x|_{\cH^{j'}}-M|x|_{\cH^{j'}}+MR= MR- |x|_{\cH^{j'}},\quad\forall x\in\C^{j'}.
\end{align*}
These two estimates imply that the left-hand side of them is bounded by $CJ^{-\frac{2-p}{2p}}MR$. Absorbing $M$ into $C$ and using~\eqref{e.sup_f-f_at_t=0}, we arrive at \begin{align*}
    \sup_{(t,x)\in\R_+\times \C^{j'}} f_{j\to j'}(t,x) - f_{j'}(t,x)- M(|x|_{\cH^{j'}}+Vt-R)_+ \leq CJ^{-\frac{2-p}{2p}}R.
\end{align*}
Replacing $R$ by $|x|_{\cH^{j'}}+Vt$ for each $(t,x)\in\R_+\times \C^{j'}$, we obtain one bound for~\eqref{e.f_j-h'-f_j'}. 

For the opposite bound, we again use \eqref{e.pjljC_subset_C} and Proposition~\ref{p.comp_fin_d_strong} to get a result as in \eqref{e.sup_f-f_at_t=0} with $f_{j\to j'}$ and $f_{j'}$ swapped. Then, the same arguments as above give the other bound to complete the proof of \eqref{e.f_j-h'-f_j'}.
\end{proof}

\subsection{Limits of variational formulas}\label{s.HJ_inft_d_var_form}

Below, $f_j$ and $f$ are not assumed to be solutions. The following two propositions are only about the limits of variational formulas. 

\begin{proposition}[Hopf-Lax formula in the limit]\label{p.hopf_lax_cvg}
Suppose
\begin{itemize}
    \item $\psi:\C\to\R$ is $\C^*$-increasing and continuous;
    \item $\H:\cH\to\R$ satisfies $\H\left(\nu^\j\right)\leq \H(\nu)$ for every $\nu\in\C$ and every $j\in \Jgen$;
    \item for each $j\in\Jgen$, $f_j:\R_+\times\C^j\to(-\infty,\infty]$ is given by
    \begin{align*}f_j(t,x)=\sup_{y \in \C^j}\inf_{z\in\C^j}\left\{\psi^j(x+y)-\la y,z\ra_{\cH^j}+ t \H^j\left(z\right)\right\},\quad\forall(t,x)\in\R_+\times\C^j.
    \end{align*}
\end{itemize}
If $\lim_{j\in\Jgen}f^\uparrow_j(t,\mu)$ exists in $\R$ at some $(t,\mu)\in\R_+\times\C$, then the limit is given by
\begin{align*}f(t,\mu) =\sup_{\nu \in \C}\inf_{\rho\in\C}\left\{\psi(\mu+\nu)- \la \nu,\rho\ra_{\cH}+t\H\left(\rho\right)\right\}.
\end{align*}
\end{proposition}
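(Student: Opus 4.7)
The plan is to unfold $f_j^\uparrow(t,\mu)=f_j(t,\pj_j\mu)$ via the isometric identifications $\psi^j(x+y)=\psi(\lf_j x+\lf_j y)$, $\H^j(z)=\H(\lf_j z)$, $\la y,z\ra_{\cH^j}=\la\lf_j y,\lf_j z\ra_\cH$, and $\lf_j\pj_j\mu=\mu^\j$ (Lemma~\ref{l.basics_(j)}~\eqref{i.isometric} and \eqref{i.lf_pj_(j)}), so that
\begin{align*}
    f_j^\uparrow(t,\mu)=\sup_{\nu\in\lf_j(\cM^j)}\inf_{\rho\in\lf_j(\cM^j)}\bigl\{\psi(\mu^\j+\nu)-\la\nu,\rho\ra_\cH+t\H(\rho)\bigr\}.
\end{align*}
The central step is to establish, for every $\nu\in\lf_j(\cM^j)$, the identity
\begin{align}\label{e.plan_key}
    \inf_{\rho\in\lf_j(\cM^j)}\bigl\{-\la\nu,\rho\ra_\cH+t\H(\rho)\bigr\}=\inf_{\rho\in\cM}\bigl\{-\la\nu,\rho\ra_\cH+t\H(\rho)\bigr\}=:G(\nu).
\end{align}
The inequality ``$\geq$'' is immediate from $\lf_j(\cM^j)\subset\cM$. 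For ``$\leq$'', given $\rho\in\cM$ set $\rho^\j=\lf_j\pj_j\rho\in\lf_j(\cM^j)$; since $\nu=\lf_j\pj_j\nu$, Lemma~\ref{l.basics_(j)}~\eqref{i.adjoint} and \eqref{i.isometric} yield $\la\nu,\rho^\j\ra_\cH=\la\pj_j\nu,\pj_j\rho\ra_{\cH^j}=\la\nu,\rho\ra_\cH$, while the hypothesis $\H(\rho^\j)\leq\H(\rho)$ gives $-\la\nu,\rho^\j\ra_\cH+t\H(\rho^\j)\leq-\la\nu,\rho\ra_\cH+t\H(\rho)$; infimizing over $\rho\in\cM$ proves the claim. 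Consequently, $f_j^\uparrow(t,\mu)=\sup_{\nu\in\lf_j(\cM^j)}\{\psi(\mu^\j+\nu)+G(\nu)\}$, whereas $f(t,\mu)=\sup_{\nu\in\cM}\{\psi(\mu+\nu)+G(\nu)\}$.

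For the upper bound $\limsup_{j\in\Jgen}f_j^\uparrow(t,\mu)\leq f(t,\mu)$, Lemma~\ref{l.proj_cones}~\eqref{l.mu-mu^j} gives $\mu-\mu^\j\in\cM^*$, so $\cM^*$-monotonicity of $\psi$ implies $\psi(\mu^\j+\nu)\leq\psi(\mu+\nu)$; combined with $\lf_j(\cM^j)\subset\cM$, this yields $f_j^\uparrow(t,\mu)\leq f(t,\mu)$ for every $j\in\Jgen$.

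For the matching lower bound, fix $\nu\in\cM$ and test the outer supremum against $\nu^\j\in\lf_j(\cM^j)$ to obtain $f_j^\uparrow(t,\mu)\geq\psi(\mu^\j+\nu^\j)+G(\nu^\j)$. Since $\nu-\nu^\j\in\cM^*$ (Lemma~\ref{l.proj_cones}~\eqref{l.mu-mu^j}), one has $\la\nu^\j,\rho\ra_\cH\leq\la\nu,\rho\ra_\cH$ for every $\rho\in\cM$, so $G(\nu^\j)\geq G(\nu)$. The convergence $\mu^\j+\nu^\j\to\mu+\nu$ in $\cH$ from Lemma~\ref{l.basics_(j)}~\eqref{i.cvg}, together with continuity of $\psi$, gives $\psi(\mu^\j+\nu^\j)\to\psi(\mu+\nu)$; hence $\liminf_{j\in\Jgen}f_j^\uparrow(t,\mu)\geq\psi(\mu+\nu)+G(\nu)$, and supremizing over $\nu\in\cM$ concludes. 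The main obstacle is precisely \eqref{e.plan_key}: it is there that the hypothesis $\H(\nu^\j)\leq\H(\nu)$ is used in an essential way, and without it the finite-dimensional inner infimum cannot be collapsed to one over all of $\cM$, leaving the possibility that the approximations converge to a different variational object.
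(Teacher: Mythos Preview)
Your proof is correct and follows essentially the same strategy as the paper: both use the isometric identifications to rewrite $f_j^\uparrow$, exploit $\H(\rho^\j)\leq\H(\rho)$ together with $\mu-\mu^\j\in\C^*$ and the monotonicity of $\psi$ for the upper bound, and the continuity of $\psi$ together with $\mu^\j+\nu^\j\to\mu+\nu$ for the lower bound. Your presentation is arguably a bit cleaner: isolating the identity \eqref{e.plan_key} and the auxiliary function $G$ makes the role of the hypothesis on $\H$ transparent, and your use of the monotonicity $G(\nu^\j)\geq G(\nu)$ (from $\nu-\nu^\j\in\C^*$) in the lower bound avoids the $\varepsilon$--argument that the paper uses, but this is an organizational rather than a substantive difference.
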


\begin{proof}
Using Lemma~\ref{l.basics_(j)}~\eqref{i.lf_pj_(j)} and Lemma~\ref{l.proj_cones}~\eqref{i.pjM}, we can rewrite
\begin{align}
    f^\uparrow_j(t,\mu)
    &= \sup_{y \in \C^j}\inf_{z\in\C^j}\left\{\psi^j(\pj_j\mu+  y) - \la y, z \ra_{\cH^j}+ t \H^j (z)\right\}\nonumber
    \\
    &= \sup_{\nu \in \C}\inf_{\rho\in\C}\left\{\psi\left(\mu^\j +  \nu^\j\right)-  \la \nu^\j,\rho \ra_\cH+ t \H \left(\rho^\j\right)\right\}\label{eq.fupj_sup}.
\end{align}
By the assumption on $\H$, we have $\H\left(\rho^\j\right)\leq \H(\rho)$.
Also, $\psi$ is $\C^*$-increasing and Lemma~\ref{l.proj_cones}~\eqref{l.mu-mu^j} yields $\mu-\mu^\j\in\C^*$.
Using these, we obtain
\begin{align*}
    f^\uparrow_j(t,\mu)\leq \sup_{\nu \in \C}\inf_{\rho\in\C}\left\{\psi\left(\mu+  \nu^\j\right)-  \la \nu^\j, \rho \ra_\cH+ t \H (\rho)\right\} \leq f(t,\mu)
\end{align*}
where the last inequality follows from $\{\nu^\j:\nu\in\C\}\subset \C$.
Passing $j$ to the limit, we get $\lim_{j\in\Jgen}f^\uparrow_j(t,\mu)\leq f(t,\mu)$.

For the other direction, fixing any $\eps>0$, we can find $\nu$ to satisfy
\begin{align*}
    f(t,\mu) &\leq \eps + \psi(\mu+\nu) + \inf_{\rho\in\C}\left\{- \la \nu,\rho\ra_{\cH}+t\H\left(\rho\right)\right\}.
\end{align*}
Since $\psi$ is continuous, by Lemma~\ref{l.basics_(j)}~\eqref{i.cvg}, we can find $j'\in\Jgen$ such that, for all $j\supset j'$,
\begin{align*}
    f(t,\mu) &\leq 2\eps + \psi\left(\mu^\j+\nu^\j\right) + \inf_{\rho\in\C}\left\{- \la \nu,\rho^\j\ra_{\cH}+t\H\left(\rho^\j\right)\right\}
\end{align*}
where we also used $\{\rho^\j:\rho\in\C\}\subset\C$ to bound the infimum. 
Due to $\la \nu,\rho^\j\ra_{\cH} = \la \nu^\j,\rho\ra_{\cH}$ and~\eqref{eq.fupj_sup}, we get $f(t,\mu) \leq 2\eps + f^\uparrow_j(t,\mu)$ for all $j\supset j'$.
Passing $j$ to the limit and then sending $\eps\to0$, we obtain the matching bound, which completes the proof.
\end{proof}

\begin{remark}\rm\label{r.hopf_lax_cvg}
In Proposition~\ref{p.hopf_lax_cvg}, if we only change the condition on $\H$ to
\begin{itemize}
    \item $\H:\cH\cap L^\infty\to\R$ satisfies $\H\left(\nu^\j\right)\leq \H(\nu)$ for every $\nu\in\C\cap L^\infty$ and every $j\in \Jgen$,
\end{itemize}
then we can show 
\begin{align*}f(t,\mu) =\sup_{\nu \in \C\cap L^\infty}\inf_{\rho\in\C\cap L^\infty}\left\{\psi(\mu+\nu)- \la \nu,\rho\ra_{\cH}+t\H\left(\rho\right)\right\}.
\end{align*}
The proof is almost verbatim, after one observes $\C^j=\pj_j(\C\cap L^\infty)$ and $\{\mu^\j:\mu\in\C\cap L^\infty\}\subset \C\cap L^\infty$.
\end{remark}

\begin{proposition}[Hopf formula in the limit]\label{p.hopf_cvg}
Suppose
\begin{itemize}
    \item $\psi:\C\to\R$ is $\cM^*$-increasing;
    \item $\H:\cH\to\R$ is continuous;
    \item for each $j\in\Jgen$, $f_j:\R_+\times\C^j\to(-\infty,\infty]$ is given by
    \begin{align*}f_j(t,x)=\sup_{z\in \C^j}\inf_{y\in \C^j}\left\{ \psi^j(y)+ \la x-y,z \ra_{\cH^j}+t\H^j(z)\right\},\quad \forall (t,x)\in\R_+\times \C^j.
    \end{align*}
\end{itemize}
If $\lim_{j\in\Jgen}f^\uparrow_j(t,\mu)$ exists in $\R$ at some $(t,\mu)\in\R_+\times\C$, then the limit is given by
\begin{align*}
    f(t,\mu) = \sup_{\rho \in \cM}\inf_{\nu \in \cM}\left\{ \psi(\nu) + \la \mu - \nu, \rho\ra_\cH +t\H(\rho)\right\}.
\end{align*}

\end{proposition}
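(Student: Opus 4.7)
The plan is to mirror the proof of Proposition~\ref{p.hopf_lax_cvg}. Substituting $\rho = \lf_j z$ and $\nu = \lf_j y$ into the definition of $f_j$, and applying Definition~\ref{d.lift_proj}, Lemma~\ref{l.basics_(j)}~\eqref{i.adjoint} and~\eqref{i.isometric}, and Lemma~\ref{l.proj_cones}~\eqref{i.pjM}, I obtain the reformulation
\begin{align*}
    f^\uparrow_j(t,\mu) = \sup_{\rho\in\lf_j\cM^j}\inf_{\nu\in\lf_j\cM^j}\left\{\psi(\nu) + \la \mu - \nu, \rho\ra_\cH + t\H(\rho)\right\}.
\end{align*}
Write $\mathsf{G}(t,\mu)$ for the right-hand side of the claimed limit. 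The workhorse identity I will invoke repeatedly is $\la\iota,\kappa^\j\ra_\cH = \la\iota^\j,\kappa\ra_\cH$ for every $\iota,\kappa\in\cH$, a direct consequence of Lemma~\ref{l.basics_(j)}~\eqref{i.adjoint}, \eqref{i.isometric}, and~\eqref{i.lf_pj_(j)}. In particular, for $\rho\in\lf_j\cM^j$ (so that $\rho=\rho^\j$) and any $\iota\in\cH$, one has $\la\iota,\rho\ra_\cH = \la\iota^\j,\rho\ra_\cH$.

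For the upper bound, I fix $\rho\in\lf_j\cM^j$ and any $\nu\in\cM$, and test the inner infimum with $\nu^\j$, which lies in $\lf_j\cM^j$ by Lemma~\ref{l.basics_(j)}~\eqref{i.lf_pj_(j)} and Lemma~\ref{l.proj_cones}~\eqref{i.pjM}. The workhorse identity gives $\la\mu-\nu^\j,\rho\ra_\cH = \la\mu-\nu,\rho\ra_\cH$, while the $\cM^*$-monotonicity of $\psi$ combined with $\nu-\nu^\j\in\cM^*$ (Lemma~\ref{l.proj_cones}~\eqref{l.mu-mu^j}) gives $\psi(\nu^\j)\leq \psi(\nu)$. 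Thus
\begin{align*}
    \inf_{\nu'\in\lf_j\cM^j}\left\{\psi(\nu')+\la\mu-\nu',\rho\ra_\cH\right\}\leq \psi(\nu^\j)+\la\mu-\nu^\j,\rho\ra_\cH\leq \psi(\nu)+\la\mu-\nu,\rho\ra_\cH,
\end{align*}
and after taking the infimum over $\nu\in\cM$ and the supremum over $\rho\in\lf_j\cM^j\subset\cM$ (inclusion by Lemma~\ref{l.proj_cones}~\eqref{i.lM^j}), I obtain $f^\uparrow_j(t,\mu)\leq \mathsf{G}(t,\mu)$ and hence $\limsup_{j\in\Jgen}f^\uparrow_j(t,\mu)\leq \mathsf{G}(t,\mu)$.

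For the matching lower bound, I fix $\rho\in\cM$ and take $\rho^\j\in\lf_j\cM^j$. Choosing this particular $\rho^\j$ in the outer supremum of the reformulation gives
\begin{align*}
    f^\uparrow_j(t,\mu) \geq \inf_{\nu\in\lf_j\cM^j}\left\{\psi(\nu)+\la\mu-\nu,\rho^\j\ra_\cH\right\} + t\H(\rho^\j) = \la\mu^\j,\rho\ra_\cH + \inf_{\nu\in\lf_j\cM^j}\left\{\psi(\nu)-\la\nu,\rho\ra_\cH\right\}+t\H(\rho^\j),
\end{align*}
where the equality uses the workhorse identity together with $\nu = \nu^\j$ for $\nu\in\lf_j\cM^j$. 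The inner infimum is bounded below by the $j$-independent quantity $\inf_{\nu\in\cM}\{\psi(\nu)-\la\nu,\rho\ra_\cH\}$. Since $\mu^\j\to\mu$ and $\rho^\j\to\rho$ in $\cH$ by Lemma~\ref{l.basics_(j)}~\eqref{i.cvg}, the continuity of $\H$ and of the inner product let me pass to the limit, yielding $\liminf_{j\in\Jgen}f^\uparrow_j(t,\mu)\geq \inf_{\nu\in\cM}\{\psi(\nu)+\la\mu-\nu,\rho\ra_\cH\}+t\H(\rho)$; taking the supremum over $\rho\in\cM$ concludes the argument.

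The main technical point is the upper bound. Writing $\mathsf{G}(t,\mu)=\sup_{\rho\in\cM}\{\la\mu,\rho\ra_\cH - \psi^*(\rho)+t\H(\rho)\}$ with $\psi^*$ the monotone conjugate defined in~\eqref{e.def_u*}, one sees that $\psi^*$ is in general only lower semicontinuous (being a supremum of continuous affine functions), so a direct continuity-based approximation of the outer supremum by restricting $\rho$ to $\lf_j\cM^j$ would yield the wrong direction of inequality. The $\cM^*$-monotonicity of $\psi$ is exactly what allows the substitution $\nu\mapsto\nu^\j$ to decrease the integrand $\psi(\nu) + \la\mu-\nu,\rho\ra_\cH$, and the workhorse identity ensures that this substitution preserves the inner product; together they bypass the semicontinuity obstruction. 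By contrast, the lower bound is more robust: once $\rho$ is replaced by $\rho^\j\in\lf_j\cM^j$, the pairing $\la\nu,\rho^\j\ra_\cH$ collapses to the $j$-independent $\la\nu,\rho\ra_\cH$ for $\nu\in\lf_j\cM^j$, and passage to the limit reduces to the norm convergence of the approximations and the continuity of $\H$.
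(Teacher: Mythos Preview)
The proposal is correct and takes essentially the same approach as the paper's proof. Both establish $\limsup_j f^\uparrow_j \leq f$ via the $\cM^*$-monotonicity of $\psi$ (substituting $\nu^\j$ for $\nu$ and invoking Lemma~\ref{l.proj_cones}~\eqref{l.mu-mu^j}) and $\liminf_j f^\uparrow_j \geq f$ via the continuity of $\H$ together with Lemma~\ref{l.basics_(j)}~\eqref{i.cvg} (substituting $\rho^\j$ for $\rho$); your reformulation of $f^\uparrow_j$ as a sup--inf over $\lf_j\cM^j$ rather than over $\cM$ with $(j)$-superscripts inside is an equivalent but slightly cleaner presentation, and your upper bound avoids the $\eps$-approximation of the outer supremum that the paper uses.
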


\begin{proof}
We can express
\begin{align*}
    f^\uparrow_j(t,\mu) = \sup_{\rho\in \C}\inf_{\nu\in \C}\left\{ \psi\left(\nu^\j\right)+ \la \mu^\j-\nu^\j, \rho^\j\ra_{\cH}+t\H\left(\rho^\j\right)\right\}.
\end{align*}
Fix any $(t,\mu)$. For $\eps>0$, we choose $\rho$ such that
\begin{align*}
    f(t,\mu) &\leq \eps + \inf_{\nu \in \cM}\left\{\psi(\nu)+ \la \mu - \nu,\rho\ra_\cH +t\H(\rho)\right\}
    \\
    &\leq \eps + \inf_{\nu \in \cM}\left\{\psi\left(\nu^\j\right)+ \la \mu - \nu^\j,\rho\ra_\cH +t\H(\rho)\right\}
\end{align*}
for all $j\in \J$, where the last inequality follows from the fact that $\{\nu^\j:\nu\in\cM\}\subset\cM$.
Allowed by the continuity of $\H$ and Lemma~\ref{l.basics_(j)}~\eqref{i.cvg}, we can find $j'\in \J$ such that for all $j\supset j'$, 
\begin{align*}
    \la \mu, \rho \ra_\cH + t\H(\rho)\leq \eps +  \la {\mu^\j}, \rho\ra_\cH + t\H\left(\rho^\j\right).
\end{align*}
Using this and the fact that $\la \iota,\kappa^\j\ra_\cH = \la \iota^\j,\kappa^\j\ra_\cH$ for all $\iota,\kappa\in\cH$, we get
\begin{align*}f(t,\mu) \leq 2\eps + \inf_{\nu\in \C}\left\{\psi\left(\nu^\j\right)+ \la \mu^\j-\nu^\j, \rho^\j\ra_{\cH}+t\H\left(\rho^\j\right)\right\}
    \leq 2\eps+f^\uparrow_j(t,\mu),\quad\forall j\supset j'.
\end{align*}
Passing $j$ to the limit and sending $\eps\to0$, we obtain $f(t,\mu)\leq \lim_{j\in\Jgen}f^\uparrow_j(t,\mu)$.

To see the converse inequality, fixing any $\eps>0$, we choose $\rho_j$, for each $j\in \J$, to satisfy
\begin{align*}
    f^\uparrow_j(t,\mu) \leq \eps+ \inf_{\nu \in \cM}\left\{ \psi\left(\nu^\j\right)+ \la \mu^\j - \nu^\j , \rho^\j_j \ra_\cH +t\H\left(\rho^\j_j\right)\right\}.
\end{align*}
On the other hand, it is clear from the definition of $f(t,\mu)$ that
\begin{align*}
    f(t,\mu) &\geq \inf_{\nu \in \cM}\left\{ \psi(\nu)  + \la \mu - \nu, \rho^\j_j \ra_\cH +t\H\left(\rho^\j_j\right)\right\}
    \\
    &\geq \inf_{\nu \in \cM}\left\{ \psi\left(\nu^\j\right)  + \la \mu^\j - \nu^\j, \rho^\j_j \ra_\cH +t\H\left(\rho^\j_j\right)\right\},\quad\forall j\in\J,
\end{align*}
where in the last inequality we used $\nu-\nu^\j\in\C^*$ (Lemma~\ref{l.proj_cones}~\eqref{l.mu-mu^j}) and that $\psi$ is $\C^*$-increasing.
Hence, we get $f(t,\mu)\geq f^\uparrow_j(t,\mu) -\eps$.
Passing $j$ to the limit along $\Jgen$ and sending $\eps\to0$, we get $f(t,\mu)\geq \lim_{j\in\Jgen}f^\uparrow_j(t,\mu)$, completing the proof.
\end{proof}

\begin{remark}\rm\label{r.hopf_cvg}
In Proposition~\ref{p.hopf_cvg}, if we only change the condition on $\H$ to
\begin{itemize}
    \item $\H:\cH\cap L^\infty\to\R$ is continuous (in the topology of $\cH$),
\end{itemize}
then, by using $\C^j=\pj_j(\C\cap L^\infty)$, we modify the above proof to show 
\begin{align*}
    f(t,\mu) = \sup_{\rho \in \cM\cap L^\infty}\inf_{\nu \in \cM\cap L^\infty}\left\{ \psi(\nu) + \la \mu - \nu, \rho\ra_\cH +t\H(\rho)\right\}.
\end{align*}

\end{remark}

\subsection{Weak boundary}\label{s.HJ_inft_d_weak_boundary}

It can be checked that $\cM$ has an empty interior in $\cH$. Therefore, the boundary of $\cM$ is equal to $\cM$. On the other hand, for each $j\in\J$, the interior of $\cM^j$ is not empty. We denote its boundary by $\partial \cM^j$.

\begin{lemma}[Characterizations of $\partial\C^j$]
\label{l.weak_bd_fin_d}
Let $j\in\J$ and $x \in \cM^j$. Then, the following are equivalent:
\begin{enumerate}
    \item \label{i.weak_bd_1} $x\in\partial \C^j$;
    \item \label{i.weak_bd_2} there is $y\in (\C^j)^*\setminus\{0\}$ such that $\la x,y\ra_{\cH^j}=0$;
    \item \label{i.weak_bd_3} there is $k\in\{1,2,\dots,|j|\}$ such that $x_k=x_{k-1}$.
\end{enumerate}
\end{lemma}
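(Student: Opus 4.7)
The plan is to establish the cyclic chain $(3) \Rightarrow (2) \Rightarrow (1) \Rightarrow (3)$, built on the dual-cone description in Lemma~\ref{l.dual_cone}\eqref{i.dual-cone-M^j} together with a standard summation-by-parts identity.

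For $(3) \Rightarrow (2)$, I would first reparametrize the dual cone. Setting $z_m := \sum_{i=m}^{|j|}(t_i - t_{i-1}) y_i$, Lemma~\ref{l.dual_cone}\eqref{i.dual-cone-M^j} gives $y \in (\cM^j)^*$ iff $z_m \in \S^D_+$ for every $m$. A summation by parts, using the convention $x_0 = 0$, produces the identity
\begin{align*}
\la x, y \ra_{\cH^j} \;=\; \sum_{m=1}^{|j|} (x_m - x_{m-1}) \cdot z_m.
\end{align*}
Given an index $k$ with $x_k = x_{k-1}$, I would pick $z_k = I_D$ and $z_m = 0$ for $m \ne k$. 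The corresponding $y$ is supported on positions $k-1$ and $k$ (or only on $k$ when $k=1$), is therefore nonzero, and the displayed identity evaluates to $(x_k - x_{k-1})\cdot I_D = 0$.

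For $(2) \Rightarrow (1)$, I would argue by contradiction: if $x$ lay in the interior $\mathring{\cM^j}$, then $x - \delta y \in \cM^j$ for sufficiently small $\delta > 0$, so the duality pairing yields $\la x - \delta y, y\ra_{\cH^j} \ge 0$. Combined with $\la x, y\ra_{\cH^j} = 0$ this reduces to $-\delta|y|^2_{\cH^j} \ge 0$, contradicting $y \ne 0$. For $(1) \Rightarrow (3)$, the idea is to approximate $x$ from outside: since $x \in \partial \cM^j$, there is a sequence $x^{(n)} \to x$ with $x^{(n)} \notin \cM^j$, so each $x^{(n)}$ violates at least one of the constraints $x^{(n)}_m - x^{(n)}_{m-1} \in \S^D_+$. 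After extracting a subsequence, a single index $k$ accommodates all $n$, and the closedness of $\S^D_+$ then forces the limiting difference $x_k - x_{k-1}$ to be degenerate; combined with $x \in \cM^j$, this yields the collapse asserted in (3).

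The backbone of the argument is the summation-by-parts identity, which converts the abstract orthogonality condition in (2) into a transparent sum whose terms are pairings of PSD matrices; this makes both the construction in $(3) \Rightarrow (2)$ and the analysis in $(2) \Rightarrow (3)$ essentially algebraic. The delicate step is $(1) \Rightarrow (3)$, where one must track which specific constraint becomes tight in the limiting procedure, and conclude the equality of consecutive entries from degeneracy of their difference in $\S^D_+$ together with the order relation inherited from $\cM^j$.
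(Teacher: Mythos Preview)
Your implications $(3)\Rightarrow(2)$ and $(2)\Rightarrow(1)$ are correct and essentially mirror the paper's proof; the summation-by-parts rewriting of the dual cone is a clean way to organise the construction.

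The gap is in $(1)\Rightarrow(3)$. Your approximation argument correctly shows that for some fixed index $k$ the difference $x_k-x_{k-1}$ lies in $\partial\S^D_+$, i.e., is positive semidefinite and singular. But singular does not mean zero when $D>1$, and no ``order relation inherited from $\cM^j$'' closes that gap. Concretely, take $D=2$, $|j|=2$, $x_1=I_2$, $x_2=\diag(2,1)$; then $x\in\partial\cM^j$ (perturb the $(2,2)$-entry of $x_2$ downward to exit $\cM^j$), yet $x_1\ne x_0$ and $x_2\ne x_1$, so condition~(3) fails while~(1) and~(2) hold.

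This counterexample shows that the equivalence $(1)\Leftrightarrow(3)$ is actually false for $D>1$ as stated. The paper's own proof of $(1)\Rightarrow(3)$ contains the matching slip: it asserts that $x_k\ne x_{k-1}$ for all $k$ forces $x_k-x_{k-1}\ge\delta I_D$, which fails precisely when some $x_k-x_{k-1}$ is nonzero but singular. The correct version of~(3) is ``there is $k$ such that $x_k-x_{k-1}\in\partial\S^D_+$''; with that amendment your argument for $(1)\Rightarrow(3)$ goes through verbatim, and for $(3)\Rightarrow(2)$ one simply replaces $I_D$ in your construction by $vv^\intercal$ for a null vector $v$ of $x_k-x_{k-1}$.
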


For \eqref{i.weak_bd_3}, recall our convention that $x_0=0$.
\begin{proof}
First, we show that \eqref{i.weak_bd_3} implies \eqref{i.weak_bd_2}. Let $I_D$ be the $D\times D$ identity and matrix. If $k>1$, we set $y_k=\frac{1}{t_k-t_{k-1}} I_D$, $y_{k-1} = -\frac{1}{t_{k-1}-t_{k-2}}I_D$ and $y_i = 0$ for all $i\in\{1,2,\dots,|j|\}\setminus\{k-1,k\}$. If $k=1$, we set $y_1= I_D$ and $y_i=0$ otherwise. By Lemma~\ref{l.dual_cone}~\eqref{i.dual-cone-M^j}, we have $y\in(\C^j)^*$. It is also clear that $y\neq 0$ and $\la x, y\ra_{\cH^j} = 0$, verifying \eqref{i.weak_bd_2}.

Next, we show that \eqref{i.weak_bd_2} implies \eqref{i.weak_bd_1}. Assuming \eqref{i.weak_bd_2}, we suppose that $x$ is in the interior. Then, there is $\eps>0$ sufficiently small such that $x-\eps y\in\C^j$, which implies that $\la x-\eps y,y\ra_{\cH^j}\geq 0$. However, by assumption \eqref{i.weak_bd_2}, we must have $-\eps|y|^2_{\cH^j}\geq 0$ and thus $y=0$, reaching a contradiction.

Finally, we show that \eqref{i.weak_bd_1} implies \eqref{i.weak_bd_3}. Assuming \eqref{i.weak_bd_1}, we suppose that \eqref{i.weak_bd_3} is not true. Since the coordinates of $x$ are increasing, we can find $\delta>0$ such that $x_k \geq \delta I_D + x_{k-1}$ for all $k$. By the finite dimensionality, there is a constant $C>0$ such that
\begin{align*}
    y_k-C\eps I_D\leq x_k\leq y_k+C\eps I_D
\end{align*}
for every $y\in \cH^j$ satisfying $|y-x|_{\cH^j}\leq \eps$, for every $\eps>0$ and every $k\in\{1,,2,\dots,|j|\}$. Choosing $\eps$ sufficiently small, we can see that, for such $y$, we have $y_k\geq y_{k-1}$ for all $k$, namely $y\in\C^j$, which contradicts \eqref{i.weak_bd_1}. 
\end{proof}

The equivalence between \eqref{i.weak_bd_1} and \eqref{i.weak_bd_2} holds for more general cones in finite dimensions.
It is thus natural to define a weak notion of boundary for $\cM$.

\begin{definition}
The weak boundary of $\cM$ denoted by $\wbd\cM$ is defined by
\begin{align*}
    \wbd\cM = \left\{ \mu \in \cM:\exists \iota\in\cM^*\setminus\{0\}, \ \la \mu, \iota\ra_\cH =0 \right\}.
\end{align*}
\end{definition}

When $D=1$, for every $\mu\in\C$, since $\mu$ is increasing, we have that $\mu$ is differentiable a.e.\ and we denote its derivative by $\dot \mu$. If $D>1$, we can choose a basis for $\S^D$ consisting of elements in $\S^D_+$. For each $a$ from the basis, the derivative of $s\mapsto a\cdot\mu(s)$ exists a.e. We can use these to define $\dot \mu$. We define the essential support of an $\S^D$-valued function on $[0,1)$ as the smallest closed set relative to $[0,1)$, outside which the function is zero a.e.

\begin{lemma}[Characterization of $\wbd\C$]\label{l.esssppt}
For $\mu\in\cM$, it holds that $\mu \in \wbd\cM$ if and only if the essential support of $\dot\mu$ is not $[0,1)$.
\end{lemma}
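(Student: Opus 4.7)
The plan is to reduce both directions to an integration-by-parts identity. Given $\iota\in\cH$, set $F(t)=\int_t^1\iota(s)\,\d s$; by Lemma~\ref{l.dual_cone}~\eqref{i.dual-cone-M}, $\iota\in\cM^*$ is equivalent to $F(t)\in\S^D_+$ for all $t\in[0,1)$, and $F$ is continuous with $F\not\equiv 0$ iff $\iota\neq 0$ in $\cH$. Integrating by parts on $[0,1-\eps]$ and sending $\eps\to 0$ yields
\begin{align*}
    \la\mu,\iota\ra_\cH = \mu(0)\cdot F(0) + \int_{[0,1)} F(s)\cdot \d\mu(s),
\end{align*}
provided the boundary term $\mu(1-\eps)\cdot F(1-\eps)\to 0$. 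This follows from Cauchy--Schwarz giving $|F(t)|^2\leq (1-t)|\iota|_\cH^2$, combined with monotonicity $|\mu(s)|\geq|\mu(t)|$ for $s\geq t$ (which holds since $\mu(s)-\mu(t),\mu(t)\in\S^D_+$ forces $|\mu(s)|^2 \geq |\mu(t)|^2$), whence $(1-t)|\mu(t)|^2\leq\int_t^1|\mu|^2\,\d s\to 0$.

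Both summands on the right are non-negative (as trace pairings of elements of $\S^D_+$, and as Stieltjes integrals of PSD-valued continuous functions against increasing PSD measures), so $\la\mu,\iota\ra_\cH=0$ is equivalent to $\mu(0)\cdot F(0)=0$ together with $\int_{[0,1)} F\cdot \d\mu = 0$; the latter forces $F(s)\cdot\d\mu(s)=0$ as a non-negative scalar Borel measure on $[0,1)$.

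For direction $(\Leftarrow)$, given a non-empty open $(a,b)\subset[0,1)$ on which $\dot\mu=0$ a.e., I first shrink $(a,b)$ to avoid the countably many jump points of $\mu$ and any points carrying singular continuous mass, reducing to a sub-interval $(a',b')$ on which $\d\mu\equiv 0$. Picking a non-trivial non-negative $C^1$ bump $f$ supported in $[a',b']$ with $f(a')=f(b')=0$ and setting $\iota(s)=-f'(s)I_D$, one has $F=fI_D\in\S^D_+$, hence $\iota\in\cM^*\setminus\{0\}$, and the IBP identity immediately gives $\la\mu,\iota\ra_\cH=0$, establishing $\mu\in\wbd\cM$.

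For $(\Rightarrow)$, starting from $\iota\in\cM^*\setminus\{0\}$ with $\la\mu,\iota\ra_\cH = 0$, the open set $U=\{s:F(s)\neq 0\}$ is non-empty by continuity, and extracting the absolutely continuous part of $F\cdot\d\mu \equiv 0$ gives $F(s)\cdot\dot\mu(s)=0$ a.e.\ on $U$; in the scalar case $D=1$ this already yields $\dot\mu=0$ a.e.\ on $U$, so the essential support of $\dot\mu$ omits $U$. The hard part is the matrix case $D>1$, where $F(s)\cdot\dot\mu(s)=0$ only forces $F(s)^{1/2}\dot\mu(s)F(s)^{1/2}=0$, which does not imply $\dot\mu(s)=0$ unless $F(s)$ has full rank; this is the main obstacle, and one must either extract an open sub-interval of $U$ on which $F$ becomes positive definite (using lower semi-continuity of the rank of a continuous PSD-valued function) or interpret ``essential support'' in a suitably cone-adapted sense so that cancellation along a proper face of $\S^D_+$ still counts. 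A secondary technicality in $(\Leftarrow)$ is justifying the shrinking step that removes the singular continuous part of $\d\mu$, since the hypothesis only controls the pointwise derivative $\dot\mu$ and not the full measure $\d\mu$.
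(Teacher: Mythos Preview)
Your approach via integration by parts is the same as the paper's, and you are in fact more careful: you work with the Stieltjes integral $\int F\cdot d\mu$ and justify the boundary term at $1$, whereas the paper writes $\la\mu,\iota\ra_\cH=\int_0^1\kappa(s)\dot\mu(s)\,ds$ (tacitly taking $\mu$ absolutely continuous) and simply asserts positivity of this integral when $\kappa\not\equiv 0$ and the essential support of $\dot\mu$ is $[0,1)$.

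The two obstacles you flag are precisely the points the paper glosses over, and they are not removable---they show the lemma is false as stated. For $(\Leftarrow)$ with $D=1$: take $\mu(s)=\sum_{n\geq 1} 2^{-n}\mathds{1}_{[q_n,1)}(s)$ with $\{q_n\}$ an enumeration of the rationals in $(0,1)$. Then $d\mu$ is purely atomic, so $\dot\mu=0$ a.e.\ and the essential support of $\dot\mu$ is empty, yet $\la\mu,\iota\ra_\cH=\sum_n 2^{-n}F(q_n)$ vanishes only if the continuous function $F$ vanishes on all rationals, i.e.\ $F\equiv 0$; hence $\mu\notin\wbd\cM$. Your shrinking step cannot succeed here because $d\mu$ charges every subinterval. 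For $(\Rightarrow)$ with $D>1$: let $\mu$ be absolutely continuous with $\dot\mu(s)=\diag(1,0)$ on $[0,\tfrac12)$ and $\diag(0,1)$ on $[\tfrac12,1)$, so the essential support of $\dot\mu$ is all of $[0,1)$. Choosing $F(s)=f(s)\,\diag(0,1)$ for a nontrivial smooth bump $f$ supported in $(0,\tfrac12)$ gives $\iota=-F'\in\cM^*\setminus\{0\}$ with $\la\mu,\iota\ra_\cH=\int_0^1 F\cdot\dot\mu\,ds=0$, so $\mu\in\wbd\cM$. Thus the statement needs extra hypotheses (absolute continuity of $\mu$, and $D=1$) that neither you nor the paper can manufacture from within the proof; your instinct that these were the hard points was correct.
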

\begin{proof}
Let $\mu\in\cM$. By adding a constant, we may assume $\mu(0)=0$. For any fixed $\iota\in\C^*$, we set $\kappa:[0,1)\to\R$ by $\kappa(t) = \int_t^1\iota (s)\d s$. Then, $\kappa$ is continuous, nonnegative (by Lemma~\ref{l.dual_cone}~\eqref{i.dual-cone-M}), and differentiable with its derivative is given by $-\iota$. Since $\mu(0)=0$ and $\lim_{t\to1}\kappa(t)=0$, by integration by parts, we have that
\begin{align*}
    \la \mu,\iota\ra_\cH = \int_0^1\kappa(s)\dot\mu(s)ds.
\end{align*}
First, suppose that the essential support of $\dot\mu$ is $[0,1)$. Let $\iota$ be nonzero and thus so is $\kappa$. Then, the integral above is positive, and thus $\mu\not\in \wbd\cM$. For the other direction, suppose that the essential support of $\dot \mu$ is a strict subset of $[0,1)$. This implies the existence of a nonempty open set $O\subset [0,1)$ on which $\dot\mu$ vanishes. We then choose a nonnegative and smooth $\kappa$ such that $\kappa>0$ only on a subset of $O$. Setting $\iota = -\dot \kappa$, we clearly have $\iota\in \cM^*\setminus\{0\}$. In this case, the integral in the above display is zero, implying $\mu\in \wbd\cM$.
\end{proof}

It is thus tempting to use $\wbd\C$ as a more suitable notion of boundary. However, it is still not optimal, due to the following immediate consequence of Lemma~\ref{l.esssppt}.

\begin{lemma}
For $j\in\J$, then $\lf_j x \in \wbd\cM$ for every $x \in  \cM^j$; and $\mu^\j\in\wbd\C$ for every $\mu\in \C$.
\end{lemma}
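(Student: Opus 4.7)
The plan is to invoke Lemma~\ref{l.esssppt}, which characterizes $\wbd\cM$ as the set of $\nu\in\cM$ such that the essential support of $\dot\nu$ is a proper subset of $[0,1)$. Both $\lf_j x$ and $\mu^\j$ lie in $\cM$: the former by Lemma~\ref{l.proj_cones}~\eqref{i.lM^j}, and the latter by the remark immediately following \eqref{e.(j)} in Section~\ref{s.partition}. So the lemma applies to each.

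The key observation is that by \eqref{e.lf_jx} and \eqref{e.(j)}, both $\lf_j x$ and $\mu^\j$ are $\S^D$-valued step functions, constant on each subinterval $[t_{k-1},t_k)$ of the partition $j$. Working componentwise through the basis of $\S^D$ contained in $\S^D_+$ described just before Lemma~\ref{l.esssppt}, each scalar component is a monotone real-valued step function whose a.e.\ derivative is zero, since the jump set $\{t_1,\ldots,t_{|j|-1}\}$ is finite and hence Lebesgue-null. Consequently $\dot{\lf_j x}=0$ and $\dot{\mu^\j}=0$ almost everywhere, so the essential support of each derivative is the empty set, which is trivially a proper subset of $[0,1)$. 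Lemma~\ref{l.esssppt} then yields $\lf_j x\in\wbd\cM$ and $\mu^\j\in\wbd\cM$.

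No real obstacle is expected: the statement is essentially a bookkeeping corollary of the piecewise-constant structure of projections and lifts combined with the characterization in Lemma~\ref{l.esssppt}. Should one prefer a fully direct construction that bypasses Lemma~\ref{l.esssppt}, one could pick any partition subinterval $(t_{k-1},t_k)$, choose a smooth nonnegative bump $\kappa:[0,1)\to\R_+$ compactly supported inside $(t_{k-1},t_k)$ with $\kappa(1)=0$, and set $\iota=-\dot\kappa\,I_D$, where $I_D$ is the $D\times D$ identity. Then Lemma~\ref{l.dual_cone}~\eqref{i.dual-cone-M} gives $\iota\in\cM^*\setminus\{0\}$, since $\int_t^1\iota(s)\,\d s=\kappa(t)I_D\in\S^D_+$ for every $t\in[0,1)$. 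A short computation exploiting that both $\lf_j x$ and $\mu^\j$ are constant on $(t_{k-1},t_k)$ while $\kappa$ vanishes outside it then gives $\la \lf_j x,\iota\ra_\cH=\la \mu^\j,\iota\ra_\cH=0$, as desired.
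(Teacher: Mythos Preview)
Your proposal is correct and follows exactly the paper's approach: the paper states this lemma as an ``immediate consequence of Lemma~\ref{l.esssppt}'' without further proof, and your argument spells out precisely why---both $\lf_j x$ and $\mu^\j$ are piecewise constant, so $\dot{\lf_j x}$ and $\dot{\mu^\j}$ vanish a.e., giving empty essential support. Your optional direct construction via a bump $\kappa$ is also sound and mirrors the second half of the proof of Lemma~\ref{l.esssppt} itself.
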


In other words, any point from $\C^j$ is lifted to the boundary of $\C$, no matter whether it is in the interior of $\C^j$ or not. The following lemma could potentially be a remedy.

\begin{lemma}If $x \in \cM^j\setminus\partial \cM^j$, then there is $\mu\in\cM\setminus\wbd \cM$ such that $ \pj_j\mu = x$.
\end{lemma}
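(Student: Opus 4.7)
The plan is to build $\mu$ explicitly as a piecewise-affine function adapted to the partition $j$. The key observation is that a topological interior point $x$ of $\cM^j\subset(\S^D)^{|j|}$ must have each increment $x_k-x_{k-1}$ strictly positive definite (with the convention $x_0=0$); otherwise a small perturbation along a null eigenvector of some $x_k-x_{k-1}$ would push $x$ out of $\cM^j$. Hence there is $\lambda>0$ with $x_k-x_{k-1}\ge \lambda I_D$ for all $k\in\{1,\dots,|j|\}$, where $I_D$ denotes the $D\times D$ identity.

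For a small parameter $\alpha>0$ to be fixed, I would define
\[
\mu(s) = x_k + \alpha I_D \Bigl(s - \tfrac{t_{k-1}+t_k}{2}\Bigr), \qquad s\in[t_{k-1},t_k),\ k\in\{1,\dots,|j|\}.
\]
This $\mu$ is linear on each half-open subinterval, hence right-continuous with left limits, and obviously square-integrable. A direct integration shows that the average over $[t_{k-1},t_k)$ equals $x_k$, so $\pj_j\mu=x$. The jump at a breakpoint $t_k$ is
\[
\mu(t_k)-\mu(t_k^-) = (x_{k+1}-x_k) - \tfrac{\alpha}{2}(t_{k+1}-t_{k-1}) I_D,
\]
which lies in $\S^D_+$ as soon as $\alpha \le 2\lambda/\max_k(t_{k+1}-t_{k-1})$; combined with $\dot\mu = \alpha I_D \ge 0$ on each piece, this makes $\mu$ globally increasing. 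Positivity then reduces to checking $\mu(0)=x_1 - \tfrac{\alpha t_1}{2} I_D\in\S^D_+$, which holds once $\alpha$ is small compared to the smallest eigenvalue of $x_1$ (strictly positive by the interior hypothesis). Taking $\alpha>0$ small enough for both estimates simultaneously places $\mu$ in $\cM$.

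Finally, to conclude $\mu\notin\wbd\cM$, I would apply Lemma~\ref{l.esssppt}: the a.e.\ derivative of $\mu$ equals $\alpha I_D$ on every subinterval, so the essential support of $\dot\mu$ is all of $[0,1)$. The only nontrivial step is extracting the uniform positive-definite gap $\lambda$ from the interior hypothesis; the rest is a direct verification and an appeal to the already-established characterization of $\wbd\cM$.
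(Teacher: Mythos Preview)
Your proof is correct and essentially identical to the paper's: both construct the same piecewise-affine $\mu(s)=x_k+\alpha I_D\bigl(s-\tfrac{t_{k-1}+t_k}{2}\bigr)$ on $[t_{k-1},t_k)$, choose the slope small enough to preserve monotonicity and positivity, and invoke Lemma~\ref{l.esssppt}. The only cosmetic difference is that the paper cites Lemma~\ref{l.weak_bd_fin_d} for the existence of the uniform gap $\lambda$, whereas you sketch that argument directly.
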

\begin{proof}
By the equivalence between \eqref{i.weak_bd_1} and \eqref{i.weak_bd_3} in Lemma~\ref{l.weak_bd_fin_d}, we can find $\delta>0$ such that $x_k-x_{k-1}\geq\delta I_D$ for all $k$, where $I_D$ is the $D\times D$ identity matrix. Then, we define $\mu:[0,1)\to\S^D$ by
\begin{align*}
\mu(s)&=\eps I_D\left(s-\frac{t_k+t_{k-1}}{2}\right)+x_k,\quad\textrm{if }s\in[t_{k-1},t_k),
\end{align*}
for $\eps>0$. It is straightforward to check that $\pj_j \mu =x$. By choosing $\eps>0$ sufficiently small, we can ensure that $\mu$ is strictly increasing on $[0,1)$. Hence, Lemma~\ref{l.esssppt} implies that $\mu\in \C\setminus\wbd\C$.
\end{proof}

The applications of these results are still unclear to us. So, we leave them for future investigations.

\section{Application to the spin glass setting}\label{s.spin-glass}

We start by presenting a general vector spin glass model. We describe the external field indexed by a monotone probability measure. Then, we review the geometry of the space of monotone probability measures, which leads to an isometry between it and $\C$ in~\eqref{e.cone_M}. These parts are not needed in the study of the equation~\eqref{e.HJ_spin_glass} but serve as motivation for our ensuing definition of viscosity solutions of~\eqref{e.HJ_spin_glass}. Then, we move towards establishing the precise version of Theorem~\ref{t.informal}. After that, we show that different notions of solutions of~\eqref{e.HJ_spin_glass} considered in \cite{mourrat2019parisi,mourrat2020extending,mourrat2020nonconvex,mourrat2023free} are in fact viscosity solutions.

\subsection{Mean-field spin glass models}\label{s.spin-glass_setting}
We follow the setting in \cite{mourrat2023free} which encompasses a wide class of mean-field vector spin models. Recall that $D$ is a positive integer. 
For each $N\in\N$, let $\mathfrak{H}_N$ be a finite-dimensional Hilbert space.
Let $P_N$ be a probability measure on $\mathfrak{H}_N^D$ supported on the closed centered ball in $\mathfrak{H}_N^D$ with radius $\sqrt{N}$.
We interpret $\mathfrak{H}_N^D$ as the state space of spin configurations $\sigma = (\sigma_d)_{d=1}^D$, where each $\sigma_d$ lies in $\mathfrak{H}_N$, and $P_N$ as the reference measure to distribute~$\sigma$.

Let $\xi:\R^{D\times D}\to \R$ be locally Lipschitz. For each $N\in\N$, we assume the existence of a centered Gaussian field $(H_N(\sigma))_{\sigma\in\mathfrak{H}_N^D}$ with covariance
\begin{align}\label{e.Gaussian_covariance}
    \E [H_N(\sigma) H_N(\tau)] = N\xi\left(\frac{\sigma \tau^\intercal}{N}\right),\quad\forall \sigma, \tau \in \mathfrak{H}_N^D,
\end{align}
where the $D\times D$ real-valued matrix $\sigma \tau^\intercal$ is given by
\begin{align*}
    \sigma\tau^\intercal = \left(\la \sigma_d, \tau_{d'}\ra_{\mathfrak{H}_N}\right)_{1\leq d,d'\leq D}.
\end{align*}
We interpret $H_N(\sigma)$ as the random Hamiltonian of the $\mathfrak{H}_N^D$-valued spin configuration $\sigma$. 
Examples of $\xi$ and $H_N(\sigma)$ are given in \cite[Section~6]{mourrat2023free}.

\begin{example}\label{e.SK}
The Sherrington--Kirkpatrick model~\cite{sherrington1975solvable} corresponds to $D=1$, $\mathfrak{H}_N=\R^N$, $\xi(r)= r^2$, and $P_N$ uniform on $\{-1,+1\}^N$. The Hamiltonian can be expressed as
\begin{align*}
    H_N^{\mathrm{SK}}(\sigma) = \frac{1}{\sqrt{N}}\sum_{i,j=1}^Ng_{ij}\sigma_i\sigma_j,\quad\forall \sigma \in \R^N,
\end{align*}
where $\{g_{ij}\}_{1\leq i,j\leq N}$ is a collection of independent standard Gaussian random variables. 
\rrv{For the inverse temperature $\beta = \sqrt{2t}$ and $F_N$ introduced later in~\eqref{e.F_N}, we have
\begin{align*}
    \frac{1}{N}\E\log \int\exp\left(\beta H_N^{\mathrm{SK}}(\sigma)\right)\d P_N(\sigma) = -\bar F_N(t,\delta_0) + Nt.
\end{align*}
It is proven in~\cite{mourrat2019parisi} that $\bar F_N(t,\delta_0)$ converges to $f(t,\delta_0)$ where $f$ is given by the Hopf--Lax formula for~\eqref{e.HJ_spin_glass} with initial condition $f= \bar F_1(0,\cdot)$.}
\end{example}

Back to the general setting, in addition to the Hamiltonian $H_N(\sigma)$, we want to add an external field parametrized by $\mathcal{P}^\uparrow_1$, the set of monotone probability measures on $\S^D_+$ with finite first moments. 
This set will be defined in the next subsection.
Here, we present the construction of the external field associated with a discrete $\varrho\in \mathcal{P}^\uparrow_1$ in \cite{mourrat2023free}.
Using the isometry to be introduced in~\eqref{e.isometry}, one can check that $\varrho$ has the form
\begin{align}\label{e.varrho}
    \varrho= \sum_{k=0}^K(\zeta_{k+1}-\zeta_k)\delta_{q_k}
\end{align}
for some $K\in\N$, where $\zeta_0,\dots,\zeta_{K+1}\in\R$ satisfy $0=\zeta_0<\zeta_1<\cdots<\zeta_{K+1}=1$ and $q_0,\dots,q_K\in \S^D_+$ satisfy $0\leq q_0<q_1<\cdots<q_{K-1}<q_K$. For convenience, we also set $q_{-1} =0$.

Then, let $\mathcal{A}=\N^0\cup\N^1\cup\cdots\cup\N^K$ be a rooted tree with countably infinite degrees and depth $K$, where $\N^0=\{\emptyset\}$ contains the root. For each leaf $\alpha\in\N^K$, writing $\alpha = ( n_1,n_2,\dots,n_K)$, we denote the path from the root to $\alpha$ by $p(\alpha) = (\emptyset, (n_1), (n_1,n_2),\dots,(n_1,n_2,\dots,n_K))$. For each node $\beta\in\mathcal{A}$, we set $|\beta|$ to be its depth satisfying $\beta \in \N^{|\beta|}$.

Associated with the sequence $(\zeta_k)_{k=0}^{K+1}$,
there exists a family of random nonnegative weights $(\nu_\alpha)_{\alpha\in \N^K}$ (see ~\cite[(2.46)]{pan}), called the Poisson--Dirichlet cascade, indexed by the leaves of $\mathcal{A}$. The construction and properties can be seen in \cite[Chapter~2]{pan}. Since $\sum_{\alpha\in\N^K}\nu_\alpha=1$ almost surely, we can view $(\nu_\alpha)_{\alpha\in \N^K}$ as a random probability measure on the leaves of $\mathcal{A}$.
One can embed $\mathcal{A}$ into the unit sphere of a Hilbert space and interpret $(\nu_\alpha)_{\alpha\in \N^K}$ as a random probability measure supported on an ultrametric set. Again, we refer to~\cite[Chapter~2]{pan} for the detail. We take $(\nu_\alpha)_{\alpha\in \N^K}$ to be independent of $(H_N(\sigma))_{\sigma\in \mathfrak{H}^D_N}$.

Let $(z_\beta)_{\beta\in\mathcal{A}}$ be a family of independent standard $\mathfrak{H}^D_N$-valued Gaussian vectors, which can be defined through an isometry between $\mathfrak{H}^D_N$ and a Euclidean space. Then, we define the centered $\mathfrak{H}^D_N$-valued Gaussian process $(\mathsf{w}^\varrho(\alpha))_{\alpha\in\N^K}$ by
\begin{align*}
    \mathsf{w}^\varrho(\alpha) = \sum_{\beta\in p(\alpha)} \sqrt{q_{|\beta|}-q_{|\beta|-1}} z_\beta.
\end{align*}
Here, the square root is taken on matrices in $\S^D_+$. For $a\in\R^{D\times D}$ and $\mathbf{h} = (\mathbf{h}_d)_{d=1}^D\in \mathfrak{H}^D_N$, we understand $a \mathbf{h} = \left(\sum_{d'=1}^D a_{d,d'}\mathbf{h}_{d'}\right)_{d=1}^D \in \mathfrak{H}^D_N$. For $\mathbf{h}, \mathbf{h}'\in \mathfrak{H}^D_N$, we denote the inner product between them by $\mathbf{h}\cdot \mathbf{h}'$.

For $t\geq 0$ and $\varrho$ of form~\eqref{e.varrho}, we define
\begin{align}\label{e.F_N}
    F_N(t,\varrho) = - \frac{1}{N}\log \sum_{\alpha\in \N^K}\nu_\alpha \int \exp\left(\sqrt{2t}H_N(\sigma)- Nt\xi\left(\frac{\sigma\sigma^\intercal}{N}  \right)  + \sqrt{2}\mathsf{w}^\varrho(\alpha)\cdot \sigma - \sigma \cdot q_K\sigma\right)\d P_N( \sigma).
\end{align}
We also set $\bar F_N(t,\varrho) = \E F_N(t,\varrho)$ where $\E$ integrates all randomness.
Notice that $N\xi(\sigma\sigma^\intercal/N)$ is the variance of $H_N(\sigma)$ and $\sigma\cdot q_K\sigma$ is the variance of $\mathsf{w}(\alpha)^\varrho\cdot \sigma$. These terms are added to ensure that the exponential term has an expectation equal to one. 
It is proved in \cite[Proposition~3.1]{mourrat2023free} that, for each fixed $t$, $\bar F_N(t,\cdot)$ is Lipschitz in $\mathcal{P}^\uparrow_1$ (with metric defined below). Hence, we can extend $\bar F_N$ by continuity to $\R_+\times \mathcal{P}^\uparrow_1$.

\subsection{Monotone probability measures and isometry}\label{s.monotone_measure} 

A probability measure $\varrho$ on $\S^D_+$ is said to be \textit{monotone}, if
\begin{equation}\label{e.monotone_measure}
\P\left\{a\cdot X<a\cdot X'\textrm{ and }b\cdot X > b\cdot X'\right\} = 0, \quad\forall a, b \in \S^D_+,
\end{equation}
where $X$ and $X'$ are two independent $\S^D_+$-valued random variables with the same law $\varrho$. We denote the collection of such probability measures by $\cP^\uparrow$.

For $p\in[1,\infty)$, denote by $\cP^\uparrow_p$ the restriction of $\cP^\uparrow$ to those probability measures with finite $p$-th moments. We equip $\cP^\uparrow_p$ with the $p$-Wasserstein metric $\metric_p$ given by
\begin{align*}\metric_p(\varrho,\vartheta) =\inf_{\pi\in\Pi(\varrho,\vartheta)}\left(\int |x-y|^p\pi(\d x, \d y)\right)^\frac{1}{p}, \quad\forall \varrho,\vartheta \in \cP^\uparrow_p
\end{align*}
where $\Pi(\varrho,\vartheta)$ is collection of all couplings of $\varrho,\vartheta$. Here, a probability measure $\pi$ on $\S^D_+\times \S^D_+$ is said to be a coupling of $\varrho,\vartheta$ if the first marginal of $\pi$ is $\varrho$ and its second marginal is $\vartheta$. 

We want to embed $\cP^\uparrow_2$ isometrically onto the cone $\C$ given in \eqref{e.cone_M} with the ambient Hilbert space $\cH$ in \eqref{e.H_inft_d}. Throughout this section, let $U$ be the random variable distributed uniformly over $[0,1)$. By \cite[Propositions~2.4 and~2.5]{mourrat2023free}, the map $\mu\mapsto \hat \mu$ given by
\begin{align}\label{e.isometry}
    \begin{cases}
    \C \to  \cP^\uparrow_2
    \\
    \mu \mapsto \hat\mu=\Law(\mu(U))
    \end{cases}
\end{align}
is an isometric bijection. Moreover, we have
\begin{align*}\metric_p(\varrho,\vartheta)=\left|\check{\varrho}-\check\vartheta\right|_{L^p}= \left(\E\left[\left|\check\varrho(U)-\check\vartheta(U)\right|^p\right]\right)^\frac{1}{p},\quad\forall \varrho,\vartheta\in \cP^\uparrow_p,\ \forall p\in[2,\infty),
\end{align*}
where $\check{\varrho}$ is the inverse of $\varrho$ under~\eqref{e.isometry} and the same for $\check\vartheta$.
For $g:\cP^\uparrow_2\to\R$ and $f:\R_+\times\cP^\uparrow_2\to\R$, the actions of the isometry on them are given by
\begin{align*}\hat g: 
    \begin{cases}
    \C\to \R
    \\
    \mu \mapsto g(\hat \mu)
    \end{cases},
    \qquad
    \hat f:
    \begin{cases}
    \R_+\times \C\to\R
    \\
    (t,\mu)\mapsto f(t,\hat \mu)
    \end{cases}.
\end{align*}

\subsection{Interpretation of the equation}

We give an informal discussion on how to interpret the equation~\eqref{e.HJ_spin_glass} using the isometry.
We start by clarifying the meaning of $\partial_\varrho$.
For $g:\cP^\uparrow_2\to \R$ and any fixed $\varrho$, in a fashion as in \cite[Chapter~10]{ambrosio2005gradient}, we view $\partial_\varrho g$ as the element in $L^2((\S^D_+,\varrho);\,\S^D)$ satisfying
\begin{align}\label{e.diff_wass}
    g(\vartheta)-g(\varrho) = \int_{\S^D_+}\partial_\varrho g\cdot (\mathbf{t}^{\varrho\to\vartheta}-\mathbf{i})\d \varrho + o(\metric_2(\vartheta,\varrho))
\end{align}
as $\vartheta \to\varrho$ in $\cP^\uparrow_2$, where $\mathbf{i}$ is the identity map on $\S^D_+$ and $\mathbf{t}^{\varrho\to\vartheta}:\S^D_+\to\S^D_+$ is the optimal transport map. More precisely, the pushforward of $\varrho$ by $(\mathbf{i},\mathbf{t}^{\varrho\to\vartheta})$ is the optimal coupling of $(\varrho,\vartheta)$, namely, the law of $(\check\varrho(U),\check\vartheta(U))$. Hence, expressing $\partial_\varrho g$ as a map from $\S^D_+$ to $\S^D$, we can rewrite
\begin{align*}
    \int_{\S^D_+}\partial_\varrho g\cdot (\mathbf{t}^{\varrho\to\vartheta}-\mathbf{i})\d \varrho &= \E\left[ \left(\partial_\varrho g\circ\check\varrho(U)\right)\cdot\left(\check\vartheta(U)-\check\varrho(U)\right)\right]
    = \la \partial_\varrho g\circ \check\varrho,\,\check\vartheta-\check\varrho \ra_\cH.
\end{align*}
On the other hand, in view of Definition~\ref{d.differentiability}~\eqref{i.def_differentiable_C} and the isometry~\eqref{e.isometry}, the differential $\nabla\hat g$ of $\hat g$ at $\check\varrho$ satisfies
\begin{align*}
    \hat g(\check\vartheta)-\hat g(\check\varrho) = \la \nabla \hat g(\check \varrho),\, \check\vartheta-\check\varrho\ra_\cH + o(\metric_2(\vartheta,\varrho)),
\end{align*}
as $\vartheta\to\varrho$.
Comparing this with the previous two displays, we get
\begin{align*}
    \partial_\varrho g\circ \check\varrho=\nabla \hat g(\check \varrho)
\end{align*}
in $\cH$. Therefore, the first-order calculus on $\mathcal{P}^\uparrow_2$ is the same as that on $\C$.

Then, we can rewrite the nonlinearity in~\eqref{e.HJ_spin_glass} as
\begin{align*}
    \int \xi(\partial_\varrho g)\d \varrho= \E \left[\xi\big(\partial_\varrho g\circ \check\varrho(U)\big)\right] = \int_0^1 \xi\left(\nabla \hat g(\check \varrho)\right) 
\end{align*}
where $\int_0^1 \xi\left(\nabla \hat g(\check \varrho)\right) = \int_0^1\xi(\kappa)\d s$ by viewing $\nabla \hat g(\check \varrho)$ as a function $\kappa$ in $\cH$.
Hence, the equation \eqref{e.HJ_spin_glass} can be viewed as 
\begin{align}\label{e.HJ_spin_glass_xi_hat}
    \partial_t \hat f- \int_0^1\xi\left(\nabla \hat f\right)=0 , \quad\text{on }\R_+\times \C.
\end{align}
We do not attempt to make the above informal discussion rigorous and only hope to motivate our choice of interpreting~\eqref{e.HJ_spin_glass} as~\eqref{e.HJ_spin_glass_xi_hat} by an application of the isometry.

\subsection{Definition of solutions}

Since it is cumbersome to write out the isometry~\eqref{e.isometry}, henceforth, we identify every element in $\mathcal{P}^\uparrow_2$ with its preimage in $\C$ and we work directly with elements in $\C$. Hence, we view the enriched free energy $\bar F_N$ as a function $(t,\mu)\mapsto \bar F_N(t,\mu)$ on $\R_+\times\C$. Also, we can drop the hat in~\eqref{e.HJ_spin_glass_xi_hat} and view it as in~\eqref{e.HJ}.

For technical reasons, we need a regularized version of $\xi$.
Recall the definition of being increasing along a cone in \eqref{e.C^*-increasing}.
A function $g:\S^D_+\to\R$ is said to be \textit{proper} if $g$ is $\S^D_+$-increasing and, for every $b\in\S^D_+$, the function $\S^D_+\ni a\mapsto g(a+b)-g(a)$ is $\S^D_+$-increasing. 
\begin{definition}\label{d.regularization}
A function $\bar\xi:\S^D_+\to\R$ is said to be a \textit{regularization} of $\xi:\R^{D\times D}\to\R$ in \eqref{e.Gaussian_covariance} if
\begin{enumerate}
    \item \label{e.bar_xi_coincide} $\bar\xi$ coincides with $\xi$ on the intersection between $\S^D_+$ and the closed unit ball in $\S^D$;
    \item \label{e.bar_xi_Lip_and_proper} $\bar\xi$ is Lipschitz and proper;
\item \label{i.bar_xi_convex} $\bar\xi$ is convex, if, in addition, $\xi$ is convex on $\S^D_+$.
\end{enumerate}
\end{definition}

For any regularization $\xi$, we define the function $\H:\cH\to\R$ by
\begin{align}\label{e.def_H_spin_glass}
    \H(\kappa) = \inf\left\{\int_0^1\bar\xi(\mu(s))\d s:\: \mu \in \C\cap(\kappa+\C^*)\right\},\quad\forall \kappa\in\cH.
\end{align}
We study the properties of $\H$ in the next subsection.
Recall the definition of viscosity solutions in Definition~\ref{d.vs}.

\begin{definition}[Viscosity solutions of~\eqref{e.HJ_spin_glass}]\label{d.vis_sol_spin_glass}
Assume that ${\xi}:\R^{D\times D}\to\R$ is locally Lipschitz and $\xi\lfloor_{\S^D_+}$ is proper. A function $f:\R_+\times \C\to\R$ is said to be a \textit{viscosity subsolution} (respectively, \textit{supersolution}) of \eqref{e.HJ_spin_glass}, if there is a regularization $\bar\xi$ such that $f$ is a viscosity subsolution (respectively, supersolution) of $\HJ(\cH,\C,\H)$, namely,
\begin{align*}
    \partial_t f- \H(\nabla f) =0,\quad\text{on $\R_+\times\C$}
\end{align*}
for $\cH$, $\C$, $\H$ given in \eqref{e.H_inft_d}, \eqref{e.cone_M}, \eqref{e.def_H_spin_glass}, respectively. 
The function $f$ is said to be a \textit{viscosity solution} of \eqref{e.HJ_spin_glass} if $f$ is both a subsolution and a supersolution.
\end{definition}

In Lemma~\ref{l.exist_regularization} below, we show that the assumption on $\xi$ guarantees the existence of $\bar \xi$. As a corollary of the main theorem to be stated, we will see that $f$ does not depend on the choice of $\bar \xi$. In other words, we can define $f$ to be the function that solves $\HJ(\cH,\C,\H)$ for every choice of $\bar \xi$.

\subsection{Properties of regularization and nonlinearity}

\subsubsection{Comments on the regularization}
Let us justify the condition \eqref{e.bar_xi_coincide} in Definition~\ref{d.regularization} in the spin glass setting. It is expected that $(t,\varrho)\mapsto \bar F_N(t,\varrho)$ converges as $N\to\infty$ to a solution $f$ of \eqref{e.HJ_spin_glass}, at least when $\xi$ is convex on $\S^D_+$. Due to our assumption on the support of $P_N$, it has been shown in \cite[Proposition~3.1]{mourrat2023free} that
\begin{align}\label{e.bar_F_N_lip}
    \left|\bar F_N(t,\mu)-\bar F_N(t,\nu)\right|\leq \E |\mu(U)-\nu(U)|= \left|\mu-\nu\right|_{L^1},\quad\forall t\geq 0,\ \forall \mu,\nu\in\C,\ \forall N\in\N.
\end{align} 
Hence, $|\nabla \bar F_N(t,\mu)|_{L^\infty}\leq 1$ for every $N,t,\mu$. Passing to the limit, the same bound is expected to hold for $ f$.
In view of~\eqref{e.HJ},
this means that only values of $\xi$ on the closed unit ball of $\S^D$ matter. In addition, by \cite[Proposition~3.8]{mourrat2023free}, for every $N$ and $t$,
\begin{align}\label{e.F_N_C^*_nondecrea}
    \text{$\bar F_N(t,\cdot)$ is $\C^*$-increasing}
\end{align}
which by the duality of cones implies $\nabla\bar F_N(t,\mu)\in\C$ for every $\mu\in\C$. Passing to the limit, we expect $\nabla  f\in \C$ everywhere. Hence, only values of $\xi$ on $\S^D_+$ matter. Therefore, condition~\eqref{e.bar_xi_coincide} can be justified.

\subsubsection{Existence of regularization}
Since $\xi$ is the covariance function of a Gaussian field, there are many structures to exploit. Under the assumption that $\xi$ admits a convergent power series expansion, \cite[Propositions~6.4 and~6.6]{mourrat2023free} yield that $\xi$ is proper when restricted to $\S^D_+$.
The following lemma guarantees the existence of $\bar\xi$.

\begin{lemma}\label{l.exist_regularization}
If ${\xi}:\R^{D\times D}\to\R$ is locally Lipschitz and $\xi\lfloor_{\S^D_+}$ is proper, then there exists a regularization $\bar\xi$ of $\xi$. 
\end{lemma}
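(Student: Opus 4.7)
The plan is to define $\bar\xi$ by gluing $\xi$ with a linear proper majorant via a radial cutoff that exploits the Loewner-monotonicity of the Frobenius norm on $\S^D_+$. Since $\xi$ is locally Lipschitz, let $L_0$ be a Lipschitz constant of $\xi$ on the compact set $K=\{a\in\S^D_+:|a|\leq 2\}\supset\bar B(1)\cap\S^D_+$. Take $q\in\S^D_+$ with $q\geq 2L_0\cdot I$ in Loewner order, a constant $c$ so large that $q\cdot a+c\geq\xi(a)$ on $K$, and a smooth non-increasing $\eta\colon[0,\infty)\to[0,1]$ equal to $1$ on $[0,1]$ and $0$ on $[2,\infty)$. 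Propose
\begin{equation*}
\bar\xi(a)\,=\,\eta(|a|)\,\xi(a)\,+\,\bigl(1-\eta(|a|)\bigr)\bigl(q\cdot a+c\bigr), \qquad a\in\S^D_+.
\end{equation*}

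The key observation is that $|a|$ is Loewner-monotone on $\S^D_+$---indeed $|a'|^2-|a|^2=\tr\bigl((a'-a)(a'+a)\bigr)\geq 0$ whenever $a\leq a'$ in $\S^D_+$---so the cutoff $\eta(|a|)$ is Loewner-anti-monotone, of the correct sign to combine with the other pieces. Condition \eqref{e.bar_xi_coincide} follows immediately from $\eta\equiv 1$ on $\bar B(1)\cap\S^D_+$, and Lipschitzness is routine. First-order monotonicity of $\bar\xi$ in the Loewner order can be verified via a directional derivative in any $h\in\S^D_+$: the expression breaks into three terms $\eta'(|a|)\,\partial_h|a|\,(\xi(a)-q\cdot a-c)$, $\eta(|a|)\nabla\xi(a)\cdot h$, and $(1-\eta(|a|))\,q\cdot h$, each of which is non-negative by the sign choices of $\eta'$, $c$, and $q$.

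The main obstacle is the supermodular condition of properness: that $a\mapsto\bar\xi(a+b_0)-\bar\xi(a)$ is $\S^D_+$-increasing for every $b_0\in\S^D_+$. Expanding via the formula reduces this to supermodularity of $\xi$ on $K$ (inherited from properness of $\xi\lfloor_{\S^D_+}$) together with cross terms involving derivatives of $\eta$, which must be controlled by enlarging $q$. A robust fallback, useful also for securing convexity when $\xi$ is convex on $\S^D_+$, is to define $\bar\xi$ as the pointwise supremum over all proper Lipschitz (respectively, convex proper Lipschitz) functions on $\S^D_+$ with a prescribed Lipschitz constant that agree with $\xi$ on $\bar B(1)\cap\S^D_+$; this family is nonempty, and pointwise suprema of proper Lipschitz functions are again proper and Lipschitz.
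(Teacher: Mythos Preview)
Your construction is genuinely different from the paper's, and it has two real gaps.

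\textbf{Supermodularity of the smooth cutoff.} The second-order condition in ``proper'' is that the mixed directional derivative $\partial_{b_0}\partial_h\bar\xi(a)\geq 0$ for $h,b_0\in\S^D_+$. Expanding your $\bar\xi$ produces, among other terms,
\[
\eta'(|a|)\,\bigl(\partial_{b_0}\partial_h|a|\bigr)\,\bigl(\xi(a)-q\cdot a-c\bigr).
\]
Here $\eta'\leq 0$ and $\xi(a)-q\cdot a-c\leq 0$ on the transition shell, so the sign of this term is that of $\partial_{b_0}\partial_h|a|=\frac{b_0\cdot h}{|a|}-\frac{(a\cdot h)(a\cdot b_0)}{|a|^3}$. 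This can be strictly negative: take $a=\diag(2,1)$, $h=\diag(1,0)$, $b_0=\diag(0,1)$. Enlarging $q$ makes $\xi(a)-q\cdot a-c$ more negative and hence makes this bad term larger in magnitude, so your proposed fix goes the wrong way. The Frobenius norm is Loewner-monotone at first order but not at second order, which is exactly what supermodularity needs.

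\textbf{The fallback is false.} Pointwise suprema of proper functions need not be proper. On $\S^2_+$, set $g_1(a)=a_{11}$ and $g_2(a)=a_{22}$; both are linear with gradient in $\S^D_+$, hence proper. But $g=\max(g_1,g_2)$ fails the increasing-increment condition: with $a=0$, $a'=\diag(0,2)$, $b_0=\diag(1,0)$ one has $g(a+b_0)-g(a)=1$ while $g(a'+b_0)-g(a')=0$, yet $a\leq a'$. So the ``supremum over all proper Lipschitz extensions'' need not be proper, and the fallback for convexity has the same defect.

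\textbf{What the paper does.} The paper replaces the Frobenius norm by the trace and the smooth blend by a hard maximum: with $L=\||\nabla\xi|_\infty\|_{L^\infty(B_\tr(2D))}$, it sets $\bar\xi(a)=\xi(a)\vee\bigl(\xi(0)+2L(\tr a-D)\bigr)$ on $B_\tr(2D)$ and equal to the linear part outside. The point is not that a max of proper functions is proper in general (your counterexample shows it is not), but that here the linear piece has gradient $2LI$ dominating $\nabla\xi$ in Loewner order everywhere on $B_\tr(2D)$; consequently the set $\{\ell\geq\xi\}$ is upward-closed in the Loewner order, so along any Loewner-increasing path the gradient of $\bar\xi$ can only jump from $\nabla\xi(a)$ to $2LI\geq\nabla\xi(a)$. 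This gives properness directly, and convexity (when $\xi$ is convex on $\S^D_+$) is checked by a short case analysis using the same ordering of the two regions.
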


\begin{proof}
We follow the construction in \cite[Proposition~6.8]{mourrat2023free}. There, the definition of regularizations only requires \eqref{e.bar_xi_coincide} and \eqref{e.bar_xi_Lip_and_proper}. Here, we verify that $\bar\xi $ constructed is convex if $\xi$ is so. For $r>0$, we set $B_\tr(r) = \{a\in\S^D_+:\tr(a)\leq r\}$. Then, all $a\in\S^D_+$ with entries in $[-1,1]$ belong to $B_\tr(D)$.
In particular, $B_\tr(D)$ contains the closed unit ball intersected with $\S^D_+$.
For every $a\in\S^D_+$, we denote by $|a|_\infty$ the largest eigenvalue of $a$. Setting $L= \||\nabla \xi|_\infty\|_{L^\infty(B_\tr(2D))}$, we define, for every $a\in\S^D_+$,
\begin{align*}
    \bar\xi(a) = 
    \begin{cases}
    \xi(a)\vee (\xi(0)+2L(\tr(a)-D)), & \text{if }a\in B_\tr(2D),
    \\
    \xi(0)+2L(\tr(a)-D),& \text{if }a\not\in B_\tr(2D).
    \end{cases}
\end{align*}
Since $\xi$ is proper, we have $\xi(a)\geq \xi(0) \geq \xi(0)+2L(\tr(a)-D)$ for all $a\in B_\tr(D)$. Hence, $\bar\xi$ coincides with $\xi$ on matrices with entries in $[-1,1]$, verifying \eqref{e.bar_xi_coincide}. Note that $\bar\xi$ is continuous on $\{a\in\S^D_+:\tr(a)=2D\}$. Then, it is easy to check that $\bar\xi$ is Lipschitz. Due to the choice of $L$, we can also see that the gradient of $\bar\xi$ is increasing and thus $\bar\xi$ is proper, verifying \eqref{e.bar_xi_Lip_and_proper}.

Now, assuming that $\xi$ is convex on $\S^D_+$, we show that $\bar\xi$ is also convex. If $a,b\in B_\tr(2D)$ or $a,b\not\in B_\tr(2D)$, it is easy to check that
\begin{align}\label{e.bar_xi_convex}
    \bar\xi(\lambda a+(1-\lambda )b)\leq \lambda \bar\xi(a) + (1-\lambda)\bar\xi(b),\quad\forall \lambda\in[0,1].
\end{align}
Then, we consider $a\in B_\tr(2D)$ and $b\not\in B_\tr(2D)$. If $\lambda$ satisfies $\lambda a+(1-\lambda )b\not\in B_\tr(2D)$, then \eqref{e.bar_xi_convex} holds. Now, let $\lambda$ be such that $\lambda a+(1-\lambda )b\in B_\tr(2D)$.
There is $\gamma\in[0,\lambda]$ such that $c= \gamma a + (1-\gamma)b$ satisfies $\tr (c) =2D$. Then, for $\alpha = \frac{\lambda-\gamma}{1-\gamma}$, we have $\alpha a + (1-\alpha )c = \lambda a + (1-\lambda)b$. Since $\bar\xi$ is convex on $B_\tr(2D)$, the left-hand side of \eqref{e.bar_xi_convex} is bounded from above by $\alpha \bar\xi(a)+(1-\alpha)\bar\xi(c)$. On the other hand, using $\bar\xi(c) = \xi(0)+ 2L(\tr(c)-D)$ and the definition of $\bar\xi$, we have $\bar\xi(c)\leq \gamma \bar\xi(a)+(1-\gamma)\bar\xi(b)$. Combining these and using the choice of $\alpha$, we recover \eqref{e.bar_xi_convex}, verifying \eqref{i.bar_xi_convex}. 
\end{proof}

\begin{remark}\rm
By a straightforward modification of the above proof, for any $r\geq 1$, we can construct a regularization $\bar \xi$ of $\xi$ that coincides with $\xi$ on a centered closed ball with radius $r$ intersected with $\S^D_+$.
\end{remark}

\subsubsection{Properties of $\H$}
\begin{lemma}\label{l.H_lip}
Let $\H$ be given in~\eqref{e.def_H_spin_glass}. Then, the following hold:
\begin{enumerate}
\item \label{i.H=H_xi} $\H(\mu) = \int_0^1\bar\xi(\mu(s))\d s$ for every $\mu\in\C$;
\item \label{i.xi_proper=>} $\H$ is $\C^*$-increasing;
\item \label{i.xi_lip=>H_lip} $\H$ is Lipschitz;
\item \label{i.H_bdd_below} $\H$ is bounded below;
\item \label{i.xi_convex=>H_convex} if ${\xi}$ is convex on $\S^D_+$, then $\H$ is convex and satisfies $\H(\iota^\j)\leq \H(\iota)$ for every $j\in\J$ and every $\iota\in \cH$.
\end{enumerate}
\end{lemma}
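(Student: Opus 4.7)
The crux of the argument is a simple identity expressing $\H$ via an orthogonal projection: letting $\Pi_\C:\cH\to\C$ denote the $\cH$-projection onto the closed convex cone $\C$, I claim that
\begin{align*}
    \H(\kappa)=\int_0^1\bar\xi(\Pi_\C\kappa(s))\d s,\quad\forall \kappa\in\cH,
\end{align*}
which once established makes parts~\eqref{i.xi_lip=>H_lip} and~\eqref{i.H_bdd_below} essentially automatic. I begin with part~\eqref{i.xi_proper=>}, which is immediate: if $\kappa-\kappa'\in\C^*$, writing $\kappa=\kappa'+\eta$ with $\eta\in\C^*$ gives $\kappa+\C^*\subset\kappa'+\C^*$, hence $\C\cap(\kappa+\C^*)\subset\C\cap(\kappa'+\C^*)$, so $\H(\kappa)\geq\H(\kappa')$.

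Part~\eqref{i.H=H_xi} has an obvious upper bound $\H(\mu)\leq\int_0^1\bar\xi(\mu(s))\d s$ from the choice $\nu=\mu$ in the infimum. For the reverse, given any $\nu\in\C$ with $\nu-\mu\in\C^*$, I interpolate along $\nu_\lambda=(1-\lambda)\mu+\lambda\nu\in\C$ and compute
\begin{align*}
    \tfrac{d}{d\lambda}\int_0^1\bar\xi(\nu_\lambda(s))\d s=\la \nabla\bar\xi\circ\nu_\lambda,\,\nu-\mu\ra_\cH.
\end{align*}
Properness of $\bar\xi$ ensures that $\nabla\bar\xi$ takes values in $\S^D_+$ and is $\S^D_+$-increasing; combined with $\nu_\lambda\in\C$ being nondecreasing in $s$, this places the map $s\mapsto\nabla\bar\xi(\nu_\lambda(s))$ in $\C$. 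Cone duality with $\nu-\mu\in\C^*$ then makes the derivative nonnegative, so $\int\bar\xi(\nu)\geq\int\bar\xi(\mu)$; a standard mollification handles the merely Lipschitz case. The projection identity then follows: Moreau's decomposition for the polar pair yields $\kappa=\Pi_\C\kappa-\Pi_{\C^*}\kappa$ with $\Pi_{\C^*}\kappa\in\C^*$, so $\Pi_\C\kappa\in\C\cap(\kappa+\C^*)$ and hence $\H(\kappa)\leq\int_0^1\bar\xi(\Pi_\C\kappa(s))\d s$; the matching lower bound combines part~\eqref{i.xi_proper=>} applied to $\Pi_\C\kappa-\kappa\in\C^*$ (which gives $\H(\Pi_\C\kappa)\geq\H(\kappa)$) with part~\eqref{i.H=H_xi} at $\Pi_\C\kappa\in\C$.

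With the identity in hand, part~\eqref{i.xi_lip=>H_lip} follows from $\bar\xi$ being $L$-Lipschitz and $\Pi_\C$ being a contraction in $\cH$, via Cauchy--Schwarz on $[0,1)$:
\begin{align*}
    |\H(\kappa)-\H(\kappa')|\leq L\int_0^1|\Pi_\C\kappa(s)-\Pi_\C\kappa'(s)|\d s\leq L|\Pi_\C\kappa-\Pi_\C\kappa'|_\cH\leq L|\kappa-\kappa'|_\cH.
\end{align*}
Part~\eqref{i.H_bdd_below} follows from $\Pi_\C\kappa(s)\in\S^D_+$ and the $\S^D_+$-monotonicity of $\bar\xi$, which give $\bar\xi(\Pi_\C\kappa(s))\geq\bar\xi(0)$, hence $\H(\kappa)\geq\bar\xi(0)$.

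Finally, for part~\eqref{i.xi_convex=>H_convex}, convexity of $\H$ follows at once from convexity of $\bar\xi$ together with convexity of $\C$ and $\C^*$: near-minimizing feasible $\mu_i\in\C\cap(\kappa_i+\C^*)$ convex-combine into $\mu_\lambda\in\C\cap(\kappa_\lambda+\C^*)$, and $\int\bar\xi(\mu_\lambda)\leq(1-\lambda)\int\bar\xi(\mu_0)+\lambda\int\bar\xi(\mu_1)$. For the Jensen-type inequality $\H(\iota^\j)\leq\H(\iota)$, I take any $\mu\in\C\cap(\iota+\C^*)$ and observe that $\mu^\j\in\C$ (monotonicity is preserved by partition averaging, as noted after~\eqref{e.(j)}) and $\mu^\j-\iota^\j=\lf_j\pj_j(\mu-\iota)\in\lf_j((\C^j)^*)\subset\C^*$ by Lemma~\ref{l.proj_cones}~\eqref{i.lM^j*} and~\eqref{i.pjM*}, so $\mu^\j$ is feasible for $\H(\iota^\j)$. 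Conditional Jensen for the convex $\bar\xi$ yields $\int\bar\xi(\mu^\j)\leq\int\bar\xi(\mu)$, and taking the infimum over $\mu$ completes the proof. The delicate step throughout is part~\eqref{i.H=H_xi}: the properness of $\bar\xi$ is exactly what is needed to ensure $\nabla\bar\xi\circ\nu_\lambda\in\C$, and without this structural input the identity and hence everything downstream would fail.
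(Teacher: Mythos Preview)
Your argument has a genuine gap in the projection identity $\H(\kappa)=\int_0^1\bar\xi(\Pi_\C\kappa)$. You correctly obtain the upper bound $\H(\kappa)\leq\int\bar\xi(\Pi_\C\kappa)$ from feasibility of $\Pi_\C\kappa$. But your ``matching lower bound'' proves the same inequality a second time: from $\Pi_\C\kappa-\kappa\in\C^*$ and part~\eqref{i.xi_proper=>} you get $\H(\Pi_\C\kappa)\geq\H(\kappa)$, and combining with part~\eqref{i.H=H_xi} at $\Pi_\C\kappa$ yields $\int\bar\xi(\Pi_\C\kappa)=\H(\Pi_\C\kappa)\geq\H(\kappa)$, which is again the \emph{upper} bound. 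The missing direction $\H(\kappa)\geq\int\bar\xi(\Pi_\C\kappa)$ would require that every feasible $\mu\in\C\cap(\kappa+\C^*)$ satisfy $\mu-\Pi_\C\kappa\in\C^*$, i.e., that $\Pi_\C\kappa$ is the $\C^*$-least element of the feasible set. That is a nontrivial structural fact about this particular cone (it is false for general closed convex cones), and you have not supplied an argument for it. (A minor related slip: Moreau's decomposition is $\kappa=\Pi_\C\kappa+\Pi_{-\C^*}\kappa$, not $\Pi_\C\kappa-\Pi_{\C^*}\kappa$; the conclusion $\Pi_\C\kappa-\kappa\in\C^*$ that you use is nonetheless correct.)

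This gap undermines your proof of part~\eqref{i.xi_lip=>H_lip}, which rests entirely on the identity. The paper avoids the identity altogether and proves Lipschitzness by a shifting argument: for $\iota,\iota'\in\cH$, set $\nu=\Pi_\C(\iota-\iota')$; the projection inequalities give $\iota'-\iota+\nu\in\C^*$ and $|\nu|_\cH\leq|\iota-\iota'|_\cH$, so every $\mu$ feasible for $\H(\iota')$ yields $\mu+\nu$ feasible for $\H(\iota)$, and $|\int\bar\xi(\mu+\nu)-\int\bar\xi(\mu)|\leq\|\bar\xi\|_{\mathrm{Lip}}|\nu|_\cH$ finishes by symmetry. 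Your parts~\eqref{i.H=H_xi}, \eqref{i.xi_proper=>}, and~\eqref{i.xi_convex=>H_convex} are correct and essentially match the paper's (your direct infinite-dimensional argument for~\eqref{i.H=H_xi} is arguably cleaner than the paper's reduction to $\C^j$); part~\eqref{i.H_bdd_below} is easily repaired without the identity by noting $\int\bar\xi(\mu)\geq\bar\xi(0)$ for every feasible $\mu$.
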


\begin{proof}
We set $\H_{\bar\xi}(\mu) = \int_0^1\bar\xi(\mu(s))\d s$ for every $\mu\in\C$.

Part~\eqref{i.H=H_xi}. We first show that $\H_{\bar\xi}$ is $\C^*$-increasing on $\C$. We argue that it suffices to show $\H_{\bar\xi}^j$ is $(\C^j)^*$-increasing on $\C^j$, where $\H_{\bar\xi}^j$ is the $j$-projection of $\H_{\bar\xi}$. Indeed, for $\mu,\nu\in\C$ satisfying $\mu-\nu\in\C^*$, we have $\H_{\bar\xi}(\mu^\j) - \H_{\bar\xi}(\nu^\j) = \H^j_{\bar\xi}(\pj_j\mu)- \H^j_{\bar\xi}(\pj_j\nu)$. Due to Lemma~\ref{l.proj_cones}~~\eqref{i.pjM*}, we have $\pj_j \mu - \pj_j \nu \in (\C^j)^*$. Hence, $\H_{\bar\xi}(\mu^\j) - \H_{\bar\xi}(\nu^\j)\geq 0$ for every $j\in\J$. Passing to the limit along some $\Jgen$, and using Lemma~\ref{l.basics_(j)}~~\eqref{i.cvg} and the continuity of $\H_{\bar\xi}$ to conclude that $\H_{\bar\xi}(\mu) - \H_{\bar\xi}(\nu)\geq 0$. 

With this explained, we show that $\H_{\bar\xi}^j$ is $(\C^j)^*$-increasing. First, let us assume that $\bar\xi$ is differentiable everywhere. Since for every $x\in\C^j$,  $\H_{\bar\xi}^j(x) = \sum_{k=1}^{|j|}(t_k-t_{k-1}){\bar\xi}(x_k)$, we have (recall the inner product on $\cH^j$ given in \eqref{e.inner_product_H^j})
\begin{align*}
    \nabla_j \H_{\bar\xi}^j(x) = (\nabla{\bar\xi}(x_k))_{k=1,2,\dots,|j|},\quad\forall x\in\C^j.
\end{align*}
Here, $\nabla_j$ denotes the gradient of functions defined on subsets of $\cH^j$ and $\nabla$ on $\S^D_+$. Since ${\bar\xi}$ is proper, the above display implies $\nabla_j \H_{\bar\xi}^j(x)\in\C^j$. For any $x,y$ satisfying $x-y \in (\C^j)^*$, we have
\begin{align*}
    \H_{\bar\xi}^j(x) - \H_{\bar\xi}^j(y) = \int_0^1 \la \nabla_j \H_{\bar\xi}^j(sx+(1-s)y),\, x-y\ra_{\cH^j}\d s\geq 0,
\end{align*}
where we used the definition of dual cones to deduce that the integrand is nonnegative. Hence, $\H_{\bar\xi}^j$ is $(\C^j)^*$-increasing. The same can be deduced in general via a mollification argument. 

Having shown that $\H_{\bar\xi}$ is $\C^*$-increasing, we return to the proof. Let $\mu\in\C$. By the definition of $\H$, we clearly have $\H(\mu) \leq \H_{\bar\xi}(\mu)$. On the other hand, for every $\nu\in \C\cap(\mu+\C^*)$, the monotonicity of $\H_{\bar\xi}$ implies that $\H_{\bar\xi}(\mu)\leq \H_{\bar\xi}(\nu)$. Taking infimum in $\nu$, we obtain $\H_{\bar\xi}(\mu) \leq \H(\mu)$ verifying~\eqref{i.H=H_xi}.

Part~\eqref{i.xi_proper=>}. Let $\iota,\kappa\in\cH$ satisfy $\iota-\kappa\in\C^*$. For every $\mu\in\C\cap(\iota+\C^*)$, it is immediate that $\mu\in\C\cap(\kappa+\C^*)$, implying $\H_{\bar\xi}(\mu)\geq \H(\kappa)$. Taking infimum over $\mu\in\C\cap(\kappa+\C^*)$, we obtain~\eqref{i.xi_proper=>}.

Part~\eqref{i.xi_lip=>H_lip}.
Fix any $\iota,\iota'\in\cH$. Let $\nu$ be the projection of $\iota-\iota'$ to $\C$. Since $\C$ is closed and convex, we have
\begin{align}\label{e.iota-iota'-nu,rho-nu}
     \la \iota-\iota'-\nu,\ \rho-\nu\ra_\cH\leq 0,\quad\forall \rho\in\C.
\end{align}
Since $s\nu\in\C$ for all $s\geq 0$, \eqref{e.iota-iota'-nu,rho-nu} yields
\begin{align}
    \la \iota-\iota'-\nu,\nu\ra_{\cH}&= 0\label{e.iota-iota'-nu,nu}.
\end{align}
Inserting this back to \eqref{e.iota-iota'-nu,rho-nu}, we have $\la\iota-\iota'-\nu,\rho\ra_{\cH}\leq 0$ for all $\rho\in\C$, which implies
\begin{align*}
    \iota'-\iota+\nu\in{\cM}^*,
\end{align*}
For all $\mu\in\C\cap(\iota'+\C^*)$, the above display implies that $\mu+\nu\in\C\cap(\iota+\C^*)$. Since $\H$ is $\C^*$-increasing by \eqref{i.xi_proper=>}, we have
\begin{align*}
    \H_{\bar\xi}(\mu+\nu)\geq \H(\iota),\quad\forall \mu \in \C\cap(\iota'+\C^*).
\end{align*}
By \eqref{i.H=H_xi}, we get
\begin{align*}
    |\H_{\bar\xi}(\mu+\nu)-\H_{\bar\xi}(\mu)| \leq \E|{\bar\xi}(\mu(U)+\nu(U))-{\bar\xi}(\mu(U))|\leq \|{\bar\xi}\|_\mathrm{Lip}|\nu|_\cH.
\end{align*}
The above two displays imply
\begin{align*}
    \H(\iota)-\H_{\bar\xi}(\mu)\leq \|{\bar\xi}\|_\mathrm{Lip}|\nu|_\cH, \quad\forall \mu \in \C\cap(\iota'+\C^*).
\end{align*}
Due to \eqref{e.iota-iota'-nu,nu}, we can see that
\begin{align*}
    |\iota-\iota'|_{\cH}^2=|\iota-\iota'-\nu|_{\cH}^2+|\nu|_{\cH}^2\geq |\nu|_\cH^2.
\end{align*}
Using this and taking supremum over $\mu \in \C\cap(\iota'+\C^*)$, we obtain
\begin{align*}
    \H(\iota)-\H(\iota')\leq \|{\bar\xi}\|_\mathrm{Lip}|\iota-\iota'|_\cH.
\end{align*}
By symmetry, we conclude that $\H$ is Lipschitz.

Part~\eqref{i.H_bdd_below}. Since $\bar\xi$ is proper, we have $\bar\xi(a)\geq \bar\xi(0)$ for every $a\in\S^D_+$.
It is clear from~\eqref{e.def_H_spin_glass} that $\H$ is bounded below.

Part~\eqref{i.xi_convex=>H_convex}. 
By Definition~\ref{d.regularization}~\eqref{i.bar_xi_convex}, we have that $\bar\xi$ is convex.
From~\eqref{i.H=H_xi}, we can see that $\H$ is convex on $\C$. For every $\iota,\kappa\in\cH$ and every $s\in[0,1]$, we have $s \mu+(1-s)\nu \in \C\cap(s \iota+(1-s)\kappa+\C^*)$ if $\mu\in \C\cap(\iota+\C^*)$ and $\nu\in \C\cap(\kappa+\C^*)$. In view of this, the convexity of $\H$ on $\cH$ follows from its convexity on $\C$ and~\eqref{e.def_H_spin_glass}. To see the second claim, using Jensen's inequality, we have that, for every $\mu\in\C$ and every $j\in\J$,
\begin{align*}
    \H\left(\mu^\j\right)= \sum_{k=1}^{|j|}(t_{k+1}-t_k){\bar\xi}\left(\frac{1}{t_{k}-t_{k-1}}\int_{t_{k-1}}^{t_k}\mu(s)\d s\right)\leq \int_0^1{\bar\xi}(\mu(s))\d s = \H(\mu).
\end{align*}
Fix any $\iota\in\cH$. By Lemma~\ref{l.basics_(j)}~\eqref{i.lf_pj_(j)} and Lemma~\ref{l.proj_cones}, we have $\mu^\j\in \C\cap(\iota^\j+\C^*)$ for every $j\in \J$, and every $\mu\in \C\cap(\iota+\C^*)$. Therefore, the above display along with the definition of $\H$ implies that
\begin{align*}
    \H\left(\iota^\j\right)\leq \H(\mu),\quad\forall \mu\in \C\cap(\iota+\C^*).
\end{align*}
Taking infimum over $\mu\in \C\cap(\iota+\C^*)$, we conclude that $\H(\iota^\j)\leq \H(\iota)$.
\end{proof}

\subsection{Proof of the main result}

We state the rigorous version of Theorem~\ref{t.informal}.
Recall that we have identified $\mathcal{P}^\uparrow_2$ with $\C$ via the isometry~\eqref{e.isometry} and recall the notion of viscosity solutions of~\eqref{e.HJ_spin_glass} given in Definition~\ref{d.vis_sol_spin_glass}.
Also recall the Hilbert spaces $\cH$ and $\cH^j$, $j\in\J$, in \eqref{e.H_inft_d} and \eqref{e.def_H^j}, respectively; the cones $\C$ and $\C^j$, $j\in\J$, in \eqref{e.cone_M} and \eqref{e.C^j}, respectively; the definition of $\C^*$-increasingness in \eqref{e.C^*-increasing}; the definition of good collections of partitions at the beginning of Section~\ref{s.partition}; lifts and projections of functions in Definition~\ref{d.lift_proj}.
We write
\begin{align}\label{e.bxi}
    \bxi(\kappa) = \int_0^1\xi(\kappa(s))\d s,\quad\forall \kappa\in L^\infty.
\end{align}
For $j\in\J$, we can define the $j$-projection $\bxi^j:\cH^j\to\R$ as in Definition~\ref{d.lift_proj}. So, for $x\in \cH^j$, we have $\bxi^j(x) =\sum_{k=1}^{|j|}(t_k-t_{k-1})\xi(x_k)$. Now, we are ready to state the result.

\begin{theorem}\label{t.main}
Suppose
\begin{itemize}
    \item ${\xi}:\R^{D\times D}\to\R$ is locally Lipschitz and $\xi\lfloor_{\S^D_+}$ is proper;
    \item $\psi:\C\to\R$ is $\C^*$-increasing and satisfies, 
    \begin{align}\label{e.psi_1-wasser-Lip}
        |\psi(\mu)-\psi(\nu)|\leq |\mu-\nu|_{L^1},\quad\forall \mu,\nu\in \C.
    \end{align}
\end{itemize}
Then, there is a unique Lipschitz viscosity solution $f$ of~\eqref{e.HJ_spin_glass} with $f(0,\cdot) = \psi$.
Moreover, 
\begin{enumerate}
    \item \label{i.fin_d_approx} $ f=\lim_{j\in\Jgood}f^\uparrow_j$ in the local uniform topology, for any good collection of partitions $\Jgood$, where each $f_j:\R_+\times\C^j\to\R$ is the unique Lipschitz viscosity solution of $\HJ(\cH^j,\mathring\C^j,\bxi^j; \psi^j)$;
    \item \label{i.hopf_lax} if $\xi$ is convex on $\S^D_+$, then $f$ is given by the Hopf--Lax formula
    \begin{align}\label{e.Hopf-Lax_spin_glass}
        f(t,\mu) = \sup_{\nu \in \C\cap L^\infty}\inf_{\rho\in\C\cap L^\infty}\left\{\psi(\mu+\nu)- \la \nu,\rho\ra_{\cH}+t\int_0^1\xi(\rho(s))\d s\right\},\quad\forall(t,\mu)\in\R_+\times\C;
    \end{align}
    \item \label{i.hopf} if $\psi$ is convex, then $f$ is given by the Hopf formula
    \begin{align}\label{e.Hopf_spin_glass}
        f(t,\mu) = \sup_{\rho \in \cM\cap L^\infty}\inf_{\nu \in \cM\cap L^\infty}\left\{ \psi(\nu) + \la \mu - \nu, \rho\ra_\cH +t\int_0^1\xi(\rho(s))\d s\right\},\quad\forall(t,\mu)\in\R_+\times\C.
    \end{align}
\end{enumerate}
\end{theorem}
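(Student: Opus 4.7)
The strategy is to construct $f$ as the limit of finite-dimensional viscosity solutions and derive all claims from the corresponding finite-dimensional results by passage to the limit.

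Fix a regularization $\bar\xi$ of $\xi$ (existing by Lemma~\ref{l.exist_regularization}) and let $\H$ be defined as in~\eqref{e.def_H_spin_glass}. By Lemma~\ref{l.H_lip}, $\H$ is Lipschitz, $\C^*$-increasing, and bounded below; $\psi$ is Lipschitz with respect to $|\cdot|_{L^1}$ by \eqref{e.psi_1-wasser-Lip} and $\C^*$-increasing by hypothesis. For each $j\in\Jgood$, Theorem~\ref{t.hj_cone} produces a Lipschitz viscosity solution $g_j$ of $\HJ(\cH^j,\mathring\C^j,\H^j;\psi^j)$, the hypotheses on $\psi^j$ and $\H^j$ being inherited via Lemma~\ref{l.proj_cones}. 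Proposition~\ref{p.exist_sol_approx} (with $p=1$) then yields local uniform convergence of $(g_j^\uparrow)_{j\in\Jgood}$ to a Lipschitz $f:\R_+\times\C\to\R$ with $f(0,\cdot)=\psi$, and Proposition~\ref{p.lim_is_vis_sol} upgrades this limit to a viscosity solution of $\HJ(\cH,\C,\H)$, hence of~\eqref{e.HJ_spin_glass} per Definition~\ref{d.vis_sol_spin_glass}. Uniqueness of such an $f$ is immediate from Proposition~\ref{p.comp}.

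For part~\eqref{i.fin_d_approx}, let $f_j$ denote the Lipschitz viscosity solution of $\HJ(\cH^j,\mathring\C^j,\bxi^j;\psi^j)$ obtained by a direct application of Theorem~\ref{t.hj_cone} (the restriction of $\bxi^j$ to $\C^j$ is locally Lipschitz and $(\C^j)^*$-increasing since $\xi\lfloor_{\S^D_+}$ is locally Lipschitz and proper). To identify $f_j=g_j$, apply Proposition~\ref{p.lip_l^p} with $p=1$: both solutions have $\|\cdot\|_{\mathrm{Lip}\|\cdot\|_1}\leq 1$, which by duality forces the relevant test gradients in any viscosity inequality to lie in $\{v\in\C^j:\|v\|_\infty\leq 1\}$; on this set $\bxi^j=\H^j$ because $\bar\xi=\xi$ on the closed unit ball of $\S^D_+$ by Definition~\ref{d.regularization}~\eqref{e.bar_xi_coincide}. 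A quantified comparison argument in the spirit of Proposition~\ref{p.comp_fin_d_strong}, keyed to the $\|\cdot\|_1$-Lipschitz bound rather than the $L^2$ one, then yields $f_j=g_j$; in particular $f$ does not depend on the choice of regularization.

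For the variational formulas, when $\xi$ is convex on $\S^D_+$ we pick $\bar\xi$ convex using Definition~\ref{d.regularization}~\eqref{i.bar_xi_convex}, so that Lemma~\ref{l.H_lip}~\eqref{i.xi_convex=>H_convex} renders $\H$ convex and satisfying $\H(\iota^\j)\leq \H(\iota)$. Using the Fenchel--Moreau property of $\C^j$ established in Section~\ref{s.fenchel-moreau}, Theorem~\ref{t.hj_cone}~\eqref{i.main_var_rep} yields a Hopf--Lax representation for each $g_j$ on $\C^j$, and Remark~\ref{r.hopf_lax_cvg} to Proposition~\ref{p.hopf_lax_cvg} passes it to the limit to give~\eqref{e.Hopf-Lax_spin_glass} (the replacement of the integrand $\bar\xi$ by $\xi$ in the formula is absorbed by choosing regularizations that coincide with $\xi$ on arbitrarily large balls via the remark following Lemma~\ref{l.exist_regularization}). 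The Hopf formula~\eqref{e.Hopf_spin_glass} when $\psi$ is convex is obtained analogously via Theorem~\ref{t.hj_cone}~\eqref{i.main_var_rep} and Remark~\ref{r.hopf_cvg} to Proposition~\ref{p.hopf_cvg}. The main technical obstacle throughout is reconciling the regularized nonlinearity $\H$ (defined globally on $\cH$, necessary for the viscosity theory) with the ``natural'' $\bxi^j$ or $\bxi$ defined from $\xi$ alone, since the two differ outside the unit $\S^D_+$-ball; the reconciliation rests on the $L^1$-Lipschitz hypothesis~\eqref{e.psi_1-wasser-Lip}, which via Proposition~\ref{p.lip_l^p} dualizes to an $L^\infty$-bound on gradients and thereby confines all relevant test objects to the region where the two nonlinearities coincide.
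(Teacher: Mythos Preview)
Your overall architecture matches the paper's: build $f$ as the limit of finite-dimensional solutions, identify the $\H^j$-solution with the $\bxi^j$-solution via the $\ell^1$-Lipschitz/$\ell^\infty$-gradient duality, and pass variational formulas to the limit. Two points deserve correction.

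First, the identification $f_j=g_j$ does not need a ``quantified comparison keyed to $\|\cdot\|_1$''. Once Proposition~\ref{p.lip_l^p} and the $(\C^j)^*$-monotonicity give $\nabla\phi(t,x)\in\{a\in\C^j:|a|_{\ell^\infty}\le 1\}$ at any interior test point, and since $\H^j=\bxi^j$ on that set, $g_j$ is \emph{already} a viscosity solution of $\HJ(\cH^j,\mathring\C^j,\bxi^j;\psi^j)$; uniqueness from Theorem~\ref{t.hj_cone} then gives $g_j=f_j$ with no further comparison needed. This is exactly how the paper proceeds.

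Second, and more substantively, your route to the variational formulas has a gap. You apply Theorem~\ref{t.hj_cone}~\eqref{i.main_var_rep} to $g_j$ as an $\H^j$-solution, which produces a finite-dimensional Hopf--Lax formula with $\H^j$ in the integrand; passing to the limit via Remark~\ref{r.hopf_lax_cvg} then yields the formula with $\int_0^1\bar\xi(\rho)$, not $\int_0^1\xi(\rho)$. Your proposed fix---let the regularization agree with $\xi$ on ever larger balls---does not close: the infimum over $\rho\in\C\cap L^\infty$ ranges over arbitrarily large $\rho$, and there is no obvious monotonicity or uniform control allowing you to interchange the limit in $r$ with the $\sup\inf$. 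The paper avoids this entirely: having established $g_j=f_j$, it applies Theorem~\ref{t.hj_cone}~\eqref{i.main_var_rep} to $f_j$ viewed as the solution of $\HJ(\cH^j,\mathring\C^j,\bxi^j;\psi^j)$, obtaining the finite-dimensional formula with $\bxi^j$ directly (here one checks $\bxi^j\lfloor_{\C^j}$ is convex and bounded below from the properness and convexity of $\xi\lfloor_{\S^D_+}$), and then Proposition~\ref{p.hopf_lax_cvg} with Remark~\ref{r.hopf_lax_cvg} (respectively Proposition~\ref{p.hopf_cvg} with Remark~\ref{r.hopf_cvg}) delivers~\eqref{e.Hopf-Lax_spin_glass} and~\eqref{e.Hopf_spin_glass} with $\xi$ already in place. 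You should reroute through $\bxi^j$ once $f_j=g_j$ is in hand.
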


\begin{proof}
Lemma~\ref{l.exist_regularization} guarantees the existence of regularizations (see Definition~\ref{d.regularization}). We fix any regularization $\bar\xi$ and let $\H$ be given in~\eqref{e.def_H_spin_glass}.
The properties of $\H$ are listed in Lemma~\ref{l.H_lip}. 
The existence of a Lipschitz viscosity solution $f$ of $\HJ(\cH,\C,\H;\psi)$ follows from Propositions~\ref{p.lim_is_vis_sol} and Proposition~\ref{p.exist_sol_approx}. 
In view of Definition~\ref{d.vis_sol_spin_glass}, $f$ is a viscosity solution of~\eqref{e.HJ_spin_glass} with $f(0,\cdot)=\psi$. Notice that the uniqueness of $f$ will follow from~\eqref{i.fin_d_approx}.

Let us prove~\eqref{i.fin_d_approx}.
Let $f_j$ be given in Proposition~\ref{p.exist_sol_approx}, which is the unique Lipschitz viscosity solution of $\HJ(\cH^j,\C^j,\H^j;\psi^j)$ given by Theorem~\ref{t.hj_cone}~\eqref{i.main_equiv}. The convergence $f=\lim_{j\in\Jgood}f^\uparrow_j$ is already given by Proposition~\ref{p.exist_sol_approx}.

We verify that $f_j$ is the unique Lipschitz solution of $\HJ(\cH^j,\mathring\C^j,\bxi^j,\psi^j)$.
On $\cH^j$, we consider norms $|x|_{\ell^1} = \sum_{k=1}^{|j|}(t_k-t_{k-1})|x_k|$ and $|x|_{\ell^\infty}=\sup_{1\leq k\leq |j|} |x_k|$. Due to~\eqref{e.psi_1-wasser-Lip}, we can verify $|\psi^j(x)-\psi^j(x')|\leq |x-x'|_{\ell^1}$ for all $x,x'\in\cH^j$. Invoking Proposition~\ref{p.lip_l^p}, we have $|f_j(t,x)-f_j(t,x')|\leq |x-x'|_{\ell^1}$ for all $t\geq 0$ and $x,x'\in\cH^j$. 
Theorem~\ref{t.hj_cone}~\eqref{i.main_equiv} ensures that $f_j$ belongs to the class~\eqref{e.class_sol} and thus $f_j(t,\cdot)$ is $(\C^j)^*$-increasing for every $t\geq 0$. These properties of $f_j$ imply that, if $f_j-\phi$ achieves a local extremum at some $(t,x)\in (0,\infty)\times \mathring\C^j$ for some smooth $\phi$, then
\begin{align}\label{e.obs_1_pf_main}
    \nabla\phi(t,x)\in \{a\in \C^j: |a|_{\ell^\infty}\leq 1\}.
\end{align}
By Lemma~\ref{l.H_lip}~\eqref{i.H=H_xi} and Definition~\ref{d.regularization}~\eqref{e.bar_xi_coincide}, we can see
\begin{align}\label{e.obs_2_pf_main}
    \H^j(a) = \sum_{k=1}^{|j|} (t_k-t_{k-1})\bar \xi(a_k) = \bxi^j(a), \quad\forall a\in \C^j:\: |a|_{\ell^\infty}\leq 1.
\end{align}
From \eqref{e.obs_1_pf_main}, \eqref{e.obs_2_pf_main}, and Definition~\ref{d.vs} of viscosity solutions, we can verify that $f_j$ is a viscosity solution of $\HJ(\cH^j,\mathring\C^j,\bxi^j,\psi^j)$. Since $\xi\lfloor_{\S^D_+}$ is proper, we can use the same argument in the proof of Lemma~\ref{l.H_lip}~\eqref{i.H=H_xi} (substituting $\xi$ for $\bar\xi$ therein) to see that $\bxi^j$ is $(\C^j)^*$-increasing.
Since $\bxi^j$ is also locally Lipschitz and $\psi^j:\C^j\to\R$ is clearly Lipschitz and $(\C^j)^*$-increasing, the uniqueness follows from Theorem~\ref{t.hj_cone}. This completes~\eqref{i.fin_d_approx}.

To prove~\eqref{i.hopf_lax} and~\eqref{i.hopf}, we want to use Theorem~\ref{t.hj_cone}~\eqref{i.main_var_rep}. For this, Proposition~\ref{p.fenchel-moreau}, stated and proved later, ensures that $\C^j$ has the Fenchel--Moreau property defined in Definition~\ref{d.fenchel_moreau_prop}. 

Given that $\xi\lfloor_{\S^D_+}$ is convex and proper, it is easy to see that $\bxi^j\lfloor_{\C^j}$ is convex and bounded below for every $j$.
Since $f_j$ solves $\HJ(\cH^j,\mathring\C^j,\bxi^j,\psi^j)$, Theorem~\ref{t.hj_cone}~\eqref{i.main_var_rep} gives
\begin{align*}
    f_j(t,x)=\sup_{y \in \C^j}\inf_{z\in\C^j}\left\{\psi^j(x+y)-\la y,z\ra_{\cH^j}+ t \bxi^j\left(z\right)\right\},\quad\forall(t,x)\in\R_+\times\C^j.
\end{align*}
The convexity of $\xi$ on $\S^D_+$ together with Jensen's inequality also implies $\bxi\left(\nu^\j\right)\leq \bxi(\nu)$ for every $\nu\in\C\cap L^\infty$.
Hence, Proposition~\ref{p.hopf_lax_cvg} along with Remark~\ref{r.hopf_lax_cvg} yields~\eqref{i.hopf_lax}.

Under the assumption that $\psi$ is convex, it is straightforward to see that $\psi^j:\C^j\to\R$ is also convex. Invoking Theorem~\ref{t.hj_cone}~\eqref{i.main_var_rep}, we get
\begin{align*}
    f_j(t,x) = \sup_{z\in\C^j}\inf_{y\in\C^j} \left\{\psi^j(y)+\la x-y,z\ra_{\cH^j}+t\bxi^j(z)\right\},\quad\forall (t,x)\in\R_+\times\C^j.
\end{align*}
Then,~\eqref{i.hopf} follows from Proposition~\ref{p.hopf_cvg} along with Remark~\ref{r.hopf_cvg}.
\end{proof}

\begin{remark}[Comparison principle]
\rm
In view of Definition~\ref{d.vis_sol_spin_glass} and Lemma~\ref{l.H_lip}, Proposition~\ref{p.comp} supplies a comparison principle for \eqref{e.HJ_spin_glass}.
\end{remark}

\begin{remark}[Assumptions on $\xi$ and $\psi$]\rm \label{r.assump_xi_psi}
In most of the interesting models, $\xi$ in \eqref{e.Gaussian_covariance} is a convergent power series and proper on $\S^D_+$ (see \cite[Propositions~6.4 and~6.6]{mourrat2023free}). In practice, $\psi$ will be the limit of $\bar F_N(0,\cdot)$ as $N\to\infty$. Due to~\eqref{e.bar_F_N_lip} and~\eqref{e.F_N_C^*_nondecrea}, the assumption on $\psi$ is natural. In general, $\psi$ is neither concave nor convex, which renders the Hopf formula less useful. A discussion on the existence of variational formulas for the limit free energy is in \cite[Section~6]{mourrat2020nonconvex}.
\end{remark}

\begin{remark}[Independence of regularizations]\rm
In Definition~\ref{d.vis_sol_spin_glass}, $f$ is only required to solve $\HJ(\cH,\C,\H)$ for $\H$ in~\eqref{e.def_H_spin_glass} associated with some regularization $\bar\xi$ of $\xi$. In the proof of Theorem~\ref{t.main}, the choice of $\bar\xi$ is arbitrary. Hence, in fact, we can define $f$ to solve $\HJ(\cH,\C,\H)$ for any regularization of $\xi$.
\end{remark}

\begin{remark}[Solving regularized finite-dimensional equations]\rm \label{r.f_j_sol_reg}
For any regularization $\bar\xi$, let us define $\bar{\bxi}:\C\to\R$ by
\begin{align}\label{e.bar_bold_xi}
    \bar{\bxi}(\kappa) = \int_0^1 \bar\xi(\kappa(s))\d s,\quad\forall \kappa\in\cH
\end{align}
which is well-defined because $\bar\xi$ is Lipschitz.
In the proof of the theorem, we initially take $f_j$ to be the unique viscosity solution of $\HJ(\cH^j,\C^j,\H^j;\psi^j)$.
From~\eqref{e.obs_2_pf_main}, we also have $\H^j(a)= \bar{\bxi}^j(a)$ for all $a\in\C^j$ satisfying $|a|_{\ell^\infty}\leq 1$. Combining this with~\eqref{e.obs_1_pf_main}, we can similarly deduce that $f_j$ in Theorem~\ref{t.main}~\eqref{i.fin_d_approx} is the unique Lipschitz viscosity solution of $\HJ(\cH^j,\mathring\C^j,\bar{\bxi}^j;\psi^j)$ for any regularization $\bar\xi$.
\end{remark}

\begin{remark}[Regularized variational formulas]\rm
We continue from the results in the previous remark.
If $\xi$ is convex on $\S^D_+$, Definition~\ref{d.regularization}~\eqref{i.bar_xi_convex} ensures that $\bar\xi$ is convex. Moreover, $\bar\xi$ is proper by definition. By the same argument in the proof of Theorem~\ref{t.main}~\eqref{i.hopf_lax} (without using Remark~\ref{r.hopf_lax_cvg}), we can get
\begin{align*}
    f(t,\mu) = \sup_{\nu \in \C}\inf_{\rho\in\C}\left\{\psi(\mu+\nu)- \la \nu,\rho\ra_{\cH}+t\int_0^1\bar\xi(\rho(s))\d s\right\},\quad\forall(t,\mu)\in\R_+\times\C.
\end{align*}
If $\psi$ is convex, by the same reasoning in the proof of Theorem~\ref{t.main}~\eqref{i.hopf} (without using Remark~\ref{r.hopf_cvg}), we have
\begin{align*}
    f(t,\mu) = \sup_{\rho \in \cM}\inf_{\nu \in \cM}\left\{ \psi(\nu) + \la \mu - \nu, \rho\ra_\cH +t\int_0^1\bar\xi(\rho(s))\d s\right\},\quad\forall(t,\mu)\in\R_+\times\C.
\end{align*}
We stress that these formulas hold for any regularization $\bar\xi$. Also, by Remarks~\ref{r.hopf_lax_cvg} and~\ref{r.hopf_cvg}, we can replace $\nu\in\C$ and $\rho\in\C$ by $\nu\in\C\cap L^\infty$ and $\rho\in\C\cap L^\infty$ in the above formulas.
\end{remark}

\subsection{Other notions of solution} \label{s.def_vs_equiv}

We show that solutions of~\eqref{e.HJ_spin_glass} in \cite{mourrat2019parisi,mourrat2020extending,mourrat2020nonconvex,mourrat2023free} are viscosity solutions.

\subsubsection{Solutions in \cite{mourrat2020nonconvex,mourrat2023free}} 
The spin glass setting in \cite{mourrat2023free} is the same as in Section~\ref{s.spin-glass_setting}.
The definition of solutions of~\eqref{e.HJ_spin_glass} is in \cite[Proposition~4.5]{mourrat2023free}. We briefly present it here in our notation. Fix any regularization $\bar\xi$ and let $\bar{\bxi}$ be given in~\eqref{e.bar_bold_xi}. 
Recall that we have identified $\mathcal{P}^\uparrow_2$ with $\C$ via the isometry~\eqref{e.isometry}.
Let $\psi:\C\to\R$ be $\C^*$-increasing and satisfy~\eqref{e.psi_1-wasser-Lip}.
In \cite{mourrat2023free}, the uniform partitions of $[0,1)$ are considered. For each $j\in \Junif$, we set $\sF_j(b) = \inf\{\bar{\bxi}^j(a):\: a\in \C^j\cap (b+(\C^j)^*)\}$ for $b\in \cH^j$ (see \cite[(4.18) and (4.19)]{mourrat2023free}).
Then, let $f_j$ (see \cite[(4.21)]{mourrat2023free}) be the viscosity solution of
\begin{align*}
    \begin{cases}
        \partial_t f_j - \sF_j(\nabla f_j) =0,\quad & \text{in $(0,\infty)\times \mathring\C^j$},
        \\
        \mathbf{n}\cdot\nabla f_j=0 ,\quad & \text{on $\partial\C^j$}.
    \end{cases}
\end{align*}
with initial condition $f_j(0,\cdot) = \psi^j$. The precise notion of solutions of the displayed equation is given in \cite[Definition~4.1]{mourrat2023free}. We omit the exact definition of the Neumann boundary condition and only mention that, by restricting to the interior, $f_j$ solves $\HJ(\cH^j,\mathring\C^j,\sF_j;\psi^j)$ in the sense of Definition~\ref{d.vs}.
Then, in~\cite[Proposition~4.5]{mourrat2023free}, the solution of~\eqref{e.HJ_spin_glass} is defined to be the pointwise limit of $f^\uparrow_j$.

\begin{proposition}
Suppose that $\xi$ and $\psi$ satisfy the conditions in Theorem~\ref{t.main}.
The solution of~\eqref{e.HJ_spin_glass} with initial condition $\psi$ defined in~\cite[Proposition~4.5]{mourrat2023free} is the unique Lipschitz viscosity solution of~\eqref{e.HJ_spin_glass} with $f(0,\cdot) = \psi$ given by Theorem~\ref{t.main}.
\end{proposition}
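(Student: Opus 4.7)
The plan is to show that for each partition $j$ the finite-dimensional approximant $f_j$ from \cite[Proposition~4.5]{mourrat2023free} coincides with the viscosity solution $g_j$ of $\HJ(\cH^j,\mathring\C^j,\bar{\bxi}^j;\psi^j)$ identified in Remark~\ref{r.f_j_sol_reg}, and then to invoke Theorem~\ref{t.main}~\eqref{i.fin_d_approx} to match the two pointwise limits.

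First I would compare $\sF_j$ and $\bar{\bxi}^j$ on $\C^j$. Taking $a=b$ in the defining infimum gives $\sF_j(b)\leq \bar{\bxi}^j(b)$ for $b\in\C^j$, while the fact that $\bar{\bxi}^j$ is $(\C^j)^*$-increasing on $\C^j$ (a finite-dimensional version of the first part of the proof of Lemma~\ref{l.H_lip}~\eqref{i.H=H_xi}, using that $\bar\xi$ is proper) gives the reverse inequality; thus $\sF_j=\bar{\bxi}^j$ on $\C^j$. A repetition of the argument of Lemma~\ref{l.H_lip}~\eqref{i.xi_proper=>}--\eqref{i.xi_lip=>H_lip} in $\cH^j$ shows that $\sF_j$ is locally Lipschitz and $(\C^j)^*$-increasing on all of $\cH^j$. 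Theorem~\ref{t.hj_cone}~\eqref{i.main_equiv} applied to $\H=\bar{\bxi}^j$ and extension $\sF_j$ then says that $g_j$ is the unique solution of $\HJ(\cH^j,\C^j,\sF_j;\psi^j)$ in the class~\eqref{e.class_sol}.

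Next I would place $f_j$ in this uniqueness framework. By its definition in \cite[Definition~4.1, Proposition~4.5]{mourrat2023free}, $f_j$ is a viscosity solution of $\partial_t f_j-\sF_j(\nabla f_j)=0$ on $(0,\infty)\times \mathring\C^j$ (and satisfies a Neumann condition on $\partial\C^j$, which we simply discard); in particular $f_j$ is a viscosity solution of $\HJ(\cH^j,\mathring\C^j,\sF_j;\psi^j)$ in the sense of Definition~\ref{d.vs}. Since $\sF_j$ is continuous and $(\C^j)^*$-increasing, Proposition~\ref{p.drop_bdy} upgrades $f_j$ to a viscosity solution of $\HJ(\cH^j,\C^j,\sF_j;\psi^j)$. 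The Lipschitz bounds in time and space and the $(\C^j)^*$-monotonicity of $f_j(t,\cdot)$ are established in \cite{mourrat2023free} (with constants controlled by $\|\psi^j\|_\mathrm{Lip}\leq 1$ coming from \eqref{e.psi_1-wasser-Lip}), so $f_j$ belongs to the class~\eqref{e.class_sol} and satisfies $f_j(0,\cdot)=\psi^j$. The uniqueness statement above forces $f_j=g_j$.

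Finally, the solution defined in \cite[Proposition~4.5]{mourrat2023free} is by construction the pointwise (in fact local uniform) limit of $f^\uparrow_j$ along uniform partitions, which by the previous step equals $\lim_{j\in\Junif}g^\uparrow_j$; Theorem~\ref{t.main}~\eqref{i.fin_d_approx} identifies this limit with the unique Lipschitz viscosity solution $f$. The main obstacle is bookkeeping rather than conceptual: one must carefully reconcile the Neumann-type definition of viscosity solution used in \cite{mourrat2023free} with Definition~\ref{d.vs}, and confirm that the regularity and monotonicity properties of $f_j$ recorded in \cite{mourrat2023free} are exactly what the class~\eqref{e.class_sol} demands, so that Theorem~\ref{t.hj_cone}~\eqref{i.main_equiv} can be brought to bear.
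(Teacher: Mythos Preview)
Your proposal is correct and follows essentially the same route as the paper: show that $\sF_j=\bar{\bxi}^j$ on $\C^j$ and that $\sF_j$ is $(\C^j)^*$-increasing and Lipschitz (by the finite-dimensional analogue of Lemma~\ref{l.H_lip}), use Proposition~\ref{p.drop_bdy} to upgrade $f_j$ from a solution on $\mathring\C^j$ to one on $\C^j$, and then invoke Theorem~\ref{t.hj_cone}~\eqref{i.main_equiv} together with Remark~\ref{r.f_j_sol_reg} to identify $f_j$ with the approximant in Theorem~\ref{t.main}~\eqref{i.fin_d_approx}. The paper cites \cite[Proposition~4.3]{mourrat2023free} for the Lipschitz and monotonicity properties placing $f_j$ in the class~\eqref{e.class_sol}, exactly as you anticipate; the only cosmetic difference is that you name the target solution $g_j$ explicitly whereas the paper argues directly.
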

\begin{proof}
It is verified in \cite[Proposition~4.3]{mourrat2023free} that $f_j$ is Lipschitz. By a similar argument in Lemma~\ref{l.H_lip}, we can show that $\sF_j$ coincides with $\bar{\bxi}^j$ on $\C^j$ and $\sF_j$ is $(\C^j)^*$-increasing and Lipschitz.
Since $f_j$ solves $\HJ(\cH^j,\mathring\C^j,\sF_j;\psi^j)$, Proposition~\ref{p.drop_bdy} implies that $f_j$ is a Lipschitz viscosity solution of $\HJ(\cH^j,\C^j,\sF_j;\psi^j)$.
Theorem~\ref{t.hj_cone}~\eqref{i.main_equiv} implies that $f_j$ is the unique Lipschitz viscosity solution of $\HJ(\cH^j,\mathring\C^j,\bar{\bxi}^j;\psi^j)$. Now, by Remark~\ref{r.f_j_sol_reg}, $f_j$ is exactly the one in Theorem~\ref{t.main}~\eqref{i.fin_d_approx}, which implies the desired result. 
\end{proof}

Therefore, in view of this proposition and Remark~\ref{r.assump_xi_psi}, the main result \cite[Theorem~3.4]{mourrat2023free} can be restated as follows.

\begin{theorem}[\cite{mourrat2023free}]\label{t.genbound}
Under the setting in Section~\ref{s.spin-glass_setting}, suppose
\begin{itemize}
    \item ${\xi}:\R^{D\times D}\to\R$ is locally Lipschitz and $\xi\lfloor_{\S^D_+}$ is proper;
    \item $\bar F_N(0,\cdot)$ converges pointwise to some $\psi:\C\to\R$ as $N\to\infty$.
\end{itemize}
Then, for every $(t,\mu) \in \R_+\times\C$,
\begin{align*}
    \liminf_{N\to\infty} \bar F_N(t,\mu) \geq f(t,\mu)
\end{align*}
where $f$ is the unique Lipschitz viscosity solution of~\eqref{e.HJ_spin_glass} with $f(0,\cdot) = \psi$ given by Theorem~\ref{t.main}.
\end{theorem}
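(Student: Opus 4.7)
The plan is to obtain this theorem as an essentially formal consequence of three ingredients already at our disposal: (i) the existence statement in Theorem~\ref{t.main}, which requires the initial datum to be $\C^*$-increasing and $L^1$-Lipschitz; (ii) the original main result \cite[Theorem~3.4]{mourrat2023free}, which asserts the $\liminf$ bound for the solution $\tilde f$ defined there via vanishing discretization with a Neumann-type boundary condition; and (iii) the preceding proposition of this subsection, which identifies $\tilde f$ with the viscosity solution $f$ produced by Theorem~\ref{t.main}.

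The first step is to check that the limit $\psi$ inherits from $\bar F_N(0,\cdot)$ the hypotheses needed by Theorem~\ref{t.main}. By \eqref{e.bar_F_N_lip}, each $\bar F_N(0,\cdot)$ is $1$-Lipschitz with respect to $|\cdot|_{L^1}$, and by \eqref{e.F_N_C^*_nondecrea}, each is $\C^*$-increasing. Both properties are preserved under pointwise limits: the Lipschitz bound by applying $|\bar F_N(0,\mu)-\bar F_N(0,\nu)|\leq |\mu-\nu|_{L^1}$ and sending $N\to\infty$, and the monotonicity by passing to the limit in the inequality $\bar F_N(0,\mu)\geq \bar F_N(0,\nu)$ valid whenever $\mu-\nu\in\C^*$. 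Consequently $\psi$ is $\C^*$-increasing and satisfies \eqref{e.psi_1-wasser-Lip}.

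With the hypotheses verified, Theorem~\ref{t.main} furnishes a unique Lipschitz viscosity solution $f$ of \eqref{e.HJ_spin_glass} with $f(0,\cdot)=\psi$. On the other hand, \cite[Theorem~3.4]{mourrat2023free} gives
\[
\liminf_{N\to\infty}\bar F_N(t,\mu)\geq \tilde f(t,\mu),\qquad\forall (t,\mu)\in\R_+\times\C,
\]
where $\tilde f$ is the solution of \eqref{e.HJ_spin_glass} with initial condition $\psi$ in the sense of \cite[Proposition~4.5]{mourrat2023free}. The preceding proposition asserts that $\tilde f = f$, and combining this identification with the above inequality yields the desired conclusion.

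There is no substantive obstacle here, since the real content lies in Theorem~\ref{t.main} (existence and convergence of finite-dimensional approximations) and in the preceding proposition (equivalence of solution concepts). The only mildly delicate point is bookkeeping: confirming that the hypotheses on $\psi$ imposed by Theorem~\ref{t.main} are actually implied by the assumption that $\bar F_N(0,\cdot)$ converges pointwise, which is why the Lipschitz estimate \eqref{e.bar_F_N_lip} and the monotonicity \eqref{e.F_N_C^*_nondecrea} available uniformly in $N$ are essential.
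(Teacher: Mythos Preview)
Your proposal is correct and matches the paper's approach exactly: the paper does not give a formal proof of this theorem but simply presents it as a restatement of \cite[Theorem~3.4]{mourrat2023free} ``in view of this proposition and Remark~\ref{r.assump_xi_psi},'' which is precisely the combination of ingredients (i)--(iii) you invoke. Your verification that $\psi$ inherits the $L^1$-Lipschitz and $\C^*$-increasing properties from \eqref{e.bar_F_N_lip} and \eqref{e.F_N_C^*_nondecrea} is the content of Remark~\ref{r.assump_xi_psi}, spelled out in slightly more detail than the paper provides.
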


It is mentioned in \cite[Remark~3.5]{mourrat2023free} that $f$ is expected to be independent of the choice of $\bar\xi$ and the missing ingredient therein is an $\ell^\infty$-control of the gradient of the solution by the initial condition. Here, such control is supplied by Proposition~\ref{p.lip_l^p} and we have verified the independence from $\bar\xi$.

We state a corollary of this theorem and Theorem~\ref{t.main}~\eqref{i.hopf_lax}.

\begin{corollary}
Under the same setup of Theorem~\ref{t.genbound}, if $\xi$ is convex on $\S^D_+$, then, for every $(t,\mu) \in \R_+\times\C$,
\begin{align*}
    \liminf_{N\to\infty} \bar F_N(t,\mu) \geq \sup_{\nu \in \C\in L^\infty}\inf_{\rho\in\C\cap L^\infty}\left\{\psi(\mu+\nu)- \la \nu,\rho\ra_{\cH}+t\int_0^1\xi(\rho(s))\d s\right\}.
\end{align*}
\end{corollary}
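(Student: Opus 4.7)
The plan is to combine Theorem~\ref{t.genbound} with the Hopf--Lax representation in Theorem~\ref{t.main}~\eqref{i.hopf_lax}, so the only real work lies in verifying that the hypotheses of the latter are satisfied by the limit initial condition $\psi$.

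First, I would observe that the hypotheses of Theorem~\ref{t.genbound} hold verbatim under the assumption of the corollary, and hence produce a Lipschitz viscosity solution $f$ of~\eqref{e.HJ_spin_glass} with $f(0,\cdot)=\psi$ satisfying
\begin{align*}
    \liminf_{N\to\infty} \bar F_N(t,\mu)\geq f(t,\mu),\qquad \forall (t,\mu)\in\R_+\times\C.
\end{align*}
This step requires no further computation.

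Next, I would check that the hypotheses of Theorem~\ref{t.main}~\eqref{i.hopf_lax} are in force. The hypothesis on $\xi$ is given directly. For $\psi$, I need $\C^*$-increasingness and the $L^1$-Lipschitz bound~\eqref{e.psi_1-wasser-Lip}. Both properties are inherited from the pre-limit free energies: \eqref{e.bar_F_N_lip} gives
\begin{align*}
    |\bar F_N(0,\mu)-\bar F_N(0,\nu)|\leq |\mu-\nu|_{L^1},\quad\forall \mu,\nu\in\C,\ \forall N\in\N,
\end{align*}
and~\eqref{e.F_N_C^*_nondecrea} asserts that $\bar F_N(0,\cdot)$ is $\C^*$-increasing for every $N$. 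Passing to the pointwise limit preserves both properties, and in particular the pointwise-limit $\psi$ is continuous (indeed $L^1$-Lipschitz) and $\C^*$-increasing.

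Having verified the hypotheses, Theorem~\ref{t.main}~\eqref{i.hopf_lax} applies and identifies $f(t,\mu)$ with the Hopf--Lax formula~\eqref{e.Hopf-Lax_spin_glass}, which is exactly the right-hand side of the claimed inequality. Chaining this identification with the inequality from Theorem~\ref{t.genbound} yields the corollary. No step poses a genuine obstacle; the proof is essentially a clean bookkeeping argument that records the fact that $\psi$ meets the regularity hypotheses required to unlock the variational representation of $f$.
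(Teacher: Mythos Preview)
Your proposal is correct and matches the paper's approach exactly: the paper presents this result without proof, merely stating that it is ``a corollary of this theorem and Theorem~\ref{t.main}~\eqref{i.hopf_lax},'' and your argument spells out precisely that combination, together with the verification (implicit in Remark~\ref{r.assump_xi_psi}) that $\psi$ inherits the $\C^*$-monotonicity and $L^1$-Lipschitz bound from $\bar F_N(0,\cdot)$ via~\eqref{e.bar_F_N_lip} and~\eqref{e.F_N_C^*_nondecrea}.
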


Notice the minus sign in the definition of $F_N$ in \eqref{e.F_N}, which is not included in most of the literature. Hence, the lower bound here corresponds to the usual upper bound. Classically, the upper bound is obtained via the Guerra interpolation \cite{gue03}, which requires $\xi$ to be convex on $\R^{D\times D}$. If $D=1$, one can use Talagrand's positivity principle (see \cite[Theorem~3.4]{pan}) to allow $\xi$ to be convex only on $\R_+$. For $D>1$, there is no general method to weaken the convexity condition. The above corollary provides a solution to this issue. It will be shown in \cite[Theorem~1.1]{HJ_critical_pts} via cavity computation that the Hopf--Lax formula in the corollary is in fact the limit.

\subsubsection{Solutions in \cite{mourrat2019parisi,mourrat2020extending}}
The solutions in both works are defined directly as the Hopf--Lax formula.
We present the setting in \cite{mourrat2020extending} in the notation here. Let $D=1$ in which case $\S^D=\R$ and $\S^D_+=\R_+$. Set $\mathfrak{H}_N = \R^N$. Hence, for each $N\in\N$, spins are real-valued and the spin configuration $\sigma$ is a vector in $\R^N$. We also set $P_N = P_1^{\otimes N}$, which means that the spins are i.i.d. We also assume that $P_1$ is supported on $[-1,1]$. Suppose $\xi(r)= \sum_{p\geq 2}\beta^2_pr^p$ for $r\in\R$ where the sequence $(\beta_p)_{p}$ of real numbers is assumed to decay sufficiently fast. 
We set $\psi = \bar F_1(0,\cdot)$ and
\begin{align}\label{e.xi^*=}
    \xi^*(r) = \sup_{s\geq 0}\left\{rs-\xi(s)\right\},\quad\forall r\in\R.
\end{align}
Using the isometry in~\eqref{e.isometry}, we can rewrite the main result \cite[Theorem~1.1]{mourrat2020extending} as follows.
\begin{theorem}[\cite{mourrat2020extending}]\label{t.hjsoft}
Let $\xi$ and $\psi$ be given above.
For every $t\geq0$ and $\mu\in \C\cap L^\infty$,
\begin{align*}
    \lim_{N\to\infty} \bar F_N(t,\mu) = \sup_{\nu\in \C\cap L^\infty}\left\{\psi(\nu) - t\int_0^1 \xi^*\left(\frac{\nu(s)-\mu(s)}{t}\right)\d s\right\}.
\end{align*}
\end{theorem}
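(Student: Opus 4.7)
The plan is to combine Theorem~\ref{t.main}\eqref{i.hopf_lax} (to get the Hopf-Lax representation of the viscosity solution $f$), Theorem~\ref{t.genbound} (for the lower bound $\liminf_N\bar F_N\geq f$), and a Guerra-type upper bound from the classical literature to match the formula displayed in the theorem. The hypotheses of Theorem~\ref{t.main} hold: $\xi(r)=\sum_{p\geq 2}\beta_p^2 r^p$ is locally Lipschitz and convex on $\R_+$; its restriction to $\R_+$ is proper (both $\xi$ and $\xi'$ are nondecreasing there); and $\psi=\bar F_1(0,\cdot)$ is $\C^*$-increasing and $1$-Lipschitz in $|\cdot|_{L^1}$ by \eqref{e.F_N_C^*_nondecrea} and \eqref{e.bar_F_N_lip}. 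The factorization $P_N=P_1^{\otimes N}$ combined with the multiplicative property of Poisson-Dirichlet cascades gives $\bar F_N(0,\cdot)=\psi$ for every $N$, so the convergence hypothesis of Theorem~\ref{t.genbound} is automatic.

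By Theorem~\ref{t.main}\eqref{i.hopf_lax},
$$f(t,\mu)=\sup_{\nu\in\C\cap L^\infty}\inf_{\rho\in\C\cap L^\infty}\!\left\{\psi(\mu+\nu)-\la\nu,\rho\ra_\cH+t\!\int_0^1\!\xi(\rho(s))\,\mathrm{d}s\right\}.$$
Substituting $\nu\mapsto\nu-\mu$ and writing $g:=\nu-\mu\in\C\cap L^\infty$, the pointwise minimizer $\rho^\star(s)=\mathrm{argmin}_{r\geq 0}\{-g(s)r+t\xi(r)\}$ is nondecreasing in $s$ (since $\xi'$ is nondecreasing on $\R_+$), hence $\rho^\star\in\C\cap L^\infty$; the inner infimum thus equals $-t\int_0^1\xi^*(g/t)$ by \eqref{e.xi^*=}, yielding
$$f(t,\mu)=\sup_{\nu\in(\mu+\C)\cap L^\infty}\!\left\{\psi(\nu)-t\!\int_0^1\!\xi^*\!\left(\tfrac{\nu(s)-\mu(s)}{t}\right)\mathrm{d}s\right\}.$$
To relax the supremum to $\nu\in\C\cap L^\infty$ as in the statement, note that $\xi^*$ vanishes on $(-\infty,0]$ because $\xi(0)=\xi'(0^+)=0$. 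For any $\nu\in\C\cap L^\infty$, first replace it by $\nu\vee\mu\in\C\cap L^\infty$: since $(\nu\vee\mu)-\nu=(\mu-\nu)_+\geq 0$ lies in $\C^*$ by Lemma~\ref{l.dual_cone}\eqref{i.dual-cone-M}, $\psi$ increases, while the $\xi^*$-integral is unchanged. Then let $g^\uparrow$ denote the nondecreasing rearrangement of $g:=(\nu\vee\mu)-\mu\geq 0$ on $[0,1)$ and set $\tilde\nu:=\mu+g^\uparrow\in(\mu+\C)\cap L^\infty$: the $\xi^*$-integral is rearrangement-invariant, and the extremality bound $\int_t^1 g^\uparrow\geq\int_t^1 g$ for every $t\in[0,1)$ places $g^\uparrow-g\in\C^*$, whence $\psi(\tilde\nu)\geq\psi(\nu\vee\mu)$. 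This identifies $f(t,\mu)$ with the right-hand side of the theorem's formula.

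The lower bound $\liminf_N\bar F_N(t,\mu)\geq f(t,\mu)$ is then immediate from Theorem~\ref{t.genbound}. The matching upper bound $\limsup_N\bar F_N(t,\mu)\leq f(t,\mu)$ is the classical Guerra interpolation in the enriched setting: for each test $\nu\in\C\cap L^\infty$ one interpolates between $\bar F_N(t,\mu)$ and $\bar F_N(0,\nu)=\psi(\nu)$ in the parameter $t$, with the convexity of $\xi$ on $\R_+$ (legitimized by Talagrand's positivity principle, which ensures the relevant Gibbs overlaps lie in $\R_+$) producing precisely the cost $t\int_0^1\xi^*((\nu-\mu)/t)$; taking $\inf_\nu$ yields the upper bound. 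The main obstacle is this Guerra upper bound: it is external to the present paper and must be imported from \cite{mourrat2020extending} (which itself builds on \cite{talagrand2006parisi}). The internal algebraic identification above, although subtle, is a routine consequence of the paper's viscosity framework combined with the monotone-truncation-plus-rearrangement step.
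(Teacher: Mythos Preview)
The paper does not prove Theorem~\ref{t.hjsoft}: it is quoted verbatim from \cite{mourrat2020extending}. There is therefore no ``paper's own proof'' to compare against. What the paper \emph{does} do, immediately afterward, is show (in the Proposition following the theorem, via Proposition~\ref{p.equival_exp_hopf_lax}) that the right-hand side of the formula coincides with the viscosity solution $f$ of Theorem~\ref{t.main}; the convergence $\bar F_N\to f$ itself is taken as given from the cited reference.

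Your proposal is a legitimate reconstruction of the theorem from the paper's tools plus an external upper bound. The identification $f(t,\mu)=\text{RHS}$ that you carry out is essentially Proposition~\ref{p.equival_exp_hopf_lax}, but with a more direct argument: you use the continuous increasing rearrangement $g^\uparrow$ and the Hardy--Littlewood tail inequality $\int_t^1 g^\uparrow\geq\int_t^1 g$ to land in $\mu+\C$, whereas the paper discretizes via $j\in\Junif$ and applies the permutation rearrangement of Lemma~\ref{l.nond_rearrang}. Both arguments are correct; yours is cleaner. One small point: your claim that the pointwise minimizer $\rho^\star$ lies in $\C\cap L^\infty$ tacitly uses that $\xi$ is strictly convex and superlinear on $\R_+$ (true here since $\xi(r)=\sum_{p\geq 2}\beta_p^2 r^p$), and would fail for $\xi\equiv 0$. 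The paper sidesteps this by proving Lemma~\ref{l.H^circledast=E_xi^*} via approximation rather than by exhibiting a minimizer.

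Where your proof genuinely relies on outside input is the upper bound $\limsup_N\bar F_N\leq f$, which you correctly flag as the Guerra interpolation of \cite{mourrat2020extending}. Since that is precisely the substance of Theorem~\ref{t.hjsoft} itself, your reconstruction is circular in spirit: you are using the nontrivial half of \cite{mourrat2020extending} to re-derive the theorem stated from \cite{mourrat2020extending}. This is not a mathematical error, but it means your proof does not stand independently of the cited result any more than the paper's direct citation does.
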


In \cite{mourrat2019parisi,mourrat2020extending}, the right-hand side is defined to be the solution of~\eqref{e.HJ_spin_glass}. We show that it is the viscosity solution.

\begin{proposition}
The limit in Theorem~\ref{t.hjsoft} coincides at every $(t,\mu)\in\R_+\times (\C\cap L^\infty)$ with the unique Lipschitz viscosity solution $f$ of~\eqref{e.HJ_spin_glass} with $f(0,\cdot) = \psi$ given by Theorem~\ref{t.main}.    
\end{proposition}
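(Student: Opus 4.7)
The plan is to show that the Mourrat formula and the Hopf--Lax formula~\eqref{e.Hopf-Lax_spin_glass} of Theorem~\ref{t.main}~\eqref{i.hopf_lax} coincide. First I would verify the hypotheses of Theorem~\ref{t.main}: since $\xi(r) = \sum_{p\geq 2}\beta_p^2 r^p$ is a convergent power series with nonnegative coefficients and $p\geq 2$, it is locally Lipschitz on $\R$, and its restriction to $\R_+ = \S^D_+$ (here $D=1$) is convex, nondecreasing, with nondecreasing increments, hence proper. The initial condition $\psi = \bar F_1(0,\cdot)$ is $\C^*$-increasing by~\eqref{e.F_N_C^*_nondecrea} and satisfies~\eqref{e.psi_1-wasser-Lip} by~\eqref{e.bar_F_N_lip}, both specialised to $N=1$. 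Consequently Theorem~\ref{t.main}~\eqref{i.hopf_lax} applies and $f$ is given by~\eqref{e.Hopf-Lax_spin_glass}.

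Next I would evaluate the inner infimum in~\eqref{e.Hopf-Lax_spin_glass}. For fixed $\nu\in \C\cap L^\infty$, pointwise minimization of $-\nu(s)\rho(s)+t\xi(\rho(s))$ over $\rho(s)\geq 0$ is achieved at $\rho^*(s)=(\xi')^{-1}(\nu(s)/t)$, which is well-defined and continuous because $\xi'$ is continuous and strictly increasing on $\R_+$ with $\xi'(0)=0$ and $\xi'(r)\to\infty$. Since $\nu$ is monotone increasing and bounded and $(\xi')^{-1}$ is monotone increasing and continuous, $\rho^*$ lies in $\C\cap L^\infty$ and is therefore admissible, so by~\eqref{e.xi^*=} the inner infimum equals $-t\int_0^1\xi^*(\nu(s)/t)\d s$. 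Substituting $\tilde\nu = \mu+\nu$ then gives
\[
f(t,\mu)=\sup_{\tilde\nu\in\mu+(\C\cap L^\infty)}\left\{\psi(\tilde\nu) - t\int_0^1\xi^*\!\left(\frac{\tilde\nu(s)-\mu(s)}{t}\right)\d s\right\}.
\]
Denoting the right-hand side of the formula in Theorem~\ref{t.hjsoft} by $g(t,\mu)$, the inequality $f\leq g$ is immediate because $\mu+(\C\cap L^\infty)\subset \C\cap L^\infty$ (as $\mu\in\C\cap L^\infty$).

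The main obstacle is the reverse inequality $f\geq g$: given an arbitrary $\tilde\nu\in\C\cap L^\infty$, for which $\tilde\nu-\mu$ need not lie in $\C$, I must exhibit $\tilde\nu'\in\mu+(\C\cap L^\infty)$ whose objective dominates that of $\tilde\nu$. The proposed construction is a monotone rearrangement: set $\eta = (\tilde\nu-\mu)_+\in L^\infty$, let $\eta^\uparrow$ be the right-continuous monotone increasing rearrangement of $\eta$ on $[0,1)$ (so $\eta^\uparrow\in\C\cap L^\infty$), and define $\tilde\nu' = \mu+\eta^\uparrow\in\mu+(\C\cap L^\infty)$. The $\xi^*$-integrals agree: equimeasurability yields $\int\xi^*(\eta^\uparrow/t)\d s=\int\xi^*(\eta/t)\d s$, and since $\xi\geq 0$ on $\R_+$ with $\xi(0)=0$ one has $\xi^*(r)=0$ for $r\leq 0$, whence $\int\xi^*(\eta/t)\d s=\int\xi^*((\tilde\nu-\mu)/t)\d s$. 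For the $\psi$-term, I would show $\tilde\nu'-\tilde\nu\in\C^*$ via Lemma~\ref{l.dual_cone}~\eqref{i.dual-cone-M}: for every $t\in[0,1)$,
\[
\int_t^1(\tilde\nu'-\tilde\nu)\d s = \int_t^1\eta^\uparrow \d s - \int_t^1(\tilde\nu-\mu)\d s \geq \int_t^1\eta^\uparrow \d s - \int_t^1 \eta \d s \geq 0,
\]
using $\tilde\nu-\mu\leq\eta$ and the standard property that the monotone increasing rearrangement pushes mass toward $s=1$. Then $\C^*$-increasingness of $\psi$ gives $\psi(\tilde\nu')\geq\psi(\tilde\nu)$, and combining, the objective at $\tilde\nu'$ in the $f$-supremum is at least the objective at $\tilde\nu$ in the $g$-supremum. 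Taking the supremum over $\tilde\nu$ yields $f\geq g$, and together with the easy bound we conclude $f(t,\mu)=g(t,\mu)$ for all $(t,\mu)\in\R_+\times(\C\cap L^\infty)$.
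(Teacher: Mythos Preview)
Your proposal is correct. Both your argument and the paper's follow the same high-level plan—verify the hypotheses of Theorem~\ref{t.main}, invoke the convex case~\eqref{i.hopf_lax} to obtain the Hopf--Lax formula, then identify it with the Mourrat formula by proving ``$\le$'' trivially and ``$\ge$'' via a rearrangement of the positive part $(\tilde\nu-\mu)_+$—but the executions differ.

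For the inner infimum, you exploit the explicit power-series structure of $\xi$: since $\xi'$ is a continuous strictly increasing bijection of $\R_+$, the pointwise minimiser $\rho^*=(\xi')^{-1}(\nu/t)$ is already monotone and bounded, hence admissible in $\C\cap L^\infty$, so the constrained and unconstrained infima coincide directly. The paper instead proves the abstract duality $\bxi^*(\mu)=\int_0^1\xi^*(\mu)$ (Lemma~\ref{l.H^circledast=E_xi^*}) by first enlarging the feasible set from $\C\cap L^\infty$ to $\cH_+\cap L^\infty$ via rearrangement at the finite-dimensional level (Lemma~\ref{l.nond_rearrang}), and only then does the pointwise computation. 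For the ``$\ge$'' direction, you use the continuous increasing rearrangement $\eta^\uparrow$ of $\eta=(\tilde\nu-\mu)_+$ and the Hardy--Littlewood-type inequality $\int_t^1\eta^\uparrow\ge\int_t^1\eta$ to verify $\tilde\nu'-\tilde\nu\in\C^*$ directly; the paper instead projects to a fine uniform partition $j\in\Junif$, rearranges in $\cH^j$, and controls approximation errors using continuity of $\psi$, Jensen's inequality for $\xi^*$, and Lemma~\ref{l.proj_cones}~\eqref{l.mu-mu^j}.

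Your route is shorter and avoids the finite-dimensional projection machinery, at the price of relying on the specific form of $\xi$ to guarantee that $(\xi')^{-1}$ exists and is monotone. The paper's argument in Proposition~\ref{p.equival_exp_hopf_lax} is more robust: it requires only that $\xi$ be locally Lipschitz and increasing on $\R_+$, with no strict convexity or surjectivity of $\xi'$.
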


\begin{proof}
It is easy to verify that $\xi$ satisfies the condition in Theorem~\ref{t.main}. By Remark~\ref{r.assump_xi_psi}, $\psi$ also satisfies the required condition.
Note that $\xi$ is convex on $\S^1_+=\R_+$. So, the viscosity solution $f$ is given by the Hopf--Lax formula~\eqref{e.Hopf-Lax_spin_glass} in Theorem~\ref{t.main}~\eqref{i.hopf_lax}. 
By some elementary analysis methods, we verify in Proposition~\ref{p.equival_exp_hopf_lax} that~\eqref{e.Hopf-Lax_spin_glass} coincides with the formula in Theorem~\ref{t.hjsoft} on $(t,\mu)\in\R_+\times (\C\cap L^\infty)$.
\end{proof}

\appendix

\section{Hopf--Lax formula in one dimension}

Throughout this section, we set $D=1$, in which case $\S^D=\R$ and $\S^D_+=\R_+$. Hence, for $a,b\in\S^D$, $a\cdot b= ab$.
The goal is to prove Proposition~\ref{p.equival_exp_hopf_lax}.

We introduce the notation for the \textit{nondecreasing rearrangement}. For every uniform partition $j\in\Junif$ (see Section~\ref{s.partition}) and $x\in \cH^j$, we set $x_\sharp = (x_{\sigma(k)})_{k=1,2,\dots,|j|}$ where $\sigma$ is a permutation of $\{1,2,\dots,|j|\}$ satisfying $x_{\sigma(k)}  - x_{\sigma(k-1)}\in\R_+$ for every $k\geq 2$. Using this notation, for every $j\in\Junif$ and every $\iota\in \cH$, we set $\iota^\j_\sharp = \lf_j ((\pj_j \iota)_\sharp)$.
We also take $\cH_+ = \{\iota\in\cH:\iota(s)\in\R_+,\,\text{a.e.}\,s\in[0,1]\}$.
\begin{lemma}\label{l.nond_rearrang}
For every $j\in\Junif$,
\begin{enumerate}
    \item \label{i.iota^j_sharp_in_iota^j+C*} $\iota^\j_\sharp \in \iota^\j+\C^*$ for every $\iota\in\cH$;
    \item \label{i.iota^j_sharp_in_C} $\iota^\j_\sharp\in \C$ for every $\iota\in\cH_+$;
    \item \label{i.Eh(iota^j)} $\int_0^1 h\left(\iota^\j(s)\right)\d s =\int_0^1 h\left(\iota^\j_\sharp(s)\right)\d s$ for every measurable function $h:\R\to\R$ and every $\iota\in\cH$.
\end{enumerate}
\end{lemma}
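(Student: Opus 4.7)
The plan rests on a single observation: because $j\in\Junif$ is a uniform partition into $N=|j|$ equal intervals, the piecewise-constant function $\iota^\j_\sharp$ is, by construction, nothing other than the nondecreasing rearrangement of $\iota^\j$. Concretely, writing $x=\pj_j\iota\in\cH^j$ and letting $\sigma$ be a permutation of $\{1,\dots,N\}$ with $x_{\sigma(1)}\leq\cdots\leq x_{\sigma(N)}$, the values of $\iota^\j_\sharp$ on the cells of $j$ are precisely the sorted values $x_{\sigma(1)},\dots,x_{\sigma(N)}$. With this observation in hand, each of the three items becomes a short computation.

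Parts \eqref{i.iota^j_sharp_in_C} and \eqref{i.Eh(iota^j)} I would dispatch first. For \eqref{i.Eh(iota^j)} I would just compute both integrals directly: each equals $N^{-1}\sum_k h(x_k)$ versus $N^{-1}\sum_k h(x_{\sigma(k)})$, which agree because $\sigma$ is a bijection. For \eqref{i.iota^j_sharp_in_C}, the assumption $\iota\in\cH_+$ forces each $x_k$ to be nonnegative (as the average of a nonnegative function), so the sorted values $x_{\sigma(1)}\leq\cdots\leq x_{\sigma(N)}$ are nonnegative and nondecreasing; hence $\iota^\j_\sharp$ is piecewise constant, nonnegative, increasing, and square-integrable, i.e.\ in $\C$ by \eqref{e.cone_M}.

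The only real content is \eqref{i.iota^j_sharp_in_iota^j+C*}. By the characterization of $\C^*$ in Lemma~\ref{l.dual_cone}\eqref{i.dual-cone-M}, it suffices to show $\int_t^1 (\iota^\j_\sharp - \iota^\j)\d s\geq 0$ for every $t\in[0,1)$. Writing $t=(k-1)/N+\tau$ with $\tau\in[0,1/N)$ and $y_m=x_{\sigma(m)}$, a direct computation reduces this to the scalar inequality
\begin{align*}
\tfrac{1}{N}\sum_{m=k}^N(y_m-x_m) + \tau(x_k-y_k)\geq 0.
\end{align*}
The first sum is nonnegative because $\{y_k,\dots,y_N\}$ is by definition the set of the $N-k+1$ largest entries of $x$. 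If $x_k\geq y_k$ we are done; otherwise both $A=\{\sigma(k),\dots,\sigma(N)\}$ and $B=\{k,\dots,N\}$ have the same cardinality, every $x_m$ with $m\in A\setminus B$ satisfies $x_m\geq y_k$, every $x_m$ with $m\in B\setminus A$ (including $m=k$) satisfies $x_m\leq y_k$, and a one-line estimate gives $\sum_{m=k}^N(y_m-x_m)\geq y_k-x_k$, so the expression is bounded below by $(\tfrac{1}{N}-\tau)(y_k-x_k)\geq 0$.

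I expect this last bookkeeping step to be the main (and essentially only) technical point. An attractive alternative is to bypass it entirely by appealing to the Hardy--Littlewood rearrangement inequality: applied to $f=\iota^\j$ and $E=[t,1]$, it yields $\int_t^1\iota^\j\d s \leq \int_t^1 (\iota^\j)^\uparrow\d s = \int_t^1\iota^\j_\sharp\d s$, which is exactly \eqref{i.iota^j_sharp_in_iota^j+C*}. I would likely present the elementary argument since everything happens for finitely many numbers, but cite Hardy--Littlewood as the conceptual reason.
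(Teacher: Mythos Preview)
Your proof is correct. Parts~\eqref{i.iota^j_sharp_in_C} and~\eqref{i.Eh(iota^j)} match the paper's treatment essentially verbatim. For part~\eqref{i.iota^j_sharp_in_iota^j+C*}, you take a genuinely different route: you invoke the explicit characterization of $\C^*$ from Lemma~\ref{l.dual_cone}\eqref{i.dual-cone-M} and verify the tail-integral condition $\int_t^1(\iota^\j_\sharp-\iota^\j)\,\d s\geq 0$ by a partial-sums case analysis. The paper instead works entirely in the finite-dimensional space $\cH^j$: it applies the classical rearrangement inequality to get $\la x_\sharp,y\ra_{\cH^j}\geq\la x,y\ra_{\cH^j}$ for every $y\in\C^j$, which yields $x_\sharp-x\in(\C^j)^*$ directly by duality, and then lifts to $\C^*$ via Lemma~\ref{l.proj_cones}\eqref{i.lM^j*} and Lemma~\ref{l.basics_(j)}. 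The paper's argument is a couple of lines shorter because it leverages the projection/lift machinery already developed in Section~\ref{s.HJ_inft_d_prelim}; your direct computation is self-contained but requires the small bookkeeping on $A\setminus B$ and $B\setminus A$. Your Hardy--Littlewood remark is in fact exactly the conceptual content of the paper's proof: the rearrangement inequality the paper quotes is the discrete Hardy--Littlewood inequality applied to $x$ against an increasing test vector $y$.
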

\begin{proof}
Part~\eqref{i.iota^j_sharp_in_iota^j+C*}.
For every $x\in\cH^j$, by the rearrangement inequality, we have
\begin{align*}
    \la x_\sharp, y \ra_{\cH^j} = \frac{1}{|j|}\sum_{k=1}^{|j|}x_{\sigma(k)}y_k \geq \frac{1}{|j|}\sum_{k=1}^{|j|}x_ky_k = \la x, y \ra_{\cH^j},\quad\forall y \in \C^j,
\end{align*}
where $\sigma$ is the permutation in the definition of $x_\sharp$. This implies $x_\sharp - x \in (\C^j)^*$. By the definition of $\iota^\j_\sharp$ and Lemma~\ref{l.basics_(j)}~\eqref{i.pj_lf=ID}, we have $\pj_j\left(\iota^\j_\sharp \right)= (\pj_j\iota)_\sharp$. Hence, we get $\pj_j\left(\iota^\j_\sharp \right ) - \pj_j\iota \in (\C^j)^*$, which along with Lemma~\ref{l.proj_cones}~~\eqref{i.lM^j*} implies that $\left(\iota^\j_\sharp \right)^\j - \iota^\j \in \C^*$. By Lemma~\ref{l.basics_(j)}~\eqref{i.pj_lf=ID} and~\eqref{i.lf_pj_(j)}, we have
\begin{align*}
    \left(\iota^\j_\sharp \right)^\j = \lf_j\pj_j\lf_j((\pj_j\iota)_\sharp) = \lf_j((\pj_j\iota)_\sharp) = \iota^\j_\sharp.
\end{align*}
Then, \eqref{i.iota^j_sharp_in_iota^j+C*} follows.

Part~\eqref{i.iota^j_sharp_in_C}. Let $\iota\in\cH_+$. It is clear from the definition that $(\pj_j\iota)_\sharp\in \C^j$. Then, by Lemma~\ref{l.proj_cones}~~\eqref{i.lM^j}, we get $\iota^\j_\sharp\in \C$. 

Part~\eqref{i.Eh(iota^j)}. We can compute
\begin{align*}
    \int_0^1 h\left(\iota^\j(s)\right)\d s= \frac{1}{|j|}\sum_{k=1}^{|j|}h\left(|j|\int_{\frac{k-1}{|j|}}^{\frac{k}{|j|}}\iota(s)\d s\right)= \frac{1}{|j|}\sum_{k=1}^{|j|}h\left(|j|\int_{\frac{\sigma(k)-1}{|j|}}^{\frac{\sigma(k)}{|j|}}\iota(s)\d s\right) =  \int_0^1 h\left(\iota^\j_\sharp(s)\right)\d s.
\end{align*}
This completes the proof.
\end{proof}

Recall $\xi^*$ in~\eqref{e.xi^*=} and $\bxi$ in~\eqref{e.bxi}. We define
\begin{align*}
    \bxi^*(\iota) = \sup_{\nu\in\C\cap L^\infty}\left\{\la \iota,\nu\ra_\cH - \bxi(\nu)\right\},\quad\forall \iota\in \cH.
\end{align*}

\begin{lemma}\label{l.H^circledast=E_xi^*}
For every $\mu\in \C$, it holds that $\bxi^*(\mu) = \int_0^1{\xi}^*(\mu(s))\d s$.
\end{lemma}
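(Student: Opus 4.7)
The plan is to prove the two inequalities separately, with the harder work in the lower bound. The upper bound is immediate: for any $\nu \in \C \cap L^\infty$, $\nu(s) \in \R_+$ almost everywhere, so by the pointwise definition of $\xi^*$, $\mu(s)\nu(s) - \xi(\nu(s)) \leq \xi^*(\mu(s))$ a.e.; integrating and taking supremum over $\nu$ gives $\bxi^*(\mu) \leq \int_0^1 \xi^*(\mu(s))\,\d s$.

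For the lower bound, the strategy is to build a near-optimal $\nu \in \C \cap L^\infty$ exploiting that $\mu$ is already monotone. First I would define $\nu^\star: \R_+ \to \R_+$ as the largest maximizer in $\xi^*(m) = \sup_{r \geq 0}\{mr - \xi(r)\}$; this exists and is finite because $\xi$ grows superlinearly (at least like $r^p$ with $p\geq 2$) on $\R_+$ in the setting of Theorem~\ref{t.hjsoft}. Pairing the maximizer inequalities at $m_1 \leq m_2$ in the usual way yields $(m_2-m_1)(r_2-r_1)\geq 0$, so $\nu^\star$ is nondecreasing on $\R_+$ and hence Borel measurable. Setting $\nu(s) := \nu^\star(\mu(s))$ produces a nondecreasing function on $[0,1)$ satisfying $\mu(s)\nu(s)-\xi(\nu(s)) = \xi^*(\mu(s))$ a.e.

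The hard part will be that this $\nu$ may fail to lie in $L^\infty$ (since $\mu$ is only in $L^2$) and may fail to be right-continuous. I would remedy both by truncating, $\nu_n := \nu \wedge n$, and then replacing the result by its right-continuous modification, which only changes it on the countable jump set of a monotone function, hence on a null set. Each such $\nu_n$ lies in $\C \cap L^\infty$. The concavity of $r\mapsto \mu(s)r-\xi(r)$, whose maximum over $[0,\infty)$ is attained at $\nu(s) \geq \nu_n(s)$, shows that this function is nondecreasing on $[0,\nu(s)]$ and in particular $\mu(s)\nu_n(s)-\xi(\nu_n(s)) \geq \mu(s)\cdot 0 - \xi(0) = 0$, while pointwise $\mu(s)\nu_n(s)-\xi(\nu_n(s)) \to \xi^*(\mu(s))$. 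Fatou's lemma applied to this nonnegative sequence then gives $\liminf_n \int_0^1 (\mu(s)\nu_n(s) - \xi(\nu_n(s)))\,\d s \geq \int_0^1 \xi^*(\mu(s))\,\d s$, and since each left-hand term is bounded above by $\bxi^*(\mu)$, this matches the upper bound and completes the proof.
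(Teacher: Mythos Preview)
Your argument is correct under the extra structural assumptions you invoke (convexity and superlinear growth of $\xi$ on $\R_+$, and $\xi(0)=0$), all of which hold for the $\xi$ of Theorem~\ref{t.hjsoft}. The route, however, is genuinely different from the paper's. You build a near-optimal competitor in $\C\cap L^\infty$ directly, by composing the monotone maximizer map $\nu^\star$ with the monotone $\mu$; the paper instead introduces the auxiliary quantity $\bxi^\oplus(\iota)=\sup_{\kappa\in\cH_+\cap L^\infty}\{\la\iota,\kappa\ra_\cH-\bxi(\kappa)\}$, first proves $\bxi^*(\mu)=\bxi^\oplus(\mu)$ via the nondecreasing rearrangement (Lemma~\ref{l.nond_rearrang}), and then shows $\bxi^\oplus(\iota)=\int_0^1\xi^*(\iota(s))\,\d s$ for \emph{all} $\iota\in\cH$ by a partition approximation and Fatou, without ever needing a pointwise maximizer.

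The trade-off is scope versus directness. Your construction is shorter and more transparent, but it leans on the concavity of $r\mapsto \mu(s)r-\xi(r)$ both to guarantee a maximizer and to obtain the pointwise lower bound feeding Fatou; without convexity of $\xi$ that step breaks down. The paper's argument uses only local Lipschitzness of $\xi$, and as a byproduct yields the identity on all of $\cH$ rather than just on $\C$. For the intended application (Proposition~\ref{p.equival_exp_hopf_lax} and Theorem~\ref{t.hjsoft}) your hypotheses are available, so either proof suffices there; just be aware that the lemma as stated in the paper is established at a level of generality your method does not reach. One cosmetic point: the claim ``$\mu(s)\cdot 0-\xi(0)=0$'' silently uses $\xi(0)=0$; for general $\xi$ you would get the constant lower bound $-\xi(0)$, which is still enough for Fatou.
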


\begin{proof}
Step~1. We show
\begin{align}\label{e.H^*=H^oplus}
    \bxi^*(\mu) = \bxi^\oplus(\mu),\quad\forall\mu\in\C,
\end{align}
where $\bxi^\oplus$ is defined by
\begin{align*}
    \bxi^\oplus(\iota) = \sup_{\kappa\in \cH_+\cap L^\infty}\{\la\iota,\kappa\ra_\cH - \bxi(\kappa)\},\quad\forall \iota\in\cH.
\end{align*}
Due to $\C\subset \cH_+$, we have $\bxi^*(\mu)\leq \bxi^\oplus(\mu)$ for $\mu\in\C$. For any $\eps>0$, there is $\kappa\in\cH_+\cap L^\infty$ such that
\begin{align*}
    \bxi^\oplus(\mu) \leq \la\mu,\kappa\ra_\cH - \bxi(\kappa) +\eps.
\end{align*}
Using Lemma~\ref{l.basics_(j)}~~\eqref{i.cvg} and the local Lipschitzness of $\xi$, we can find $j\in \Junif$ such that
\begin{align*}
    \bxi^\oplus(\mu) \leq \la\mu,\kappa^\j\ra_\cH - \bxi\left(\kappa^\j\right) +2\eps.
\end{align*}
Lemma~\ref{l.nond_rearrang} implies that $\la\mu,\kappa^\j\ra_\cH\leq \langle\mu,\kappa^\j_\sharp\rangle_\cH$, $\kappa^\j_\sharp\in\C$, and $\bxi(\kappa^\j) = \bxi(\kappa^\j_\sharp)$. These together with the above display yield $\bxi^\oplus(\mu)\leq \bxi^*(\mu) +2\eps$. Since $\eps$ is arbitrary, we obtain~~\eqref{e.H^*=H^oplus}.

Step~2. We show
\begin{align}\label{e.H^oplus=E_xi^*}
    \bxi^\oplus(\iota) = \int_0^1{\xi}^*(\iota(s))\d s,\quad\forall \iota \in \cH.
\end{align}
For every $\iota\in\cH$, $\kappa\in\cH_+\cap L^\infty$, by the definition of ${\xi}^*$, we have
\begin{align*}
    {\xi}^*(\iota(s))\geq \iota(s) \kappa(s) - {\xi}(\kappa(s)),\quad\forall s\in[0,1).
\end{align*}
Integrating in $s$, we get
\begin{align*}
    \int_0^1{\xi}^*(\iota(s))\d s\geq \la \iota,\kappa\ra_\cH - \bxi(\kappa).
\end{align*}
Taking supremum over $\kappa\in\cH_+\cap L^\infty$, we obtain $\int_0^1{\xi}^*(\iota(s))\d s\geq \bxi^\oplus(\iota)$ for every $\iota\in\cH$.

For the other direction, fix any $\iota\in\cH$. Note that ${\xi}^*$ is lower-semicontinuous and ${\xi}^*(\iota)\geq -{\xi}(0)$.
Using Lemma~\ref{l.basics_(j)}~\eqref{i.cvg}, we can extract from $\Junif$ a sequence $(j_n)_{n=1}^\infty$ satisfying $\lim_{n\to\infty}\iota^{(j_n)}=\iota$ a.e.\ on $[0,1)$. Using the lower semi-continuity of $\xi^*$ and Fatou's lemma, we get
\begin{align*}
    \int_0^1{\xi}^*(\iota(s))\d s\leq \int_0^1 \liminf_{n\to \infty} {\xi}^*\left(\iota^{(j_n)}(s)\right)\d s\leq \liminf_{n\to \infty}\int_0^1 {\xi}^*\left(\iota^{(j_n)}(s)\right)\d s.
\end{align*}
Recall the definitions of $\iota^\j$ in~\eqref{e.(j)} and $\pj_j\iota$ in \eqref{e.def_pj_iota}. For every $j\in\Junif$, we can compute
\begin{align*}
    \int_0^1 {\xi}^*\left(\iota^\j(s)\right)\d s=\sum_{k=1}^{|j|}\frac{1}{|j|}{\xi}^*((\pj_j\iota)_k)= \sum_{k=1}^{|j|}\frac{1}{|j|}\sup_{x_k \in \R_+}\left\{x_k \cdot (\pj_j\iota)_k - {\xi}(x_k)\right\}
    \\
    = \sup_{x\in \cH^j_+}\left\{\la x,\pj_j\iota\ra_{\cH^j}-\int_0^1{\xi}(\lf_jx(s))\d s\right\} = \sup_{x\in \cH^j_+}\{\la \lf_j x,\iota\ra_{\cH}-\bxi(\lf_j x)\}\leq \bxi^\oplus(\iota),
\end{align*}
where $\cH^j_+$ stands for $\pj_j(\cH_+) = \{x\in \cH^j:x_k\in\R_+,\,\forall k\}$.
The above two displays together yield $\int_0^1{\xi}^\ast(\iota(s))\d s\leq \bxi^\oplus(\iota)$ for every $\iota\in\cH$, verifying~\eqref{e.H^oplus=E_xi^*}. 

The desired result follows from~\eqref{e.H^*=H^oplus} and~\eqref{e.H^oplus=E_xi^*}.
\end{proof}

Now, we are ready to prove the following.

\begin{proposition}[Hopf-Lax formula in one dimension]\label{p.equival_exp_hopf_lax}
Let $D=1$.
Suppose that $\psi:\C\to\R$ is $\C^*$-nondecreasing and continuous and that $\xi:\R\to\R$ is increasing on $\R_+$. 
Then, for every $t\geq 0$ and $\mu\in\C\cap L^\infty$,
\begin{align}\label{e.hopf_lax_xi}
    \sup_{\nu \in \C\cap L^\infty}\inf_{\rho\in\C\cap L^\infty}\left\{\psi(\mu+\nu)- \la \nu,\rho\ra_{\cH}+t\int_0^1\xi(\rho(s))\d s\right\}  \notag
    \\= \sup_{\nu \in \C\cap L^\infty}\left\{\psi(\nu)-t\int_0^1{\xi}^*\left(\frac{\nu(s)-\mu(s)}{t}\right) \d s\right\}.
\end{align}

\end{proposition}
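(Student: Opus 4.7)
The plan is to reduce both sides of \eqref{e.hopf_lax_xi} to the same variational form over $\nu'\in\C\cap L^\infty$ and then use the classical nondecreasing rearrangement to identify the two feasible sets. For $t>0$, applying Lemma~\ref{l.H^circledast=E_xi^*} to $\nu/t\in\C$ (together with the scaling $t\bxi^*(\nu/t)=\sup_{\rho\in\C\cap L^\infty}\{\la\nu,\rho\ra_\cH-t\bxi(\rho)\}$) identifies the inner infimum on the left as $-t\int_0^1\xi^*(\nu(s)/t)\,\d s$. Substituting $\nu'=\mu+\nu$ therefore rewrites the left-hand side of \eqref{e.hopf_lax_xi} as $\sup_{\nu'\in(\mu+\C)\cap L^\infty}G(\nu')$ while the right-hand side is $\sup_{\nu'\in\C\cap L^\infty}G(\nu')$, where
\[
G(\nu'):=\psi(\nu')-t\int_0^1\xi^*\bigl((\nu'(s)-\mu(s))/t\bigr)\,\d s.
\]
The case $t=0$ is dispatched separately: a direct computation gives $\psi(\mu)$ on both sides.

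Since $\mu\in\C$, the trivial direction $\mathsf{LHS}\leq\mathsf{RHS}$ follows from $(\mu+\C)\cap L^\infty\subset \C\cap L^\infty$. For the reverse direction, given $\nu'\in\C\cap L^\infty$, set $h:=(\nu'-\mu)_+\in L^\infty([0,1);\R_+)$, let $h_\sharp$ be the classical nondecreasing right-continuous rearrangement of $h$ on $[0,1)$ (so $h_\sharp\in\C\cap L^\infty$), and put $\hat\nu':=\mu+h_\sharp\in(\mu+\C)\cap L^\infty$. I would verify $G(\hat\nu')\geq G(\nu')$ via two observations. The cost integrals coincide: because $\xi$ is increasing on $\R_+$ one has $\xi^*(r)=-\xi(0)$ for every $r\leq 0$, so $\xi^*((\nu'-\mu)/t)=\xi^*(h/t)$ pointwise, and equimeasurability of $h_\sharp$ with $h$ gives $\int_0^1 \xi^*(h_\sharp(s)/t)\,\d s=\int_0^1\xi^*(h(s)/t)\,\d s$. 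For the initial term, write $\hat\nu'-\nu'=(h_\sharp-h)+(\mu-\nu')_+$; the standard tail identity $\int_\tau^1(h_\sharp-h)\,\d s\geq 0$ (which comes from $\int_0^\tau h_\sharp\,\d s\leq \int_0^\tau h\,\d s$ together with $\int_0^1 h_\sharp=\int_0^1 h$), combined with the pointwise nonnegativity of $(\mu-\nu')_+$, yields $\hat\nu'-\nu'\in\C^*$ by Lemma~\ref{l.dual_cone}~\eqref{i.dual-cone-M}, whence $\psi(\hat\nu')\geq\psi(\nu')$ by $\C^*$-monotonicity of $\psi$.

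The main obstacle, and really the only nontrivial ingredient, is discovering the choice of $\hat\nu'$; once one sees that $\hat\nu':=\mu+h_\sharp$ exactly exploits the flatness of $\xi^*$ on $(-\infty,0]$ and the tail monotonicity of the rearrangement, the rest is bookkeeping with standard facts about the nondecreasing rearrangement on $[0,1)$ and the characterization of $\C^*$ in Lemma~\ref{l.dual_cone}~\eqref{i.dual-cone-M}.
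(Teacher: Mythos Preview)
Your proof is correct and follows the same overall strategy as the paper: rewrite the left-hand side via Lemma~\ref{l.H^circledast=E_xi^*} as a supremum over $\nu'\in(\mu+\C)\cap L^\infty$, observe the trivial inclusion for $\mathsf{LHS}\leq\mathsf{RHS}$, and for the reverse inequality replace a competitor $\nu'\in\C\cap L^\infty$ by $\hat\nu'=\mu+h_\sharp$ with $h=(\nu'-\mu)_+$, using the flatness of $\xi^*$ on $(-\infty,0]$ and a rearrangement inequality to control both the cost and the $\psi$ term.

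The implementation, however, is genuinely more direct than the paper's. The paper works with the \emph{discrete} rearrangement $\iota^{(j)}_\sharp$ over uniform partitions (Lemma~\ref{l.nond_rearrang}), which forces an $\eps$-approximation step: one chooses $j$ fine enough that $\psi(\nu)\leq\psi(\nu^{(j)})+\eps$ (using continuity of $\psi$) and invokes Jensen's inequality for the convex $\xi^*$ to pass from $\int\xi^*(\iota)$ to $\int\xi^*(\iota^{(j)})$. Your use of the \emph{continuous} nondecreasing rearrangement eliminates both detours: equimeasurability gives the cost identity exactly, and the Hardy--Littlewood tail inequality $\int_\tau^1(h_\sharp-h)\,\d s\geq 0$ together with $(\mu-\nu')_+\geq 0$ yields $\hat\nu'-\nu'\in\C^*$ directly, with no approximation and no appeal to convexity of $\xi^*$. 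The price is that you rely on standard real-analysis facts about the continuous rearrangement rather than the self-contained finite-dimensional Lemma~\ref{l.nond_rearrang}, but this is a fair trade and arguably cleaner.
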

\begin{proof}
Let us denote the left-hand side in \eqref{e.hopf_lax_xi} by $\LHS$ and the right-hand side by $\RHS$. By Lemma~\ref{l.H^circledast=E_xi^*} (together with changing $\nu$ to $t\nu$ and then $\mu+t\nu$ to $\nu$),
\begin{align*}
    \LHS = \sup_{\nu \in \C\cap L^\infty}\left\{\psi(\mu+t\nu)- t \int_0^1\xi^*(\nu(s))\d s\right\} = \sup_{\nu \in \mu+ \C\cap L^\infty}\left\{\psi(\nu)-t\int_0^1{\xi}^*\left(\frac{\nu(s)-\mu(s)}{t}\right) \d s\right\}.
\end{align*}
It is clear that $\LHS\leq \RHS$.
We only need to show $\RHS\leq \LHS$. Since $\xi$ is nondecreasing on $\R_+$, the definition of $\xi^*$ implies $\xi^*(r) = \xi^*(0)$ for all $r<0$.
For every $\kappa\in\cH$, we define $\kappa_+$ by $\kappa_+(s) = (\kappa(s))\vee 0$ for all $s\in[0,1)$. Then, we have
\begin{align*}
    \RHS = \sup_{\nu\in\C\cap L^\infty}\left\{\psi(\nu)-t\int_0^1{\xi}^*\left(\frac{(\nu-\mu)_+(s)}{t}\right)\d s\right\}.
\end{align*}
For every $\eps>0$, we can find $\nu \in \C\cap L^\infty$ such that
\begin{align*}
    \RHS \leq \psi(\nu)-t\int_0^1{\xi}^*\left(\iota(s)\right)\d s + \eps,
\end{align*}
where we set $\iota=\frac{1}{t}(\nu-\mu)_+\in\cH_+$. We choose a sufficiently fine $j\in\Junif$ satisfying $\psi(\nu)\leq \psi\left(\nu^\j\right)+\eps$. Since ${\xi}^*$ is convex, we have $\int_0^1{\xi}^*\left(\iota^\j(s)\right)\d s\leq \int_0^1{\xi}^*(\iota(s))\d s$ by Jensen's inequality. Hence, the above display becomes
\begin{align*}
    \RHS \leq \psi\left(\nu^\j\right)-t\int_0^1{\xi}^*\left(\iota^\j(s)\right)\d s + 2
    \eps.
\end{align*}
Setting $\rho = \mu + t\iota^\j_\sharp$, we have
\begin{align*}
    \rho - \nu^\j = \left(\mu -\mu^\j\right) + t\left(\iota^\j_\sharp -\iota^\j\right) + \left(t\iota^\j - (\nu-\mu)^\j\right) \in \C^*
\end{align*}
where $\mu -\mu^\j\in\C^*$ due to Lemma~\ref{l.proj_cones}~\eqref{l.mu-mu^j}, $\iota^\j_\sharp -\iota^\j\in\C^*$ due to Lemma~\ref{l.nond_rearrang}~\eqref{i.iota^j_sharp_in_iota^j+C*}, and $t\iota^\j - (\nu-\mu)^\j$ due to $\cH_+\subset \C^*$. As $\iota\in\cH_+$, Lemma~\ref{l.nond_rearrang}~\eqref{i.iota^j_sharp_in_C} implies $\rho \in\C$ and $\rho-\mu\in\C$, which are also in $L^\infty$. Since $\psi$ is $\C^*$-nondecreasing, the above display gives $\psi\left(\nu^\j\right)\leq \psi(\rho)$. 
Lemma~\ref{l.nond_rearrang}~\eqref{i.Eh(iota^j)} also gives $\int_0^1{\xi}^*\left(\frac{\rho-\mu}{t}\right)=\int_0^1{\xi}^*\left(\iota^\j_\sharp\right) = \int_0^1{\xi}^*\left(\iota^\j\right)$.
Combining these, we obtain
\begin{align*}
    \RHS \leq \psi(\rho)- t \int{\xi}^*\left(\frac{\rho(s)-\mu(s)}{t}\right)\d s+2\eps  \leq \LHS + 2\eps.
\end{align*}
Sending $\eps\to0$, we obtain the desired result.
\end{proof}

\section{Fenchel--Moreau identity on cones}\label{s.fenchel-moreau}

Recall Definition~\ref{d.fenchel_moreau_prop} of the Fenchel--Moreau property. 
To apply Theorem~\ref{t.hj_cone}~\eqref{i.main_var_rep} to equations on $\R_+\times \C^j$, $j\in\J$, we need to show that $\C^j$ given in \eqref{e.C^j} has the Fenchel--Moreau property. Adapting the definition of monotone conjugate in \eqref{e.def_u*} to $\C^j$ with ambient Hilbert space $\cH^j$ given in \eqref{e.def_H^j}, in this section, for any $g:\C^j\to(-\infty,\infty]$, we set
\begin{align}\label{e.g^*_C^j}
     g^*(y) = \sup_{x\in \C^j}\{\la x,y\ra_{\cH^j}-g(x)\},\quad\forall y \in \cH^j,
\end{align}
and $g^{**} = (g^*)^*$, where $g^*$ is understood to be its restriction to $\C^j$.

\begin{proposition}\label{p.fenchel-moreau}
For every $j\in\J$, the closed convex cone $\C^j$ possesses the Fenchel--Moreau property: for $g:\cM^j\to (-\infty,\infty]$ not identically equal to $\infty$, we have $g^{**}=g$ if and only if $g$ is convex, lower semicontinuous and $(\cM^j)^*$-increasing.
\end{proposition}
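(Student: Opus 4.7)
The plan is to prove the two implications separately, with the forward direction being routine and the reverse direction following the template of \cite{chen2020fenchel}. For the forward direction, if $g = g^{**}$, then $g$ is the pointwise supremum of the affine functions $L_y(x) = \la y, x\ra_{\cH^j} - g^*(y)$ indexed by $y \in \cM^j$. Each such $L_y$ is continuous and convex, and since $y \in \cM^j = (\cM^j)^{**}$ yields $\la y, z\ra_{\cH^j} \geq 0$ for every $z \in (\cM^j)^*$, each $L_y$ is $(\cM^j)^*$-increasing. These three properties pass to the pointwise supremum, giving convexity, lower semicontinuity, and $(\cM^j)^*$-monotonicity of $g$.

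For the converse, assume $g$ is convex, lower semicontinuous, $(\cM^j)^*$-increasing, and not identically $+\infty$. The inequality $g^{**} \leq g$ on $\cM^j$ is standard. For the reverse inequality, my plan is to relate $g^{**}$ to the classical Legendre biconjugate of a suitable extension. Concretely, define
\[
G(x) = \inf\bigl\{g(x') : x' \in \cM^j,\ x' - x \in (\cM^j)^*\bigr\}, \qquad x \in \cH^j,
\]
with the convention that the infimum over the empty set is $+\infty$. The $(\cM^j)^*$-monotonicity of $g$ forces $G = g$ on $\cM^j$ (the choice $x' = x$ attains the infimum), and straightforward convex combinations using the convexity of $\cM^j$ and $(\cM^j)^*$ show $G$ is convex on $\cH^j$. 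The classical Legendre conjugate $G^\star(y) = \sup_{x \in \cH^j}(\la y, x\ra_{\cH^j} - G(x))$ can then be computed by the substitution $x = x' - z$ with $x' \in \cM^j$, $z \in (\cM^j)^*$: for $y \in \cM^j$ the supremum in $z$ of $-\la y, z\ra_{\cH^j}$ is zero since $y \in (\cM^j)^{**}$, yielding $G^\star(y) = g^*(y)$, while for $y \notin \cM^j$ there exists $z_0 \in (\cM^j)^*$ with $\la y, z_0\ra_{\cH^j} < 0$, and sending $z = tz_0$ to infinity forces $G^\star(y) = +\infty$. Consequently $G^{\star\star}(x) = \sup_{y \in \cM^j}(\la y, x\ra_{\cH^j} - g^*(y)) = g^{**}(x)$ for every $x \in \cH^j$.

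Applying the classical Fenchel--Moreau theorem on the Hilbert space $\cH^j$ (\cite[Corollary~6.33]{bauschke2011convex}) to the convex function $G$ gives $G^{\star\star} = \overline{G}$, the lower semicontinuous convex envelope of $G$. Hence $g^{**} = \overline{G}$ on $\cH^j$, and since $G = g$ on $\cM^j$ one already has $\overline{G} \leq g$ on $\cM^j$. The main obstacle of the proof is establishing the reverse inequality $\overline{G} \geq g$ on $\cM^j$; this is where the specific geometry of $\cM^j$ must enter, since for $x_0 \in \cM^j$ a sequence $x_n \to x_0$ in $\cH^j$ gives near-optimizers $x'_n \in \cM^j$ of $G(x_n)$ that may drift arbitrarily far from $x_0$ in the $(\cM^j)^*$-direction. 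Following the template of \cite{chen2020fenchel}, I would exploit the parametrization of $\cM^j$ by increments $(x_k - x_{k-1})_k \in (\S^D_+)^{|j|}$ together with the inclusion $\cM^j \subset (\cM^j)^*$, which follows from Lemma~\ref{l.dual_cone}, to construct dominating approximants in $\cM^j$ converging to $x_0$; the $(\cM^j)^*$-monotonicity of $g$ together with its lower semicontinuity on $\cM^j$ would then force $\liminf_n G(x_n) \geq g(x_0)$, and handling boundary points of $\cM^j$ carefully completes the identification $\overline{G} = g$ on $\cM^j$ and hence $g^{**} = g$.
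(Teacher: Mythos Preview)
Your reduction via the infimal extension $G$ is clean and the computation of $G^\star$ is correct: you do obtain $g^{**}=G^{\star\star}=\overline{G}$ on $\cH^j$, and hence the Fenchel--Moreau identity on $\cM^j$ is equivalent to the single assertion that $G$ is lower semicontinuous at every point of $\cM^j$. But this last assertion is precisely where all of the cone-specific geometry lives, and you have not proved it. Your plan to ``construct dominating approximants in $\cM^j$ converging to $x_0$'' would amount to producing, for an arbitrary sequence $x_n\to x_0\in\cM^j$ in $\cH^j$, elements $\tilde x_n\in\cM^j$ with $\tilde x_n\to x_0$ and $x_n-\tilde x_n\in(\cM^j)^*$; but when $x_0\in\partial\cM^j$ the set $\cM^j\cap(x_n-(\cM^j)^*)$ can be empty, so no such $\tilde x_n$ exists, and a more delicate argument is needed. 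Invoking \cite{chen2020fenchel} does not close the gap either: as noted in the introduction of Section~\ref{s.fenchel-moreau}, the cones $\cM^j$ are \emph{not} in the class treated there, which is exactly why this section exists.

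The paper takes a different route that avoids the extension $G$ altogether. It works directly with the monotone conjugate and argues in two layers. First, Lemma~\ref{l.biconj_interior} handles the case $\itr\dom g\neq\emptyset$: on $\itr\dom g$ one shows $\partial g(x)\subset\cM^j$ (using $(\cM^j)^*$-monotonicity) and applies Lemma~\ref{lemma:supsubdif}; the identity extends to $\cl\dom g$ by segment continuity; outside $\cl\dom g$ one constructs an affine minorant with slope in $\cM^j$ blowing up at $x$, and the key geometric input is Lemma~\ref{lemma:outernormal}, which produces a nonzero outer normal in $\cM^j$ with strictly positive pairing---this is where $\cM^j\subset(\cM^j)^*$ is actually used. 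Second, the case $\itr\dom g=\emptyset$ is reduced to the first by a rank argument specific to $\cM^j$: one shows $\dom g$ sits inside a face $\tilde\cM^j$ of strictly smaller matrix rank (Steps~1--2 of the proof), applies Lemma~\ref{l.biconj_interior} on $\tilde\cM^j$, and treats points of $\cM^j\setminus\tilde\cM^j$ with an explicit affine minorant. Your approach, if completed, would need a comparably substantial argument to establish the lower semicontinuity of $G$ across $\partial\cM^j$, and it is not clear that this is any easier than the paper's direct method.
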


The proof largely follows the steps in \cite{chen2020fenchel}. We first recall the basic results of convex analysis. Then, we show Lemma~\ref{l.biconj_interior} which treats the case where the effective domain of $g$ has a nonempty interior. Finally, in the last subsection, we extend the result to the general case.

\subsection{Basic results of convex analysis}

For $a\in\cH^j$ and $\nu\in\R$, we define the affine function $ L_{a,\nu}$ with slope $a$ and translation $\nu$ by
\begin{align*}L_{a,\nu}(x)=\la a,x\ra_{\cH^j}+\nu,\quad \forall x\in \cH^j.
\end{align*}
For a function $g:\cE\to(-\infty,\infty]$ defined on a subset $\cE\subset \cH^j$, we can extend it in the standard way to $g:\cH^j\to(-\infty,\infty]$ by setting $g(x)=\infty$ for $x\not\in\cE$. For $g:\cH^j\to(-\infty,\infty]$, we define its \textit{effect domain} by
\begin{align*}\dom g = \left\{x\in\cH^j:\ g(x)<\infty\right\}.
\end{align*}
Henceforth, we shall not distinguish functions defined on $\C^j$ from their standard extensions to $\cH^j$. We denote by $\Gamma_0(\cE)$ the collection of convex and lower semicontinuous functions from $\cE\subset \cH^j$ to $(-\infty,\infty]$ with nonempty effect domain.

\smallskip

For $g:\cH^j\to(-\infty,\infty]$ and each $x\in\cH^j$, recall that the subdifferential of $g$ at $x$ is given by
\begin{equation*}\partial g(x)=\left\{z\in\cH^j:g(y)\geq g(x)+\la z, y-x\ra_{\cH^j},\,\forall y\in\cH^j\right\}.
\end{equation*}
The effective domain of $\partial g$ is defined to be
\begin{align*}
    \dom \partial g = \left\{x\in\cH^j:\ \partial g(x)\neq\emptyset\right\}.
\end{align*}

\smallskip

We now list some lemmas needed in our proofs. 
\begin{lemma}\label{lemma:itrconvex}
For a convex set $\cE\subset \cH^j$, if $y\in\mathsf{cl}\,\cE$ and $y'\in\mathsf{int}\,\cE$, then $\lambda y+(1-\lambda)y'\in\mathsf{int}\,\cE$ for all $\lambda \in[0,1)$.
\end{lemma}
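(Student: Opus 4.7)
The plan is to use the fact that $y'$ is an interior point to carry a full open ball through the convex combination, and to absorb the error coming from approximating $y\in\cl\cE$ by points of $\cE$. Since $\cH^j$ is finite-dimensional, all of this amounts to elementary linear-algebraic bookkeeping; the only subtlety is coordinating the two small parameters, namely the approximation radius for $y$ and the ball radius around $y'$, so that the latter dominates.

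Concretely, since $y'\in\itr\cE$, fix $r>0$ with $B(y',r)\subset\cE$, and fix $\lambda\in[0,1)$. Set $z=\lambda y+(1-\lambda)y'$. For any $\eps>0$, choose $y_\eps\in\cE$ with $|y-y_\eps|_{\cH^j}<\eps$, which is possible because $y\in\cl\cE$, and set $z_\eps=\lambda y_\eps+(1-\lambda)y'$, so that $|z-z_\eps|_{\cH^j}\leq\lambda\eps$.

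Next, I would verify directly that $B(z_\eps,(1-\lambda)r)\subset\cE$. Indeed, for any $w$ in this ball, the identity
\begin{equation*}
w=\lambda y_\eps+(1-\lambda)\Bigl(y'+\tfrac{w-z_\eps}{1-\lambda}\Bigr)
\end{equation*}
displays $w$ as a convex combination of $y_\eps\in\cE$ and a point in $B(y',r)\subset\cE$, so $w\in\cE$ by convexity. Choosing $\eps<(1-\lambda)r/(2\lambda)$ (or any $\eps$ when $\lambda=0$) forces $B(z,(1-\lambda)r/2)\subset B(z_\eps,(1-\lambda)r)\subset\cE$, so $z\in\itr\cE$.

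The only place where care is needed is the coefficient $(1-\lambda)$ in the denominator of the second summand, which is exactly what forces the conclusion to fail at $\lambda=1$ and dictates the scaling of the final ball; the argument is otherwise routine and requires no appeal to the Hahn--Banach theorem or separation results.
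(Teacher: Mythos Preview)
Your proof is correct and is the standard elementary argument for this classical fact. The paper does not actually prove this lemma; it simply cites \cite[Proposition~3.35]{bauschke2011convex}, so your self-contained argument is a fine replacement and there is nothing substantive to compare.
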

\begin{lemma}\label{lemma:itrsubdif}
For $g\in\Gamma_0(\cH^j)$, it holds that $\itr\dom g\subset \mathsf{dom}\, \partial g\subset \dom g$.
\end{lemma}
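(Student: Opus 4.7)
The plan is to handle the two inclusions separately. The right inclusion $\dom\partial g\subset \dom g$ is immediate from the definition of subgradient: fix any $x\in\dom\partial g$ and any $z\in\partial g(x)$, and since $g\in\Gamma_0(\cH^j)$ has nonempty effective domain, pick any $y\in\dom g$. The subgradient inequality $g(y)\geq g(x)+\la z,y-x\ra_{\cH^j}$ rearranges to $g(x)\leq g(y)-\la z,y-x\ra_{\cH^j}<\infty$, so $x\in\dom g$.

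For the left inclusion $\itr\dom g\subset \dom\partial g$, I would produce a subgradient at each $x_0\in\itr\dom g$ by a supporting hyperplane argument applied to the epigraph $E=\{(y,t)\in\cH^j\times\R:g(y)\leq t\}$. This set is closed and convex since $g\in\Gamma_0(\cH^j)$, and the point $(x_0,g(x_0))$ lies on its boundary (note $g(x_0)<\infty$ because $x_0\in\dom g$). The finite-dimensional supporting hyperplane theorem then yields a nonzero $(a,\beta)\in\cH^j\times\R$ with
\begin{align*}
\la a,y\ra_{\cH^j}+\beta t\geq \la a,x_0\ra_{\cH^j}+\beta g(x_0),\qquad\forall (y,t)\in E.
\end{align*}
Since $(y,t+s)\in E$ whenever $(y,t)\in E$ and $s\geq 0$, letting $s\to\infty$ forces $\beta\geq 0$. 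Once $\beta>0$ is known, I normalize to $\beta=1$ and specialize $t=g(y)$ to obtain the subgradient inequality $g(y)\geq g(x_0)-\la a,y-x_0\ra_{\cH^j}$ for every $y\in\cH^j$ (the inequality being trivial outside $\dom g$), so $-a\in\partial g(x_0)$ and consequently $x_0\in\dom\partial g$.

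The main step is therefore ruling out the ``vertical'' case $\beta=0$, which is exactly where the interior hypothesis enters. If $\beta=0$, the supporting inequality reduces to $\la a,y-x_0\ra_{\cH^j}\geq 0$ for all $y\in\dom g$; since $x_0\in\itr\dom g$ and $\cH^j$ is finite-dimensional, there exists $\eps>0$ with $y:=x_0-\eps a\in\dom g$, whence $-\eps|a|_{\cH^j}^2\geq 0$, forcing $a=0$ and contradicting $(a,\beta)\neq(0,0)$. I do not anticipate any further obstacles: the ingredients (closedness and convexity of $E$ for $g\in\Gamma_0(\cH^j)$, together with finite-dimensional Hahn--Banach separation) are standard, and the lemma itself is a classical result in convex analysis for which citing a standard reference would be an acceptable alternative to the above sketch.
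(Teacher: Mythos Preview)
Your proof is correct and follows the standard supporting-hyperplane argument on the epigraph; the paper does not supply its own proof of this lemma but simply cites \cite[Proposition~16.21]{bauschke2011convex}, so your sketch is in fact more detailed than what the paper provides. As you anticipated, citing a standard reference is exactly what the paper does.
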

\begin{lemma}\label{lemma:lineconv}
Let $g\in\Gamma_0(\cH^j)$, $x\in\cH^j$ and $y\in\mathsf{dom}\,g$. For every $\alpha\in(0,1)$, set $x_\alpha=(1-\alpha)x+\alpha y$. Then $\lim_{\alpha\to 0} g(x_\alpha)=g(x)$.
\end{lemma}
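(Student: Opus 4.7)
The plan is to sandwich $g(x_\alpha)$ between an upper bound coming from convexity and a lower bound coming from lower semicontinuity, then split on whether $g(x)$ is finite.

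First, I would establish the upper bound $\limsup_{\alpha\to 0} g(x_\alpha) \leq g(x)$. Since $x_\alpha = (1-\alpha)x + \alpha y$, convexity of $g$ gives
\begin{equation*}
g(x_\alpha) \leq (1-\alpha) g(x) + \alpha g(y).
\end{equation*}
In the case $g(x) < \infty$, since $g(y) < \infty$ by the hypothesis $y \in \dom g$, letting $\alpha \to 0$ yields $\limsup_{\alpha \to 0} g(x_\alpha) \leq g(x)$. In the case $g(x) = +\infty$, there is nothing to prove for this direction since $g(x)$ is the maximal possible value.

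Next, I would establish the lower bound $\liminf_{\alpha \to 0} g(x_\alpha) \geq g(x)$. Since $x_\alpha - x = \alpha(y-x)$, we have $|x_\alpha - x|_{\cH^j} = \alpha|y-x|_{\cH^j} \to 0$ as $\alpha \to 0$, so $x_\alpha \to x$ in $\cH^j$. The lower semicontinuity of $g$ then directly gives $\liminf_{\alpha \to 0} g(x_\alpha) \geq g(x)$.

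Combining the two bounds, we get $\lim_{\alpha \to 0} g(x_\alpha) = g(x)$ whenever $g(x) < \infty$. When $g(x) = +\infty$, the lower bound alone forces $\liminf_{\alpha \to 0} g(x_\alpha) = +\infty$, hence $\lim_{\alpha \to 0} g(x_\alpha) = +\infty = g(x)$. There is no real obstacle here; the argument is entirely standard and the only point that requires a moment of care is the degenerate case $g(x) = +\infty$, where the convexity bound $g(x_\alpha) \leq (1-\alpha)g(x)+\alpha g(y)$ becomes vacuous but the lower-semicontinuity bound remains sharp and suffices.
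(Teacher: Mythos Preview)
Your proof is correct. The paper does not give its own argument for this lemma; it simply cites \cite[Proposition~9.14]{bauschke2011convex}, and the sandwich you wrote (convexity for the $\limsup$ bound, lower semicontinuity for the $\liminf$ bound, with the case $g(x)=+\infty$ handled by the latter alone) is exactly the standard proof one finds there.
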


\begin{lemma}\label{lemma:supsubdif}
Let $g\in\Gamma_0(\C^j)$, $x\in\C^j$ and $y\in \C^j$. If $y\in\partial g(x)$, then $g^*(y)=\la x,y\ra_{\cH^j}-g(x)$.
\end{lemma}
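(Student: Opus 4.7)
The plan is to prove this by directly unwinding the definitions of the subdifferential and the monotone conjugate $g^*$ given in \eqref{e.g^*_C^j}. The statement is the Fenchel--Young equality restricted to the cone setting, and no deep convex-analytic machinery is required beyond the definitions themselves.

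First, starting from the hypothesis $y\in\partial g(x)$, I would rewrite the defining inequality of $\partial g(x)$, namely $g(y')\geq g(x)+\la y,y'-x\ra_{\cH^j}$ for all $y'\in\cH^j$, in the equivalent form
\begin{align*}
\la y,y'\ra_{\cH^j}-g(y')\leq \la y,x\ra_{\cH^j}-g(x),\qquad \forall y'\in\cH^j,
\end{align*}
where the inequality is trivially satisfied whenever $g(y')=\infty$ (i.e.\ whenever $y'\notin\dom g$, which in particular covers $y'\notin\C^j$ under the standard extension). In particular, the inequality holds for every $y'\in\C^j$. Taking the supremum over $y'\in\C^j$ on the left-hand side and recalling the definition of $g^*$ from \eqref{e.g^*_C^j} then yields the one-sided bound
\begin{align*}
g^*(y)=\sup_{y'\in\C^j}\bigl\{\la y',y\ra_{\cH^j}-g(y')\bigr\}\leq \la x,y\ra_{\cH^j}-g(x).
\end{align*}

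For the matching lower bound, I would simply test the supremum defining $g^*(y)$ at the admissible choice $y'=x\in\C^j$:
\begin{align*}
g^*(y)\geq \la x,y\ra_{\cH^j}-g(x).
\end{align*}
Note that $g(x)<\infty$ since $\partial g(x)\neq\emptyset$ implies $x\in\dom g$, so the right-hand side is well-defined and finite. Combining the two bounds gives the claimed identity $g^*(y)=\la x,y\ra_{\cH^j}-g(x)$.

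I do not anticipate any real obstacle here; the argument is an immediate consequence of the definitions and takes only a few lines. The hypothesis $y\in\C^j$ in the statement is not in fact used in the argument (the conclusion holds for any $y\in\cH^j$ with $y\in\partial g(x)$), but it is natural to state it this way because $g^*$ appears downstream through its values on $\C^j$ when building the monotone biconjugate $g^{**}$.
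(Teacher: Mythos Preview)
Your proof is correct and follows essentially the same approach as the paper: both derive the Fenchel--Young equality by unpacking the subdifferential inequality and then restricting the supremum to $\C^j$ via the standard extension. The only cosmetic difference is that the paper invokes the classical identity $\sup_{z\in\cH^j}\{\la z,y\ra_{\cH^j}-g(z)\}=\la x,y\ra_{\cH^j}-g(x)$ by citing \cite[Theorem~16.23]{bauschke2011convex}, whereas you verify it directly from the definition in two lines; your version is slightly more self-contained.
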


\begin{lemma}\label{lemma:hyperplane}
For $g\in\Gamma_0(\C^j)$ and $x\in\C^j$, we have
\begin{align*}g^{**}(x)=\sup L_{a,\nu}(x)
\end{align*}
where the supremum is taken over
\begin{align}\label{e.sup_set}
    \{(a,\nu)\in \C^j\times\R: L_{a,\nu}\leq g\text{ on }\C^j\}.
\end{align}
\end{lemma}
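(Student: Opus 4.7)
The plan is to unpack the definitions of the two conjugations directly: both the outer and inner suprema are taken over $\C^j$, so the admissible slopes $a\in\C^j$ in the set~\eqref{e.sup_set} line up exactly with the supremum variable $y$ in $g^{**}(x)=\sup_{y\in\C^j}\{\la x,y\ra_{\cH^j}-g^*(y)\}$. I will prove the two inequalities separately.

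For the $\leq$ inequality, I will fix $y\in\C^j$ with $g^*(y)<\infty$ and exhibit $L_{y,-g^*(y)}$ as an admissible affine minorant. By \eqref{e.g^*_C^j}, $g^*(y)\geq \la z,y\ra_{\cH^j}-g(z)$ for every $z\in\C^j$, so $L_{y,-g^*(y)}(z)=\la z,y\ra_{\cH^j}-g^*(y)\leq g(z)$ on $\C^j$; since $y\in\C^j$, the pair $(y,-g^*(y))$ lies in~\eqref{e.sup_set}. Evaluating at $x$ gives the term $\la x,y\ra_{\cH^j}-g^*(y)$ that appears in the supremum defining $g^{**}(x)$, so
\begin{align*}
\la x,y\ra_{\cH^j}-g^*(y)\leq \sup L_{a,\nu}(x),
\end{align*}
and taking the supremum over $y\in\C^j$ (values with $g^*(y)=\infty$ are harmless as they contribute $-\infty$) yields $g^{**}(x)\leq \sup L_{a,\nu}(x)$.

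For the $\geq$ inequality, I will start from an arbitrary admissible $(a,\nu)\in\C^j\times\R$. The condition $L_{a,\nu}\leq g$ on $\C^j$ rearranges to $\la z,a\ra_{\cH^j}-g(z)\leq -\nu$ for every $z\in\C^j$, hence $g^*(a)\leq -\nu$. Because $a\in\C^j$, it is a legal argument in the outer conjugation, giving
\begin{align*}
g^{**}(x)\geq \la x,a\ra_{\cH^j}-g^*(a)\geq \la x,a\ra_{\cH^j}+\nu = L_{a,\nu}(x).
\end{align*}
Taking the supremum over admissible $(a,\nu)$ furnishes the matching bound. The degenerate cases match on both sides: if \eqref{e.sup_set} is empty then necessarily $g^*(y)=\infty$ for every $y\in\C^j$ (else $(y,-g^*(y))$ would be admissible), so $g^{**}(x)=-\infty$, and conversely.

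There is essentially no obstacle here: the assertion is a definitional identity, made to work precisely because the monotone conjugate and biconjugate in~\eqref{e.g^*_C^j} are defined over the \emph{same} cone $\C^j$ — it is this symmetry that forces the slopes of the affine minorants to lie in $\C^j$. The hypothesis $g\in\Gamma_0(\C^j)$ is not even used; it becomes relevant only in downstream results (such as Proposition~\ref{p.fenchel-moreau}) that compare $g^{**}$ with $g$ itself, where Hahn--Banach separation and the analysis of $\itr\dom g$ would be required.
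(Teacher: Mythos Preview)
Your proof is correct and follows essentially the same approach as the paper: both directions are obtained by matching each $y\in\C^j$ with the affine minorant $L_{y,-g^*(y)}$ and, conversely, each admissible $(a,\nu)$ with the bound $g^*(a)\leq -\nu$. Your additional remarks on the degenerate case and on the hypothesis $g\in\Gamma_0(\C^j)$ being unused are accurate and go slightly beyond what the paper records.
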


For, Lemmas \ref{lemma:itrconvex}, \ref{lemma:itrsubdif}, and  \ref{lemma:lineconv}, we refer to \cite[Propositions~3.35, 16.21, and 9.14]{bauschke2011convex}. 
Here, let us prove Lemma~\ref{lemma:supsubdif} and Lemma~\ref{lemma:hyperplane}.

\begin{proof}[Proof of Lemma~\ref{lemma:supsubdif}]
By the standard extension, we have $g\in\Gamma_0(\cH^j)$. Since $y\in\partial g(x)$, it is classically known (c.f.\ \cite[Theorem 16.23]{bauschke2011convex}) that
\begin{align*}
    \sup_{z\in\cH^j}\left\{\la z, y \ra_{\cH^j}- g(z)\right\} = \la x, y \ra_{\cH^j} -g(x).
\end{align*}
By assumption, we know $x\in\dom\partial g$. Hence, Lemma~\ref{lemma:itrsubdif} implies $x\in \dom g$ and thus both sides above are finite. On the other hand, by the extension, we have $g(z)=\infty$ if $z\not\in \C^j$, which yields
\begin{align*}
    \sup_{z\in\cH^j}\left\{\la z, y \ra_{\cH^j}- g(z)\right\} = \sup_{z\in\C^j}\left\{\la z, y \ra_{\cH^j}- g(z)\right\} = g^*(y).
\end{align*}
The desired result follows from the above two displays.
\end{proof}

\begin{proof}[Proof of Lemma~\ref{lemma:hyperplane}]
For each $y\in\C^j$,
\begin{align*}
    L_{y,\, -g^*(y)}(x) = \la y,x\ra_{\cH^j} - g^*(y),\quad \forall x \in \C.
\end{align*}
is an affine function with slope $y\in\C^j$. By~\eqref{e.g^*_C^j}, we can see that $L_{y,\,-g^*(y)}\leq g$ on $\C^j$. In view of the definition of $g^{**}$, we have $g^{**}(x)\leq \sup L_{a,\nu}(x)$ for all $x\in\C^j$ where the $\sup$ is taken over the collection in~\eqref{e.sup_set}.

\smallskip

For the other direction, if $(\alpha,\nu)$ belongs to the set in \eqref{e.sup_set}, we have
\begin{align*}
    \la a, x\ra_{\cH^j} +\nu \leq g(x),\quad\forall x\in\C^j.
\end{align*}
Rearranging and taking supremum in $x\in\C^j$, we get $g^*(a)\leq -\nu$. This yields
\begin{align*}
    L_{a, \nu}(x)\leq \la a, x\ra_{\cH^j} -g^*(a)\leq g^{**}(x),
\end{align*}
which implies $ \sup L_{a,\nu}(x)\leq g^{**}(x)$.
\end{proof}

The proof of Proposition~\ref{p.fenchel-moreau} consists of two parts. The first part, summarized in the lemma below, concerns the case where $\dom g$ has a non-empty interior.

\begin{lemma}\label{l.biconj_interior}
If $\itr\dom g\neq\emptyset$, then $g^{**}=g$ if and only if $g$ is convex, lower semicontinuous and $(\cM^j)^*$-increasing.
\end{lemma}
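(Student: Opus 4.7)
My plan is to prove the backward implication, since the forward direction is immediate: by Lemma~\ref{lemma:hyperplane}, $g^{**}$ is a pointwise supremum of affine functions $L_{a,\nu}$ with $a\in\C^j$, each of which is convex, continuous, and $(\C^j)^*$-increasing because $\la a,z\ra_{\cH^j}\geq 0$ whenever $a\in\C^j$ and $z\in(\C^j)^*$, so the supremum inherits these properties. For the converse, assuming $g$ is convex, lsc, and $(\C^j)^*$-increasing, I need $g^{**}(x)\geq g(x)$ for every $x\in\C^j$ (the other inequality being automatic). I will split this into three cases: $x\in\itr\dom g$, $x\in\dom g\setminus\itr\dom g$, and $x\in\C^j\setminus\dom g$.

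The first case is the heart of the argument. By Lemma~\ref{lemma:itrsubdif}, $\partial g(x_0)$ is nonempty for $x_0\in\itr\dom g$, and I claim every $y\in\partial g(x_0)$ in fact lies in $\C^j$. For small $z\in(\C^j)^*$ the point $x_0-z$ stays in $\dom g$, so the subgradient inequality $g(x_0-z)\geq g(x_0)-\la y,z\ra_{\cH^j}$ and the monotonicity bound $g(x_0-z)\leq g(x_0)$ together force $\la y,z\ra_{\cH^j}\geq 0$; scaling upgrades this to all $z\in(\C^j)^*$, yielding $y\in((\C^j)^*)^*=\C^j$ by bipolar duality. Lemma~\ref{lemma:supsubdif} then gives $g^*(y)=\la x_0,y\ra_{\cH^j}-g(x_0)$, whence $g^{**}(x_0)\geq \la x_0,y\ra_{\cH^j}-g^*(y)=g(x_0)$.

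For the second case I use a standard convexity extension. For $x\in\dom g$ and any chosen $y_0\in\itr\dom g$, Lemma~\ref{lemma:itrconvex} places $x_\alpha=(1-\alpha)x+\alpha y_0$ in $\itr\dom g$ for $\alpha\in(0,1)$, so the first case gives $g^{**}(x_\alpha)=g(x_\alpha)$. Convexity of $g^{**}$ yields $g^{**}(x_\alpha)\leq(1-\alpha)g^{**}(x)+\alpha g^{**}(y_0)$; Lemma~\ref{lemma:lineconv} gives $g(x_\alpha)\to g(x)$, and a brief convexity argument rules out $g^{**}(x)=-\infty$ (else $g^{**}$ would be $-\infty$ on the entire segment to $y_0$, contradicting $g^{**}(y_0)=g(y_0)$), so sending $\alpha\to 0$ yields $g(x)\leq g^{**}(x)$.

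The main obstacle is the third case, $x\in\C^j\setminus\dom g$, where $g(x)=+\infty$ and I need to produce, for each $K>0$, an affine minorant $L_{a,\nu}\leq g$ on $\C^j$ with $a\in\C^j$ and $L_{a,\nu}(x)>K$. My plan is to follow the Hahn--Banach construction of \cite[Section~3]{chen2020fenchel}: the epigraph of $g$ is closed and convex in $\cH^j\times\R$ by lsc, and $(x,K)$ is not in it, so a separating hyperplane produces an affine minorant of $g$ with value at $x$ exceeding $K$. The slope returned by Hahn--Banach need not lie in $\C^j$, and the delicate point is to use the $(\C^j)^*$-monotonicity of $g$ to replace it by one in $\C^j$ while preserving the separation at $x$. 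Concretely, decomposing the slope against $\C^j$ and its polar, one can discard the component along $-(\C^j)^*$: the monotonicity prevents this deletion from raising the minorant on $\dom g$, and the fact $x\in\C^j$ ensures that the value at $x$ does not decrease. Implementing this slope-adjustment scheme, adapted to our specific cones $\C^j$, will occupy the bulk of the proof.
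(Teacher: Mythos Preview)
Your first two cases match the paper. The gap is in the third case. If you Moreau-decompose the Hahn--Banach slope as $p=a+c$ with $a\in\C^j$, $c\in-(\C^j)^*$, $\la a,c\ra_{\cH^j}=0$, then for every $y\in\C^j$ one has $\la c,y\ra_{\cH^j}\le 0$ and hence $L_{a,\nu}(y)\ge L_{p,\nu}(y)$: discarding the polar component \emph{raises} the affine function everywhere on $\C^j\supset\dom g$, the opposite of what you assert. Monotonicity of $g$ gives no obvious leverage, since it compares values of $g$ at points differing by an element of $(\C^j)^*$, whereas what you would need is to absorb the nonnegative increment $-\la c,y\ra_{\cH^j}$ into $g(y)$ at the \emph{same} point $y$. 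This is exactly the place where the paper warns that $\C^j$ falls outside the class of cones handled in \cite{chen2020fenchel}, so invoking that construction verbatim is the step that requires new ideas.

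The paper's approach avoids projecting a slope after the fact and instead produces one in $\C^j$ directly. First it extends your Case~2 to all of $\cl\Omega$ (with $\Omega:=\dom g$) by applying Lemma~\ref{lemma:lineconv} to both $g$ and $g^{**}$. For $x\notin\cl\Omega$: since $0\in\Omega$ by monotonicity, the segment $[0,x]$ last meets $\cl\Omega$ at $\bar x=\bar\lambda x$ with $\bar\lambda<1$. If $\bar x\notin\Omega$ then $g^{**}(\bar x)=g(\bar x)=\infty$, and convexity of $g^{**}$ along $[0,x]$ forces $g^{**}(x)=\infty$. If $\bar x\in\Omega$, the crucial ingredient is Lemma~\ref{lemma:outernormal}: approximating $\bar x$ by $y_n\in\bd\Omega\cap\itr\C^j$, one picks nonzero $z_n\in\nn_\Omega(y_n)$; since $y_n\in\itr\C^j$, monotonicity forces $y_n-a\in\Omega$ for all small $a\in(\C^j)^*$, whence $\la z_n,a\ra_{\cH^j}\ge 0$ and $z_n\in((\C^j)^*)^*=\C^j$. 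A subsequential limit yields $z\in\nn_\Omega(\bar x)\cap\C^j$ with $\la z,\bar x\ra_{\cH^j}>0$ (the strict inequality using $\C^j\subset(\C^j)^*$). Then $\cL_\rho=L_{\rho z,\,g(0)-\rho\la z,\bar x\ra_{\cH^j}}$ are minorants of $g$ with slopes in $\C^j$ and $\cL_\rho(x)\to\infty$, so $g^{**}(x)=\infty$ by Lemma~\ref{lemma:hyperplane}.
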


The next subsection is devoted to its proof. The second part deals with the case where $\dom g$ has an empty interior. For this, we need a more careful analysis of the structure of the boundary of $\C^j$. This is done in the second subsection.

\subsection{Proof of Lemma~\ref{l.biconj_interior}}Let satisfy $\itr\dom g\neq \emptyset$.
It is clear that $g^{**}$ is convex, lower semicontinuous, and $(\C^j)^*$-increasing.

Henceforth, assuming that $g$ is convex, lower semicontinuous, and $(\C^j)^*$-increasing, we want to prove the converse. For convenience, we write $\Omega = \dom g$. The plan is to prove the identity $g=g^{**}$ first on $\itr\Omega$, then on $\cl\Omega$, and finally on the entire $\C$.

\subsubsection{Analysis on $\itr\Omega$}

Let $x\in \itr \Omega$. By Lemma~\ref{lemma:itrsubdif}, we know $\partial g(x)$ is not empty. For each $v\in (\C^j)^*$, there is $\eps>0$ small so that $x -\eps v \in \Omega$. For each $y\in \partial g(x)$, by the definition of subdifferentials and the monotonicity of $g$, we have
\begin{equation*}
    \la v,y\ra_{\cH^j}\geq \frac{1}{\eps}\left( g(x)-g(x-\eps v)\right)\geq 0,
\end{equation*}
which implies $\emptyset \neq \partial g(x)\subset \C^j$.
Invoking Lemma~\ref{lemma:supsubdif}, we can deduce
\begin{align*}
    g(x)\leq\sup_{y\in\C^j}\{\la y,x\ra_{\cH^j}-g^*(y)\}=g^{**}(x).
\end{align*}
On the other hand, from the definition of $g^{**}$,
it is easy to see that
\begin{align}\label{e.u>u**}
    g(x)\geq g^{**}(x),\quad\forall x\in\C^j.
\end{align}
Hence, we obtain
\begin{align*}g(x)=g^{**}(x),\quad\forall x\in\itr\Omega.
\end{align*}

\subsubsection{Analysis on $\mathsf{cl}\,\Omega$}
Let $x\in\cl\Omega$ and choose $y\in\itr\Omega$. Setting $x_\alpha= (1-\alpha)x+\alpha y$, by Lemma~\ref{lemma:itrconvex}, we have $x_\alpha\in\itr\Omega$ for every $\alpha \in (0,1]$. By the result on $\itr\Omega$, we have
\begin{align*}
    g(x_\alpha) = g^{**}(x_\alpha).
\end{align*}
Then, $x_\alpha$ belongs to $\dom g$ and $\dom g^{**}$. Applying Lemma~\ref{lemma:lineconv} and sending $\alpha \to 0$, we get
\begin{align}\label{e.u=u**_on_cl}
    g(x)=g^{**}(x),\quad\forall x\in \cl\Omega.
\end{align}

\subsubsection{Analysis on $\C^j$}

Due to~\eqref{e.u=u**_on_cl}, we only need to consider points outside $\cl\Omega$. Fixing any $x\in \C^j\setminus\mathsf{cl}\,\Omega$, we have $g(x)=\infty$. Since $f$ is not identically equal to $\infty$ and $(\C^j)^*$-increasing, we must have $0\in\Omega$.
By this, $x\notin\mathsf{cl}\,\Omega$ and the convexity of $\cl\Omega$, we must have
\begin{equation}\label{e.lambda'<1}
    \bar\lambda=\sup\{\lambda\in\R_+:\lambda x\in\mathsf{cl}\,\Omega\}<1.
\end{equation}
We set
\begin{align}\label{e.x'}
    \bar x=\bar\lambda x.
\end{align}
Then, we have that $\bar x\in\bd\Omega$ and $\lambda \bar x\notin \mathsf{cl}\,\Omega$ for all $\lambda>1$. 

\smallskip
We need to discuss two cases: $\bar x\in\Omega$ or not.

\smallskip
In the second case where $\bar x\notin \Omega$, we have $g(\bar x)=\infty$. Due to $\bar x\in\cl\Omega$ and~\eqref{e.u=u**_on_cl}, we have $g^{**}(\bar x)=\infty$. On the other hand, by~\eqref{e.u=u**_on_cl} and the fact that $0\in\Omega$, we have $g^{**}(0)=g(0)$ and thus $0\in \dom g^{**}$. The convexity of $g^{**}$ implies that
\begin{align*}
    \infty = g^{**}(\bar x)\leq \bar\lambda g^{**}(x)+ (1-\bar\lambda)g^{**}(0).
\end{align*}
Hence, we must have $g^{**}(x)=\infty$ and thus $g(x)=g^{**}(x)$ for such $x$.

We now consider the case where $\bar x\in\Omega$. For every $y\in\cH^j$, the outer normal cone to $\Omega$ at $y$ is defined by
\begin{equation}\label{e.ndef}
    \nn_\Omega (y)=\{z\in \cH^j:\la z, y'-y\ra_{\cH^j}\leq 0, \,\forall y'\in\Omega\}.
\end{equation}
We need the following result.

\begin{lemma}\label{lemma:outernormal}
Assume $\itr\Omega\neq \emptyset$. For every $y\in \Omega\setminus\mathsf{int}\,\Omega$ satisfying $\lambda y\notin \mathsf{cl}\,\Omega$ for all $\lambda>1$, there is $z\in \nn_\Omega(y)\cap \C^j$ such that $\la z, y\ra_{\cH^j} >0$.
\end{lemma}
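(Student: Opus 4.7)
The plan is a separation argument combined with the $(\C^j)^*$-increasing property. First, I would observe that the hypothesis $\lambda y \notin \cl \Omega$ for all $\lambda > 1$ implies, via strict Hahn--Banach separation applied at $\lambda = 2$, the existence of $z \in \cH^j$ and $\alpha \in \R$ with $\la z, 2y\ra_{\cH^j} > \alpha \geq \la z, y' \ra_{\cH^j}$ for all $y' \in \cl \Omega$. Evaluating the right-hand inequality at $y' = y$ and comparing with the left gives $\la z, y\ra_{\cH^j} > 0$, a direction of positive slope on $y$. This separating $z$ is not yet in $\nn_\Omega(y)$, since the slack $\alpha - \la z, y\ra_{\cH^j}$ can be positive; to remedy this, I would apply strict separation at every scale $\lambda \in (1, 2]$, producing a family $(z_\lambda)$ with $\la z_\lambda, \lambda y\ra_{\cH^j} > \sup_\Omega \la z_\lambda, \cdot\ra_{\cH^j}$, normalize via a ball $B(\omega, r) \subset \itr \Omega$ around a fixed interior point $\omega$ to guarantee a uniform bound $|z_\lambda|_{\cH^j} \leq 1/r$, and extract a convergent subsequence as $\lambda \to 1^+$. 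The squeezing $\la z_\lambda, y\ra_{\cH^j} \leq \sup_\Omega \la z_\lambda, \cdot\ra_{\cH^j} < \lambda \la z_\lambda, y\ra_{\cH^j}$ forces the limit $z$ into $\nn_\Omega(y)$.

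To secure simultaneously $z \in \C^j$ and $\la z, y\ra_{\cH^j} > 0$, I would switch to a dual viewpoint. By polar calculus for closed convex cones, the existence of such a $z$ is equivalent to $y \notin \cl\bigl(T_\Omega(y) - (\C^j)^*\bigr)$, where $T_\Omega(y) = \nn_\Omega(y)^\circ$ is the tangent cone. Arguing by contradiction, I would suppose $y \in \cl(T_\Omega(y) - (\C^j)^*)$; after a boundedness and density step, this would produce a direction $w \in \itr(\C^j)^*$ with $y + w \in T_\Omega(y) = \cl \cone(\Omega - y)$. Unpacking the definition of the tangent cone would furnish a sequence $y_n \in \Omega$ and $t_n \to 0^+$ with $y_n = (1+t_n) y + t_n w + o(t_n)$, so that $y_n - (1+t_n) y \in (\C^j)^*$ eventually (since $w$ lies in the open interior of the dual cone, and $(\C^j)^*$ absorbs small perturbations of $t_n w$). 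The $(\C^j)^*$-increasing property of $g$ then gives $g((1+t_n) y) \leq g(y_n) < \infty$, so $(1+t_n) y \in \Omega$, contradicting the hypothesis $\lambda y \notin \cl \Omega$ for $\lambda > 1$.

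The principal obstacle is the density/interior upgrade within the contradiction step: the limit direction $w$ extracted from $y \in \cl(T_\Omega(y) - (\C^j)^*)$ may a priori lie on the boundary $\bd(\C^j)^*$, whereas the monotonicity argument requires $w \in \itr(\C^j)^*$ so that $t_n w + o(t_n) \in (\C^j)^*$ for large $n$. Handling this point cleanly requires either a perturbation of the approximating sequence (replacing $w_n$ by $w_n + \delta v$ for $v \in \itr(\C^j)^*$, then arguing that the shift can be compensated on the $T_\Omega(y)$ side using the established inclusion $-((\C^j)^* \cap T_{\C^j}(y)) \subset T_\Omega(y)$) or an a priori closure-refinement showing that $\cl(T_\Omega(y) - (\C^j)^*) = \cl(T_\Omega(y) - \itr(\C^j)^*)$ in the present finite-dimensional setting, with the non-emptiness of $\itr(\C^j)^*$ ensured by the pointedness of $\C^j$.
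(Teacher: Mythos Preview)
Your polar-calculus reduction is correct: the existence of $z\in\nn_\Omega(y)\cap\C^j$ with $\la z,y\ra_{\cH^j}>0$ is equivalent to $y\notin\cl\bigl(T_\Omega(y)-(\C^j)^*\bigr)$. The contradiction step, however, does not close. Writing $K=T_\Omega(y)-(\C^j)^*$, the set $T_\Omega(y)-\itr(\C^j)^*$ is open and convex with closure $\cl K$, hence equals $\itr(\cl K)$; so ``find $w\in\itr(\C^j)^*$ with $y+w\in T_\Omega(y)$'' is precisely the assertion $y\in\itr(\cl K)$, whereas the contradiction hypothesis only supplies $y\in\cl K$. Neither remedy handles the residual case $y\in\bd(\cl K)$. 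For the perturbation remedy, the compensation you need on the $T_\Omega(y)$ side amounts to having $-v\in T_\Omega(y)$ for some $v\in\itr(\C^j)^*$ (your stated inclusion should read $(-(\C^j)^*)\cap T_{\C^j}(y)\subset T_\Omega(y)$, not $-((\C^j)^*\cap T_{\C^j}(y))$), and this fails whenever $y\in\bd\C^j$: then $-v\notin T_{\C^j}(y)\supset T_\Omega(y)$ in general (e.g.\ $D=1$, $|j|=2$, $y=(0,1)$, $v=(1,1)$). The closure-density remedy only rewrites $\cl K$ and cannot promote $y$ from the boundary to the interior. The paper sidesteps exactly this $\bd\C^j$ obstruction by first approximating $y$ by a sequence $y_n\in\bd\Omega\cap\itr\C^j$, extracting nonzero $z_n\in\nn_\Omega(y_n)$, and using the interior position $y_n\in\itr\C^j$ together with the downward-closure of $\Omega$ to force $z_n\in\C^j$ before passing to the limit.

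If you want to keep the dual viewpoint and avoid approximation, enlarge $\Omega$ to $\tilde\Omega=\Omega-(\C^j)^*$. From the $(\C^j)^*$-monotonicity of $g$ one has $\tilde\Omega\cap\C^j=\Omega$, and $\itr\tilde\Omega\supset\itr\Omega\neq\emptyset$. If $y\in\itr\tilde\Omega$ then $(1+s)y\in\tilde\Omega\cap\C^j=\Omega$ for small $s>0$, contradicting the ray hypothesis; hence $y\in\bd\tilde\Omega$, and a supporting hyperplane produces a nonzero $z\in\nn_{\tilde\Omega}(y)\subset\nn_\Omega(y)$. Since $y-w\in\tilde\Omega$ for every $w\in(\C^j)^*$, the normal inequality gives $\la z,w\ra_{\cH^j}\geq0$, i.e., $z\in(\C^j)^{**}=\C^j$. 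Finally, pick $x_0\in\itr\Omega$ with $B(x_0,\eps)\subset\tilde\Omega$: the normal inequality at $x_0+\eps z/|z|_{\cH^j}$ combined with $\la z,x_0\ra_{\cH^j}\geq0$ (from $\C^j\subset(\C^j)^*$) yields $\la z,y\ra_{\cH^j}\geq\eps|z|_{\cH^j}>0$.
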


By Lemma~\ref{lemma:outernormal} applied to $\bar x\in\Omega$, there is $z\in \C^j$ such that
\begin{gather}\label{e.znx'}
\la z, w-\bar x\ra_{\cH^j}\leq 0,\quad\forall w\in\Omega,\\
\label{e.zx'>0}\la z,\bar x\ra_{\cH^j}>0.
\end{gather}

\smallskip

The monotonicity of $g$ ensures that $g(x)\geq g(0)$ for all $x\in\C^j$. For each $\rho\geq 0$, define
\begin{equation*}
     \cL_\rho= L_{\rho z,\ g(0)-\rho\la z,\bar x\ra_{\cH^j}}.
\end{equation*}
Due to~\eqref{e.znx'}, we can see that
\begin{align*}
     \cL_\rho(w)&=\rho \la z, w-\bar x\ra_{\cH^j} + g(0)\leq   g(w),\quad\forall w\in\Omega.
\end{align*}
Since we know $f\big|_{\C^j\setminus\Omega}=\infty$, the inequality above gives
\begin{align}\label{e.cL_rho<f}
    \cL_\rho\leq g,\quad\forall \rho\geq 0.
\end{align}
Evaluating $\cL_\rho$ at $x$ and using~\eqref{e.x'},
we have
\begin{align*}
     \cL_\rho(x)&=\rho\la z,x-\bar x\ra_{\cH^j} + g(0)= \rho\left({\bar\lambda}^{-1}-1\right)\la z,\bar x\ra_{\cH^j} + g(0).
\end{align*}
By~\eqref{e.lambda'<1} and~\eqref{e.zx'>0}, we obtain
\begin{align*}
    \lim_{\rho\rightarrow\infty} \cL_\rho(x)=\infty.
\end{align*}
This along with~\eqref{e.cL_rho<f}, Lemma~\ref{lemma:hyperplane} and~\eqref{e.u>u**} implies
\begin{align*}
    g(x)=g^{**}(x)\quad\forall x \in \C^j\setminus\cl\Omega.
\end{align*}
In view of this and~\eqref{e.u=u**_on_cl}, we have completed the proof of Lemma~\ref{l.biconj_interior}. It remains to prove Lemma~\ref{lemma:outernormal}.

\begin{proof}[Proof of Lemma~\ref{lemma:outernormal}]

Fix $y$ satisfying the condition. Since it is possible that $y\not\in\itr\C^j$, we want to approximate $y$ by points in $\mathsf{bd}\,\Omega\cap \mathsf{int}\,\C^j$. For every open ball $B\subset \cH^j$ centered at $y$, there is some $\lambda>1$ such that $y'=\lambda y\in \C^j\cap (B\setminus \mathsf{cl}\,\Omega)$. Due to $\mathsf{int}\,\Omega\neq \emptyset$ and $y\in\Omega$, by Lemma~\ref{lemma:itrconvex}, there is some $y''\in B\cap \mathsf{int}\,\Omega\subset \mathsf{int}\,\C$. For $\rho\in[0,1]$, we set
\begin{equation*}
    y_\rho= \rho y'+(1-\rho)y''\in B.
\end{equation*}
Then, we take
\begin{align*}
    \rho_0=\sup\{\rho\in[0,1]:y_\rho\in\mathsf{int}\,\Omega\}.
\end{align*}
Since $y'\notin\mathsf{cl}\,\Omega$, we must have $\rho_0<1$. It can be seen that $y_{\rho_0}\in\mathsf{cl}\,\Omega\setminus\mathsf{int}\,\Omega$ and thus $y_{\rho_0}\in B\cap\mathsf{bd}\,\Omega$. Due to $y'\in\C^j$, $y''\in\mathsf{int}\,\C^j$ and Lemma~\ref{lemma:itrconvex}, we have $y_{\rho_0}\in \mathsf{int}\,\C^j$. In summary, we obtain $y_{\rho_0}\in B\cap\mathsf{bd}\,\Omega\cap \mathsf{int}\,\C^j$.

By this construction and varying the size of the open balls centered at $y$, we can find a sequence $(y_n)_{n=1}^\infty$ such that
\begin{gather}
    y_n\in \itr \C^j,\label{e.y_n_int}\\
    y_n\in\mathsf{bd}\,\Omega,\label{e.y_n_bd}\\
    \lim_{n\rightarrow \infty}y_n=y.\label{e.y_n_lim}
\end{gather}

\smallskip

Fix any $n$. By~\eqref{e.y_n_int}, there is $\delta >0$ such that
\begin{align}\label{e.y_n+B_in_C}
    y_n+ B(0,2\delta) \subset\C^j.
\end{align}
Here, for $a\in\cH^j,r>0$, we write $B(a,r)=\{z\in\cH^j:\ |z-a|<r\}$.
For each $\eps\in(0,\delta)$, due to~\eqref{e.y_n_bd}, we can also find $y_{n,\eps}$ such that
\begin{gather}
y_{n,\eps}\in\Omega\label{e.y_n,eps_itr},\\
    |y_{n,\eps}-y_n|<\eps.\label{e.y_n,eps-y_n}
\end{gather}
This and~\eqref{e.y_n+B_in_C} imply that
\begin{align*}
    y_{n,\eps}-a \in \C^j,\quad \forall \eps \in(0,\delta),\ a \in B(0,\delta).\end{align*}
Since $g$ is $(\C^j)^*$-increasing, this along with~\eqref{e.y_n,eps_itr} implies that
\begin{align*}
    y_{n,\eps}-a\in\Omega,\quad \forall \eps\in(0,\delta),\ a\in (\C^j)^*\cap B(0,\delta).
\end{align*}
Due to~\eqref{e.y_n_bd} and $\itr\Omega\neq \emptyset$, we have that $\nn_\Omega(y_n)$ contains some nonzero vector $z_n$ (see \cite[Proposition~6.45]{bauschke2011convex} together with \cite[Proposition~6.23~(iii)]{bauschke2011convex}). The definition of the outer normal cone in~\eqref{e.ndef} yields
\begin{align*}
    \la z_n,y_{n,\eps}-a-y_n\ra_{\cH^j}\leq 0,
\end{align*}
which along with~\eqref{e.y_n,eps-y_n} implies
\begin{equation*}
    \la z_n, a\ra_{\cH^j}\geq -|z_n|\eps.
\end{equation*}
Sending $\eps\rightarrow 0$ and varying $a\in(\C^j)^*\cap B(0,\delta)$, we conclude that
\begin{align}\label{e.z_n_in_n_cap_C}
    z_n\in\nn_\Omega(y_n)\cap \C^j,\quad \forall n.
\end{align}

\smallskip

Now for each $n$, we rescale $z_n$ to get $|z_n|=1$. 
By passing to a subsequence, we can assume that there is $z\in\C^j$ such that $z_n$ converges to $z$. By $z_n\in\nn_\Omega(y_n)$, we get
\begin{equation*}\la z_n,w-y_n\ra_{\cH^j}\leq 0,\quad \forall w\in\Omega. 
\end{equation*}
The convergence of $(z_n)_{n=1}^\infty$ along with~\eqref{e.y_n_lim} implies
\begin{align*}
    \lim_{n\rightarrow \infty}\la z_n,w-y_n\ra_{\cH^j}=\la z,w-y\ra_{\cH^j},\quad \forall w\in\Omega.
\end{align*}
The above two displays yield $z\in\nn_\Omega(y)\cap \C^j$.

\smallskip

Then, we show $\la z, y\ra_{\cH^j} >0$.
Fix some $x_0\in\itr\Omega$ and some $\eps>0$ such that $B(x_0,2\eps)\subset \Omega$. Let $y_n$ and $z_n$ be given as in the above. Due to $|z_n|=1$, we have
\begin{align*}
    x_0-\eps z_n\in\Omega\subset \C^j.
\end{align*}
Since it is easy to see that $\C^j\subset (\C^j)^*$, by \eqref{e.z_n_in_n_cap_C}, we have $z_n\in(\C^j)^*$, which along with the above display implies that
\begin{align*}
    \la x_0-\eps z_n,z_n\ra_{\cH^j} \geq 0
\end{align*}
and thus $\la x_0,z_n\ra_{\cH^j}\geq \eps$. Using $z_n\in \nn_\Omega(y_n)$, we obtain
\begin{align*}
    \la y_n,z_n\ra_{\cH^j} \geq \la x_0,z_n\ra_{\cH^j} \geq \eps.
\end{align*}
Passing to the limit, we conclude that $\la z,y\ra_{\cH^j}>0$, completing the proof.
\end{proof}

\subsection{Proof of Proposition~\ref{p.fenchel-moreau}}

Similar to the arguments at the beginning of the proof of Lemma~\ref{l.biconj_interior}, we only need to show the direction that $g^{**}=g$ if $g$ is convex, lower semicontinuous and $(\C^j)^*$-increasing. By Lemma~\ref{l.biconj_interior}, we only need to consider the case where $\Omega$ has an empty interior. Recall that we have set $\Omega = \dom g$. Throughout this subsection, we assume that $\Omega$ has an empty interior. We proceed in steps.

Step~1. 
Setting
\begin{align}\label{e.N=rank}
    N = \max\left\{\rank\left(x_{|j|}\right):x\in\Omega\right\},
\end{align}
we want to show $N<D$. We need the following lemma.
\begin{lemma}\label{l.full_rank}
If there is $x\in \cM^j$ such that $x_{|j|}$ is of full rank, then $\itr(\cM^j\cap(x-(\cM^j)^*))\neq \emptyset$.
\end{lemma}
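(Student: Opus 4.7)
The plan is to exhibit an explicit point $y$ in the interior of $\cM^j\cap(x-(\cM^j)^*)$ rather than argue abstractly. The key observation is that for $y$ to lie in this interior, it is sufficient to have $y$ in the interior of $\cM^j$ and $x-y$ in the interior of $(\cM^j)^*$, because a small perturbation $y+\delta$ then still satisfies both $y+\delta\in\cM^j$ and $x-y-\delta\in(\cM^j)^*$. By Lemma~\ref{l.weak_bd_fin_d}, the interior of $\cM^j$ is the set of $y$ with $y_k-y_{k-1}$ strictly positive definite for every $k\in\{1,\dots,|j|\}$ (using the convention $y_0=0$). An analogous characterization (via the same argument as in Lemma~\ref{l.weak_bd_fin_d} applied to $(\cM^j)^*$, using the description in Lemma~\ref{l.dual_cone}~\eqref{i.dual-cone-M^j}) shows that the interior of $(\cM^j)^*$ consists of those $z\in\cH^j$ with $\sum_{i=k}^{|j|}(t_i-t_{i-1})z_i$ strictly positive definite for every $k$.

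Next, I would take the candidate $y=\eps\sum_{k=1}^{|j|}k\,I_D\,\mathbf{e}_k$, i.e.\ $y_k=k\eps I_D$, for $\eps>0$ small. Then $y_k-y_{k-1}=\eps I_D>0$, so $y$ sits in the interior of $\cM^j$. It remains to verify that $x-y$ lies in the interior of $(\cM^j)^*$, i.e.\ that for each $k$,
\begin{align*}
\sum_{i=k}^{|j|}(t_i-t_{i-1})(x_i-i\eps I_D)\in \itr\S^D_+.
\end{align*}
Here I use the full-rank hypothesis: since $x\in\cM^j$ and $x_{|j|}$ is of full rank, $x_i\in\S^D_+$ for every $i$ and $x_{|j|}\in\itr\S^D_+$, so
\begin{align*}
\sum_{i=k}^{|j|}(t_i-t_{i-1})x_i \;\geq\; (t_{|j|}-t_{|j|-1})\,x_{|j|}\;>\;0
\end{align*}
in the PSD order, for every $k$. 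Thus each of these sums is strictly positive definite, and a small enough choice of $\eps$ absorbs the perturbation $-\eps\sum_{i=k}^{|j|}(t_i-t_{i-1})i\cdot I_D$ while preserving strict positive definiteness.

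Combining these two verifications, $y$ lies in the interior of $\cM^j\cap(x-(\cM^j)^*)$, completing the proof. The argument is essentially direct; the only substantive point is the PSD chain $x_i\geq 0$, $x_{|j|}>0\Rightarrow \sum_{i=k}^{|j|}(t_i-t_{i-1})x_i>0$, and I expect no real obstacle, only the need to record carefully the characterization of $\itr(\cM^j)^*$ and the general fact that in finite dimensions $\itr\cM^j\cap\itr(x-(\cM^j)^*)\subset\itr\bigl(\cM^j\cap(x-(\cM^j)^*)\bigr)$.
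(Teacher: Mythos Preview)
Your proposal is correct and takes essentially the same approach as the paper: both construct the explicit candidate $y_k = k\eps I_D$ for small $\eps>0$ and verify that $y$ lies in the interior of $\cM^j$ (via $y_k-y_{k-1}=\eps I_D>0$) and that $x-y$ lies in the interior of $(\cM^j)^*$ (via the full-rank hypothesis on $x_{|j|}$). The only cosmetic difference is that the paper works directly with an explicit ball around $y$, whereas you invoke the characterizations of the two interiors and the general inclusion $\itr A\cap\itr B\subset\itr(A\cap B)$; your version is arguably tidier in that it carries the weights $(t_i-t_{i-1})$ explicitly throughout.
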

\begin{proof}
Recall the partial order induced by $\S^D_+$ in \eqref{e.a>b_SD_+}.
Let $x$ satisfy the assumption. Then, there is some constant $a>0$ such that $x_{|j|}\geq a I_D$ where $I_D$ is the $D\times D$ identity matrix. Let us define $y_k=k\delta I_D$, $k=1,2,\dots,|j|$, for some $\delta>0$ to be chosen later. Then, it is clear that $y\in \cM^j$. We consider $B=\{z\in \cH^j: |z_k-y_k|\leq r,\,\forall k\}$ for some $r>0$ to be chosen later. Then, due to finite dimensionality, there is some $c>0$ such that, for every $z\in B$,
\begin{align*}
    -crI_D\leq z_k-y_k\leq crI_D,\quad\forall k=1,2,\dots,|j|.
\end{align*}
Using this, we can show that, for every $z\in B$,
\begin{align*}
    z_{k}-z_{k-1}\geq y_{k}-y_{k-1}-2crI_D=(\delta-2cr)I_D,\quad\forall k=1,2,\dots,|j|,
\end{align*}
where we set $z_0=y_0=0$. By choosing $r$ sufficiently small, the above is in $\S^D_+$, and we have $B\subset \cM^j$. On the other hand, we also have, for $i=1,2,\dots,|j|$,
\begin{align*}
    \sum_{k=i}^{|j|}(x_k-z_k) =  \sum_{k=i}^{|j|}(x_k-y_k+y_k-z_k)\geq \left(x_{|j|}-\sum_{k=1}^{|j|}y_k\right) + \sum_{k=i}^{|j|}(y_k-z_k)
    \\
    \geq \left(aI_D-\sum_{k=1}^{|j|}y_k\right)-|j|crI_D = \left(a-\frac{1}{2}(1+|j|)|j|\delta-|j|cr\right)I_D,
\end{align*}
which is in $\S^D_+$ if $\delta$ and $r$ are chosen sufficiently small. Hence, we have $x-z\in (\C^j)^*$ for all $z\in B$, which is equivalent to $B\subset x-(\cM^j)^*$. Since $B$ has a nonempty interior, the proof is complete.
\end{proof}

Since $g$ is $(\cM^j)^*$-increasing, we have
\begin{align*}
    \cM^j\cap (x-(\cM^j)^*)\subset \Omega,\quad\forall x\in \Omega.
\end{align*}
Hence, Lemma~\ref{l.full_rank} implies that if there is $x\in\Omega$ with $\rank(x_{|j|})=D$, then $\itr\Omega\neq \emptyset$. Therefore, under our assumption $\itr\Omega=\emptyset$, we must have that $x_{|j|}$ is of rank less than $D$ for every $x\in\Omega$. So, for $N$ defined in \eqref{e.N=rank}, we must have $N<D$.

Step~2.
We fix $\tilde x\in \Omega$ with rank $N$. By changing basis, we may assume $\tilde x=\diag(a,0_{D-N})$ where $a$ is a $N\times N$ diagonal matrix with positive entries and $0_{D-N}$ is $(D-N)\times(D-N)$ zero matrix. We set
\begin{gather*}
    \tilde \S^N_+=\{\diag(a,0_{D-N}):a\in\S^N_+\}\subset \S^D_+,
    \\
    \tilde \cM^j=\{x\in \cM^j:x_k\in \tilde\S^N_+,\,\forall k\}.
\end{gather*}
We want to show that
\begin{align}\label{e.Omega_in_tilde_cM^j}
    \Omega\subset \tilde \cM^j.
\end{align}
We argue by contradiction and assume that there is $y\in \Omega$ such that $y_{k'}\not\in \tilde\S^N_+$ for some $k'$. Let us define $\tilde y$ by $\tilde y_k = y_k$ for all $k\leq k'$ and $\tilde y_k=y_{k'}$ for all $k>k'$. We clearly have $\tilde y\in \cM^j\cap (y-(\cM^j)^*)$ which implies that $\tilde y\in\Omega$ (because $g$ is $(\C^j)^*$-increasing). By convexity of $\Omega$, we must have $z=\frac{1}{2}\tilde x+\frac{1}{2}\tilde y\in \Omega$. 

We argue that $z_{|j|}$ has rank at least $N+1$. Since $y_{k'}$ is positive semi-definite, we must have
\begin{align}\label{e.y_k'_ii>0}
    (y_{k'})_{ii}> 0
\end{align}
for some $i>N$. By reordering coordinates, we may assume $i=N+1$ in \eqref{e.y_k'_ii>0} and thus $(y_{k'})_{N+1,N+1}> 0$. Setting $\hat z_{|j|} = ((z_{|j|})_{m,n})_{1\leq m,n\leq |j|}$, it suffices to verify $v^\intercal z_{|j|}v>0$ for all $v\in \R^{N+1}\setminus\{0\}$. We define $\hat x_{|j|}$ and $\hat y_{|j|}$ analogously. If $v_n\neq0$ for all $n=1,2,\dots, N$, we have
\begin{align*}
    v^\intercal \hat z_{|j|}v\geq \frac{1}{2}v^\intercal \hat x_{|j|}v>0
\end{align*}
due to the fact that $\hat x_{|j|}=a$ is a diagonal matrix with positive entries.
If $v_n=0$ for all $n=1,2,\dots, N$, then we must have $v_{N+1}\neq 0$ and thus
\begin{align*}
    v^\intercal \hat z_{|j|}v\geq \frac{1}{2}v^\intercal \hat y_{|j|}v = \frac{1}{2}v^2_{N+1}(y_{|j|})_{N+1,N+1}>0.
\end{align*}
We conclude that $z_{|j|}$ has rank at least $N+1$ contradicting the definition of $N$. Therefore, we must have~\eqref{e.Omega_in_tilde_cM^j}.

Step~3.
We conclude by applying Lemma~\ref{l.biconj_interior} to $g$ restricted to $\tilde\C^j$, and treating $g$ on $\C^j\setminus\tilde\C^j$ using Lemma~\ref{lemma:hyperplane}.

In view of~\eqref{e.N=rank} and~\eqref{e.Omega_in_tilde_cM^j}, applying Lemma~\ref{l.full_rank} to $\tilde\C^j$, we have that $\Omega$ has nonempty interior relative to $\tilde\C^j$. Let $\tilde g$ be the restriction of $g$ to $\tilde\C^j$. Define
\begin{align*}
    \tilde g^{\tilde *}(y)=\sup_{x\in \tilde\C^j}\{\la x, y \ra_{\cH^j} - \tilde g(x)\},\quad\forall y  \in \tilde\cH^j
\end{align*}
where $\tilde \cH^j =\{x\in\cH^j: x_k\in \tilde\S^N,\ \forall k\}$ with $\tilde\S^N=\{\diag(a,0_{K-N}):a\in\S^N\}$. Since $g(x) = \infty$ for $x\not\in \tilde\C^j$ and $g=\tilde g$ on $\tilde\C^j$, we can see from the definition of $g^*$ in \eqref{e.g^*_C^j} that
\begin{align*}
    g^*(y) = \sup_{x\in\tilde \C^j}\{\la x,y\ra_{\cH^j}- g(x)\} = g^{\tilde *}(y),\quad\forall y\in\tilde\cH^j,
\end{align*}
which implies $g^{**}(x) \geq \tilde g^{\tilde *\tilde *}(x)$ for all $x\in \tilde\C^j$. Since Lemma~\ref{l.biconj_interior} implies that $\tilde g(x) = \tilde g^{\tilde * \tilde *}(x)$ for $x\in \tilde\C^j$, we can thus conclude that $g^{**}(x) \geq \tilde g(x) = g(x)$ for all $x\in\tilde\C^j$. This along with~\eqref{e.u>u**} yields
\begin{align}\label{e.u**=u_tilde_C}
    g^{**}(x) = g(x),\quad\forall x \in \tilde\C^j.
\end{align}

For $x\in \cM^j\setminus \tilde \cM^j$, arguing as above (the paragraph studying the rank of $z_{|j|}$), we can see that there is some $k$ and some $i>N$ such that $(x_{k})_{ii}>0$. Now, setting $y_k=\diag(0_N,I_{K-N})$ for every $k$, we have $y\in \cM^j$, $\la y,x\ra_{\cH^j}>0$ and $\la y,z\ra_{\cH^j}=0$ for all $z\in \tilde \cM^j$. We define $\cL_\rho = \rho\la y,\cdot\ra_{\cH^j}+g(0)$ for each $\rho>0$. Since $g(z)\geq g(0)$ for all $z\in\C^j$ due to the monotonicity of $g$, and since $\cL_\rho(z) = g(0)$ for all $z\in \tilde\C^j$, we have $g(z) \geq \cL_\rho(z)$ for all $z\in\tilde\C^j$. Due to $g =\infty$ on $\C^j\setminus\tilde\C^j$, we thus get 
\begin{align*}
    g(z) \geq \cL_\rho(z),\quad\forall z\in \C^j.
\end{align*}
Due to $\la y, x\ra_{\cH^j}>0$, we also have $\lim_{\rho\to\infty}\cL_\rho(x) =\infty = g(x)$. In view of Lemma~\ref{lemma:hyperplane}, this along with the above display implies that $g^{**}(x) = g(x)$ for all $x\in \C^j\setminus \tilde\C^j$, which together with~\eqref{e.u**=u_tilde_C} completes the proof of Proposition~\ref{p.fenchel-moreau}.

\bibliographystyle{abbrv}
\end{document}